\newtheorem{thm}{Theorem}[section]
\newtheorem{them}{Theorem}
\newtheorem*{thm*}{Theorem}
\newtheorem{cor}[thm]{Corollary}
\newtheorem*{cor*}{Corollary}
\newtheorem{prop}[thm]{Proposition}
\newtheorem*{prop*}{Proposition}
\newtheorem{lem}[thm]{Lemma}
\theoremstyle{definition}
\newtheorem{definition}[thm]{Definition}
\newtheorem{rmk}[thm]{Remark}
\newtheorem*{rmk*}{Remark}
\newtheorem{example}[thm]{Example}
\def\lquotient#1#2{%
\makeatletter
\lower.6ex\hbox{$#1$}\backslash\raise.3ex\hbox{$#2$}%
\makeatother
}
\def\rquotient#1#2{%
\makeatletter
\raise.6ex\hbox{$#1$}/\lower.2ex\hbox{$#2$}%
\makeatother
}
\newcommand{\bbZ}{{\mathbb Z}}
\newcommand{\cC}{{\mathcal C}}
\newcommand{\cE}{{\mathcal E}}
\newcommand{\cH}{{\mathcal H}}
\newcommand{\cK}{{\mathcal K}}
\newcommand{\cL}{{\mathcal L}}
\newcommand{\cP}{{\mathcal P}}
\newcommand{\cR}{{\mathcal R}}
\newcommand{\cW}{{\mathcal W}}
\newcommand{\cY}{{\mathcal Y}}
\newcommand{\fra}{{\mathfrak a}}
\newcommand{\frA}{{\mathfrak A}}
\newcommand{\ra}{\rightarrow}
\newcommand{\hra}{\hookrightarrow}
\DeclareMathOperator{\F}{F}
\title{\textbf{A combination theorem for cubulation in small cancellation theory over free products}}  
\author{A. Martin and M. Steenbock}
\date{}
\begin{document}
\maketitle

\begin{abstract} 

We prove that a group obtained as a quotient of the free product of finitely many cubulable groups by a finite set of relators satisfying the classical $C'(1/6)$--small cancellation condition is cubulable. This yields a new large class of relatively hyperbolic groups that can be cubulated, and constitutes the first instance of a cubulability theorem for relatively hyperbolic groups which does not require any geometric assumption on the peripheral subgroups besides their cubulability. We do this by constructing appropriate wallspace structures for such groups, by combining walls of the free factors with walls coming from the universal cover of an associated $2$-complex of groups. 
\end{abstract}

{\footnotesize
\noindent \textbf{MSC-classification:} 20F06; 20F65; 20F67. \\
\textbf{Keywords and phrases:} group actions on CAT(0) cube complexes, small cancellation theory over  free products, cubulation of groups.
}

\medskip

\section{Introduction}

The geometry of non-positively curved cube complexes has attracted a lot of attention recently due to the spectacular progress in several related problems, most notably the solution to Thurston's remaining four questions on the structure of $3$-dimensional manifolds, including the virtual Haken conjecture of Waldhausen \cite{AgolVirtualHaken}. An important problem in this circle of ideas is to show that virtually special cubical complexes are \emph{stable} under various geometric operations. 
A main geometric task is to \emph{combine} the various wallspaces at hand to construct a wallspace structure for the group under 
study. 

\subsection{Combination problems}

A general combination problem for wallspaces can be formulated as follows:

\medskip

\noindent \textbf{Combination Problem.}\emph{ Let $G$ be a group acting on a  polyhedral complex $X$ endowed with a wallspace structure, such that each non-trivial face stabiliser admits a wallspace structure. 
\begin{itemize}
 \item Under which conditions can we combine such structures into a wallspace structure for $G$? 
 \item If each stabiliser is cubulable, under which conditions can we ensure that $G$ is cubulable? 
\end{itemize}
}

\medskip

This problem has been extensively studied to combine CAT(0) cube complexes under strong \emph{ (relative) hyperbolicity conditions on the group}:

\paragraph{Amalgams and HNN extensions.}
Haglund-Wise \cite{HaglundWiseCombination} and \cite{HsuWiseAmalgams} prove that virtual specialness of groups is preserved under certain amalgamated products or HNN extensions. In a such setting, $G$ is acting cocompactly on a tree $X$  with vertex stabilisers that are CAT(0) cubulable. Theorem 1.2 of \cite{HaglundWiseCombination} requires the vertex stabilisers and the whole groups to be \emph{Gromov hyperbolic}, while Theorem A of \cite{HsuWiseAmalgams} requires that the group $G$ is hyperbolic \emph{relative to virtually abelian subgroups}. 

\paragraph{Cubical small cancellation theory.}

The malnormal virtually special quotient theorem \cite{wise_structure_2011} deals with cubulable \emph{hyperbolic} groups and proves the cubulability of appropriate \emph{hyperbolic} quotients, under strong \emph{cubical} small cancellation conditions \cite[Def. 5.1]{wise_structure_2011}.

\paragraph{Relatively hyperbolic groups.}

Hruska--Wise \cite{hruska_finiteness_2014} prove that, for a group $G$ that is hyperbolic relative to a finite set of parabolic subgroups $(P_i)$, 
the $G$-action on the CAT(0) cube complex dual to a finite family of relatively quasiconvex subgroups is cocompact \emph{relative to} $P_i$-invariant subcomplexes. If the parabolic subgroups are abelian and if the action on the dual cube complex is proper, they show that the action is cocompact on a \emph{truncation} of that dual cube complex.

\subsection{The main theorem}

The main theorem of this article is a cubulation theorem for large classes of relatively hyperbolic groups, \textit{without any} assumption on the peripheral subgroups besides their cubulability. These groups are realised as $C'(1/6)$ small cancellation groups over free products. Note that finitely presented $C'(1/6)$ small cancellation groups over a free product of groups are hyperbolic relative to their free factors \cite{pankratev_hyperbolic_1999}. 
\begin{them}[cf. Theorem \ref{properaction} and Theorem \ref{cocompactaction}]\label{T: main2} Let $\F$ be the free product of finitely many cubulable groups. If $G$ is a quotient of $\F$ by a finite set of relators which satisfies the classical $C'(1/6)$--small cancellation condition over $\F$, then $G$ is cubulable.
\end{them}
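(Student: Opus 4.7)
The plan is to construct a $G$-invariant wallspace on a suitable simply connected $2$-complex $X$ associated to the small cancellation presentation, and to apply Sageev's construction to obtain the desired cube complex. The two referenced theorems are then established separately as properness and cocompactness of the resulting action.

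First, I would realise $G$ as the fundamental group of a developable $2$-complex of groups whose vertex group is $\F$ and whose $2$-cells carry the defining relators. Its universal cover $X$ is a simply connected polyhedral $2$-complex on which $G$ acts, with vertex stabilisers conjugate to the free factors of $\F$ and face stabilisers related to the stabilisers of the relator words. The classical $C'(1/6)$ hypothesis supplies a small cancellation geometry for $X$ in which reduced disk diagrams admit a Strebel-type classification, which provides the combinatorial control used throughout the rest of the argument.

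Next I would endow $X$ with two families of walls. The \emph{relator walls} (hypergraphs) are defined, as in Wise's approach to cubical small cancellation, by joining midpoints of ``antipodal'' edges across each relator $2$-cell and extending through adjacent cells; the $C'(1/6)$ condition ensures that each connected hypergraph is an embedded tree and that two distinct hypergraphs cross in at most one edge. The \emph{free factor walls} are imported from the hypotheses: at each vertex $v \in X$ whose stabiliser is a conjugate of some free factor $\F_i$, the hyperplanes of the chosen cubulation of $\F_i$ induce an equivariant wall partition of the edges at $v$, and each such local wall is extended to a global wall of $X$ by propagation along hypergraphs. A small cancellation disk diagram argument shows that the resulting family is $G$-invariant, two-sided, and has the finite-crossing property, yielding a wallspace whose dual cube complex carries a natural $G$-action.

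Properness (Theorem \ref{properaction}) would then follow by exhibiting linear separation: elements acting loxodromically on $X$ cross linearly many pairwise distinct hypergraph walls along an axis via the standard linear separation theorem for small cancellation hypergraphs, while for elements fixing a vertex, properness reduces to properness of the free factor cubulation acting on the imported walls at that vertex. The main obstacle is cocompactness (Theorem \ref{cocompactaction}): since the peripheral subgroups are allowed to be arbitrary cubulable groups, one cannot reduce to the abelian or geometrically controlled setting of Hruska--Wise via truncation of horoballs. The strategy I would pursue is to show that there are only finitely many $G$-orbits of maximal collections of pairwise-crossing walls, whence Sageev's criterion delivers cocompactness. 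This in turn reduces to a combinatorial statement, again provable by $C'(1/6)$ disk diagrams, to the effect that only finitely many hypergraphs touch a given relator cell and that each vertex receives only finitely many $G$-orbits of extended free factor walls. Carefully bounding the interaction between the two families of walls under $C'(1/6)$, in particular ruling out unbounded accumulation of transverse walls at vertices and faces, is the central technical difficulty I would expect.
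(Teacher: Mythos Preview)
Your outline captures the overall Sageev strategy and the two families of walls, but it skips the central construction of the paper and this is a genuine gap, not a cosmetic difference. You propose to put the wallspace directly on the polygonal complex $X$. However, $G$ does \emph{not} act properly on $X$: vertex stabilisers are conjugates of the (possibly infinite) free factors, so no wallspace structure on the vertex set of $X$ can yield a proper action on the dual cube complex. In particular, your ``free factor walls'' are described as partitions of the edges incident to a vertex $v$ of $X$, but such data do not separate elements of the stabiliser $G_v$ inside any set on which $G$ acts properly. The paper resolves this by \emph{blowing up} each vertex $v$ of $X$ to a copy of the chosen CAT(0) cube complex $EG_v$ for the free factor, producing a complex of spaces $\cE G$ over $X$ on which $G$ now acts properly and cocompactly; the walls are then defined on $\cE G$, not on $X$, and the free factor walls are honest hyperplanes of the fibres $EG_v$ extended across polygons of $\cE G$.

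A second missing ingredient is the \emph{balancing} subdivision. Once one works in $\cE G$, the polygons $\widetilde R$ have boundary made of both horizontal edges (over edges of $X$) and vertical geodesic segments in the fibres. Opposite edges of $\widetilde R$ need not project to far-apart edges of $X$, so the naive extension of a fibre hyperplane by diameters of polygons does not a priori produce an embedded tree (the associated system of doors need not be a gallery). The paper subdivides the horizontal edges enough times to force the far-apart condition; only then do the generalised hypergraph arguments of the appendix show that the resulting walls embed and separate. Your ``propagation along hypergraphs'' hides exactly this difficulty. Finally, your cocompactness sketch (finitely many hypergraphs per relator, finitely many orbits of walls per vertex) is not the mechanism the paper uses: cocompactness is obtained by showing that the hypercarriers of any family of pairwise crossing walls have a common vertex $v$ in $X$, and then applying Helly's theorem for convex subcomplexes \emph{inside the CAT(0) fibre} $EG_v$ to localise the whole configuration in a bounded ball there. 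This step genuinely requires the blown-up space.
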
 
To the authors' knowledge, there are no prior results for relatively hyperbolic groups to provide a \emph{cocompact} action on the dual CAT(0) cube complex \emph{without} strong  assumptions on the peripheral subgroups. In particular, the cocompactness of the action \emph{does not} assume any condition on the free factors besides their cubulability. This contrasts with the aforementioned previous theorems where either stronger hyperbolic conditions on the group $G$, or stronger conditions on the peripheral subgroups are needed. In particular, we do not need the peripheral subgroups to be hyperbolic, nor do we need that they are virtually abelian.

\paragraph{Small cancellation over free products}
The class of small cancellation groups over free products, both classical and its graphical generalisation, provides a natural setting to study the cubulability of groups acting cocompactly but not properly on higher-dimensional complexes for two reasons. As we explain in this article, such groups act in a very controlled way on $2$-dimensional $C'(1/6)$--polygonal complexes, and therefore provide a manageable framework to develop a good geometric intuition. Moreover, the small cancellation theory over free products allows for the construction of groups with a wide range of algebraic and geometric properties. It was fundamental in showing strong embedding properties of infinite groups \cite{miller_embeddings_1971,schupp_embeddings_1976}, in the solution of non-singular equations over groups \cite{edjvet_nonsingular_2011, edjvet_nonsingular_2010}, in the construction of torsion-free groups without the unique product property \cite{RipsSegevUniqueProduct, SteenbockRipsSegev,ArzhantsevaSteenbockRipsConstruction,
GruberMartinSteenbockFiniteIndex} and in the construction 
of 
acylindrically hyperbolic groups with unexpected properties \cite[Theorem 1.7]{dominik_infintely_2014}.

 \subsection{Comparison to previous work of Wise on cubical small cancellation}\label{S: Comparison}

In the celebrated essay \cite{wise_structure_2011}, Wise outlines a far-reaching extension of his results on the action of finitely presented classical $C'(1/6)$--small cancellation quotients on CAT(0) cube complexes \cite{WiseSmallCancellation}. We explain here how the small cancellation groups over free products considered in this paper can be considered  examples of Wise's cubical small cancellation groups, and to what extent Wise's approach \cite[Th. 5.50, Cor. 5.53]{wise_structure_2011} is sufficient to recover some, but certainly not all, of the results obtained in this paper.

In \emph{this} section, for the sake of a simplified comparison, we use the notations of \cite{wise_structure_2011}.

\paragraph{1. Cubical presentations.}

The general setting of Wise's cubical small cancellation theory deals with so-called \textit{cubical presentations} $\langle X\mid Y \rangle$ \cite[Sec. 3.2]{wise_structure_2011}, 
which consists of a non-positively curved cube complex $X$, and a local isometry $\varphi: Y \ra X$ of non-positively curved cube complexes (Wise's theory deals with an arbitrary number of such maps, but for simplicity we will restrict ourselves to the case of a single local isometry). To such a data, one can associate  its mapping cone $X^*$, whose fundamental group is the quotient of $\pi_1(X)$ by the normal subgroup generated by the image of $\varphi$. 

We obtain a cubical presentation associated with a quotient over a free product of the form $\rquotient{A*B}{\ll w \gg}$, for some appropriate element $w$ of $A*B$, as follows (Here we only treat  the \textit{torsion-free} case.): 
Let us assume that the word $w$ is \emph{not} a proper power, and that $A$ and $B$ are torsion-free. One first constructs a non-positively curved cube complex $X$ with fundamental group $A * B$ by choosing two non-positively curved complexes $X_A$ and $ X_B$ with fundamental groups $A$ and $B$ respectively, and by connecting them by an edge. 
 One can then associate to the word $w$ an immersed simplicial loop $\varphi: P \ra X$. This yields a cubical presentation for the quotient $G:= \rquotient{A*B}{\ll w \gg}$. 
 
\medskip

In Section \ref{SectionSmallCancellation}, we explore  such a construction in a technically precise way. We can then treat such groups  in more generality than we could using only the methods of \cite{wise_structure_2011}.

\paragraph{2. Properness of the action and the generalised $B(6)$--condition.}

 Wise gives conditions of a small cancellation nature so that the universal cover of $X^*$ can be equipped with a wallspace structure that allows for the study of the cubulability and the specialness of $\pi_1(X^*)$.  
In particular, the generalised $B(6)$--condition \cite[Def. 5.1]{wise_structure_2011} is a key ingredient to construct an appropriate wallspace structure for the group in \cite[Th. 5.50]{wise_structure_2011}, and to obtain the properness of the action on the dual CAT(0) cube complex. In presence of strong small cancellation conditions \cite[Th. 3.20, Cor. 3.32]{wise_structure_2011}, Wise shows that the crucial non-positive curvature condition (2) in his generalised $B(6)$--condition holds.

In our previous construction, by choosing a sufficiently large length for the edge joining $X_A$ and $X_B$,  the generalised $B(6)$--condition can be verified for the cubical presentation of $G$. In particular, Wise's work can be adapted to our  setting to show that the groups we consider in this paper act \textit{properly} on a CAT(0) cube complex. Indeed, properness is treated in Theorem 5.50 of \cite{wise_structure_2011}. (One can verify the assertions: The conditions (1),(3),(4),(5) and (6) of Wise generalised $B(6)$--condition are satisfied by construction; Condition (2) follows from Corollary 3.32 (1) in \cite{wise_structure_2011}. The conditions (2),(3), and (4) of \cite[Th. 5.50]{wise_structure_2011} are as well satisfied by construction.) 

With these general ideas of Wise in the background, the approach followed in this article, however, provides a shorter and more transparent explicit proof of the fact that such groups act properly on CAT(0) cube complexes, and does not require the full strength of Wise's machinery. In particular, we do neither use the generalised $B(6)$--condition, nor do we use Wise's detailed analysis of  cubical van Kampen diagrams.

\paragraph{3. Cocompactness of the action.}

Our most important contribution lies in the cocompactness of the action. In Wise's Corollary 5.53 \cite{wise_structure_2011} (and in other related results as mentioned above), cocompactness of the action follows from the hyperbolicity of the quotient group. It is therefore \textit{not} possible to recover our cubulability results from Wise's argument in \cite[Th. 5.50, Cor. 5.53]{wise_structure_2011} when the free factors are not hyperbolic. In contrast to such strong conditions, in this article we control the geometry \textit{of the universal cover} of the complex of groups associated with small cancellation groups over free products of groups, and not necessarily the geometry of the whole group, to understand the  geometric structure of the wallspace. 

\subsection{Complexes of groups}

In this article we adopt the point of view of \emph{complexes of groups}, a high-dimensional generalisation of graph of groups, developed by Gersten--Stallings \cite{GerstenStallings}, Corson \cite{CorsonComplexesofGroups}, and Haefliger \cite{HaefligerOrbihedra}. In particular, we associate to a small cancellation group $G$  over a free product  a $2$-dimensional complex of groups with fundamental group $G$. Its universal cover is a $C'(1/6)$--small cancellation polygonal complex $X$ on which $G$ acts with vertex stabilisers being conjugates of the free factors. To obtain a space quasi-isometric to $G$, we then \emph{blow up} vertices into CAT(0) cube complexes. As a result, we obtain a polyhedral complex with a proper and cocompact $G$-action. It is on such a polyhedral complex that we want to define a wallspace structure, by \textit{combining} the walls in $X$ and the walls of the various cube complexes present in the blown-up space. 

This complex of groups approach is very natural:  It allows us to work directly with the geometric structure of the small cancellation complex $X$. We can use it to explicitly combine walls of the free factors, to obtain a wallspace structure for the small cancellation quotient~$G$.

 It is this complex of groups approach that allows us to remove the strong (relative) hyperbolicity conditions required in aforementioned articles: The polygonal complex $X$ itself is hyperbolic, but the blown-up space, which is quasi-isometric to $G$, can have a very different geometry. One of the key points in this complex of groups approach is to use the geometry of the polygonal complex $X$ to study  the walls constructed in the blown-up space.

Note that our main theorem then follows from the following, slightly more general, statement that can be extracted from our proof of Theorem \ref{T: main2}.

\begin{them}  Let $X$ be $C'(1/6)$--small cancellation polygonal complex on which a group $G$ acts cocompactly, with cubulable vertex stabilisers and trivial edge stabilisers. Then $G$ is cubulable.
\end{them}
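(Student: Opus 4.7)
The plan is to build, from the data $(X,G)$, a polyhedral complex $Y$ carrying a proper cocompact $G$-action, equip it with a wallspace structure that combines walls of two very different natures, and apply Sageev's dual cube complex construction. Throughout, what we control is the geometry of the polygonal complex $X$, not that of the whole group $G$: this is what lets us dispense with a hyperbolicity assumption on $G$.

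First I would construct $Y$ by blowing up each vertex $v$ of $X$: choose a CAT(0) cube complex $C_v$ on which $G_v$ acts properly and cocompactly (available by cubulability of vertex stabilisers), and glue these cube complexes into $X$ in place of vertices. Because all edge stabilisers are trivial, there is no equivariance condition to satisfy along the gluing edges; the resulting $Y$ therefore carries a natural proper and cocompact $G$-action, together with a $G$-equivariant collapse map $Y\to X$ whose vertex fibres are the chosen cube complexes.

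Next I would endow $Y$ with two families of walls. The \emph{cubical walls} are the hyperplanes of the cube complexes $C_v$, extended trivially through the remainder of $Y$. The \emph{polygonal walls} come from hypergraphs in $X$: in a $C'(1/6)$ polygonal complex, one builds hypergraphs by taking midsegments across pairs of opposite sides of polygons and threading them through common edges, exactly as in Wise's treatment of classical small cancellation. Each such hypergraph embeds as a tree and cuts $X$ into two halfspaces; pulling back via $Y\to X$ yields a wall on $Y$. Standard small cancellation arguments, together with the triviality of edge stabilisers, ensure that each polygonal partition is indeed a wall, that the combined collection is $G$-invariant, and that any two points of $Y$ are separated by only finitely many walls.

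With this wallspace in hand, Sageev's construction produces a CAT(0) cube complex $\mathcal{C}(Y)$ with a $G$-action. Properness is comparatively easy: the cubical walls separate orbits within each $C_v$ since the $G_v$-action on $C_v$ was already proper, and the polygonal walls separate any two points of $Y$ that lie above distinct vertices of $X$, because any path between such vertices crosses midsegments in $X$. The main obstacle, and the genuinely new point of the theorem, is \emph{cocompactness} of the $G$-action on $\mathcal{C}(Y)$ in the absence of any hyperbolicity hypothesis on $G$. The strategy I would follow is to describe cubes of $\mathcal{C}(Y)$ as finite collections of pairwise crossing walls in $Y$, and to show that any such family projects under the collapse $Y\to X$ into a uniformly bounded region of the hyperbolic complex $X$: hypergraphs in a $C'(1/6)$ complex have tightly controlled mutual crossing patterns, a cubical wall is pinned to a single $C_v$ and so can only cross polygonal walls that pass through $v$, and two cubical walls cross only if the corresponding hyperplanes already crossed inside some $C_v$. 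Combining these three bounds with the cocompactness of the $G$-action on $X$ and of each $G_v$-action on $C_v$ should yield finitely many $G$-orbits of cubes in $\mathcal{C}(Y)$, and hence cocompactness. This last step, which replaces the usual appeal to hyperbolicity of $G$ by a delicate geometric analysis of walls on the blown-up complex, is where I expect the real work to be.
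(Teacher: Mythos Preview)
Your overall architecture matches the paper's, but there is a genuine gap in the wall construction that undermines both the properness and cocompactness arguments as you've sketched them.

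The problem is your \emph{cubical walls}. A hyperplane $H$ of $C_v$, ``extended trivially through the remainder of $Y$'', does \emph{not} separate $Y$. Any polygon $\widetilde{R}$ of $Y$ whose attaching path in $C_v$ crosses $H$ gives a path in $Y\setminus H$ joining the two sides of $H$; the complement $Y\setminus H$ is connected. So these are not walls at all, and without them you cannot separate points within a fibre. The paper's repair is the heart of the construction: one must \emph{extend} each such hyperplane through every polygon $\widetilde{R}$ it touches, along the diameter of $\widetilde{R}$ starting at the crossed edge, and then continue into whatever fibre or polygon lies on the far side. The resulting wall is a tree of hyperplanes glued along a \emph{generalised hypergraph} of $X$ (Definition~\ref{D: walls 2}), not a single hyperplane. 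For this to make sense one first performs a \emph{balancing} subdivision of the horizontal edges (Definition~\ref{balanced}) to ensure that opposite edges of polygons of $Y$ project to cells that are far apart in $X$; without balancing, the projected system of doors need not be a gallery and the hypergraph need not embed.

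This change also invalidates your cocompactness sketch. Once correctly extended, a cubical wall is \emph{not} pinned to a single $C_v$: it can thread through many fibres and polygons, so the claim that ``a cubical wall can only cross polygonal walls that pass through $v$'' fails. The paper's cocompactness argument (Theorem~\ref{configurations}) instead shows that the \emph{hypercarriers} in $X$ of any family of pairwise crossing walls have a common vertex (Proposition~\ref{intersectionhypergraphs}, a generalisation of Wise's result requiring the far-apart condition), and then uses Helly's theorem inside the fibre CAT(0) cube complex over that vertex, together with a uniform bound on attaching-path lengths, to trap the whole configuration in a bounded region.
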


\subsection{Applications}
 
The existence of a cubulation, or more generally of a proper action on a CAT(0) cube complex, has many interesting consequences. We list here several corollaries of our main theorem.

 \paragraph{Baum-Connes conjecture.}
 
Recall that a group acting properly on a CAT(0) cube complex has the Haagerup property. In particular, such a group satisfies the strong Baum-Connes conjecture \cite{higson_etheory_2001} and does not have Kazhdan's Property (T). By relaxing our assumptions on the free factors, we obtain a combination theorem for groups acting properly on  \textit{locally finite} CAT(0) cube complexes.
 \begin{them}\label{T: mainHaagerup} Let $\F$ be the free product of finitely many groups acting properly on a locally finite CAT(0) cube complex. If $G$ is the quotient of $\F$ by a finite set of relators which satisfies the classical $C'(1/6)$--small cancellation condition over $\F$, then $G$ acts properly on a locally finite CAT(0) cube complex. In particular, $G$ satisfies the Haagerup property and the strong Baum-Connes conjecture.
 \end{them}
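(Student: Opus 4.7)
The plan is to re-run the construction from the proof of Theorem \ref{T: main2}, observing that cocompactness of the vertex-stabiliser cube complexes is only used in the main theorem to obtain cocompactness of the $G$-action on the dual cube complex; properness is already a consequence of properness at the vertex-stabiliser level combined with the small-cancellation geometry of the ambient polygonal complex. For Theorem \ref{T: mainHaagerup}, we therefore only need to preserve properness and verify local finiteness of the resulting dual cube complex.

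More concretely, let $(F_i)$ be the free factors of $\F$, each acting properly on a locally finite CAT(0) cube complex $C_i$. Following the complex-of-groups construction outlined in the introduction, we associate to $G$ the $C'(1/6)$--polygonal complex $X$ (on which $G$ acts cocompactly with vertex stabilisers being conjugates of the $F_i$ and trivial edge stabilisers), and we blow up each vertex stabilised by a conjugate of $F_i$ into a copy of $C_i$, obtaining a polyhedral complex $\tilde{Y}$ with a proper (though no longer cocompact) $G$-action. We then endow $\tilde{Y}$ with the same combined wallspace structure as in the proof of Theorem \ref{T: main2}: walls of type (a) are hyperplanes of the blown-up cube complexes $C_i$, and walls of type (b) are the hypergraphs dual to the $C'(1/6)$ structure of $X$.

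Properness of the $G$-action on the resulting dual cube complex $C(G)$ is then established exactly as in the main theorem. Given a non-trivial $g \in G$ and a basepoint $p \in \tilde{Y}$, either $g$ moves the image of $p$ far inside $X$, in which case arbitrarily many type-(b) walls separate $p$ from $gp$ (by the same wall-separation argument used for Theorem \ref{properaction}); or $g$ stabilises a vertex of $X$, hence lies in a conjugate of some $F_i$, and properness of the $F_i$-action on $C_i$ forces arbitrarily many type-(a) walls to separate $p$ from $gp$. Once properness on $C(G)$ is established, the Haagerup property and the strong Baum–Connes conjecture follow from the standard results; in particular \cite{higson_etheory_2001} provides the latter.

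The main new point to check, which constitutes the principal obstacle, is local finiteness of $C(G)$. By the Sageev construction, this is equivalent to local finiteness of the wallspace on $\tilde{Y}$, i.e.\ to the statement that only finitely many walls meet any given compact subset. For type-(a) walls this is immediate from the assumed local finiteness of each $C_i$. For type-(b) walls, one uses the cocompact $G$-action on $X$ together with the structural results established for hypergraphs in the proof of Theorem \ref{T: main2} to ensure that inside any compact region of $\tilde{Y}$ only finitely many hypergraphs can enter the blown-up cube complexes. The delicate point is to rule out accumulation of hypergraphs within a single blown-up vertex $C_i$ when $C_i$ is no longer cocompact; this should follow because each hypergraph enters a blown-up vertex through finitely many edges of $X$, whose adjacent carriers in $\tilde Y$ are locally finite by assumption, and the cocompactness of the $G$-action on $X$ bounds the number of hypergraph-types entering any compact region.
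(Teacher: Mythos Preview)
Your core approach matches the paper's: redo the complex-of-groups construction with fibres that are locally finite CAT(0) cube complexes carrying only a proper (not cocompact) action, and observe that the properness proof (Theorem~\ref{properaction}) goes through verbatim because it uses only the local finiteness of the fibres and of $\cE G$. The paper says exactly this in the paragraph following the proof of Theorem~\ref{properaction}, and then deduces Haagerup and Baum--Connes from properness alone.

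Where you diverge is in treating local finiteness of the dual cube complex $C(G)$ as ``the principal obstacle''. The paper does not verify this, and does not need to: the Haagerup property (and hence strong Baum--Connes via \cite{higson_etheory_2001}) follows from a proper action on \emph{any} CAT(0) cube complex, locally finite or not. Your claimed equivalence ``$C(G)$ is locally finite $\Leftrightarrow$ only finitely many walls meet any compact subset of $\tilde Y$'' is not standard and would require proof (non-principal ultrafilters in the Sageev construction can have infinite degree even when the underlying wallspace is tame in your sense), and your argument for the right-hand side for type-(b) walls is explicitly left as ``this should follow'', which is a genuine gap if you insist on establishing local finiteness.

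Two smaller points. First, your (a)/(b) dichotomy of walls does not match the paper's three families (Definition~\ref{DefinitionWalls}); the walls of second type combine fibre hyperplanes with polygon diameters and are neither purely (a) nor purely (b). Second, your properness sketch via the dichotomy ``$g$ moves $p$ far in $X$, or $g$ stabilises a vertex'' is not literally a dichotomy; the paper's argument instead shows directly that the $M$-ball in the wall-pseudometric is contained in an explicitly constructed finite set $K_{k_M}$, interleaving separation by lifted hypergraphs (between fibres) with separation by fibre hyperplanes (within fibres). Your sketch captures the intuition but would need to be rewritten along these lines to be complete.
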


 \paragraph{Consequences of Agol's theorem.}
 
Let us mention two other significant applications of Theorem~\ref{T: main2} in the particular case of (Gromov) hyperbolic groups. By a recent result of Agol \cite{AgolVirtualHaken} building upon a work of Haglund--Wise \cite{HaglundWiseSpecial, HaglundWiseCombination} among others, a hyperbolic group that acts properly and cocompactly on a CAT(0) cube complex is virtually a special subgroup of a right-angled Artin group. In particular, this implies that a cubulable hyperbolic group is residually finite, linear over the integers and has separable quasiconvex subgroups. We thus obtain the following:
\begin{them}
 Let $\F$ be the free product of finitely many hyperbolic cubulable groups. If $G$ is a quotient of $\F$ by a finite set of relators which satisfies the \emph{classical} $C'(1/6)$--small cancellation condition over $\F$, then $G$ is residually finite, linear over the integers and has separable quasiconvex subgroups.
\end{them}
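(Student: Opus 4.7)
The plan is to reduce the statement to a direct application of Theorem~\ref{T: main2} combined with Agol's theorem \cite{AgolVirtualHaken} and the Haglund--Wise machinery on virtually special groups \cite{HaglundWiseSpecial, HaglundWiseCombination}, which the excerpt itself summarises just above the statement. There are essentially three steps.

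First, I would verify that the hypotheses of Theorem~\ref{T: main2} are met. Since each free factor is by assumption hyperbolic and cubulable, in particular cubulable, the free product $\F$ is a free product of finitely many cubulable groups. Thus Theorem~\ref{T: main2} immediately produces a proper and cocompact action of $G$ on a CAT(0) cube complex, i.e.\ $G$ is cubulable.

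Second, I would check that $G$ is itself hyperbolic (not merely relatively hyperbolic). By Pankratev \cite{pankratev_hyperbolic_1999}, cited in the introduction, a finitely presented $C'(1/6)$--small cancellation quotient of a free product is hyperbolic relative to its free factors. Since the free factors are assumed to be hyperbolic, and since a group which is hyperbolic relative to a finite collection of hyperbolic subgroups is itself hyperbolic (as can be deduced, for instance, from the combination theorem \cite{MartinBoundaries} invoked in the excerpt), $G$ is a hyperbolic group.

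Finally, I would combine the previous two steps with Agol's theorem: a hyperbolic group that acts properly and cocompactly on a CAT(0) cube complex is virtually a special subgroup of a right-angled Artin group. The three conclusions follow at once from standard consequences of virtual specialness recalled in the paragraph preceding the statement: right-angled Artin groups are residually finite and linear over $\mathbb{Z}$, so virtually special groups inherit both properties; and Haglund--Wise prove that quasiconvex subgroups of virtually special hyperbolic groups are separable.

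I do not expect serious obstacles, as every step is either quoted in the excerpt or is a standard consequence of cited results; the only mildly delicate point is the promotion from relative hyperbolicity to hyperbolicity in step two, but this is immediate once one notes that the peripheral subgroups are themselves hyperbolic.
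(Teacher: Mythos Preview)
Your proposal is correct and follows essentially the same approach as the paper, which derives the theorem directly from Theorem~\ref{T: main2} together with Agol's theorem and the Haglund--Wise consequences of virtual specialness. The only minor difference is that the paper obtains hyperbolicity of $G$ directly from \cite{pankratev_hyperbolic_1999} (``If all free factors are hyperbolic, then so is $G$''), whereas you pass through relative hyperbolicity first; either route is fine.
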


Another application of Agol's theorem, in the context of the Atiyah and Kaplansky zero-divisor conjectures, was provided by \cite{SchreveStrongAtiyah}. The main result therein, based on the work of Linnell--Schick--Okun and collaborators, see for instance \cite{LinnellOkunSchickStrongAtiyah}, implies the Atiyah conjecture on $\ell^2$-Betti numbers for a large class of groups having the Haagerup property, including cubulable hyperbolic groups. We thus obtain the following:
\begin{them}
 Let $\F$ be the free product of finitely many torsion-free hyperbolic cubulable groups. If $G$ is a torsion-free quotient of $\F$ by a finite set of relators which satisfies the classical $C'(1/6)$--small cancellation condition over $\F$, then $G$ satisfies the strong Atiyah conjecture. In particular, $G$ satisfies the Kaplansky zero-divisor conjecture over the complex numbers.
\end{them}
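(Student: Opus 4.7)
The plan is to deduce this statement by directly combining Theorem~\ref{T: main2} with Agol's theorem and Schreve's result on the Atiyah conjecture for virtually compact special groups; the argument is a concatenation of known deep theorems once the main cubulation result is in hand.

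First, Theorem~\ref{T: main2} applied to $\F$ and $G$ guarantees that $G$ is cubulable, i.e., acts properly and cocompactly on a CAT(0) cube complex. Next, I would verify that $G$ is Gromov hyperbolic: by the result of Pankrat'ev cited in the introduction, $G$ is hyperbolic relative to the images of the free factors in $G$, and since each peripheral subgroup is itself hyperbolic, standard results on relative hyperbolicity with hyperbolic peripherals upgrade this to absolute Gromov hyperbolicity of $G$.

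With $G$ a cocompactly cubulated hyperbolic group, Agol's theorem \cite{AgolVirtualHaken} (building on Haglund--Wise \cite{HaglundWiseSpecial, HaglundWiseCombination}) yields that $G$ is virtually compact special. The main result of \cite{SchreveStrongAtiyah}, based on the Linnell--Schick--Okun framework \cite{LinnellOkunSchickStrongAtiyah}, asserts that a virtually compact special group satisfies the strong Atiyah conjecture over $\mathbb{C}$. Since $G$ is assumed torsion-free, this is precisely the strong Atiyah conjecture in its cleanest form.

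Finally, the Kaplansky zero-divisor statement follows by the standard implication: for a torsion-free group satisfying the strong Atiyah conjecture over $\mathbb{C}$, the group algebra $\mathbb{C}[G]$ embeds into a skew field (Linnell's division closure inside the group von Neumann algebra), and therefore has no zero divisors. The main conceptual input is Theorem~\ref{T: main2}; no new technical obstacle arises, and the only mild verification is the passage from relative to absolute hyperbolicity, which is standard.
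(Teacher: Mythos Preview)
Your proposal is correct and follows essentially the same route as the paper: the paper also derives this theorem by combining Theorem~\ref{T: main2} with hyperbolicity of $G$ (stated there as a consequence of Pankrat'ev's result when the free factors are hyperbolic), Agol's theorem, and Schreve's result \cite{SchreveStrongAtiyah} on the strong Atiyah conjecture for cubulable hyperbolic groups. The paper does not give a separate detailed proof beyond this chain of citations, so your write-up in fact spells out slightly more than the paper does.
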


The Kaplansky zero-divisor conjecture asserts that the group ring over the complex numbers of a torsion-free group contains no non-trivial zero-divisor. A usual method to show the Kaplansky conjecture is to prove the unique product property for the group. The question whether small cancellation groups have the unique product property is a difficult and long-standing open problem, cf.  Problem N1140 of Ivanov in \cite{KourovkaProblems2014}. 

 \paragraph{Open problem.}

Torsion-free groups without the unique product property were constructed as \emph{graphical} small cancellation groups over free products \cite{RipsSegevUniqueProduct,SteenbockRipsSegev,ArzhantsevaSteenbockRipsConstruction,GruberMartinSteenbockFiniteIndex}. It is unknown whether these so  called  generalised Rips-Segev groups satisfy the Kaplansky zero-divisor conjecture. It is therefore  natural to ask, in light of Agol's theorem, whether our approach can be extended to cubulate some generalised Rips--Segev groups.

\subsection{Methods}

Let us detail the idea and structure of our proof. 

\paragraph{Complexes of groups and spaces.}

In Section $2$, we realise a $C'(1/6)$ small cancellation group $G$ over the free product $\F$ as the fundamental group of a developable $2$-dimensional complex of groups, the universal cover of which is a $C'(1/6)$--small cancellation polygonal complex. \emph{From now on we denote this polygonal complex by $X$.} In order to prove that a group is cubulable, a useful approach---which goes back to ideas of Sageev \cite{SageevCubeComplex, HaglundPaulinWallSpaces}---is to define an appropriate wallspace structure on it. Therefore, we first want a space with a proper and cocompact action of $G$. The polygonal complex $X$ does not have this property in general. Indeed, vertex stabilisers are conjugates of (the image in $G$ of) the possibly infinite free factors of the free product $\F$.

\paragraph{The blow up space.}

 To overcome this problem, we \textit{blow up} vertices of $X$. More precisely, we construct a simply connected space $\cE G$ with a proper and cocompact $G$-action as a \textit{complex of spaces} (a high-
dimensional generalisation of the notion of tree of spaces) over $X$. This complex has a polyhedral structure and is a union of CAT(0) cube complexes and polygons. The CAT(0) cube complexes are exactly the preimages of vertices of $X$ and each one is endowed with a geometric action by the associate vertex stabiliser. The polygons of $\cE G$ are in one-to-one 
correspondence with the polygons of $X$; some of their edges map homeomorphically to edges of $X$, while portions of their boundary are   geodesics in some of the  CAT(0) cube complexes contained in $\cE G$ (see Figure \ref{FigureEG}). This construction can be thought as a generalisation of the action of a classical $C'(1/6)$--small cancellation quotient over the \emph{free group} on the universal cover of its presentation complex.

\paragraph{Walls on the building blocks of $\cE G$.}

The space $\cE G$ is built up from $X$ and the fibre CAT(0) cube complexes.

In Section $3$, we put a wallspace structure on (the set of vertices of) $\cE G$. First notice that  the walls of the small cancellation complex $X$, the so-called \textit{hypergraphs} introduced by Wise \cite{WiseSmallCancellation}, naturally lift to walls of $\cE G$. In the case where one of the free factors in the free product $\F$ is infinite however, this collection of walls is not enough to separate elements of $G$ in a conjugate of the image of that factor. This corresponds to the problem of separating vertices of $\cE G$ in one of the CAT(0) cube complexes which is the preimage of a vertex of $X$ with an infinite stabiliser. Nonetheless, vertices of a CAT(0) cube complex are separated by so-called hyperplanes. We therefore want to ``extend'' hyperplanes in a given CAT(0) cube complex to walls of the whole space $\cE G$. In order to do that, we extend Wise's approach \cite{WiseSmallCancellation, wise_structure_2011} to this more general setting.

\paragraph{Walls on complexes of CAT(0) cube complexes.}

Namely, every time a polygon $\widetilde{R}$ of $\cE G$ 
crosses a hyperplane in some vertex fibre along an edge $e$, we  want to combine this hyperplane with the diameter of $\widetilde{R}$ (seen as a wall) starting at the midpoint of $e$. Such a procedure should have the feature that the resulting walls should be realised as trees of hyperplanes over \textit{generalised hypergraphs} of $X$. However, since polygons of $\cE G$ have part of their boundary contained in the vertex fibres, the overlaps between polygons of $\cE G$ can be quite different from the well controlled overlaps between polygons of the small cancellation complex $X$. In order to overcome this problem, we first perform an appropriate subdivision, called ``balancing'', of the complex  (see Definition \ref{balanced} for a precise definition). This procedure, as well as the construction of walls, is detailed in Sections $2.2$, $2.3$ and $2.4$. The aforementioned generalised hypergraphs of $X$, together with the associated generalised \emph{hypercarriers}, are introduced in Section $2.1$. They enjoy 
the same properties as the usual 
notions introduced 
in \cite{WiseSmallCancellation}, and Wise's argument extends to this more general setting in a straightforward way; we give the full proofs of these results in an Appendix.

\paragraph{Properness and cocompactness.}

Finally, we study in Section $4$ the set of walls of $\cE G$.  Namely, we prove that this set of walls satisfies criteria which, as shown by Chatterji--Niblo \cite{ChatterjiNibloWallSpaces}, imply that the action of $G$ on the CAT(0) cube complex associated with the wallspace structure is proper and cocompact. This concludes the proof of Theorem \ref{T: main2}.

\subsection{Acknowledgements} 

We wish to express our gratitude to Goulnara Arzhantseva for her encouragement to work on this problem as well as for many related discussions. Moreover, we thank Fr\'ed\'eric Haglund and  Piotr Przytycki for their interest and comments on a previous version of this work.  We also wish to thank an anonymous referee whose comments helped us improve our comparison with Wise's cubical small cancellation theory, and our introduction, in a previous version.

Both authors were partially supported by the European Research Council (ERC) grant of Goulnara Arzhantseva, grant agreement n$^o$ 259527. The second author is recipient of the DOC fellowship of the Austrian Academy of Sciences and was partially supported by the University of Vienna Research Grant 2013. 

Both authors acknowledge the support of the Erwin Schr\"odinger International Institute in Vienna for hosting the workshop ``Geometry of computation in groups'', during which part of this research was conducted.

\section{Complexes of groups and small cancellation over  free products of groups} \label{SectionSmallCancellation}

Suppose $G$ is a finitely presented group, viewed as a \emph{quotient of the free group $\mathbb{F}_n$} on $n$ generators. That is, $G$ is given by generators $g_1,\ldots,g_n$ and finitely many relators $r_1,\ldots,r_m\in \mathbb{F}_n$ such that $G$ is the quotient of the free group by the normal closure of the subgroup generated by the relators. We now recall the constructions of the   \emph{presentation complex} and the \emph{Cayley complex} associated with such presentations. We start from the bouquet of $n$ oriented cycles $c_1,\ldots c_n$. We label each cycle $c_i$ by the generator $g_i$. For each relator $r_j$ we take a polygonal 2-cell $R_j$, whose boundary edges are oriented and labelled by the generators such that the label of a boundary path of $R_j$ equals $r_j$. Then glue $R_j$ to the bouquet along its boundary word. The complex so obtained is the \emph{presentation complex of $G$}. Its universal cover is the \emph{Cayley complex of $G$}. Note that the fundamental group of the 
presentation complex is $G$, and $G$ has a free and cocompact action on the associated Cayley complex.  

In this paper, we are interested in properties of groups $G$ that are quotients of the (non-trivial) \emph{free product} $\F$ of finitely many groups. 
In this section, we associate to a small cancellation quotient $G$ of the free product of two groups a developable $2$-dimensional complex of groups with fundamental group $G$, the universal cover of which is a small cancellation polygonal complex, see Definition \ref{smallcancellationcomplex}. We shall think about this complex of groups as of an analogue for the presentation complex in the case of quotients of \emph{free product of groups}. The action of $G$ on the universal cover is no longer proper as soon as one of the free factors is infinite. More precisely, stabilisers of vertices correspond to conjugates of the free factors in $G$.  However, we can construct another polyhedral complex with a proper and cocompact $G$-action, by \emph{blowing up} vertices of the universal cover. This polyhedral complex is the  analogue of the Cayley complex for quotients of free products of groups, and is obtained as a \emph{complex of spaces} over the universal cover.

\subsection{Small cancellation groups over  free products of  groups}

We summarize some aspects of the small cancellation theory of the free product of two groups. A more complete treatment can be found in \cite[Chapter V.9]{LyndonSchupp} or \cite[Ch. 11]{olshanskii_geometry_1991}. 
We let $\F=A*B$ be the free product of two groups $A$ and $B$. The groups $A$ and $B$ are called the \emph{free factors}. 
 Every non-trivial element of $\F$ can be represented in a unique way  as a product $w=h_1\cdots h_n$, called the \emph{normal form}, where  $h_i$ is a non-trivial element   in either $ A$ or  $B$ and no two consecutive $h_i$, $h_{i+1}$ belong to the same free factor. Then the \emph{free product length} of $w$ is given by $|w|:=n$. 

The normal form of $w$ is \emph{weakly cyclically reduced} if $|w|\leqslant 1$ or $h_1\not = h_n^{-1}$. 
 If $u,v \in \F$, $u=h_1\cdots h_n$, $v=k_1\cdots k_m$,  and $h_n=k_{1}^{-1}$, then $h_n$ and $k_{1}$ \emph{cancel} in the product $uv$. Otherwise, we say that the product $uv$ is \emph{weakly reduced}.

Let $\cR\subset \F$ be a subset of $\F$, each element of which is represented by  a weakly cyclically reduced normal form.  
Let $G$ be the group defined as
\[
G:= \rquotient{\F}{\ll \cR \gg},
\]
where $\ll \cR \gg$ denotes the normal closure of $\cR$ in $\F$. We say that $\cR$ is \textit{symmetrised} if it is stable by taking  cyclic conjugates and inverses. Up to adding all cyclic conjugates of elements of $\cR$ and their inverses, we can always assume that $\cR$ is symmetrised.

An element $p$ in $\F$ is a \emph{piece} if there are distinct relators $r_1,r_2\in \cR$ such that the products $r_1=pu_1$ and $r_2=pu_2$ are weakly reduced.

The set $\cR$ satisfies the $C'(1/6)$--\emph{condition (over $\F$)} if it is symmetrised and if for every piece $p$ and every relator $r\in \cR$ such that the product $r=pu$ is weakly reduced, we have that \[|p|<\frac16 |r|.\]
In this case we say that $G$ is a $C'(1/6)$--\emph{group (over $\F$)}.

\begin{thm}[Corollary V.9.4 of \cite{LyndonSchupp}]\label{T: local factor injections}
Let $G$ be a $C'(1/6)$--group over the free product $\F$. Then the projection map $\F \ra G$ embeds each free factor of $\F$. \qed
\end{thm}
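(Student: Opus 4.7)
The plan is to derive a contradiction using the Greendlinger-type lemma that accompanies the $C'(1/6)$-condition over free products. Suppose, for contradiction, that some non-trivial element $a$ of a free factor (say $A\leqslant \F$) projects to the identity of $G$. Then $a\in \ll \cR \gg$, so there exists a reduced van Kampen diagram $D$ over the presentation $\langle \F \mid \cR\rangle$ whose boundary word, read in $\F$, represents $a$. The boundary word of $D$ then has free product length equal to $|a|=1$, since $a$ lies in the free factor~$A$.

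The key step will be to invoke the analogue of Greendlinger's lemma for $C'(1/6)$-small cancellation over free products (see e.g.\ \cite[Ch.~V.9]{LyndonSchupp}): any reduced van Kampen diagram over $\cR$ with at least one face contains a face $R$ whose boundary carries a connected sub-path lying on $\partial D$ of free product length strictly greater than $\tfrac12 |R|$. The argument is the standard one: every internal arc along which two faces of $D$ meet is (a sub-word of) a piece, so by the $C'(1/6)$-condition its free product length is strictly less than $\tfrac16|R|$; if $R$ meets $\partial D$ in at most, say, three such arcs (generically the situation yielded by an Euler-characteristic count on the dual graph of $D$), then more than $|R|-3\cdot\tfrac16|R| = \tfrac12|R|$ of $\partial R$ must be exposed on $\partial D$. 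Since $\cR$ consists of weakly cyclically reduced elements, relators have free product length at least $|R|\geqslant 2$, but to trigger the $C'(1/6)$-counting one needs $|R|$ large enough for $\tfrac12|R|>|a|=1$, which holds as soon as $|R|\geqslant 3$.

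This immediately gives the desired contradiction, provided the diagram $D$ has at least one face: the exposed arc of $R$ would contribute a sub-word of free product length at least $\lceil \tfrac12|R|+1\rceil \geqslant 2$ to $\partial D$, forcing $|\partial D|\geqslant 2$, whereas $|\partial D| = |a| = 1$. If on the contrary $D$ has no faces, then already in $\F$ we have $a=1$, contradicting the non-triviality of $a$ in $A$. The same argument applies verbatim with $B$ in place of $A$.

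The main technical obstacle in making this rigorous is the correct formulation of \emph{reducedness} for van Kampen diagrams over a free product presentation: besides the usual cancellation of two adjacent faces labelled by inverse relators, one must also be able to absorb free-product reductions along edges labelled by elements of the free factors (so that adjacent syllables lying in the same free factor are multiplied into a single edge label, and trivial edges are collapsed). Once a standard diagram-reduction procedure ensures that all internal arcs between distinct faces are pieces in the sense of the definition above, the small cancellation count and the numerical contradiction proceed exactly as in the classical proof for free groups, and the theorem follows.
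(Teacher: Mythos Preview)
The paper does not give its own proof of this statement; it simply cites Corollary~V.9.4 of Lyndon--Schupp and places a \qed. Your sketch is precisely the standard argument one finds there: apply the Greendlinger lemma for $C'(1/6)$ over free products to a reduced diagram with boundary label of free-product length~$1$, and derive a length contradiction from the existence of a shell.

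One small correction: your aside that one needs $|R|\geqslant 3$ is unnecessary. The Greendlinger shell has outer arc of length \emph{strictly greater} than $\tfrac12|R|$; since free-product lengths are integers, already $|R|\geqslant 2$ forces the outer arc to have length $\geqslant 2>1=|a|$, which is the contradiction you want. (And in fact the $C'(1/6)$ hypothesis, once any non-trivial piece exists, forces relators to be long anyway.) The genuine technical content, as you correctly flag in your final paragraph, lies in setting up the right notion of reduced diagram over a free product so that internal arcs are pieces; this is exactly what Chapter~V.9 of Lyndon--Schupp does.
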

 
\begin{thm}[cf. \cite{pankratev_hyperbolic_1999}]
 Let $G$ be a $C'(1/6)$--group over the free product $\F$. Then $G$ is hyperbolic relative to the free factors.  If all free factors are hyperbolic, then so is $G$.
\end{thm}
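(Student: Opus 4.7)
The plan is to exploit the polygonal complex $X$ described in Section~\ref{SectionSmallCancellation}: $X$ is the universal cover of a $2$-dimensional complex of groups with fundamental group $G$, and $G$ acts on $X$ cocompactly with trivial edge stabilisers and with vertex stabilisers equal to conjugates of the free factors of $\F$. The first step would be to show that $X$ is Gromov-hyperbolic as a $2$-complex. The $C'(1/6)$-condition yields a Greendlinger lemma for reduced disc diagrams in $X$ (any such diagram with at least one internal face has a face whose intersection with the boundary exceeds half of its perimeter), whence a linear combinatorial isoperimetric inequality. This in turn implies Gromov-hyperbolicity of $X$ and of the graph $\Gamma$ obtained from the $1$-skeleton of $X$ by adding an edge between any two vertices lying on a common polygon.

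The second step would be to invoke Bowditch's characterisation of relative hyperbolicity: a group $G$ is hyperbolic relative to a finite family of subgroups $\{P_i\}$ if and only if it admits a cocompact action on a connected fine hyperbolic graph with finitely many edge orbits, finite edge stabilisers, and infinite vertex stabilisers exhausting, up to conjugacy, the family $\{P_i\}$. I would check this criterion against the action of $G$ on $\Gamma$. Hyperbolicity of $\Gamma$ comes from the first step; cocompactness, finiteness of edge orbits, triviality of edge stabilisers, and the identification of infinite vertex stabilisers with conjugates of the free factors are all immediate from the complex-of-groups description. The delicate point is \emph{fineness}: for any edge $e$ of $\Gamma$ and any integer $N$, only finitely many embedded cycles of length $\leqslant N$ pass through $e$. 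This is established by observing that such a cycle bounds a reduced disc diagram in $X$, and the $C'(1/6)$-condition, combined with the finiteness of the symmetrised relator set and trivial edge stabilisers, bounds the combinatorial area of such a diagram in terms of $N$, leaving only finitely many possibilities.

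For the ``moreover'' statement, once $G$ is known to be hyperbolic relative to the free factors, and each free factor is hyperbolic, a classical result due to Farb and Bowditch (and sharpened by Osin, Dahmani, and others) yields that $G$ itself is hyperbolic: being hyperbolic relative to finitely many hyperbolic subgroups is equivalent to being hyperbolic. Alternatively, one could apply the Combination Theorem for hyperbolic groups quoted in the introduction directly to the action of $G$ on $X$, since $X$ is hyperbolic, the local groups are hyperbolic, and the edge stabilisers are trivial.

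The step I expect to be the main obstacle is the verification of fineness of $\Gamma$: one has to turn the $C'(1/6)$-hypothesis into an effective bound on the number of short embedded loops through a fixed edge, which requires a careful Greendlinger-type analysis of the disc diagrams involved. Everything else is either formal (the complex-of-groups data give the action), or a standard application of diagrammatic small cancellation (hyperbolicity of $X$), or a black-box classical result (hyperbolicity of a relatively hyperbolic group over hyperbolic peripherals).
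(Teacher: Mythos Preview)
The paper does not supply a proof of this theorem; it is stated with a reference to \cite{pankratev_hyperbolic_1999} and used as background. Your outline is therefore an independent argument rather than a reconstruction of anything in the paper.

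Your approach via Bowditch's fineness criterion is a standard and correct route to this result. Two remarks. First, the auxiliary graph $\Gamma$ is unnecessary: the $1$-skeleton of $X$ is already quasi-isometric to $X$ (polygons have uniformly bounded perimeter) and hence hyperbolic, and Bowditch's criterion applies directly to it with the given $G$-action. Second, you are right that fineness is the point requiring care; the observation you should make explicit is that \emph{each edge of $X$ lies in only finitely many polygons}. This holds because edge stabilisers are trivial and there are finitely many $G$-orbits of polygons, so a given edge can occupy each of the finitely many positions along the boundary of a polygon in at most one way. Combined with the linear isoperimetric inequality, this bounds the number of reduced disc diagrams of a given boundary length containing a fixed edge, and hence the number of circuits of bounded length through that edge. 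The ``moreover'' clause follows, as you say, from the standard fact that a group hyperbolic relative to hyperbolic peripherals is itself hyperbolic.
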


\begin{example}[Fuchsian groups] \label{E: Fuchsian} Fuchsian groups are the fundamental groups of surfaces of genus $g$ with $r$ cone-points of order $m_1, \ldots  m_r$, and $s$ points or closed discs removed. They are generated by $a_1,\ldots,a_g, b_1, \ldots, b_g, x_1,\ldots,x_r,y_1,\ldots,y_s$, with the relators $$\Pi_{i=1}^g [a_i,b_i]\Pi_{j=1}^r x_j\Pi_{k=1}^s y_k,x_1^{m_1},\ldots,x_r^{m_r}.$$
If $4g+r+s+t\geq 6$, then the set of relators obtained from symmetrising the word \[\Pi_{i=1}^g [a_i,b_i]\Pi_{j=1}^r x_j\Pi_{k=1}^s y_k\] satisfies the $C'(1/6)$--condition over the free product 
\[
\langle a_1\rangle * \langle b_1\rangle* \cdots *\langle a_g \rangle * \langle b_g\rangle * \langle x_1\mid x_1^{m_1} \rangle *\cdots * \langle x_r\mid x_r^{m_r}\rangle * \langle y_1 \rangle * \cdots\langle y_s \rangle.
 \] 
\end{example}

\subsection{Complex of groups associated with $C'(1/6)$--groups over a free product of groups}\label{complexofgroupsfreeproduct}

Let $w\in \F$ be an element satisfying the $C'(1/6)$--condition over $\F=A\ast B$ and define the group  
 \[
 G:=\rquotient{A*B}{ \ll w\gg }.
 \]                                                                                            
Observe that $w$ acts hyperbolically on the Bass-Serre tree associated with $A*B$ by the small cancellation condition, and thus we can write 
\[
 w=(a_0b_0\ldots a_{N-1}b_{N-1})^d,
\]
 where $d \geq 1$ and $a_0b_0\ldots a_{N-1}b_{N-1}$ is not a proper power in  $A *B$.  The  theory that we develop in this paper can readily be extended to the free product of finitely many groups and to quotients with respect to finitely many relators. 
 
  We now construct a complex of groups whose fundamental group is $G$.   
 We start by defining several complexes, see Figure \ref{FigureComplexGroups}. 

\begin{itemize}
\item Let $L$ be the simplicial complex consisting of a single edge with vertices $u_A$ and $u_B$, and let $L'$ be its barycentric subdivision with $c$ being the barycentre of $L$. The space $L'$ consists of two edges $e_A$ (containing $u_A$) and $e_B$ (containing $u_B$).
\item Let $R_0$ be the \textit{model polygon} on $2N$ sides, that is, a polygonal complex consisting of  a single $2$-cell whose boundary consists of $2N$ edges. We choose an orientation of $R_0$, a vertex $v_0$ in $\partial R_0$, and then denote by $(v_i)_{i \in \bbZ / 2N \bbZ}$ the remaining vertices, so that, seen from $v_i$, the vertex  $v_{i+1}$ is the next vertex in the  positive direction on $\partial R_0$.
 \item Let $R_{0,\mathrm{simpl}}$ be the simplicial complex obtained from $R_0$ by adding a vertex, called \textit{apex}, in the centre of the $2$-cell, and, for each vertex $v_i$  an edge, called \textit{radius}, joining the apex~to~$v_i$.
  In particular, $R_{0,simpl}$ is the simplicial cone over a loop on $2N$ edges.  
\item Let $R_{0,\mathrm{simpl}}'$ be the  the barycentric subdivision of $R_{0,\mathrm{simpl}}$.  
\end{itemize}

Let us \emph{orient the edges} in the $1$-skeleton of $L'$ and $R_{0,simpl}'$  as follows. If $\sigma \varsubsetneq \sigma'$ are two simplices of $L$ or $R$ (i.e. vertices, edges, or faces) with barycentres $c$ and $c'$ respectively, then the edge between $c$ and $c'$ is oriented from $c'$ to $c$; the barycentre $c'$ is called the \emph{initial vertex} of that edge, the barycentre $c$ is called the \emph{terminal vertex} of that edge. The two edges of $L'$ are, in particular, oriented towards the vertices $u_A$ and $u_B$ respectively. If $\sigma \varsubsetneq \sigma' \varsubsetneq \sigma''$ with barycentres $c$, $c'$ and $c''$ respectively, then the edges $a$ from $c''$ to $c'$ and $b$ from $c'$ to $c$ are said to be \emph{composable}, and their composition is defined to be the edge from $c''$ to $c$, which we denote $ba$. 

Starting from these complexes, we now define the CW-complexes 
\[
 K := \rquotient{( L \sqcup R_0 )}{\simeq}, ~~K_\mathrm{simpl} := \rquotient{( L \sqcup R_{0,\mathrm{simpl}} )}{\simeq} ~\text{  and  }~ K_\mathrm{simpl}' := \rquotient{( L' \sqcup R_{0,\mathrm{simpl}}' )}{\simeq}.
\]

Let us first describe $K_\mathrm{simpl}'$. Here we identify oriented edges in the boundary of $R_{0,\mathrm{simpl}}'$ pointing towards a vertex $v_{2i}$ with the oriented edge $e_A$ of $L'$, while oriented edges in the boundary of $R_{0,\mathrm{simpl}}'$ pointing towards a vertex $v_{2i+1}$ are identified with the oriented edge $e_B$. The resulting simplicial complex is $K_\mathrm{simpl}'$. 
The construction is illustrated in Figure \ref{FigureComplexGroups}. 
 Now, let 
 \[
 q:L' \sqcup R_{0,\mathrm{simpl}}'\to K_{simpl}' 
 \]
 denote the projection, seen as the map between the underlying topological spaces. The map $q$ restricts to a homeomorphism on the interior of each cell of $L \sqcup R_0$ and $L \sqcup R_{0,\mathrm{simpl}}$. We can therefore push forward the CW-structures of $L \sqcup R_0$ and $L \sqcup R_{0,\mathrm{simpl}}$ using the map $q$, and we denote by
$$ K_{simpl} :=q(L \sqcup R_{0,\mathrm{simpl}})\hbox{ and } K:=q( L \sqcup R_0)$$
the associated CW-complexes. In other words, $K_{simpl}$ and $K$ are obtained from $K_{simpl}'$ by forgetting, in each case from left to right, the additional structure we have put on $R_{0,simpl}'$ and $R_{0,simpl}$ respectively. In all three cases, we use apex, radii, and $v_i$ to refer to their respective images in $R_{0,simpl}'$ and $K'_{simpl}$ respectively. \\

A \emph{small category without loop}, or \textit{scwol} in short, is an oriented graph without loop with a notion of \emph{composability} of edges, see  \cite[Chapter~III.$\cC$ Definitions 1.1]{BridsonHaefliger}. The oriented $1$-skeleton of the first barycentric subdivision of a simplicial complex can be endowed with a structure of scwol. In particular, we described a structure of scwol on the oriented $1$-skeleton of $L'$, which we denote $\mathcal{L}'$, and on the oriented $1$-skeleton of $R_{0,simpl}'$. These scwols can be glued together along the map $q$, yielding a structure of scwol on the $1$-skeleton of $K '_{simpl}$, which we denote $\cK '_{simpl}$. 
\\

Observe  that pairs of composable edges of $\cK_\mathrm{simpl}'$ are in $1$-to-$1$ correspondence with triangles of $K_\mathrm{simpl}'$. The simplicial complex $K_\mathrm{simpl}'$ is said to be a \emph{geometric realisation} of the scwol $\cK_\mathrm{simpl}'$. \\

A  \emph{complex of groups} over a scwol $\cY$ consists of the data $(G_\sigma, \psi_\sigma, g_{b,a})$ of \textit{local groups} $G_\sigma$, \textit{local maps} $\psi_\sigma$, and \emph{twisting elements} $g_{b,a}$ for every pair $(b,a)$ of composable edges of $\cY$ subject to additional compatibility conditions, see \cite[Chapter~III.$\cC$, Definition 2.1]{BridsonHaefliger}. To follow our construction details of such kind are not a prerequisite. However, we refer the interested reader to Bridson--Haefliger~\cite[Chapter~III.$\cC$]{BridsonHaefliger} for more terminology and background on complexes of groups.

\begin{definition}\label{complexofgroupsG}
 We define a complex of groups $G(\cK_\mathrm{simpl}')$ over $\cK_\mathrm{simpl}'$ as follows: 
\begin{itemize}
\item the local groups at $u_A$ and $u_B$ are respectively $A$ and $B$, the local group at the apex is $\bbZ / d \bbZ$, and all the other local groups are trivial;
\item all the local maps are trivial;
\item the twisting element associated with a pair of composable edges $(b,a)$, or equivalently to the associated triangle of $K_\mathrm{simpl}'$, is represented in Figure \ref{FigureComplexGroups}. 
\end{itemize}

We now define a morphism $F = (F_\sigma, F(a))$ of complexes of groups from $G(\cK_\mathrm{simpl}')$ to $G$. (A general definition of morphism of complexes of groups can be found in  \cite[Chapter~III.$\cC$, Definition 2.5]{BridsonHaefliger}.)   We first fix some notation.

 For $i=0, \ldots, 2N-1$, let $c_i$ be the barycentre of the radius at the vertex $v_{i}$. For $i=0, \ldots, N-1$, let  $e_i$ be  the oriented edge of  $\cK_\mathrm{simpl}'$ from $c_{2i}$ to $v_{2i}$, and let $f_i$ be the oriented edge of  $\cK_\mathrm{simpl}'$ from $c_{2i+1}$ to $v_{2i+1}$. Let also $e_0'$ be the oriented edge of $\cK_\mathrm{simpl}'$ from $c_0$ to the apex of $R_{0,\mathrm{simpl}}$. 

Now let us set the local maps of $F$ as follows.
\begin{itemize}
\item The local morphisms $F_{u_A}: A \ra G$ and $F_{u_B}: B \ra G $ are the natural projections, the map $\bbZ /d \bbZ \ra G$ sends the generator $\overline{1}$ of $\bbZ /d \bbZ$ to the image of $a_0b_0\ldots a_{N-1}b_{N-1}$ in $G$, and the other maps are trivial;
\item For $i=0, \ldots, N-1$, we set $F(e_i):= a_i$, $F(f_i):=b_i$, $F(e_0'):= \overline{1}$, and $F$ is trivial on all the other edges of $\cK_\mathrm{simpl}'$. 
\end{itemize}
\end{definition}

\begin{figure}[H]
\begin{center}
\scalebox{0.9}{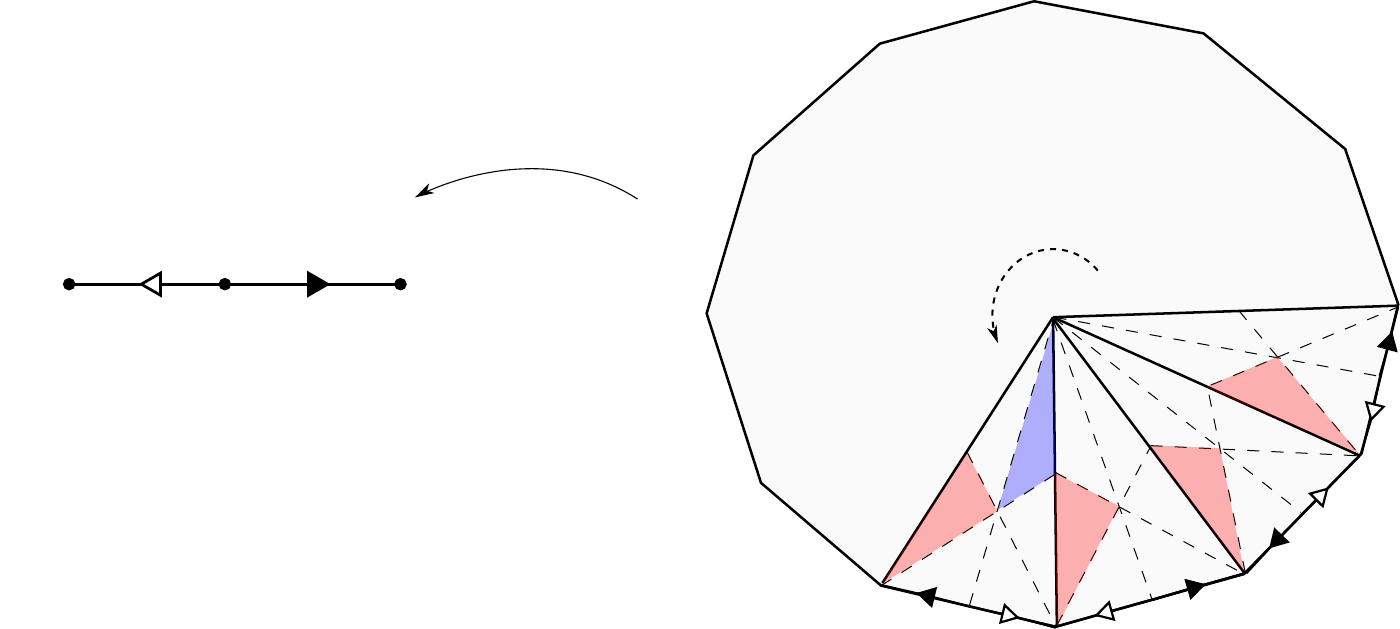}
\caption{Part of the complex of groups $G(\cK_\mathrm{simpl}')$. Twisting elements corresponding to white triangles of $R_{0, \mathrm{simpl}}'$ are trivial. (The element $\overline{1}$ denotes the generator of $\bbZ/d\bbZ$.)}
\label{FigureComplexGroups}
\end{center}
\end{figure}

Let $\pi_1(G(\cK_\mathrm{simpl}'), u_A)$ be the fundamental group of $G(\cK_\mathrm{simpl}')$ at the vertex $u_A$, seen as the group of homotopy classes of $G(\cK_\mathrm{simpl}')$-loops, see \cite[Chapter~III.$\cC$, Definition 3.5]{BridsonHaefliger}. Let $\pi_1F: \pi_1(G(\cK_\mathrm{simpl}'), u_A) \ra  G$ be the associated morphism of fundamental groups, see \cite[Chapter~III.$\cC$, Proposition 3.6]{BridsonHaefliger}. The following result is not surprising when viewed against the aforementioned construction of the presentation complex. However, as complexes of groups are in some technical points surprisingly different to the standard situation, we give an elementary proof using the language of \cite[Chapter~III.$\cC$ Section 3]{BridsonHaefliger}.

\begin{prop}\label{isompi1}
The map 
\[
\pi_1F: \pi_1(G(\cK_\mathrm{simpl}'), u_A) \ra  G
\]
 is an isomorphism.
\end{prop}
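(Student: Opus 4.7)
The plan is to exhibit an explicit presentation of $\pi_1(G(\cK_\mathrm{simpl}'), u_A)$ using the combinatorial description of the fundamental group of a complex of groups given in \cite[Chapter~III.$\cC$, Section 3]{BridsonHaefliger}, and then manipulate this presentation into the standard presentation $\langle A * B \mid w \rangle$ of $G$, checking that the map induced by $F$ on generators realises this identification.

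First I would choose a spanning tree $T$ in the $1$-skeleton of $\cK_\mathrm{simpl}'$: the natural choice is to take all the radii of $R_{0,\mathrm{simpl}}'$, the edge $e_A$ of $L'$, together with, for each $i$, enough of the edges internal to $R_{0,\mathrm{simpl}}'$ (those not incident to a barycentre of $\partial R_0$ that maps to $c$) to reach every vertex exactly once. With respect to $T$, $\pi_1(G(\cK_\mathrm{simpl}'), u_A)$ is generated by the local groups $A$, $B$ and $\bbZ/d\bbZ$ (at $u_A$, $u_B$ and the apex), plus a symbol $\hat{a}$ for each edge $a$ outside $T$; the relations are the local-group relations, the relation $\hat{a}=1$ for $a\in T$, and, for every pair of composable edges $(b,a)$, the twisting relation $\hat{b}\hat{a}=g_{b,a}\widehat{ba}$.

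Next I would carry out the simplification. Because the only non-trivial twisting elements of $G(\cK_\mathrm{simpl}')$ live on the triangles of $R_{0,\mathrm{simpl}}'$ meeting the apex, nearly all the edge-symbols are forced to be trivial or are identified in pairs by the relations coming from white (trivially-twisted) triangles. Reading the product of twisting elements along a loop in $\cK_\mathrm{simpl}'$ that goes once around the boundary of $R_0$ (and returns to the apex through a pair of radii) yields precisely the word $a_0 b_0 \cdots a_{N-1} b_{N-1}$, and the associated relation identifies this word with the generator $\overline{1}$ of the local group $\bbZ/d\bbZ$. Together with the local-group relation $\overline{1}^{\,d}=1$, this produces the single defining relation $(a_0 b_0 \cdots a_{N-1} b_{N-1})^d = 1$. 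Eliminating the now-redundant generator $\overline{1}$ leaves exactly the presentation $\pi_1(G(\cK_\mathrm{simpl}'), u_A) \cong (A*B)/\ll w \gg = G$. Finally, comparing with the definition of $F$ (which sends $a_i \mapsto a_i$, $b_i \mapsto b_i$ and $\overline{1}\mapsto a_0b_0\cdots a_{N-1}b_{N-1}$), one sees that $\pi_1 F$ is precisely the isomorphism just constructed.

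The main obstacle is the bookkeeping involved in the barycentric subdivision: the scwol $\cK_\mathrm{simpl}'$ has many edges (radii, the two halves of each boundary edge of $R_0$, edges to the face barycentre), and the composable pairs contribute a large but highly redundant set of relations. Once the spanning tree is fixed, however, all these relations collapse in a routine way, and the only substantive content is the identification of the product of twisting elements around $\partial R_0$ with the relator $a_0 b_0 \cdots a_{N-1} b_{N-1}$. Surjectivity of $\pi_1 F$ is immediate from the fact that the images of $A$ and $B$ generate $G$, so once this identification is established the proof is complete.
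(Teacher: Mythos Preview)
Your approach is correct but proceeds quite differently from the paper. You compute the fundamental group directly from the Bridson--Haefliger presentation $\pi_1(G(\cY),T)$ with respect to a spanning tree, then simplify the many edge-generators and twisting relations down to $\langle A*B \mid w\rangle$. The paper instead argues indirectly: surjectivity is immediate (as you also note), and for injectivity one observes that any $G(\cK_\mathrm{simpl}')$-loop can be homotoped into the sub-scwol $\cL'\subset \cK_\mathrm{simpl}'$, whose fundamental group is already known to be $A*B$; an element of the kernel then lies in the normal closure of the $G(\cL')$-loop spelling $w$, and one checks by hand that this specific loop becomes null-homotopic once the polygon (with its $\bbZ/d\bbZ$ apex group) is attached. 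Your method has the virtue of being completely explicit and self-contained, at the cost of the barycentric bookkeeping you flag; the paper's argument avoids that bookkeeping by exploiting the retraction onto $\cL'$, reducing everything to the homotopy-triviality of a single concrete loop. Both routes ultimately hinge on the same observation---that circling $\partial R_0$ once produces $a_0b_0\cdots a_{N-1}b_{N-1}$, which the apex group then raises to the $d$-th power---so the difference is one of packaging rather than substance.
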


\begin{proof}
Since $A$ and $B$ generate $A * B$, and thus $G$, the map $\pi_1F$ is surjective. Let $g$ be an element of $\ker \pi_1F \subseteq \pi_1(G(\cK_\mathrm{simpl}'), u_A)$, and let $\gamma$ be a $G(\cK_\mathrm{simpl}')$-loop based at $u_A$ in the homotopy class $g$. Note that it is possible to homotop $\gamma$ to a loop the support of which is contained in the image of $L'$ in $K_\mathrm{simpl}'$. 

In other words, if we denote by $i: G(\cL') \ra G(\cK_\mathrm{simpl}')$ the natural embedding of complexes of groups (that is, the pullback of $G(\cK_\mathrm{simpl}')$ under the inclusion of scwols $\cL' \hra \cK_\mathrm{simpl}'$), then the induced morphism of fundamental groups  $\pi_1i: \pi_1(G(\cL'), u_A) \ra \pi_1(G(\cK_\mathrm{simpl}'), u_A) $ is surjective. Let $h$ be an element of $ \pi_1(G(\cL'), u_A)$ such that $g= \pi_1i(h)$. We thus have $\pi_1F (\pi_1i(h))= 0$. But since $\pi_1F \circ \pi_1i: \pi_1(G(\cL'), u_A) \ra G$ is the natural projection $A*B \ra \rquotient{A*B}{\ll w\gg}$, it follows that $h$ is in the normal 
subgroup generated by the $G(\cL')$-loop $(a_0, e_A^{-1}, e_B, b_0, e_B^{-1}, e_A, a_1, \ldots)^d$.
Thus, $g$ is in the normal closure of the $G(\cK_\mathrm{simpl}')$-loop $(a_0, e_A^{-1}, e_B, b_0, e_B^{-1}, e_A, a_1, \ldots)^d$. It is now enough to prove that such a $G(\cK_\mathrm{simpl}')$-loop is homotopically trivial. But the definition of $\pi_1(G(\cK_\mathrm{simpl}'), u_A)$  implies that this loop is homotopic to the following edge-path (seen as a $\pi_1(G(\cK_\mathrm{simpl}'), u_A)$-loop): 

\begin{figure}[H]
\begin{center}
\scalebox{0.7}{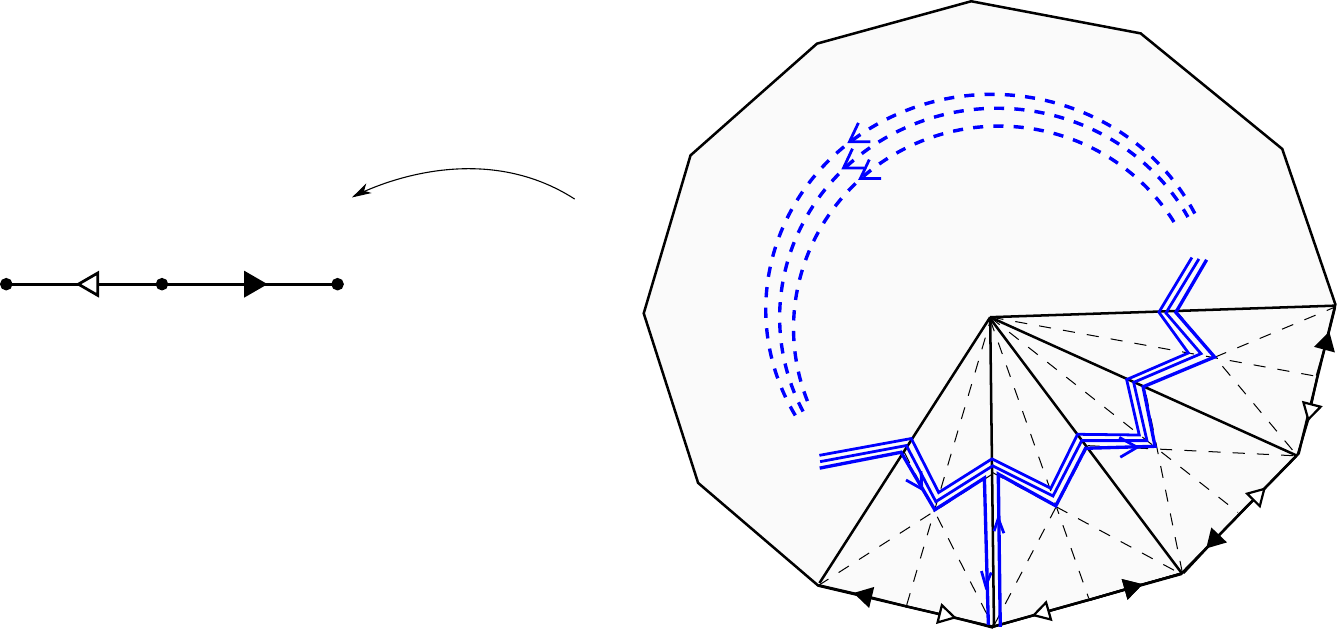}
\caption{A homotopically trivial $G(\cK_\mathrm{simpl}')$-loop.}
\label{FigureHomotopyLoop}
\end{center}
\end{figure}

which is homotopically trivial since the local group at the apex is $\bbZ / d \bbZ$, hence the result.
\end{proof}

Let $\frA^{(0)}(\cK_\mathrm{simpl}')$ be the set of vertices of $\cK_\mathrm{simpl}'$, let  $\frA^{(1)}(\cK_\mathrm{simpl}')$ be the set of edges of $\cK_\mathrm{simpl}'$, and  let $\frA^{(2)}(\cK_\mathrm{simpl}')$ denote the set of pairs $\fra= (a_2, a_1)$ of composable edges of $\cK_\mathrm{simpl}'$.  For every (oriented) edge $a$ define $i(a)$ to be the initial vertex, and $t(a)$ to be the terminal vertex. For $\fra=(a_2, a_1) \in A^{(2)}(\cY)$, we set $i(\fra):= i(a_1)$ and $t(\fra):=t(a_2)$.  
We define maps 
\[
\partial_0,\partial_1:\frA^{(1)}\to \frA^{(0)}
\]
 by setting $\partial_0 (a) := i(a)$ and $\partial_1(a) := t(a)$. 
For $0 \leq i \leq 2$, we define maps 
\[
\partial_i: \frA^{(2)}(\cK_\mathrm{simpl}') \ra \frA^{(1)}(\cK_\mathrm{simpl}')
\] by setting   
$ \partial_0 (a_2, a_1) :=  a_ 2,$ 
$  \partial_1(a_2,a_1) := a_{2}a_1,$ and 
 $\partial_2(a_2, a_1):= a_1. $

Let $\Delta^k$ be the standard Euclidean $k$-simplex, that is, the set of elements $(t_0, \ldots, t_k)$ with $ t_i \geq 0$ and $\sum_i t_i=1$. For $k \geq 1$ and $0 \leq i \leq k$, we denote the embeddings of the sides of $\Delta^k$ by 
\[
d_i: \Delta^{k-1} \ra \Delta^k,\]
 defined by sending $(t_0, \ldots, t_{k-1})$ to $(t_0, \ldots, t_{i-1},0,t_i, \ldots, t_{k-1})$. 

Since the morphism $F$ is injective on the local groups, we can define the following complex. 
\begin{definition}
Let $X_{\mathrm{simpl}}'$ be the simplicial complex obtained from the disjoint union 
\[
\underset{0 \leq k \leq 2}{\coprod}~ \underset{\fra \in \frA^{(k)}(\cK_{simpl}')}{\coprod}
 \bigg( \lquotient{F_{ i(\fra)}(G_{i(\fra)})}{G} \times \{\fra\} \times \Delta^{k} \bigg)
\]
by identifying pairs of the form 
\[
( [gF(a)^{-1}],\partial_i \fra, x ) \hbox{ and }\big([g], \fra, d_i(x)\big)\hbox{ for 
$0 \leq i \leq k$
,}
\]
 where $a$ denotes the edge with initial vertex $i(\fra)$ and terminal vertex $i(\partial_i \fra)$. 
\end{definition}
Note that there is a natural projection   
\[
\pi: X'_{simpl}\to K'_{simpl} 
\]
obtained by forgetting the first coordinate. The CW-structure on $K_{simpl}$ can be pulled-back along $\pi$, yielding a simplicial complex $X_{\mathrm{simpl}}$ with barycentric subdivision $X_{simpl}'$. For simplicity reasons, we still denote by $\pi$ the projection map $ X_{\mathrm{simpl}} \ra K_{\mathrm{simpl}}$.

\medskip

We now construct our \emph{polygonal complex $X$} as the pull back of the CW-structure on $K$ along $\pi$. We can obtain $X$ from $X_{simpl}$ as follows. We denote by $s\in K_{\mathrm{simpl}}$ the apex of $K_{\mathrm{simpl}}$ and by $S \subset X_{\mathrm{simpl}}$ the preimage of $s$ under the projection $\pi:X_{\mathrm{simpl}} \ra K_{\mathrm{simpl}}$, called the \textit{set of apices} of $X_{\mathrm{simpl}}$. A \textit{simplicial polygon} of $X_{\mathrm{simpl}}$ is the star in $X_{\mathrm{simpl}}$ of an apex of $S$, that is, the subcomplex consisting of all simplices containing that apex as a vertex. Two distinct simplicial polygons of $X_{\mathrm{simpl}}$ are either disjoint or meet along a subset of $\pi^{-1}(L)$.
 Let us delete all the apices of $X_{\mathrm{simpl}}$ and all the edges containing them to obtain a polygonal complex denoted $X$, that is, a CW-complex such that $2$-cells are modelled after a \textit{model polygon} $\widetilde{R}_0$ on $d\cdot2N$ sides (which is an orbifold cover of the model polygon $R_0$ on $2N$ sides), 
and such that the various gluing maps $\partial \widetilde{R}_0 \ra  \pi^{-1}(L)$ are simplicial. Furthermore, we identify $\widetilde{R}_0$ with the polygon of $X$ whose apex in $X_{simpl}$ corresponds to the point $ \{1\} \times \{s\} \times \Delta^0$. 

\medskip

By definition, $X'_{simpl}$ is the geometric realisation of the development $D(\cK_{simpl}',F)$, see \cite[Chapter~III.$\cC$, Theorem  2.13]{BridsonHaefliger}. Note that the following result on complexes of groups follows directly from {\cite[Chapter~III.$\cC$, Proposition 3.14]{BridsonHaefliger}}.

\begin{prop}
Let $G(\cY)$ be a complex of groups over a scwol $\cY$ whose geometric realisation is a simplicial complex, $v$ be a vertex of $\cY$ and $F:G(\cY) \ra G$ a morphism from $G(\cY)$ to some group $G$ that is injective on the local groups.

The geometric realisation of the development $D(\cY, F)$ is a universal cover of the complex of groups $G(\cY)$ if and only if the induced morphism $\pi_1F:\pi_1(G(\cY), v) \ra G$ is an isomorphism.\qed
\label{propuniversalcover}
\end{prop}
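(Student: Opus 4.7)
The plan is to deduce the statement directly from \cite[Chapter~III.$\cC$, Proposition 3.14]{BridsonHaefliger}, which sets up the classification of coverings of a complex of groups by conjugacy classes of subgroups of its fundamental group. I would first recall that, under the injectivity of $F$ on the local groups, the development $D(\cY, F)$ is a well-defined scwol carrying a natural action of $G$, and the geometric realisation inherits the structure of a simplicial complex since it is one for $\cY$ by assumption. The quotient of $D(\cY, F)$ by $G$ gives back the scwol $\cY$, and the associated complex of groups recovers $G(\cY)$, so that $D(\cY, F)$ is a \emph{covering} of $G(\cY)$ in Haefliger's sense.

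Second, I would invoke Haefliger's classification: every connected covering of $G(\cY)$ corresponds to a conjugacy class of subgroups of $\pi_1(G(\cY), v)$, with the covering associated with the development $D(\cY,F)$ corresponding to the subgroup $\ker(\pi_1 F)$. The universal cover corresponds, by definition, to the trivial subgroup. Thus $D(\cY, F)$ is the universal cover if and only if $\ker(\pi_1 F)$ is trivial.

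Third, I would verify the surjectivity side: the development $D(\cY, F)$ is connected if and only if $\pi_1F$ is surjective. This is essentially the content of the construction, since the $G$-orbit of the basepoint reaches all vertices of $D(\cY, F)$ precisely when every element of $G$ can be represented by a $G(\cY)$-loop at $v$. Combining these two observations, $D(\cY, F)$ is a connected, simply connected covering of $G(\cY)$ (that is, a universal cover) if and only if $\pi_1 F$ is both injective and surjective, i.e.\ an isomorphism.

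The main point requiring care is the verification that the injectivity of $F$ on local groups, combined with the fact that the geometric realisation of $\cY$ is simplicial, suffices for the development to itself have a simplicial geometric realisation and for the covering-theoretic formalism of \cite[Chapter~III.$\cC$]{BridsonHaefliger} to apply without adjustment. Once this is granted, the proposition is a direct consequence of the cited result and no further argument is needed.
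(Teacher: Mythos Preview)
Your proposal is correct and matches the paper's own treatment: the paper does not give a separate proof but simply states that the result follows directly from \cite[Chapter~III.$\cC$, Proposition 3.14]{BridsonHaefliger}, which is exactly the reference you invoke. Your expanded explanation of why that proposition yields the equivalence (via the classification of coverings by subgroups of $\pi_1(G(\cY),v)$, with $D(\cY,F)$ corresponding to $\ker(\pi_1 F)$ and connectedness to surjectivity) is a faithful unpacking of that citation.
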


We thus obtain the following.

\begin{prop}  The simplicial complex $X_{simpl}'$ is (equivariantly isomorphic to) a universal cover of $G(\cK_{simpl}')$. In particular, the small cancellation group $G$ acts on $X_{simpl}'$ with quotient $K'_{simpl}$, with vertex stabilisers $A$, $B$, or $\mathbb{Z}/d\mathbb{Z}$ at vertices mapped under $\pi$ on the vertices $u_A$, $u_B$, or the apex respectively, and  with trivial edge stabilisers. 
\end{prop}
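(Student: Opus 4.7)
The plan is to deduce this proposition directly from the machinery already in place, namely Proposition \ref{isompi1} together with the characterisation of universal covers stated as Proposition \ref{propuniversalcover}.

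First I would verify the hypotheses of Proposition \ref{propuniversalcover}. The geometric realisation of $\cK'_{simpl}$ is the simplicial complex $K'_{simpl}$ by construction. The morphism $F: G(\cK'_{simpl}) \to G$ is injective on the local groups: on $A$ and $B$ this is Theorem \ref{T: local factor injections} (since the projection $\F \to G$ embeds each free factor), on $\bbZ/d\bbZ$ this follows from the fact that the image $a_0 b_0 \cdots a_{N-1} b_{N-1}$ has order exactly $d$ in $G$ (a consequence of $w = (a_0 b_0 \cdots a_{N-1} b_{N-1})^d$ being a $C'(1/6)$ relator, so no proper power of the root equals the identity in $G$), and on all other local groups the injectivity is trivial. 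Combined with Proposition \ref{isompi1}, which shows that $\pi_1 F$ is an isomorphism, Proposition \ref{propuniversalcover} then yields that the geometric realisation of the development $D(\cK'_{simpl}, F)$ is equivariantly isomorphic to the universal cover of $G(\cK'_{simpl})$. Since $X'_{simpl}$ was defined precisely as this geometric realisation, the first assertion follows.

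For the statement about stabilisers, I would unwind the definition of $X'_{simpl}$. A vertex of $X'_{simpl}$ lying over a vertex $\sigma$ of $\cK'_{simpl}$ corresponds to a coset of the form $[g] \in \lquotient{F_\sigma(G_\sigma)}{G}$, and by the standard description of the $G$-action on the development (via left multiplication on the first coordinate), its stabiliser is precisely the conjugate $g F_\sigma(G_\sigma) g^{-1}$. Applied at $\sigma = u_A$, $u_B$, and the apex, this gives stabilisers isomorphic to $A$, $B$, and $\bbZ/d\bbZ$ respectively, because $F$ is injective on these local groups. At every other vertex the local group is trivial, hence so is the stabiliser; the same holds for edges, where all local groups of $G(\cK'_{simpl})$ are trivial, so edge stabilisers in $X'_{simpl}$ are trivial. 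Finally, the quotient of $X'_{simpl}$ by $G$ is by definition $K'_{simpl}$, which completes the proof.

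I do not expect a substantial obstacle: the real content has already been isolated in Proposition \ref{isompi1}, and the remainder is a direct application of the general theory of complexes of groups from \cite[Chapter~III.$\cC$]{BridsonHaefliger}. The only subtle point to check carefully is the injectivity of $F$ on the cyclic local group $\bbZ/d\bbZ$, which relies on the small cancellation hypothesis ensuring that $a_0 b_0 \cdots a_{N-1} b_{N-1}$ has order exactly $d$ in $G$ rather than a proper divisor of $d$.
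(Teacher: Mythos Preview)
Your proposal is correct and follows essentially the same route as the paper: verify the hypotheses of Proposition~\ref{propuniversalcover} (geometric realisation is simplicial, $F$ is injective on local groups) and invoke Proposition~\ref{isompi1}. You are in fact slightly more careful than the paper, which cites only Theorem~\ref{T: local factor injections} for injectivity on local groups and does not separately address the $\bbZ/d\bbZ$ case at the apex; your observation that this requires the root $a_0b_0\cdots a_{N-1}b_{N-1}$ to have exact order $d$ in $G$ is a genuine point the paper leaves implicit.
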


\begin{proof}
 It is enough to prove that the conditions of Proposition \ref{propuniversalcover} are satisfied. The geometric realisation of $\cK_{simpl}'$ is the simplicial complex $K_{simpl}'$. The morphism $F: G(\cK_{simpl}') \ra G$ is injective on the local groups  as $G$ is a $C'(1/6)$--small cancellation group, see Theorem \ref{T: local factor injections}. The result thus follows from Proposition \ref{isompi1}.
\end{proof}

\begin{definition}[piece, $C'(1/6)$ polygonal complex] 
Let $Y$ be a polygonal complex. A \textit{path of $Y$} is an injective path in the $1$-skeleton of $Y$. For a path $P$ of $Y$, we denote by $|P|$ the number of edges of $P$, called its \textit{length}. 

A \textit{piece} of a polygonal complex $Y$ is a path $P$ of $Y$ such that there exist polygons $R_1$ and $R_2$ such that the map $P \ra Y$ factors as $P \ra R_1 \ra Y$ and $P \ra R_2 \ra Y$ but there does not exist a homeomorphism $\partial R_1 \ra \partial R_2$ making the following diagram commute: 
$$ \xymatrix{
     P  \ar[d]_{} \ar[r]_{}^{} & \partial R_2 \ar[d]^{} \\
    \partial R_1 \ar[r]_{} \ar[ur]& Y. \\
  }$$\\
By convention, we also consider edges of $Y$ as pieces.

The polygonal complex $Y$ is said to be a $C'(\lambda)$ \textit{polygonal complex}, $\lambda >0$, if for every piece $P$ of $Y$ and every polygon $R$ of $Y$ containing $P$ in its boundary, we have $|P| < \lambda \cdot |\partial R|$.
\label{smallcancellationcomplex}
\end{definition}

\begin{prop}
Let $G$ be a $C'(1/6)$--small cancellation group over the free product $\F$. Then, the polygonal complex $X$ defined above is a $C'(1/6)$ polygonal complex.
\end{prop}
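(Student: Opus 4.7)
The plan is to translate the polygonal piece structure of $X$ into the free-product piece structure of $\F$, and then invoke the assumed $C'(1/6)$ condition over $\F$. The central observation is that the oriented $1$-skeleton of $X$ carries a natural labelling by elements of $A \cup B$: by the preceding proposition, vertices of $X$ are cosets of $A$ or $B$ in $G$, and an oriented edge from $gA$ to $g'B$ is labelled by the unique element of $A \cup B$ equal to $g^{-1}g'$. This labelling is intrinsic to the $1$-skeleton of $X$ and, crucially, independent of any polygon containing the edge. Reading the labels along $\partial \widetilde R$ for any polygon $\widetilde R$ of $X$, starting at a chosen boundary vertex and proceeding in the positive direction, yields a cyclic conjugate of $w^{\pm 1}$ in normal form, hence an element of the symmetrised set $\mathcal R$.

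With this setup, let $P$ be a piece of $X$ with vertices $v_0, v_1, \ldots, v_k$ in order, and with $P \to X$ factoring as $P \to R_j \to X$ via $\phi_j$ for $j=1,2$. Write $h_i \in A \cup B$ for the label of the $i$-th edge of $P$; by the previous paragraph, $h_i$ does not depend on the choice of $j$. Reading around $\partial R_j$ from $\phi_j(v_0)$ towards $\phi_j(v_k)$ produces a word $r_j = h_1 h_2 \cdots h_k u_j \in \mathcal R$, and this product is weakly reduced because $r_j$ is weakly cyclically reduced and $h_1 \cdots h_k$ is a consecutive subword.

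Next, I would verify that $r_1 \neq r_2$ in $\mathcal R$. If equality held, the label-preserving identification of $\partial R_1$ with $\partial R_2$ obtained by matching the two boundary walks would yield a homeomorphism $\partial R_1 \to \partial R_2$ restricting to the identity on $P$ and commuting with the inclusions $\phi_j$, contradicting the piece hypothesis in Definition \ref{smallcancellationcomplex}. Setting $p := h_1 h_2 \cdots h_k$, we conclude that $p$ is a piece of $\F$ in the small cancellation sense, with free-product length $|p| = k = |P|$. The $C'(1/6)$ hypothesis over $\F$ applied to $r_1 = p u_1$ then gives
\[
|P| = |p| < \tfrac{1}{6} |r_1| = \tfrac{1}{6} \cdot 2Nd = \tfrac{1}{6} |\partial R|
\]
for any polygon $R$ of $X$ containing $P$ in its boundary, which is the desired inequality. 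The edge case of a single-edge piece is handled identically, using that single letters appearing in $\mathcal R$ fall under the same $C'(1/6)$ bound.

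The step I expect to require the most care is the compatibility of the edge labelling across distinct polygons, which is what lets us read the same word $h_1 \cdots h_k$ from two different polygon boundaries. This compatibility rests on the explicit coset description of the vertices of $X$ afforded by identifying $X_{\mathrm{simpl}}'$ with the development $D(\mathcal K_{\mathrm{simpl}}', F)$; once it is established, the reduction to the free-product $C'(1/6)$ condition is essentially bookkeeping.
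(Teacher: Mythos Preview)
Your strategy coincides with the paper's: read the boundary of each polygon as a cyclic conjugate of $w$, show that two polygons constituting a piece of $X$ give distinct elements of $\mathcal R$, and then invoke the $C'(1/6)$ hypothesis over $\F$. The paper dispatches this in three lines, using the $G$-action to normalise the shared edge and appealing directly to the construction of $G(\cK_{\mathrm{simpl}}')$.

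However, your edge-labelling is not well-defined as stated. For an edge of $X^{(1)}$ joining cosets $gA$ and $g'B$, the quantity $g^{-1}g'$ depends on the coset representatives chosen and is not, in general, an element of $A \cup B$ at all. What the $1$-skeleton of $X$ actually records is this: each edge corresponds to a unique element $h \in G$ (edge stabilisers being trivial), with endpoints $hA$ and $hB$; the syllables of $w$ are read at the \emph{vertices}, not along the edges. If two consecutive boundary edges of a polygon correspond to $h$ and $h'$ and share an $A$-type vertex, so that $hA = h'A$, then the syllable at that vertex is $h^{-1}h' \in A$, and this \emph{is} intrinsic to $X^{(1)}$ and independent of the polygon, as your argument requires.

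With this correction the rest of your proof runs, but you should revisit the length bookkeeping: a path $P$ of $k$ edges has $k-1$ internal vertices, so the intrinsic common prefix of $r_1$ and $r_2$ you obtain has free-product length $k-1$, not $k$. Your displayed equality $|P| = |p|$ therefore fails as written, and the final strict inequality $|P| < \tfrac{1}{6}|\partial R|$ needs one more word of justification.
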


\begin{proof}
Consider two polygons of $X$ sharing an edge. Up to the action of $G$, we can assume that such an edge contains the vertex $v_A$.  The two chosen polygons then correspond to two cyclic conjugates of $w$. By construction of $G(\cK_\mathrm{simpl}')$, these cyclic conjugates must be distinct. The result thus follows from the $C'(1/6)$--condition satisfied by $G$.
\end{proof}
The Greendlinger Lemma \cite{LyndonSchupp} immediately implies the following, see for instance  \cite[Lemma 13.2]{McCammondWiseFansLadders}. 
\begin{cor}
The polygons of $X$ are embedded.\qed
\end{cor}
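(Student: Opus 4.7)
The plan is to argue by contradiction using Greendlinger's Lemma, in the form developed for $C'(1/6)$ polygonal complexes. Suppose a polygon $\widetilde{R}$ of $X$ fails to be embedded. Then the attaching map $\widetilde{R}_0 \to X$ identifies either two distinct boundary vertices, or two distinct boundary edges of $\widetilde{R}_0$. In either case, one extracts from this identification a non-trivial closed edge-loop $\gamma$ in the $1$-skeleton of $X$, bounded by an arc of $\partial \widetilde{R}$ which is not closed in $\widetilde{R}_0$ itself (in the vertex-identification case, one first cuts $\widetilde{R}_0$ along a chord joining the two identified vertices).

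Since $X$ is simply connected, one can fill $\gamma$ by a disc diagram $D \to X$, and standard cancellations yield a reduced diagram without increasing the boundary length. Greendlinger's Lemma then produces a $2$-cell $R'$ of $D$ whose boundary meets $\partial D$ in a connected subpath $P$ with $|P| > \frac{1}{2}|\partial R'|$. Since $\partial D$ lies on $\partial \widetilde{R}$, the path $P$ is simultaneously a subpath of $\partial R'$ and of $\partial \widetilde{R}$, hence a piece in the sense of Definition~\ref{smallcancellationcomplex}. Combined with the $C'(1/6)$ condition of the preceding proposition, this yields $|P| < \frac{1}{6}|\partial R'|$, giving the required contradiction.

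The main obstacle is purely diagrammatic: one has to make sure that the reduced disc diagram produced from the hypothetical self-intersection has at least two $2$-cells and that the face $R'$ returned by Greendlinger's Lemma is genuinely distinct from $\widetilde{R}$, so that $P$ qualifies as a piece rather than just a subpath of a single polygon. Once this bookkeeping is done, the contradiction with the $C'(1/6)$ small cancellation condition established in the previous proposition is immediate, and the argument is essentially identical to the classical one over free groups, as carried out in \cite[Lemma 13.2]{McCammondWiseFansLadders}.
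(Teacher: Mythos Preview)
Your proposal is correct and follows exactly the approach the paper indicates: the paper does not give its own proof but simply records that the statement follows from Greendlinger's Lemma, with an explicit reference to \cite[Lemma 13.2]{McCammondWiseFansLadders}. Your sketch unpacks precisely that argument and cites the same source, so there is nothing to add.
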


\subsection{Complex of spaces with proper and cocompact action}

A group is \emph{cubulable} if it acts geometrically, i.e. properly discontinuously and cocompactly, on a CAT(0) cube complex.  From now on, we assume that $A$ and $B$ are cubulable groups, and denote CAT(0) cube complexes with a geometric action of $A$ and $B$ respectively by $EA$ and $EB$ respectively.

\medskip

Let $Y$ be a CW-complex. We consider the \emph{vertex set} of $Y$ as a metric space, equipped with the \emph{graph- or edge metric} on the $1$-skeleton of $Y$. We abuse notation and refer to this metric space again as $Y$.

\medskip

We now apply a useful theory for classifying spaces of complexes of groups \cite[Section~2]{MartinBoundaries}. This theory provides us with an explicit construction of a simply connected polyhedral complex with a geometric action of $G$. The construction can be thought of as of \emph{ blowing up}  the vertices of the polygonal complex $X$.

\begin{definition}[Definition 2.2 of \cite{MartinBoundaries}]\label{EZcomplexofspaces}
Let $G(\cY)$ be a complex of groups over a scwol $\cY$. A \textit{complex of classifying spaces $EG(\cY)$ compatible with the complex of groups $G(\cY)$} consists of the following:
\begin{itemize}
 \item For every vertex $\sigma$ of $\cY$, a space $EG_\sigma$, called a \textit{fibre}, which is a cocompact model for the classifying space for proper actions of the local group $G_\sigma$,
 \item For every edge $a$ of $\cY$ with initial vertex $i(a)$ and terminal vertex $t(a)$, a $G_{i(a)}$-equivariant map $\phi_a: EG_{i(a)} \ra EG_{t(a)}$, that is, for every $g \in G_{i(a)}$ and every $x \in EG_{i(a)}$, we have 
$$ \phi_{a}(g.x) = \psi_{a}(g).\phi_{a}(x),$$
and such that for every pair $(b,a)$ of composable edges of $\cY$, we have
$$g_{b,a} \circ \phi_{ba} = \phi_b  \phi_a.$$
\end{itemize}
\end{definition}

Complexes of classifying spaces compatible with a given complex of groups were shown to exist in full generality in \cite{MartinCombinationEG}. However, we define here an explicit complex of classifying spaces compatible with $G(\cK_\mathrm{simpl}')$.  We use this space  to define a wallspace structure in Section \ref{S: construction walls}. Let us denote by $c_i$ the barycentre of the radius of $\cK_{simpl}'$ from the apex to the vertex $v_i$. Recall from Definition \ref{complexofgroupsG} that $e_i$ is the edge of  $\cK_\mathrm{simpl}'$ starting at $s_i:=c_{2i}$ and terminating at  $v_{2i}$. Furthermore, recall that $f_i$ is the edge of  $\cK_\mathrm{simpl}'$ starting at $t_i:=c_{2i+1}$ and terminating at the vertex $v_{2i+1}$.

\begin{itemize}
\item The fibre $EG_{u_A}:=EA$ and $EG_{u_B}:=EB$ are the given CAT(0) cube complexes. We fix base vertices $x_A \in EG_{u_A}$ and $x_B \in EG_{u_B}$ respectively.

\item For each $i=0, 1,\ldots, N-1$, we choose an oriented   \emph{geodesic} 
\[
\gamma_{A,i} \hbox{  from $x_A$ to $a_i\cdot x_A$}
\]
in $EG_{u_A}$, and denote by $|\gamma_{A,i}|$ its edge length. 
Let $EG_{s_i}$ be the oriented simplicial segment of $|\gamma_{A,i}| $ edges, and let $\phi_{e_i}: EG_{s_i} \ra EG_{u_A}$ be a parametrisation of $\gamma_{A,i}$.

\item
 For each $i=0 \ldots, N-1$, we choose an oriented   \emph{geodesic}  
 \[
 \gamma_{B,i} \hbox{ from $x_B$ to $b_i\cdot x_B$}
   \]
 in $EG_{u_B}$, and denote by $|\gamma_{B,i}|$ its edge length. 
Let $EG_{t_i}$ be the oriented simplicial segment of $|\gamma_{B,i}| $ edges, and let $\phi_{f_i}: EG_{t_i}  \ra EG_{u_B}$ be a parametrisation of $\gamma_{B,i}$.

\item All the other fibres are reduced to a single point and all the other maps are the trivial ones.
\end{itemize}

It is straightforward to check that this indeed defines a complex of classifying spaces compatible with $G(\cK_\mathrm{simpl}')$.

\begin{definition}[The space $\cE G$]
We construct a space $\cE G$, obtained from the disjoint union
\[
\underset{0 \leq k \leq 2}{\coprod}~ \underset{\fra \in \frA^{(k)}(\cK_{simpl}')}{\coprod}
 \bigg( \lquotient{F_{ i(\fra)}(G_{i(\fra)})}{G} \times \{\fra\} \times \Delta^{k} \times EG_{i(\fra)} \bigg)
\]
by identifying pairs of the form 
\[
( [gF(a)^{-1}],\partial_i \fra, x, \phi_{a}(\xi) ) \hbox{ and } \big([g], \fra, d_i(x), \xi\big) 
\hbox{, for $0 \leq i \leq 
k$,}
\]
  where $a$ is the edge with initial vertex $i(\fra)$ and terminal vertex $i(\partial_i \fra)$. 
The various maps 
\[
\lquotient{F_{ i(\fra)}(G_{i(\fra)})}{G} \times \{\fra\} \times \Delta^{k} \times EG_{i(\fra)} \ra \lquotient{F_{ i(\fra)}(G_{i(\fra)})}{G} \times \{\fra\} \times \Delta^{k} 
\]
 obtained by forgetting the last coordinate yield a projection 
\[
 p: \cE G \ra X.
\]
The preimage of a vertex $v$ of $X$ under $p$  is called the \textit{fibre} over $v$ and denoted $EG_v$, as it is a cocompact model for the classifying space for proper actions of the stabiliser $G_v$ of $v$.
\end{definition}

\begin{figure}[H]
\begin{center}
\scalebox{0.7}{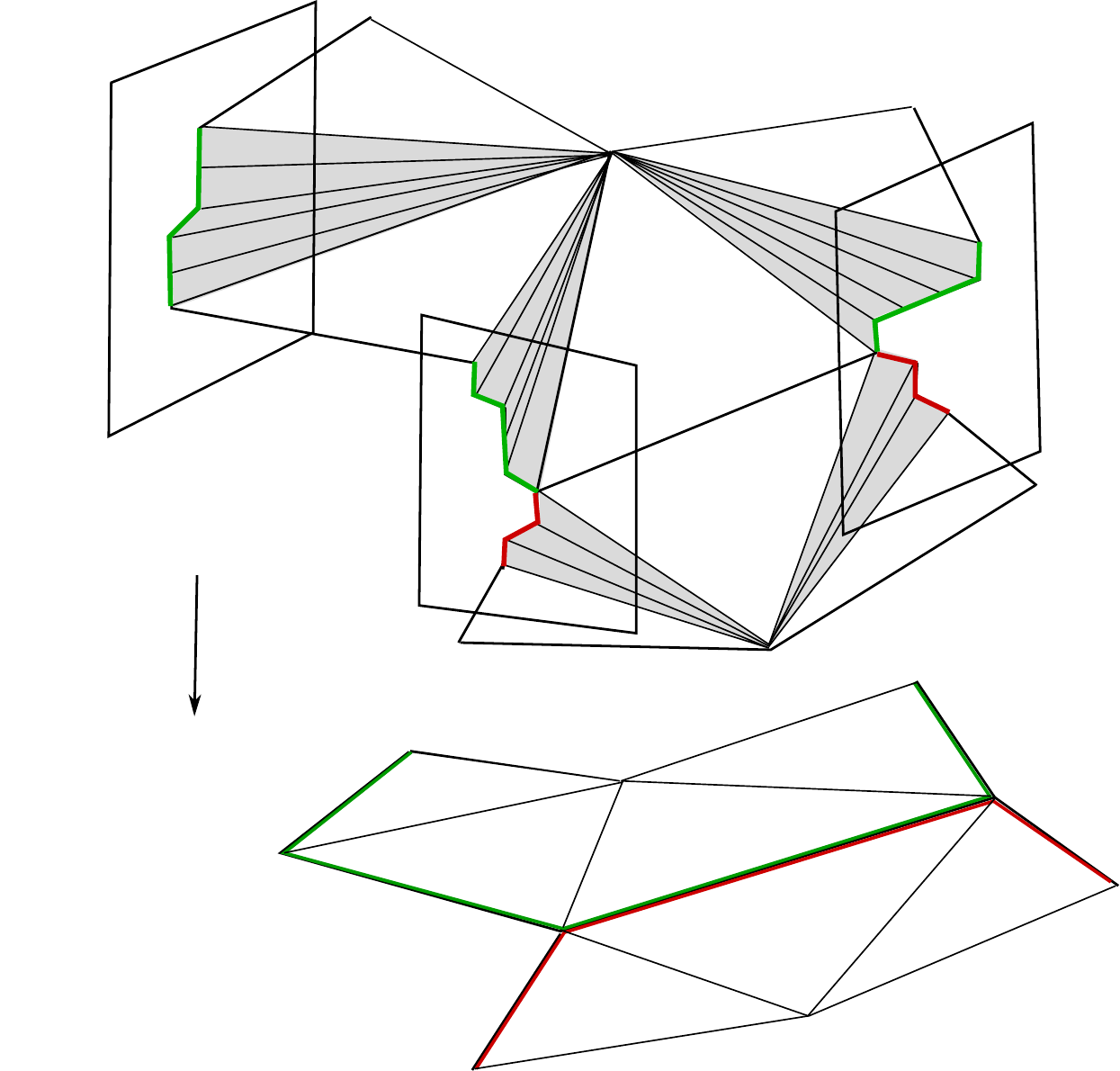}
\caption{The polyhedral structure of $\cE G$. The figure presents portions of two simplicial polygons of $X$ (one green, one red) and its preimage in $\cE G$. Vertical triangles are shaded. Attaching paths in the various fibres are coloured with respect to the associated polygon of $X$.}
\label{FigureEG}
\end{center}
\end{figure}

The following proposition is an application of Theorem 2.4 of \cite{MartinBoundaries}.
\begin{prop}
The space $\cE G$ is simply connected, and the $G$-action on it is proper and cocompact. \qed
\end{prop}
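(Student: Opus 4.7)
My plan is to verify the three claimed properties directly from the explicit description of $\cE G$ as a union of cells $\lquotient{F_{i(\fra)}(G_{i(\fra)})}{G} \times \{\fra\} \times \Delta^{k} \times EG_{i(\fra)}$, rather than invoke Theorem 2.4 of \cite{MartinBoundaries} as a black box. The essential input is that every fibre appearing in the construction is contractible and carries a cocompact action of its local group: the vertex fibres $EG_{u_A}$ and $EG_{u_B}$ are CAT(0) cube complexes with geometric actions of $A$ and $B$, the edge fibres $EG_{s_i}, EG_{t_i}$ are finite simplicial segments (already compact and contractible), and the remaining fibres are singletons.

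For cocompactness of the $G$-action, I would note that $G$ acts by left multiplication on the coset coordinate $[g]\in \lquotient{F_{i(\fra)}(G_{i(\fra)})}{G}$, so that the quotient $\rquotient{\cE G}{G}$ is obtained by gluing, for each simplex $\fra$ of $\cK_\mathrm{simpl}'$, the piece $\{\fra\} \times \Delta^{|\fra|} \times \rquotient{EG_{i(\fra)}}{G_{i(\fra)}}$ along the identifications induced by the maps $\phi_a$. Since $\cK_\mathrm{simpl}'$ has finitely many simplices and each $\rquotient{EG_{i(\fra)}}{G_{i(\fra)}}$ is compact, the quotient is a finite union of compact pieces, hence compact. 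For properness, I would argue that the stabiliser of a point $\xi$ in the interior of a cell indexed by $(\fra, [g])$ must preserve $[g]$, hence lie in $g^{-1} F_{i(\fra)}(G_{i(\fra)}) g$, and acts on $EG_{i(\fra)}$ through $G_{i(\fra)}$ by injectivity of $F$ on local groups (Theorem \ref{T: local factor injections}); properness of the $G_{i(\fra)}$-action then forces the stabiliser to be finite.

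The step I expect to require the most care is simple connectivity, and my plan is to handle it by a fibre-collapse argument. I would construct a strong deformation retraction of $\cE G$ onto a copy of the universal cover $X'_{simpl}$ embedded in it, by first picking basepoints $x_\sigma \in EG_\sigma$ compatibly with the attaching maps (this is already built in, as $\phi_{e_i}$ and $\phi_{f_i}$ parametrise geodesics starting at $x_A$ or $x_B$), and then contracting each vertex fibre to its basepoint while carrying the attached edge-fibre segments along. The resulting subcomplex is isomorphic to $X'_{simpl}$, which is simply connected by Proposition \ref{propuniversalcover} and Proposition \ref{isompi1}, giving simple connectivity of $\cE G$. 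The delicate point will be assembling the cell-by-cell retractions into a globally consistent homotopy; this can be organised by working outward from the vertex fibres of the scwol and contracting by increasing dimension over $\cK_\mathrm{simpl}'$, using contractibility of fibres at each stage to extend homotopies.
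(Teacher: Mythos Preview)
The paper's own proof is a single citation of \cite[Theorem~2.4]{MartinBoundaries}, so your direct verification is a genuinely different route. Your outline for simple connectivity (deformation retract fibre-by-fibre onto an embedded copy of $X'_{\mathrm{simpl}}$) and for cocompactness (finitely many cells in $\cK_\mathrm{simpl}'$, each with compact quotient fibre) is the standard way one unpacks that theorem, and would succeed with care.

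There is, however, a real gap in how you handle the $G$-action, and it affects both properness and cocompactness. You describe the action as ``left multiplication on the coset coordinate $[g]$'', but with that action alone the stabiliser of a point $([g], u_A, \xi)$ in a vertex fibre is the \emph{entire} conjugate $gF_{u_A}(A)g^{-1}$, since nothing in the identifications glues $([g], u_A, \xi)$ to $([g], u_A, a\cdot\xi)$ for $a\in A$. That stabiliser is infinite whenever $A$ is, so properness fails; and the $G$-quotient of the $u_A$-block is then $EA$ rather than $EA/A$, so cocompactness fails too. Your subsequent claims that the quotient piece is $EG_{i(\fra)}/G_{i(\fra)}$ and that the stabiliser ``acts on $EG_{i(\fra)}$ through $G_{i(\fra)}$'' are exactly what one needs, but they do not follow from an action on the coset coordinate alone; they require the building blocks to be read as balanced products $G \times_{G_{i(\fra)}} (\Delta^k \times EG_{i(\fra)})$, with $G_{i(\fra)}$ acting diagonally via $F_{i(\fra)}$ on $G$ and via the given action on the fibre. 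The injectivity of $F$ on local groups (Theorem~\ref{T: local factor injections}) tells you that $F_{i(\fra)}(G_{i(\fra)})$ is abstractly isomorphic to $G_{i(\fra)}$, but not that the ambient $G$-action restricts to the given local action on the fibre. Making this precise, and checking it is compatible with the face identifications, is the actual content of \cite[Theorem~2.4]{MartinBoundaries}; if you want to avoid the black box you must reprove that part explicitly.
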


\begin{rmk}
This result was proven in \cite[Theorem 2.4]{MartinBoundaries} in the case of a complex of groups over a simplicial complex. Here, while $G(\cK_\mathrm{simpl}')$ is not a complex of groups over a simplicial complex, the geometric realisation of $\cK_\mathrm{simpl}'$ is nonetheless a simplicial complex, and the proof of \cite{MartinBoundaries} carries over to this case without any change.
\end{rmk}

\begin{definition}[polyhedral structure on $\cE G$, Figure \ref{FigureEG}]  The space $\cE G$ can be endowed with a polyhedral structure as follows. First note that we have, in particular, a projection $\cE G \to K_{simpl}'$. 
\begin{itemize}
\item Each fibre is isomorphic to a locally finite  CAT(0) cube complex, more specifically $EG_{v}$ is isomorphic to $EA$ if $v$ is a vertex in the preimage of $u_A$,  and $EG_{v}$ is isomorphic to $EB$ if $v$ is  a vertex in the preimage of $u_B$.

\item Let $R$ be a polygon of $X$, and denote by $\mathring{R}$ its interior. The boundary of $p^{-1}(\mathring{R})$ in $\cE G$ is a path of $\cE G$ which is the concatenation of   geodesics in the fibres (which are translates of the chosen geodesics $\gamma_{A,i}, \gamma_{B,i}$) and paths which map homeomorphically onto edges of $X_{\mathrm{simpl}}$. Thus, such a boundary comes equipped with a simplicial structure, and we identify the closure of $p^{-1}(\mathring{R})$ with the simplicial cone over such a boundary path. The preimage of the closure $p^{-1}(\mathring{R})$ with this simplicial structure is called a \textit{simplicial polygon} of $\cE G$. 
\end{itemize}
This endows $\cE G$ with a polyhedral structure, and the projection map $p: \cE G \ra X_{\mathrm{simpl}}$ is a polyhedral map.  
For a polygon $R$ of $X$, we denote by 
\[
 \widetilde{R}:= \hbox{closure of $p^{-1}(\mathring{R})$}
\]
 the associated simplicial polygon of $\cE G$.
\end{definition}

\begin{definition}[horizontal, vertical polyhedrons]\label{vertical and horizontal}
We say that a polyhedron of $\cE G$ is \textit{horizontal} if $p$ restricts to a homeomorphism on it, and \textit{vertical} otherwise. For an edge $e$ of $X$, we denote by $\widetilde{e}$ the unique horizontal edge of $\cE G$ which maps onto $e$ under $p$.
\end{definition}

\begin{definition}[attaching paths]
Let $R$ be a polygon of $X$ and $v$ be a vertex of $R$. We define the \textit{attaching path of $\widetilde{R}$ along $EG_v$}: 
$$p_{v,R} := EG_v \cap \widetilde{R}.$$
\label{attachingpath}
\end{definition}

In this section we have constructed a polygonal complex $\cE G$ which is a realisation of an analogue for quotients of free products of the  Cayley complex of the group $G$ . In particular, the group $G$ acts properly and cocompactly on $\cE G$. The space $\cE G$ was realised as a complex of spaces over the $C'(1/6)$--small cancellation polygonal complex $X$ constructed in Section \ref{complexofgroupsfreeproduct}. $\cE G$ is equipped with a polyhedral structure consisting of the following two \emph{building blocks}:   \emph{polygons} of $\cE G$, which are mapped to polygons of $X$ (the latter being modelled after the polygon $\widetilde{R_0}$), and \emph{CAT(0) cube complexes}, which are fibre of vertices of $X$, and are isomorphic to the chosen complexes $E_A$ or $E_B$. 

Our next aim is to describe a wall structure on $\cE G$. One family of walls on $\cE G$ is obtained by lifting the walls of $X$. A second family of walls is obtained by combining  natural wall structures  on the  polygons of $\cE G$ and the various CAT(0) cube complexes. There is however, a priori, no canonical way to combine these walls, see our explanation in Section \ref{S: construction walls}. The geometric structure of the corresponding wallspace associated with $\cE G$ is controlled using the properties of the $C'(1/6)$--small cancellation polygonal complex $X$ in combination with the properties of the fibre CAT(0) cube complexes. The properties of $X$ are discussed in Section \ref{S: galleries} and the Appendix.

\section{The wallspace}

Spaces with walls were introduced by Haglund--Paulin \cite{HaglundPaulinWallSpaces} 
and generalise essential properties of CAT(0) cube complexes. 
\begin{definition}[wallspace]
A \textit{wallspace} is a pair ($Y, \cH$) consisting of a set $Y$ together with a collection $\cH$ of non-empty subsets of~$Y$, called \textit{half-spaces}, such that: 
\begin{itemize}
\item for every half-space $H$ in $\cH$, its complement $Y \setminus H$ is also in $\cH$,
\item for every $x,y$ of $Y$, there are only finitely many half-spaces $H$ such that $x\in H$ and $y \notin H$.
\end{itemize}
A partition of $Y$ into two half-spaces is called a \textit{wall}, and we denote the set of walls of ($Y, \cH$) (short: $Y$) by $\cW(Y)$.  \label{wallspace}
\end{definition}
We say that a wall \textit{separates} a pair of points of $Y$ if each half-space associated with that wall contains exactly one point of the pair. We say that two walls $W=\{H, Y \setminus H\}$ and $W'= \{H', Y \setminus H'\}$ \textit{cross} if all the intersections $H \cap H',  H \cap (Y \setminus H'), (Y \setminus H) \cap H', (Y \setminus H) \cap (Y \setminus H')$ are non-empty. We define the \textit{wall-pseudometric} $d_{\cW(Y)}(x,y)$ between two points $x,y$ of $Y$ to be the number of walls separating them. We say that a group \textit{acts on a wallspace} if it acts on the underlying set and preserves the set of half-spaces.

\begin{definition}[wallspace on a polyhedral complex] A structure of wallspace \textit{on a polyhedral complex} is a structure of wallspace on its vertex set. 
\end{definition}

If a wall of a polyhedral complex is defined by means of the complement of a separating subset containing no vertex, we will \emph{abuse notation} and not distinguish the associated wall and the separating subset.

\medskip

Whenever a group acts on a space with walls, one can associate an action of the group on a CAT(0) cube complex by isomorphisms. The CAT(0) cube complex can explicitly be described using the walls, see \cite{ChatterjiNibloWallSpaces,nica_cubulating_2004} for the explicit  construction. Let $G$ be a small cancellation group over the free product of two groups. 
The aim of this section is to define a set of walls $\cW$ on the polyhedral complex $\cE G$, turning $\cE G$ into a wallspace. The above mentioned general procedure then yields  the cube complex $C_\cW$ associated with the action of $G$ on the wallspace $(\cE G, \cW)$.

Again,  $G$ denotes the small cancellation quotient $\rquotient{A*B}{\ll w \gg}$, and $X$ the $C'(1/6)$--polygonal complex constructed in Section \ref{SectionSmallCancellation}. 
 
\subsection{Galleries, hypercarriers and hypergraphs}\label{S: galleries}

In this section we introduce fundamental notions and theory that we use later to define walls and then to study their  geometric structure. In what follows, while results are stated for the polygonal complex $X$, the results hold for an arbitrary $C'(1/6)$--polygonal complex.

\begin{definition}[far apart]\label{DefinitionFarApart}
Let $R$ be a polygon of a $C'(1/6)$--polygonal complex and $\tau_1$, $\tau_2$ two simplices of its boundary $\partial R$. We say that $\tau_1$ and $\tau_2$ are \textit{far apart} in $R$ 
if no path $P$ in $\partial R$ containing both $\tau_1$ and $\tau_2$ is a concatenation of strictly less than four pieces.
\end{definition}

\begin{example}
In a $C'(1/6)$ polygonal complex, opposite edges of a given polygon are far apart.
\end{example}

If two cells of a given polygon $R$ of a $C'(1/6)$ polygonal complex are far apart in $R$, then the polygon $R$ is unique by the small cancellation condition. We thus simply say that these cells are \textit{far apart}, the reference to $R$ being implicit.

\begin{definition}[polygon with doors, system of doors]\label{D: doors}
A \textit{polygon with doors} is a polygon $R$ of $X$, referred as the \textit{underlying cell}, together with a choice of simplices $\tau_1, \tau_2$ of $ \partial R$ called \textit{doors}. We will denote such a data $R_{\{\tau_1, \tau_2\}}$. (We  often write $R_{\{\tau_1, \tau_2\}}$ indistinctly for a polygon with doors and for its underlying cell.)

A \textit{system of doors} is a collection $\cC$ of polygons with doors. We will simply speak of a \textit{polygon} of $\cC$ when speaking of a polygon with doors of $\cC$. A \textit{door} of $\cC$ is a door of a polygon of $\cC$.  
\end{definition}

Note that a door can be an edge as well as a vertex in the boundary of a polygon.

\begin{definition}[Gallery]\label{D: gallery}
A \textit{gallery} is a system of doors $\cC$ satisfying the following conditions.
\begin{itemize}

\item (coherence condition) For every pair of polygons $R_{\{\tau_1, \tau_2\}}, R_{\{\tau_1', \tau_2'\}}$ of $\cC$ with the same underlying cell and such that $\tau_1= \tau_1'$, we also have $\tau_2=\tau_2'$.
\item (far apart condition) For every polygon $R_{\{\tau_1, \tau_2\}}$ of $\cC$, the doors $\tau_1$ and $\tau_2$ are far apart in the sense of Definition \ref{DefinitionFarApart}.
\item (connectedness condition) For every pair of doors $\tau, \tau'$ of $\cC$, there exists a sequence 
\[R_{\{\tau_1, \tau_2\}}, R_{\{\tau_2, \tau_3\}}, \ldots, R_{\{\tau_{n-1}, \tau_n\}}\]
of polygons of $\cC$ such that $\tau = \tau_1$ and $\tau'= \tau_n$.
\end{itemize}
\end{definition}

\begin{definition}[hypercarrier and hypergraph associated with a gallery]
Given a gallery $\cC$, we associate a polygonal complex to it as follows.  Take the disjoint union of all polygons $R_{\{\tau_1,\tau_2\}}$ of $\cC$.
Whenever $P$ is a path embedded in  $\partial R_{\{\tau_1,\tau_2\}}$ and $\partial R_{\{\tau_2,\tau_3\}}$, and if $P$ embeds in $X$ such that $P$ is contained in  the intersection of $\partial R_{\{\tau_1,\tau_2\}}$ and $\partial R_{\{\tau_2,\tau_3\}}$ in $X$, then we identify  $\partial R_{\{\tau_1,\tau_2\}}$ and $\partial R_{\{\tau_2,\tau_3\}}$ along $P$. 
 The resulting polygonal complex is denoted by $Y_\cC$ and called the \textit{hypercarrier} associated with $\cC$.

For each polygon $R_{\{\tau_1,\tau_2\}}$ of $\cC$, we denote by $L_{\{\tau_1,\tau_2\}}$ the path of $R_{\{\tau_1,\tau_2\}}$ which is the union of the radii of  $R_{\{\tau_1,\tau_2\}}$ joining the apex of $R_{\{\tau_1,\tau_2\}}$ to the barycentres of $\tau_1$ and $\tau_2$. Let
$$ \Lambda_\cC := \bigcup L_{\{\tau_1,\tau_2\}} \subset Y_\cC.$$
We call $\Lambda_\cC$ the \textit{hypergraph} associated with $\cC$.
\end{definition}
The hypercarrier $Y_\cC$ comes endowed with a map $i_\cC:Y_\cC \ra X$, by mapping every polygon in $Y_\cC$ to the corresponding polygon in $X$. This map is by construction an immersion on the $1$-skeletons.

We note that our hypercarriers and hypergraphs extend the corresponding notions of Wise \cite[Definition 3.2 and 3.3]{WiseSmallCancellation}. In particular, Wise's hypercarriers and hypergraphs are defined by means of opposite edges, see Section  \ref{ExampleEdge}. Our far apart condition allows, in contrast, the study of  hypergraphs and hypercarriers that are \emph{not} associated with opposite edges. Our definition moreover includes hypergraphs going through the vertices of $X$. The hypercarriers we consider are therefore allowed to have cutpoints at such vertices,~cf.~Figure~\ref{F: galleries, hypergraphs}. Such configurations do not appear in \cite{WiseSmallCancellation}. 

\begin{figure}[H]
\begin{center}
\scalebox{1}{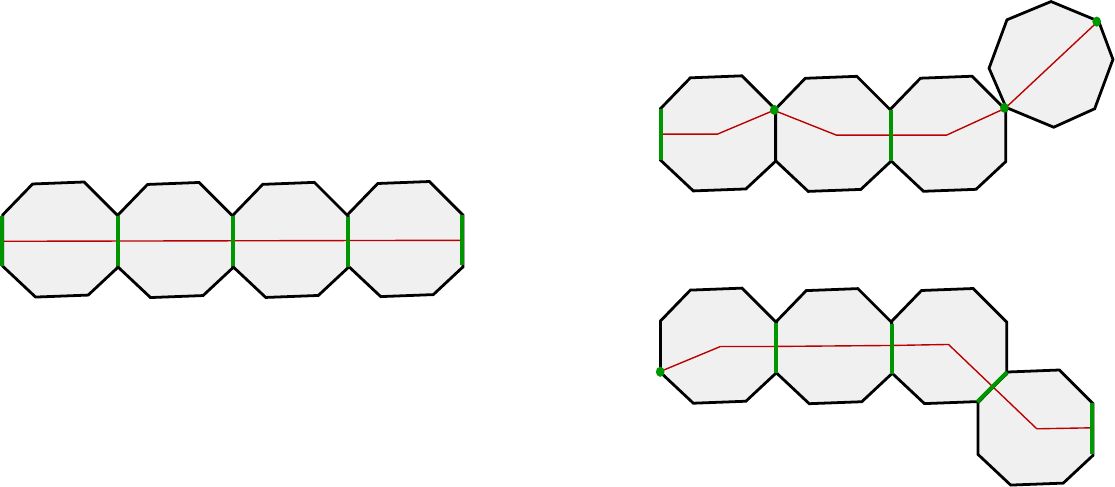}
\caption{Examples of hypercarriers with their associated hypergraphs (in red) and doors (in green). Configurations on the left are studied in detail in  \cite{WiseSmallCancellation}.  
The configurations on the right are studied in detail in the appendix.
}
 \label{F: galleries, hypergraphs}
\end{center}
\end{figure}

\begin{definition}[ convex]
A subcomplex $Y$ is called \emph{ convex} if every   geodesic between two vertices of $Y$ is contained in $Y$.
\end{definition}

The following results extend Lemma 3.11 and Theorem 3.18 of \cite{WiseSmallCancellation}, cf. Proposition \ref{P: wise hyper}.

\begin{thm}[cf. Proposition \ref{simplyconnectedembedded}, Corollary \ref{A:hypergraphtree} and Proposition \ref{A: lyconvex}]\label{T: properties hyper}
Let $\cC$ be a gallery in $X$. Then:
\begin{itemize}
 \item Its hypercarrier $Y_\cC$ is connected and simply connected and the map $i_\cC:Y(\cC) \ra X$ is an embedding.
 \item  The associated hypergraph $\Lambda_\cC$ is a tree which embeds in $X$.
 \item  The subcomplex $Y_\cC$ of $X$ is  convex.
\end{itemize}
\end{thm}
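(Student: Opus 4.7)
I would prove the three statements in the order given, since each uses the previous ones. Throughout, the standard tool will be reduced disk diagrams combined with the Greendlinger lemma for $C'(1/6)$--complexes, adapted from \cite{WiseSmallCancellation}. The main new ingredient compared to Wise's setting is that doors are allowed to be vertices, so hypercarriers can have cut points and hypergraphs can meet at vertices of $X$; the \emph{far apart} condition (no path through both doors being a concatenation of fewer than four pieces) is precisely what is needed to make the small cancellation arguments still work.

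\textbf{Step 1: connectedness, simple connectedness, and embedding of $i_\cC$.} Connectedness is immediate from the connectedness condition on the gallery: any two polygons of $\cC$ are linked by a door-sharing sequence. For simple connectedness and injectivity of $i_\cC$, I would argue by a single minimal counterexample. Suppose $i_\cC$ were non-injective, or that $Y_\cC$ contained an essential loop; in either case one obtains a reduced van Kampen diagram $D\to X$ whose boundary is the image under $i_\cC$ of a path in $Y_\cC$ built from pieces of the form $L_{\{\tau_i, \tau_{i+1}\}}$-neighbourhoods. Choose such a $D$ with minimal area. Greendlinger's lemma for $C'(1/6)$--complexes produces two boundary faces each intersecting $\partial D$ in a path that is a concatenation of at most three pieces. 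But each face of $D$ corresponds either to a polygon of $\cC$ (in which case its two consecutive exposed arcs would have to connect two doors by a path of at most three pieces, contradicting the far apart condition) or to a polygon outside $\cC$ that one can remove or that forces two adjacent polygons of $\cC$ to be identified (contradicting either minimality or the coherence condition). This rules out the minimal counterexample.

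\textbf{Step 2: $\Lambda_\cC$ is a tree embedded in $X$.} Once the embedding of $i_\cC$ is established, the embedding of $\Lambda_\cC$ in $X$ is automatic since $\Lambda_\cC \subset Y_\cC$. Connectedness of $\Lambda_\cC$ follows from the connectedness condition together with the fact that within each polygon with doors the two radii $L_{\{\tau_1,\tau_2\}}$ meet at the apex. To rule out cycles, I would observe that $\Lambda_\cC$ is a deformation retract of the interior of $Y_\cC$ (collapsing each polygon onto the two radii joining its apex to its doors), so any embedded cycle in $\Lambda_\cC$ would produce an essential loop in $Y_\cC$, contradicting simple connectedness proved in Step 1.

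\textbf{Step 3: convexity of $Y_\cC$ in $X$.} This I expect to be the main obstacle. Suppose a geodesic $\gamma$ in $X^{(1)}$ between two vertices of $Y_\cC$ leaves $Y_\cC$. Concatenate $\gamma$ with a shortest path $\gamma'$ in $Y_\cC$ between its endpoints to produce a closed loop, and fill it with a minimal reduced disk diagram $D$. By Greendlinger, $D$ has a boundary face $R'$ meeting $\partial D$ along at most three pieces, i.e.\ along more than half of $\partial R'$. If the boundary arc $R' \cap \partial D$ lies on the $\gamma$-side, shortening $\gamma$ across $R'$ contradicts the geodesicity of $\gamma$. If it lies on the $\gamma'$-side, the face $R'$ must (by far apartness applied to the doors through which $\gamma'$ enters and leaves the corresponding polygon of $\cC$) itself belong to $\cC$ and can be glued to the adjacent polygons of $\gamma'$, again contradicting the minimality of $\gamma'$. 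Faces straddling both sides are handled by the same dichotomy after a small local rearrangement. The careful combinatorial case analysis of the Strebel-type classification of boundary faces, adapted to doors that may be vertices, is the delicate part, and is the reason the detailed proof is deferred to the appendix.
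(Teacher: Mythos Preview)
Your overall strategy matches the paper's: minimal disc diagrams plus the classification theorem (Greendlinger) for all three parts, with Step~2 reduced to a deformation retraction exactly as in the appendix. However, two of your key steps rely on the wrong mechanism and would not go through as stated.

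In Step~1, the dichotomy for a shell $R$ of $D$ is not ``$R$ lies in $\cC$'' versus ``$R$ forces an identification contradicting coherence''. Coherence plays no role here. The correct argument, and the lemma you are missing, is that any polygon of $X$ which does \emph{not} contain a door of $\cC$ meets $i_\cC(Y_\cC)$ (along a liftable path) in at most two pieces; this is where the far apart condition is actually used. Hence if the shell $R$ is not in $Y_\cC$, its outer arc is covered by two pieces while its inner arc is covered by three, so $\partial R$ is covered by five pieces, contradicting $C'(1/6)$. Thus $R$ \emph{must} lie in $Y_\cC$, and one pushes the essential path across it to reduce area. Your phrase ``built from pieces of the form $L_{\{\tau_i,\tau_{i+1}\}}$-neighbourhoods'' also suggests you are tracking the hypergraph rather than an arbitrary essential path in $Y_\cC$; the latter is what is needed.

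In Step~3, ``contradicting the minimality of $\gamma'$'' does not work: pushing $\gamma'$ across a shell that lies in $Y_\cC$ gives a new path in $Y_\cC$ which need not be shorter. The paper's fix is to minimise the pair $(|\gamma|, \mathrm{Area}(D))$ lexicographically, so that pushing on the $\gamma'$-side reduces the second coordinate. Also, the ``straddling'' case does not dissolve into a local rearrangement: the paper treats the three possibilities (single polygon, ladder, at least three shells) separately, and the ladder case in particular needs its own argument using the same two-piece lemma above. Once you isolate that lemma and fix the minimisation, your outline becomes the paper's proof.
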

\begin{cor}\label{C: polygonsareconvex} Polygons of $X$ are  convex. \qed
\end{cor}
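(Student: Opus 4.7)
The plan is to realize any polygon of $X$ as the hypercarrier of a very simple gallery consisting of that single polygon, and then invoke the convexity statement in Theorem~\ref{T: properties hyper}.

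Concretely, let $R$ be a polygon of $X$. Since $X$ is $C'(1/6)$ and every edge of a polygon is in particular a piece, the boundary $\partial R$ has at least seven edges, so one can certainly find two distinct edges (or more generally two simplices) $\tau_1, \tau_2 \subset \partial R$ that are far apart in the sense of Definition~\ref{DefinitionFarApart}; for instance, by the example following that definition, any pair of opposite edges will do. Form the system of doors $\cC := \{ R_{\{\tau_1,\tau_2\}} \}$ consisting of this single polygon with doors.

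Next I would check that $\cC$ is a gallery in the sense of Definition~\ref{D: gallery}. The coherence condition is vacuous since there is only one polygon with doors in $\cC$; the far-apart condition holds by the choice of $\tau_1$ and $\tau_2$; and the connectedness condition is trivial because the set of doors has only two elements, joined by the length-one sequence $R_{\{\tau_1,\tau_2\}}$. Moreover, since $\cC$ contains a single polygon, no identifications occur in the construction of the hypercarrier, so $Y_\cC$ is canonically isomorphic to $R$, and the map $i_\cC : Y_\cC \to X$ agrees under this identification with the inclusion $R \hookrightarrow X$.

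Finally, I would apply the third bullet of Theorem~\ref{T: properties hyper} to conclude that $Y_\cC$ is a convex subcomplex of $X$. Translating through the identification $Y_\cC \cong R$, this says that every geodesic in $X$ between two vertices of $R$ stays in $R$, which is exactly the convexity of $R$. There is no serious obstacle in this argument; the only minor point is verifying the far-apart condition, which is immediate from the $C'(1/6)$ hypothesis once one picks opposite edges.
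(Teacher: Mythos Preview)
Your proposal is correct and is precisely the argument the paper has in mind: the corollary is stated with an immediate \qed after Theorem~\ref{T: properties hyper} (and again as Corollary~\ref{polygonsareconvex} after Proposition~\ref{A: lyconvex}), because a single polygon with a pair of far-apart doors is a gallery whose hypercarrier is that polygon, and hypercarriers are convex. You have simply spelled out the details the paper leaves implicit.
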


The proofs are by - now standard - small cancellation arguments, and extend the original arguments of Wise in a straightforward way, using our far apart condition. We give a complete account of the arguments in~Appendix~\ref{Appendix A}. 

We study several examples of galleries, hypergraphs and hypercarriers below. In Section \ref{ExampleEdge} we review Wise's hypergraphs and hypercarriers associated with diametrically opposed edges in the $C'(1/6)$--small cancellation complex $X$. In Section \ref{S: walls coming from X} we lift such hypergraphs and hypercarriers to $\cE G$. Finally, in Section \ref{S: extending the hyperplanes of fibres} we modify $\cE G$ to extend the hyperplanes in the fibres of $\cE G$. We obtain graphs of spaces whose projection to $X$ are hypergraphs associated with a gallery of $X$. In all three situations, we show that the complement of a hypergraph defines a wall. Here Theorem \ref{T: properties hyper} is essential. 

\subsection{Walls on the building blocks}

Recall that the space $\cE G$ has two building blocks, the polygons of $\cE G$ and the CAT(0) cube complexes which are fibres of vertices of $X$. Its geometric structure and the combination of these building blocks is controlled using the   properties of the underlying $C'(1/6)$--small cancellation polygonal complex $X$. For these three types of spaces,  the  $C'(1/6)$--small cancellation polygonal complex, the  polygons of $\cE G$, and the fibre CAT(0) cube complexes, we describe the associated wallspace structures.

\subsubsection{Walls of diametrically opposed edges}\label{ExampleEdge}

As usual, $X$ denotes the $C'(1/6)$--polygonal complex constructed in Section \ref{complexofgroupsfreeproduct}. Note that what follows can be applied to an arbitrary $C'(1/6)$--polygonal complex. 

We first put a wall structure on $X$.  Then we discuss hypergraphs and walls on polygons of $\cE G$.
We define an equivalence class on the set of edges of $X$ as follows. Two edges $e$ and $e'$ are said to be \textit{diametrically opposed } or \emph{opposite} if there exists a polygon $R$ containing them and such that $e$ and $e'$ are diametrically opposed in $R$. 
 We denote by $R_{\{e,e'\}}$ the associated polygon with doors. 
\begin{definition}[Equivalence class of  opposite edges]
Two edges $e$ and $e'$ are \textit{equivalent} if there is a sequence $e=e_1, \ldots, e_n=e'$ of edges such that any two consecutive ones are diametrically opposed. 
\end{definition}
For an edge $e$ of $X$, we define the complex with doors $\cC_e^{X}$ to be the disjoint union of all the polygons with doors $R_{\{e_1,e_2\}}$ where $e_1, e_2$ are diametrically opposed and in the equivalence class of $e$. Observe that $\cC_e^X$ is a gallery by definition. The far apart condition follows immediately from the fact that $X$ is a $C'(1/6)$--polygonal complex.   We denote the  associated hypergraph by $\Lambda_e^{X}$,  and the associated hypercarrier by $Y_e^{X}$. This coincides with  Wise's hypergraphs and hypercarriers \cite[Definition 3.2, 3.3]{WiseSmallCancellation}. 
Theorem \ref{T: properties hyper} implies:
\begin{prop}[{\cite[Lemma 3.11, Theorem 3.18]{WiseSmallCancellation}}]\label{P: wise hyper}
Every hypergraph $\Lambda_e^{X}$ embeds in $X$, is contractible and separates $X$ into two connected components. \qed
\end{prop}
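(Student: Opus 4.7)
The plan is to derive Proposition \ref{P: wise hyper} from Theorem \ref{T: properties hyper} applied to the specific gallery $\cC_e^X$, supplemented by a parity argument for the separation assertion.

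First I would verify that $\cC_e^X$ is indeed a gallery in the sense of Definition \ref{D: gallery}. Coherence is immediate because in any polygon $R$ of $X$ (with $2N$ edges) each edge has a unique diametrically opposite edge, namely the one $N$ steps away along $\partial R$. The connectedness condition is built into the definition of the equivalence class generating $\cC_e^X$. For the far-apart condition, two diametrically opposed edges of $R$ lie in $\partial R$ separated by two arcs each of length $|\partial R|/2$; by the $C'(1/6)$ condition every piece has length strictly less than $|\partial R|/6$, so any path in $\partial R$ containing both edges has length at least $|\partial R|/2 > 3\cdot|\partial R|/6$ and thus cannot be written as the concatenation of three or fewer pieces.

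Applying Theorem \ref{T: properties hyper} to $\cC_e^X$ immediately yields that $\Lambda_e^X$ embeds in $X$ and is a tree, hence contractible. This handles the first two assertions and moreover gives convexity of $Y_e^X$, which will be used below.

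For the separation statement, the key observation is that for any polygon $R$ of $X$, the set $\partial R \cap [e]$ of boundary edges belonging to the equivalence class of $e$ is stable under the involution ``opposite edge in $R$'', since diametric opposition in a polygon is precisely what generates the equivalence relation. Hence $|\partial R \cap [e]|$ is even and the boundary cycle of any polygon meets edges of $[e]$ an even number of times. Since $X$ is simply connected, every edge-loop in $X$ decomposes as a product of such polygon boundaries and therefore also crosses edges of $[e]$ an even number of times. Fixing a basepoint and assigning to every vertex $v$ of $X$ the $\bbZ/2$-parity of the number of $[e]$-edges along any edge path from the basepoint to $v$ yields a well-defined coloring $c: X^{(0)} \to \bbZ/2$ which flips across an edge if and only if that edge lies in $[e]$. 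The two color classes thus determine at most two connected components of $X \setminus \Lambda_e^X$, both non-empty since the endpoints of any edge in $[e]$ have distinct colors.

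The main obstacle I anticipate is rigorously confirming that there are \emph{at most} two components, i.e.\ that two vertices of the same color can always be joined by a path in $X \setminus \Lambda_e^X$. This is non-trivial when a polygon $R$ of $X$ is traversed by several diameters of $\Lambda_e^X$ (all meeting at the centre of $R$), in which case $R \setminus \Lambda_e^X$ has $2k$ sectors arranged cyclically and alternating in color, and two same-color sectors of $R$ must be shown to be joined through paths in the rest of $X \setminus \Lambda_e^X$. Here I would use the convexity of the hypercarrier $Y_e^X$ from Theorem \ref{T: properties hyper} together with the tree structure of $\Lambda_e^X$ to locally ``re-route'' any edge path from one sector to another via detours through polygons adjacent to $R$ along non-equivalence-class edges, reducing connectivity of same-color sectors to the global parity argument already established.
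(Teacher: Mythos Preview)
Your reduction to Theorem \ref{T: properties hyper} for the embedding and contractibility is exactly what the paper does; the paper then simply invokes the citation to Wise for the separation statement, so on that point your parity argument already goes further than the paper's own treatment.

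There is one slip and one unnecessary worry. The slip: the well-defined colouring $c:X^{(0)}\to\bbZ/2$ shows that each component of $X\setminus\Lambda_e^X$ is monochromatic and that both colours occur, hence there are \emph{at least} two components, not ``at most two'' as you first write. The real content, as you then correctly identify, is connectedness of each colour class.

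The anticipated obstacle --- a polygon $R$ carrying $k>1$ diameters of $\Lambda_e^X$ --- does not actually arise, and this is a consequence of the embedding result you have already invoked. If $R$ contributed two distinct opposite pairs of $[e]$-edges, the gallery $\cC_e^X$ would contain two polygons-with-doors $R_{\{e_1,e_1'\}}$ and $R_{\{e_2,e_2'\}}$ with the same underlying cell; since the hypercarrier is built by gluing only along \emph{shared doors}, these remain distinct $2$-cells of $Y_e^X$, and $i_{\cC_e^X}$ would send both onto $R$, contradicting the embedding in Theorem \ref{T: properties hyper}. Hence every polygon meets $\Lambda_e^X$ in at most one diameter, so $\Lambda_e^X$ is locally two-sided at every point. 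Being a tree inside the simply connected complex $X$, it then separates $X$ into exactly two pieces by the standard argument: the local sides assemble into a trivial double cover of the tree $\Lambda_e^X$, so a regular neighbourhood $N$ has $N\setminus\Lambda_e^X$ with exactly two components, and Mayer--Vietoris (using $H_1(X)=0$) gives $\tilde H_0(X\setminus\Lambda_e^X)\cong\tilde H_0(N\setminus\Lambda_e^X)\cong\bbZ$. Your proposed re-routing via convexity of $Y_e^X$ is therefore not needed.
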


\begin{definition}[Walls on $X$] For every edge $e$ of $X$, the associated hypergraph $\Lambda_e^{X}$ separates $X$ in two components. Let $W_e^X$ be the wall of $X$ associated with this decomposition. We say that $W_e^X$ is the wall \emph{associated with} $e$.

Let $\cW^X$ be the set of all these walls.
\end{definition}

\begin{prop}[\cite{WiseSmallCancellation}]
 The space $X$ with the walls $\cW^X$ is a wallspace. The wall pseudometric on $X$ is a metric. \qed
\end{prop}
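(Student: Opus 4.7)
The statement has two parts: checking the wallspace axioms, and strengthening the pseudometric to a metric. The complement axiom is immediate because every $W_e^X$ is declared as a partition into two complementary half-spaces, both of which lie in $\cW^X$ by definition.

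The finiteness axiom would be proved directly via edge paths. Given two vertices $x,y \in X$, choose any edge path $\gamma$ in the $1$-skeleton joining them. Any wall $W_e^X$ separating $x$ from $y$ forces the hypergraph $\Lambda_e^X$ to meet $\gamma$; but $\Lambda_e^X$ intersects the $1$-skeleton only in the midpoints of edges belonging to the equivalence class of $e$. Hence each edge of $\gamma$ determines at most one equivalence class of opposite edges, so at most one wall of $\cW^X$. Consequently the number of walls separating $x$ from $y$ is bounded by the length of $\gamma$ and is in particular finite.

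For the metric property, I would take a combinatorial geodesic $\gamma$ joining two distinct vertices $x \neq y$, select any edge $e$ appearing in $\gamma$, and claim that $W_e^X$ separates $x$ from $y$. If not, then $\gamma$ crosses $\Lambda_e^X$ an even (and hence at least two) number of times, so there exists an edge $e' \neq e$ of $\gamma$ in the equivalence class of $e$. Let $v$ (resp.\ $v'$) be the endpoint of $e$ (resp.\ $e'$) such that the subpath $\gamma'$ of $\gamma$ from $v$ to $v'$ lies strictly between $e$ and $e'$. Both $v$ and $v'$ are vertices of the hypercarrier $Y_e^X$, and the convexity of $Y_e^X$ given by Theorem \ref{T: properties hyper} forces $\gamma'$ to lie entirely inside $Y_e^X$. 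Since $Y_e^X$ is, by the same theorem, a union of polygons assembled along the tree $\Lambda_e^X$, one can use the tree structure to produce an alternative path from $v$ to $v'$ inside $Y_e^X$ whose length is strictly smaller than $|\gamma'|$: the $C'(1/6)$--condition on $X$ ensures that rerouting along fewer diameters of the polygons $\Lambda_e^X$ passes through is strictly cheaper than following an arc that crosses the hypergraph twice. This contradicts the geodesicity of $\gamma$, so $W_e^X$ must separate $x$ from $y$, and the pseudometric is in fact a metric.

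The genuine technical content lies in the last step: showing that a geodesic cannot cross a single hypergraph twice. This is the small cancellation heart of the argument and is where both the convexity of $Y_e^X$ and the numerical $C'(1/6)$--inequality are used together to produce a strictly shorter competitor. The other axioms require nothing more than the combinatorial structure of the hypergraphs established in Theorem \ref{T: properties hyper}.
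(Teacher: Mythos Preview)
The paper does not prove this proposition at all; it is stated with a \qed and attributed to \cite{WiseSmallCancellation}. Your outline for the two wallspace axioms is fine and is the standard argument.

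There is, however, a genuine slip in your metric argument. You set $v,v'$ to be the \emph{inner} endpoints of $e,e'$ and let $\gamma'$ be the sub-geodesic of $\gamma$ between them; you then claim one can find a path in $Y_e^X$ from $v$ to $v'$ that is \emph{strictly shorter} than $\gamma'$. That is impossible: $\gamma'$ is a subpath of the geodesic $\gamma$, hence itself geodesic, so $|\gamma'|=d(v,v')$ and no shorter competitor exists. (Concretely, in a single $2n$-gon with $e=\{0,1\}$, $e'=\{n,n+1\}$, one has $v=1$, $v'=n$ and $|\gamma'|=n-1$; every path from $1$ to $n$ has length $\geq n-1$.) The phrase ``rerouting along fewer diameters'' does not help, since diameters are not edges of the $1$-skeleton.

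The correct comparison is between the \emph{outer} endpoints. Let $u,u'$ be the other endpoints of $e,e'$; the subpath of $\gamma$ from $u$ to $u'$ has length $|\gamma'|+2$. For opposite-edge hypergraphs the hypercarrier carries a simplicial reflection across $\Lambda_e^X$ swapping the two boundary trees and exchanging $u\leftrightarrow v$, $u'\leftrightarrow v'$; hence $d(u,u')\leq d(v,v')=|\gamma'|<|\gamma'|+2$, contradicting geodesicity of $\gamma$. With this adjustment your argument goes through and recovers Wise's Lemma~4.3.
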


In Section \ref{S: walls coming from X}, we lift the walls $\cW^X$ to $\cE G$.

 \begin{rmk}[Hypergraphs and Walls on polygons]\label{R: walls polygon}
  Consider a  single polygonal cell $R$ on an even number of edges as a $C'(1/6)$--small cancellation polygonal complex. It then comes  with the above defined hypergraphs and walls of diametrically opposed edges. We denote the hypergraph of $R$ associated with $e$ by $\Lambda^{R}_e$. The corresponding wall on $R$ is denoted by $W_e^{R}$.  
\end{rmk}

Up to taking  a subdivision of $\cE G$, this, in particular, endows each polygon of $\cE G$ with a wallspace structure.

\subsubsection{Hyperplanes in CAT(0) cube complexes}

 We recall some facts on hyperplanes in  CAT(0) cube complexes. Let $C$ be a CAT(0) cube complex. The building blocks of $C$ are cubes, each $k$-cubing isomorphic to $[-1,1]^{k}$ for some integer $k \geq  0$. A cube hyperplane associated with a cube $I$ is obtained by setting exactly one coordinate to zero, and is therefore of the form $[-1,1]^{i}\times \{0\}\times [-1,1]^{j}$ with $i+j=k-1$. A hyperplane on $C$ is a connected nonempty subspace whose intersection with each cube $I$ of $C$ is either empty or a cube hyperplane associated with $I$. Every edge of $C$ has  a unique hyperplane intersecting it.

\begin{prop}\label{P: walls cupe complexes} \cite[Th. 4.10, Th. 4.13]{SageevCubeComplex}
 Let $H$ be a hyperplane of $C$.
\begin{itemize}
\item The hyperplane $H$ is contractible and separates $C$ into two connected components. 
 \item The   neighbourhood of a hyperplane $H$ is  convex. \qed
\end{itemize}
\label{propertieshyperplane}
\end{prop}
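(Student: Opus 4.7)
The plan is to construct each hyperplane as a well-defined subcomplex of $C$, endow it with its own cube complex structure, and then deduce all three assertions from the CAT(0) hypothesis on $C$.

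First, I would define a parallelism relation on edges of $C$: two edges are parallel if they appear as opposite edges of a square, and I take the transitive closure to get equivalence classes. For each equivalence class $\cE$, let $H_\cE$ be the union of those midcubes of cubes of $C$ that are dual to edges in $\cE$, and let $N(H_\cE)$ be the union of closed cubes of $C$ meeting $H_\cE$. The link of a vertex of $H_\cE$ (which corresponds to the midpoint of an edge in $\cE$) is naturally identified with a subcomplex of the link of that midpoint in $C$, and from this one checks that $H_\cE$ inherits a non-positively curved cube complex structure. Moreover, opening every cube of $N(H_\cE)$ transversally to $H_\cE$ realises $N(H_\cE)$ as a combinatorial product $H_\cE\times[-1,1]$.

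For contractibility: since $H_\cE$ is a locally CAT(0) cube complex, it remains to check simple connectivity. Any loop in $H_\cE$ bounds a disk diagram in $C$; by standard diagrammatic surgery (cornering and collapsing squares whose dual edges lie in $\cE$), the disk can be pushed entirely into $H_\cE$, yielding simple connectivity, so $H_\cE$ is CAT(0) and hence contractible. For the separation statement, I would prove the parity lemma: \emph{every combinatorial loop in $C^{(1)}$ crosses $\cE$ an even number of times.} This is an induction on the area of a minimal disk diagram bounding the loop, using that each square contributes either zero or two edges from $\cE$ to its boundary. Given the lemma, define $\epsilon\colon C^{(0)}\to\mathbb{Z}/2\mathbb{Z}$ by counting, modulo $2$, the $\cE$-edges along any path from a fixed basepoint; the lemma guarantees well-definedness, and the two preimages form the two components of $C\setminus H_\cE$. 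Connectedness of each preimage follows by induction on distance from $H_\cE$, using the product structure of $N(H_\cE)$ to replace a pair of successive $\cE$-crossings by a detour of equal sign.

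Finally, for convexity of $N(H_\cE)$, I would invoke the standard local-to-global principle for CAT(0) cube complexes: a connected full subcomplex is convex if and only if whenever three edges of a square lie in it, so does the fourth. Checking this for $N(H_\cE)$ reduces to a short case analysis, according to whether the square is dual to an edge in $\cE$ or not. The main obstacle is the parity lemma underlying separation: its clean proof requires careful disk diagram surgery together with Gromov's link condition, and is historically the most delicate step in Sageev's original argument. Contractibility and convexity of the carrier are then essentially formal corollaries of the product structure $N(H_\cE)\cong H_\cE\times[-1,1]$.
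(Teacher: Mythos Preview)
The paper does not give a proof of this proposition at all: it is stated with a \texttt{\textbackslash qed} immediately after the statement and is simply cited as \cite[Th.~4.10, Th.~4.13]{SageevCubeComplex}. So there is nothing to compare your argument against in this paper; the authors treat it as background.

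That said, your sketch is essentially the standard route to these facts (and close to Sageev's original treatment): define hyperplanes via the parallelism equivalence on edges, exhibit the carrier as a product $H_\cE\times[-1,1]$, prove the parity lemma to get separation, and deduce contractibility from simple connectivity plus the inherited non-positive curvature. One small quibble: your proposed criterion for convexity of $N(H_\cE)$---``three edges of a square in the subcomplex forces the fourth''---is the criterion for \emph{local} convexity, and you should be explicit that promoting this to global convexity in a CAT(0) cube complex uses either the CAT(0) metric (local geodesics are global) or a combinatorial argument that a geodesic leaving $N(H_\cE)$ would have to cross $H_\cE$ twice, contradicting the parity/separation statement you already proved. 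Also, for contractibility, rather than ``pushing a disk into $H_\cE$'' it is cleaner to use the product structure $N(H_\cE)\cong H_\cE\times[-1,1]$ together with convexity of $N(H_\cE)$: a convex subcomplex of a CAT(0) cube complex is itself CAT(0), hence $N(H_\cE)$ is contractible, and $H_\cE$ is a deformation retract of it. This avoids the somewhat vague diagrammatic surgery step.
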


In particular, given two vertices of $C$ there is a hyperplane separating them, and every hyperplane defines a wall of $C$. The following follows from the work of Sageev  \cite{SageevCubeComplex}.

\begin{prop}\label{P: cube complex is wallspace}
 A CAT(0) cube complex $C$ with the collection of the complements of its hyperplanes as walls is a wallspace. The wall pseudometric on $C$ is a metric. \qed
\end{prop}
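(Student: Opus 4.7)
The plan is to verify the two wallspace axioms from Definition \ref{wallspace} and then check that distinct vertices are always separated by at least one wall. The main tool is Proposition \ref{propertieshyperplane}, which guarantees that each hyperplane $H$ of $C$ separates $C$ into exactly two connected components. Hence the associated pair of complementary half-spaces is a wall in the sense of the definition, and the first wallspace axiom (stability under taking complements) is automatic.

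For the local finiteness axiom, I would use the standard fact that each edge of $C$ is dual to a unique hyperplane. Given two vertices $x, y \in C$, choose any edge path $\gamma = (e_1, \ldots, e_n)$ in the $1$-skeleton from $x$ to $y$. If a hyperplane $H$ separates $x$ from $y$, then since $x$ and $y$ lie in distinct connected components of $C \setminus H$ (by Proposition \ref{propertieshyperplane}), the path $\gamma$ must cross $H$; that is, at least one edge $e_i$ must be dual to $H$. Since each edge is dual to exactly one hyperplane, the number of hyperplanes separating $x$ from $y$ is at most $n$, hence finite. This verifies the second axiom and shows $(C, \mathcal{H})$ is a wallspace.

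For the metric property of $d_{\cW(C)}$, the only thing to check (beyond the general pseudometric axioms, which are immediate from the definition) is that $d_{\cW(C)}(x, y) > 0$ for $x \neq y$. For distinct vertices $x \neq y$, pick any edge $e$ incident to $x$ lying on a combinatorial geodesic from $x$ to $y$, and let $H$ be the hyperplane dual to $e$. Using the convexity of the carrier (Proposition \ref{propertieshyperplane}) together with the standard fact that combinatorial geodesics in a CAT(0) cube complex cross each hyperplane at most once, $H$ separates $x$ from $y$. Therefore at least one wall separates $x$ from $y$, so $d_{\cW(C)}(x, y) \geq 1$.

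The only subtle point is the last step: justifying that a geodesic crosses each hyperplane at most once, so that the hyperplane dual to the first edge of a geodesic genuinely separates the two endpoints. This is classical in the theory of CAT(0) cube complexes and follows from Sageev's work already cited; I expect no real obstacle but would invoke this fact explicitly rather than reprove it.
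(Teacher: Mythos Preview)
Your argument is correct and is precisely the standard verification; the paper itself does not give a proof at all but simply attributes the statement to Sageev \cite{SageevCubeComplex} and marks it with \qed. Your write-up is exactly the kind of unpacking one would do from that citation, so there is nothing to compare.
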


In Section \ref{S: extending the hyperplanes of fibres} we extend the walls in the fibres $EG_v$, using the walls on the polygons of $\cE G$. We therefore need the following observations.
\begin{lem}\label{L: cube complexes} Let $v$ be a vertex in $X$, and let $EG_v$ be the corresponding fibre in $\cE G$. Let $p_{v,R}$ be the attaching path where $R$ is a polygon $R$ of $X$. Suppose $H$ is a hyperplane that crosses an edge $e$ of $p_{v,R}$. Then, 
 \begin{itemize}
\item (Fibre separation) The hyperplane $H$ separates the vertices of $e$ \emph{in $EG_v$}.  In particular, the hyperplane intersects every path in $EG_v$ that connects the starting and endpoint of the attaching path $p_{v,R}$.
 \item  (No turns) The hyperplane $H$ does not intersect $p_{v,R}$ more than once.\qed
\end{itemize}\end{lem}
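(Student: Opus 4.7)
The plan is to reduce both claims to the single observation that, by the construction in Section \ref{EZcomplexofspaces}, each attaching path $p_{v,R}$ is (the image under the stabiliser action of) one of the chosen geodesics $\gamma_{A,i}$ or $\gamma_{B,i}$, and is therefore a \emph{combinatorial geodesic} in the CAT(0) cube complex $EG_v \cong EA$ or $EB$. Once this is pinned down, both bullets follow from Proposition \ref{P: walls cupe complexes}.

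For Fibre separation, the statement that $H$ separates the two vertices of the edge $e$ in $EG_v$ is immediate from Proposition \ref{P: walls cupe complexes}: a hyperplane crossing an edge splits the cube complex into two components, one containing each endpoint. The ``In particular'' statement will follow only once No turns is established: if $p_{v,R}$ crosses $H$ exactly once (at $e$), then the two endpoints of $p_{v,R}$ lie in distinct connected components of $EG_v \setminus H$, and since $EG_v \setminus H$ has two components, every path in $EG_v$ joining these endpoints must meet $H$.

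For No turns, I would invoke the standard characterisation of combinatorial geodesics in CAT(0) cube complexes: a combinatorial path is a geodesic if and only if it crosses each hyperplane at most once. Concretely, if $p_{v,R}$ crossed $H$ along two distinct edges $e_1$ and $e_2$, then the subpath of $p_{v,R}$ strictly between these crossings begins and ends on the same side of $H$, yet the path leaves that side; by the convexity of the half-spaces of $H$ (Proposition \ref{P: walls cupe complexes}), one obtains a strictly shorter path by replacing this subpath with a geodesic staying on one side. This contradicts the fact that $p_{v,R}$ is a geodesic.

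The only step that might require care is the identification of $p_{v,R}$ with a combinatorial geodesic in $EG_v$; this is essentially by definition of the complex of classifying spaces we built, since the maps $\phi_{e_i}$ and $\phi_{f_i}$ were chosen to be parametrisations of geodesics $\gamma_{A,i}$ and $\gamma_{B,i}$. Given this, no genuine obstacle remains: everything else is a direct appeal to the already stated properties of hyperplanes.
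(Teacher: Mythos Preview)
Your proposal is correct and follows essentially the same approach as the paper: the paper's justification (given immediately after the lemma) also reduces Fibre separation to the standard separation property of hyperplanes and derives No turns from the fact that $p_{v,R}$ is a geodesic together with convexity of hyperplane neighbourhoods. Your write-up is slightly more explicit, in particular in observing that the ``In particular'' clause actually relies on No turns, but the underlying argument is the same.
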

 The first fact is immediate from the above properties of CAT(0) cube complexes. For the second fact recall that $p_{v,R}$ is geodesic in $EG_v$. Hence, a turn would contradict the   convexity of hyperplanes in CAT(0) cube complexes.

\subsection{Construction of the new walls}\label{S: construction walls}

In this section we lift the walls of $X$ to $\cE G$, and explain how to combine the walls on the building blocks of $\cE G$. 
The space $\cE G$ is build up from the various CAT(0) cube complexes $EG_v$, modelled after the CAT(0) cube complexes $E_A$ and $E_B$, and the various polygons of $\cE G$. We just saw that these building blocks of $\cE G$ are equipped with natural wallspace structures. The idea is to combine walls defined by the hyperplanes on the fibre CAT(0) cube complexes with the walls of opposite edges for polygons of $\cE G$. We now observe  that there is a priori no canonical way to do this. In particular, it is  not possible to  employ the viewpoint of Wise's seminal paper \cite[Section~5]{wise_structure_2011}: To adapt to the viewpoint of Wise, view the boundary path of a polygon $\widetilde{R}$ of $\cE G$ as a cube complex, and $\widetilde{R}$ as a cone over this boundary path. It comes with the wall structure 
associated with opposite edges. Combining  the walls of $E_A$, $E_B$ and $\widetilde{R}$  as in  \cite[Section~5.f]{wise_structure_2011}, cf. Definition \ref{D: extended walls} below, does not yield walls;  in 
particular, conditions (1), (2) and (3) of Lemma 5.13 in \cite{wise_structure_2011} fail. Indeed, the subspaces we obtain
with such a procedure no longer embed. More precisely, as the small cancellation condition over the free product of two groups does not control the length of the attaching paths,  a hypergraph of diametrically opposed edges of $\widetilde{R}$ is likely to  intersect two distinct edges of the same attaching path of the  same fibre. The corresponding new hyperplane then consists of the two distinct hyperplanes associated with the aforementioned edges of that fibre and the hypergraph of diametrically opposed edges intersecting them. Note that we have no control of the   position of these two hyperplanes of the fibre cube complex, meaning that they can intersect, osculate, or just not intersect any other attaching path, hence the claim.

\subsubsection{Balancing}

We now modify the complex $\cE G$. This then allows us to combine the hyperplanes in the various CAT(0) cube complexes with the walls associated with opposed edges in polygons~of~$\cE G$. 

\begin{definition}[the subdivided complexes $X_k$ and $(\cE G)_k$] \label{balancing}
Let $k \geq 0$ be an even integer. We define a new polygonal structure from $X$ by subdividing each edge of $X$ exactly $k$ times. We denote by $X_k$ the resulting polygonal complex.

Similarly, we define a new polyhedral structure from $\cE G$ by subdividing each horizontal edge, see Definition \ref{vertical and horizontal}, exactly $k$ times. 
 We denote by $(\cE G)_k$ this new polyhedral structure, and~by 
\[
p: (\cE G)_k \ra X_k
\]
  the induced projection map.
\end{definition}

Note that this procedure does \textit{not} modify the CAT(0) cubical structures of the various fibres of $\cE G$, and it does \emph{not} modify the attaching paths.  Moreover, each complex $X_k$ does again satisfy the $C'(1/6)$--condition, and pieces of $X_k$ are subdivisions of pieces of $X$.

\begin{definition}(balanced)\label{balanced} We say that $(\cE G)_k$ is \textit{balanced} if for every polygon $\widetilde{R}$ of $(\cE G)_k$ and every edge $e$  of $\widetilde{R}$ with  opposite edge $e'$, the projections $p(e)$ and $p(e')$ are far apart (see Definition \ref{DefinitionFarApart}) in $X_k$. 
\end{definition}

\begin{lem}
There exists an even integer $k\geq 0$ such that $(\cE G)_k$ is balanced.
\end{lem}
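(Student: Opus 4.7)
The plan is to pick $k$ sufficiently large (and even) so that the horizontal edges of $\partial \widetilde R$ dominate its vertical (attaching path) edges, the latter being left unchanged by the subdivision. Since $G$ acts cocompactly on $\cE G$, the polygons $\widetilde R$ fall into finitely many $G$-orbits, so I would fix uniform constants: $H$ is the number of horizontal edges of $\partial \widetilde R$ (equal to the number of edges of the projected polygon $R \subset X$), $A$ is an upper bound for the total vertical (attaching path) length of $\partial \widetilde R$, and $p < H/6$ is the maximum piece length in $X$ granted by the $C'(1/6)$-condition. The key quantity is the uniform gap $\delta := H/2 - 3p > 0$, independent of $k$.

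Given opposite edges $e, e'$ of $\widetilde R$ in $(\cE G)_k$, the two arcs of $\partial \widetilde R$ between them each have length $((k+1)H + A)/2$. Counting horizontal ($h_i$) and vertical ($v_i$) edges in each arc and using $h_i + v_i$ equal yields $|h_1 - h_2| = |v_1 - v_2| \le A$, so $\min(h_1, h_2) \ge ((k+1)H - A)/2 - O(1)$. Under the projection $p \colon (\cE G)_k \to X_k$, horizontal edges are in bijection with edges of $\partial R_k$ while vertical edges collapse to vertices; consequently the shortest arc of $\partial R_k$ containing $p(e)$ and $p(e')$ has length at least $\min(h_1, h_2)$, up to an $O(1)$ correction depending on whether each of $e, e'$ is horizontal or vertical.

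A piece of $X_k$ is the $(k{+}1)$-fold subdivision of a path in the overlap of two polygons of $X$, hence has length at most $(k+1)p$; three such pieces concatenated have total length at most $3(k+1)p = (k+1)H/2 - (k+1)\delta$. Demanding that the shortest arc above exceed this threshold reduces to $(k+1)\delta > A/2 + O(1)$, which holds for all sufficiently large $k$; any large enough even $k$ then yields the balanced structure. The main technical point, which I expect to be the principal obstacle, is the case analysis on whether each of $e, e'$ is horizontal or vertical, since this changes whether $p(e)$ and $p(e')$ are edges or vertices of $\partial R_k$ and shifts the counts by constants; in every case, however, these adjustments are absorbed by the linear-in-$k$ growth of the horizontal portion against the fixed constant $A$.
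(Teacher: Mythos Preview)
Your proposal is correct and follows essentially the same approach as the paper's proof, which is a one-line sketch: the attaching paths have uniformly bounded length (by the maximum of the $|\gamma_{A,i}|, |\gamma_{B,i}|$), so subdividing the horizontal edges sufficiently forces the projections of opposite edges to be far apart, by the $C'(1/6)$--condition on $X$. You simply make this quantitative via the constants $H$, $A$, $p$ and the gap $\delta = H/2 - 3p > 0$.

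Two minor remarks on the write-up. First, in the sentence ``each have length $((k+1)H+A)/2$'' you should use the actual total vertical length $V$ of $\partial\widetilde R$ rather than the upper bound $A$; this does not affect the argument, since only the bound $|v_1-v_2|\le V\le A$ is used downstream. Second, the far apart condition demands that \emph{both} arcs of $\partial R_k$ containing $p(e)$ and $p(e')$ fail to be covered by three pieces, not just the shorter one; but this is automatic, since the longer arc has at least as many edges and hence also exceeds $3(k{+}1)p$.
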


\begin{proof} 
Since the number of edges in the various attaching paths $p_{v,R}$ is uniformly bounded above by the maximum of the edge lengths of the   geodesics $\gamma_{A,i}, \gamma_{B,i}$, the subdivided complex $(\cE G)_k$ becomes balanced for $k$ large enough by the $C'(1/6)$--condition.
\end{proof}

\begin{definition}
Let $k\geq 0$ be the smallest even number such that $(\cE G)_k$ is balanced. We denote by $\cE G_\mathrm{\textit{bal}}$ and $X_\mathrm{\textit{bal}}$ the complexes $(\cE G)_k$ and $X_k$ respectively.
\end{definition}

In the next section \emph{the  properties  of $X$}, in combination with the properties  of the fibre CAT(0) cube complexes, will be used to control the  geometric structure of $\cE G$. We first endow $\cE G$ with a wall structure.

\subsubsection{Lifted hypergraphs}\label{S: walls coming from X}

The polygonal complexes $X$ and $X_\mathrm{\textit{bal}}$ satisfy the small cancellation condition $C'(1/6)$, hence the hypergraphs of diametrically opposed edges  of Section \ref{ExampleEdge} define a wallspace on $X_{bal}$ 
We now lift  the corresponding family of walls on $X_{bal}$ to define a first family of walls on $\cE G_{bal}$. 

\begin{definition}[hypergraph associated with an edge of $X_\mathrm{\textit{bal}}$]
Let $e$ be an edge of $X_\mathrm{\textit{bal}}$ and $\Lambda_e^{X}$ the hypergraph of diametrically opposed edges in $X_\mathrm{\textit{bal}}$ defined in Section \ref{ExampleEdge}. We call $\Lambda_e^{X}$ the hypergraph \textit{associated with the edge} $e$ of $X_\mathrm{\textit{bal}}$.  

We define the subset $\widetilde{\Lambda_e^X}$ of $\cE G_{bal}$ as the preimage of $\Lambda_e^X$ under $p:\cE G_{bal} \ra X_{bal}$.
  We call $\widetilde{\Lambda_e^X}$ the \textit{lifted hypergraph} (of $\cE G_\mathrm{\textit{bal}}$) \textit{associated with the edge} $e$ of $X_\mathrm{\textit{bal}}$.

\end{definition}

\begin{lem}\label{WallXseparates}
Each lifted hypergraph $\widetilde{\Lambda_e^X}$ of $\cE G$ associated with an edge of $X_\mathrm{\textit{bal}}$ is contractible and separates $\cE G_\mathrm{\textit{bal}}$ into two connected components.
\end{lem}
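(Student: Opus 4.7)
The plan is to transfer what we already know about $\Lambda_e^X$ in $X_{bal}$ to $\widetilde{\Lambda_e^X}$ using the projection $p : \cE G_{bal} \to X_{bal}$. By applying Theorem~\ref{T: properties hyper} to the gallery $\cC_e^X$, we know that $\Lambda_e^X$ is a tree (hence contractible), embeds in $X_{bal}$, and, by Proposition~\ref{P: wise hyper}, separates it into two connected open components $X_1, X_2$.

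For separation, one writes
\[
\cE G_{bal} \setminus \widetilde{\Lambda_e^X} \;=\; p^{-1}(X_1) \,\sqcup\, p^{-1}(X_2),
\]
a disjoint union of open subsets. To check that each $p^{-1}(X_i)$ is path-connected, I would pick two vertices $u, v \in p^{-1}(X_i)$, join $p(u)$ and $p(v)$ by a path $\gamma \subset X_i$, and lift $\gamma$ to $\cE G_{bal}$: whenever $\gamma$ traverses a horizontal edge of $X_{bal}$, use the unique horizontal lift (Definition~\ref{vertical and horizontal}); whenever consecutive crossings land on different vertices of a fibre, move between the relevant lift endpoints through the fibre itself, which is a connected CAT(0) cube complex. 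One then connects $u$ and $v$ to the endpoints of this lift through the fibres $EG_{p(u)}$ and $EG_{p(v)}$.

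For contractibility, the key claim is that the restriction $p|_{\widetilde{\Lambda_e^X}} : \widetilde{\Lambda_e^X} \to \Lambda_e^X$ is a homeomorphism; contractibility is then immediate since $\Lambda_e^X$ is a tree. The proof is fibre-wise: inside each polygon $R$ of $\cC_e^X$, the hypergraph $\Lambda_e^X$ consists of exactly two straight segments joining the apex of $R$ to the midpoints of the two door edges. Such a segment never meets a vertex of $X_{bal}$, and it avoids the interior of any attaching path of $\cE G_{bal}$; in other words it lives in the ``horizontal'' part of $R$. Over any such point, the fibre of $p$ is a singleton: over the apex of $R$, the apex of the simplicial polygon $\widetilde R$; over the midpoint of a door edge $e$, the midpoint of its unique horizontal lift $\tilde e$; and over an interior point of one of the two radii, the unique corresponding point in the simplicial cone $\widetilde R$. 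Hence $p|_{\widetilde{\Lambda_e^X}}$ is a continuous bijection between two one-dimensional polyhedral complexes with matching combinatorial structure, and therefore a homeomorphism.

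The only subtle step, and the main place where the geometry of $\cE G_{bal}$ really enters, is this fibre triviality of $p$ over $\Lambda_e^X$. It relies on the fact that the doors of $\cC_e^X$ are boundary edges (not vertices), so that the associated radii avoid the ``thick'' parts of $\cE G_{bal}$ (the fibres $EG_v$ and the attaching paths $p_{v,R}$) which sit over vertices of $X_{bal}$, together with the uniqueness of horizontal lifts of edges granted by Definition~\ref{vertical and horizontal}.
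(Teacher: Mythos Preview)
Your proof is correct and follows essentially the same approach as the paper: both establish contractibility by noting that $p$ restricts to a homeomorphism $\widetilde{\Lambda_e^X} \to \Lambda_e^X$, and both obtain the two connected components as $p^{-1}(X_1)$ and $p^{-1}(X_2)$. You simply supply more detail than the paper, which records only that ``the preimage of a connected set under $p$ is again connected'' where you spell out the path-lifting, and which asserts the homeomorphism where you justify it fibre by fibre.
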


\begin{proof}
We use Proposition \ref{P: wise hyper}. 
  Note that $p$ restricts to a homeomorphism $\widetilde{\Lambda_e^X}\ra \Lambda_e^{X}$. 
Hence, $\widetilde{\Lambda_e^X}$ is contractible. The fact that $\widetilde{\Lambda_e^X}$ disconnects $\cE G_\mathrm{\textit{bal}}$ follows from the fact that $\Lambda_e^{X}$ disconnects $X_\mathrm{\textit{bal}}$ into two components. The fact that $\cE G_\mathrm{\textit{bal}} - \widetilde{\Lambda_e^X}$ has exactly two connected components follows from the fact that the preimage of a connected set under $p$ is again connected.
\end{proof}

\begin{definition}[wall of $\cE G_{bal}$ associated with an edge of $X_\mathrm{\textit{bal}}$]\label{D: walls 1}
 We define the \textit{wall of} $\cE G_\mathrm{\textit{bal}}$ \textit{associated with the edge} $e$ of $X_\mathrm{\textit{bal}}$ as $W_e^X := \widetilde{\Lambda_e^X}$.
\end{definition}

Note that this family of walls is not large enough to define a wallspace structure on $\cE G$ whose associated CAT(0) cube complex is endowed with a proper action, as this family of walls does not separate vertices in a given fibre.

\subsubsection{Combining the walls on the building blocks}\label{S: extending the hyperplanes of fibres}

In this section,  we combine  walls on the building blocks of $\cE G$  to a wall on the whole space $\cE G_\mathrm{\textit{bal}}$.  
Let $e$ be an edge of $\cE G_\mathrm{\textit{bal}}$. 
If $e$ is a vertical edge (that is, contained in one of the fibre CAT(0) cube complexes), we denote by $H_e$ the hyperplane in that fibre associated with $e$.
 If $e$ is a horizontal edge (that is, projects to an edge of $X_\mathrm{\textit{bal}}$), we denote by $H_e$ the midpoint of $e$.  In both cases we call $H_e$ the \textit{hyperplane associated with} $e$.

\begin{definition}\label{D: extended walls}
We define an elementary equivalence relation on the set of edges of $\cE G_\mathrm{\textit{bal}}$ as follows. Two edges $e, e'$ of $\cE G_\mathrm{\textit{bal}}$ are said to be \textit{elementarily equivalent}, and we denote it $e \sim_1 e'$, if one of the following situations occurs:
\begin{itemize}
\item $e$ and $e'$ are  opposite edges in some polygon of $\cE G_\mathrm{\textit{bal}}$,
\item $e$, $e'$ are vertical edges in the same fibre and the hyperplanes $H_e$ and $H_e'$ coincide.
\end{itemize}
The transitive closure defines an equivalence relation  on the set of edges of $\cE G_\mathrm{\textit{bal}}$.
\label{equivalencerelationedgebal}
\end{definition}

\begin{definition}[systems of doors associated with an edge of $\cE G_\mathrm{\textit{bal}}$]
Let $e$ be an edge of $\cE G_\mathrm{\textit{bal}}$. We associate to $e$ a system of doors $\cC_e^{\cE G}$ of $X$ as follows. To every polygon $\widetilde{R}$ of $\cE G_\mathrm{\textit{bal}}$ together with a pair of diametrically opposed edges $e_1, e_2\in \widetilde{R}$ in the equivalence class of $e$, we associate a polygon with doors of $\cC_e^{\cE G}$ with underlying cell $p(\widetilde{R})$ and with doors being the projections $p(e_1)$ and $p(e_2)$.
\end{definition}

\begin{prop}\label{P: extended wall gallery}
The system of doors $\cC_e^{\cE G}$ is a gallery.\qed
\end{prop}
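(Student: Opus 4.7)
The plan is to verify the three defining conditions of Definition \ref{D: gallery} (far apart, connectedness, and coherence) in turn. Two structural observations underlie the argument throughout. First, since polygons of $X$ are embedded, every polygon $R$ of $X_\mathrm{\textit{bal}}$ admits a unique lift $\widetilde{R}$ in $\cE G_\mathrm{\textit{bal}}$, and $p$ restricts to a bijection between the horizontal edges of $\partial\widetilde{R}$ and the edges of $\partial R$. Second, $p$ collapses each attaching path $p_{v,R}\subseteq\partial\widetilde{R}$ to the single vertex $v$ of $R$.

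The far-apart axiom is immediate. Every polygon-with-doors in $\cC_e^{\cE G}$ is, by construction, of the form $(R,\{p(e_1),p(e_2)\})$ for a pair $(e_1,e_2)$ of diametrically opposite edges in the unique lift $\widetilde{R}$ over $R$, and Definition \ref{balanced} of balanced asserts exactly that $p(e_1)$ and $p(e_2)$ are far apart in $X_\mathrm{\textit{bal}}$ and hence in $R$.

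For connectedness, let $\tau=p(f_0)$ and $\tau'=p(f_n)$ be doors of $\cC_e^{\cE G}$ for some $f_0,f_n\in[e]$, and fix a $\sim_1$-chain $f_0\sim_1 f_1\sim_1\cdots\sim_1 f_n$. Each step splits into one of two kinds: either $f_i$ and $f_{i+1}$ are diametrically opposite in some polygon $\widetilde{R}_i$, in which case $(p(\widetilde{R}_i),\{p(f_i),p(f_{i+1})\})$ belongs to $\cC_e^{\cE G}$ and provides a polygon with doors connecting $p(f_i)$ and $p(f_{i+1})$; or $f_i$ and $f_{i+1}$ lie in a common fibre $EG_v$ and share a hyperplane, in which case $p(f_i)=p(f_{i+1})=v$ and the step is trivial. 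Concatenating the non-trivial steps yields the required sequence of polygons from $\tau$ to $\tau'$.

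The coherence condition is the substantive point. Suppose $(R,\{\tau_1,\tau_2\})$ and $(R,\{\tau_1,\tau_2'\})$ are both in $\cC_e^{\cE G}$, arising from diametric pairs $(e_1,e_2)$ and $(e_1',e_2')$ in the unique $\widetilde{R}$ over $R$, with $p(e_1)=p(e_1')=\tau_1$. If $\tau_1$ is an edge of $X_\mathrm{\textit{bal}}$, the horizontal-edge bijection forces $e_1=e_1'$, so $e_2=e_2'$ and $\tau_2=\tau_2'$. If $\tau_1$ is a vertex, then $e_1,e_1'$ are vertical edges in the attaching path $p_{\tau_1,R}$; the no-turn property of Lemma \ref{L: cube complexes} rules out $e_1=e_1'$ via coincident hyperplanes, but distinct $e_1\neq e_1'$ is a priori possible. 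Here the heart of the argument, which I expect to be the main obstacle, is to show that $p(e_2)$ depends only on $\tau_1$ and not on the choice of $e_1\in p_{\tau_1,R}\cap[e]$. I would combine the fact that attaching paths of $\widetilde{R}$ have uniformly bounded length (at most $\max_i\{|\gamma_{A,i}|,|\gamma_{B,i}|\}$ by construction) with the choice of the balancing parameter $k$, which makes the horizontal strips between successive attaching paths in $\partial\widetilde{R}$ long compared to the attaching paths themselves. These two features together should force the diametric image of the whole path $p_{\tau_1,R}$ in $\partial\widetilde{R}$ to lie inside a single attaching path $p_{\tau_2,R}$ on the opposite side of $\widetilde{R}$, so that the common image $\tau_2=p(e_2)=p(e_2')$ is determined by $\tau_1$ alone. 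The vertex subcase is where the balanced subdivision is really doing work, and it is this step that would require the most careful combinatorial analysis of the alternating boundary structure of $\widetilde{R}$.
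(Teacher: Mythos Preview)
Your treatment of the far-apart and connectedness conditions is fine and matches the paper. The gap is in your coherence argument.

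The claim you hope to prove --- that for $k$ large the diametric image of an attaching path $p_{\tau_1,R}$ in $\partial\widetilde{R}$ lies inside a single attaching path on the opposite side --- is false, and neither the balanced condition nor the size of $k$ rescues it. The boundary $\partial\widetilde{R}$ alternates between horizontal blocks (each of the same length $K$ after subdivision) and attaching paths of lengths $\ell_0,\ldots,\ell_{2Nd-1}$, and these $\ell_i$ are the edge-lengths $|\gamma_{A,i}|,|\gamma_{B,i}|$, which have no reason to coincide. If, say, $\ell_0>\ell_{Nd}$, then the diametric image of $p_{v_0}$ (an interval of length $\ell_0$) is strictly longer than the block $p_{v_{Nd}}$ it is centred near, and will spill over into the adjacent horizontal edges on both sides. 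Concretely, two edges $e_1,e_1'\subset p_{v_0}$ can have opposites $e_2,e_2'$ with $p(e_2)$ a sub-edge of one edge of $R$ and $p(e_2')$ the vertex $v_{Nd}$, so $p(e_2)\neq p(e_2')$. Increasing $K$ shifts all positions uniformly and does not change the relative offsets that cause this overlap; it only guarantees far-apartness of the projections, not that they coincide.

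The paper's proof of coherence is global rather than local: assuming a violation $R_{\{\tau_1,\tau_2\}}$, $R_{\{\tau_1,\tau_2'\}}$ with $\tau_2\neq\tau_2'$, one uses connectedness to build a minimal chain of polygons-with-doors from $\tau_2$ back to $\tau_1$ through $\tau_2'$, argues that the intermediate underlying cells are pairwise distinct, and observes that this sub-chain is then itself a gallery (coherence being vacuous when no underlying cell repeats) whose hypercarrier fails to embed in $X$, contradicting Theorem~\ref{T: properties hyper}. The key input you are missing is precisely this embedding theorem for hypercarriers; coherence does not follow from balance alone, and the obstruction to your local picture is exactly why the paper's argument routes through the global geometry of $X$.
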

\begin{proof} We have to verify the conditions listed in Definition \ref{D: gallery}. The connectedness condition follows immediately from the definition. The doors of a given polygon of $\cC_e^{\cE G}$ are far apart because $\cE G_\mathrm{\textit{bal}}$ is balanced. 
Suppose by contradiction that there exists a pair of polygons of $\cC_e^{\cE G}$ violating the coherence condition, i.e. a pair of polygons $R_{\{\tau_1, \tau_2\}}, R_{\{\tau_1', \tau_2'\}}$ of $\cC_e^{\cE G}$ with the same underlying cell $R$, such that $\tau_1= \tau_1'$ and $\tau_2\neq\tau_2'$. By the connectedness condition, let \[R_{\{\tau_1, \tau_2\}}, R_{\{\tau_2, \tau_3\}}, \ldots, R_{\{\tau_{n-1}, \tau_n\}}\] be a sequence of polygons of $\cC_e^{\cE G}$ with $\tau_{n-1}=\tau_2', \tau_n=\tau_1$, and \[R_1, R_2, \ldots, R_{n-1}\] the associated sequence of underlying cells, with $R_1=R_{n-1}=R$. We can assume that such a sequence of polygons is minimal.

We claim that, for $1 < i < j \leq n-1$, we have $R_i \neq R_j$. Indeed, if this was not the case, then either the set of doors $\{\tau_i, \tau_{i+1}\}$ of $R_i$ is disjoint from the set of doors $\{\tau_j, \tau_{j+1}\}$ of $R_j$, or those polygons share a door. In the former case, $R_{\{\tau_i, \tau_{i+1}\}}, \ldots, R_{\{\tau_{j}, \tau_{j+1}\} }$ defines a gallery (the coherence condition now being trivially verified), the hypercarrier of which does not embed, contradicting Theorem \ref{T: properties hyper}. In the latter case, this contradicts the minimality of the initial sequence of polygons. This proves our claim.

It now follows that $R_{\{\tau_2, \tau_{3}\}}, \ldots, R_{\{\tau_{n-1}, \tau_{n}\} }$ defines a gallery (the coherence condition being trivially verified), and we have $R_2 \cap R_{n-1} \neq \varnothing$ by hypothesis. Since $n \geq 4$ by the far apart condition, it follows that the hypercarrier of that gallery does not embed in $X$, contradicting Theorem~\ref{T: properties hyper}.
\end{proof}

Let $\Lambda_e^{\cE G}$ be the hypergraph in $X$ associated with $\cC_e^{\cE G}$. 
It follows from Proposition \ref{P: extended wall gallery} and Proposition \ref{T: properties hyper} that $\Lambda_e^{\cE G}$ is a tree.
\begin{lem}\label{L: hypergraph tree}
 The hypergraph $\Lambda_e^{\cE G}$ is a tree embedded in $X$.\qed
\end{lem}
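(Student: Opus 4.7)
The plan is to derive this lemma immediately from the two prior results cited in the statement, without any additional small cancellation work. By Proposition \ref{P: extended wall gallery}, the system of doors $\cC_e^{\cE G}$ associated with the edge $e$ of $\cE G_{\mathrm{\textit{bal}}}$ is a gallery in the sense of Definition \ref{D: gallery}, so the hypergraph $\Lambda_e^{\cE G}$ is the hypergraph of a gallery on the $C'(1/6)$--polygonal complex $X$ in the general sense of Section \ref{S: galleries}.

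With this reduction in hand, both conclusions of the lemma become applications of Theorem \ref{T: properties hyper}. First, applying the second bullet of that theorem to the gallery $\cC_e^{\cE G}$ yields that the underlying abstract hypergraph is a tree. Second, the same bullet asserts directly that this hypergraph embeds in $X$. In fact, the embedding of $\Lambda_e^{\cE G}$ is also a consequence of the stronger statement in the first bullet of Theorem \ref{T: properties hyper}, namely that the entire hypercarrier $Y_{\cC_e^{\cE G}}$ embeds in $X$ under $i_{\cC_e^{\cE G}}$, since $\Lambda_e^{\cE G}$ sits inside $Y_{\cC_e^{\cE G}}$ by construction.

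There is essentially no obstacle: the hard work has already been absorbed into the verification that $\cC_e^{\cE G}$ satisfies the coherence, far-apart, and connectedness conditions of Definition \ref{D: gallery} (done in Proposition \ref{P: extended wall gallery}), and into the small cancellation arguments of the Appendix that prove Theorem \ref{T: properties hyper}. The only writing task is to combine these two inputs. Accordingly, I would state the proof as a two-sentence argument: invoke Proposition \ref{P: extended wall gallery} to see that $\cC_e^{\cE G}$ is a gallery, then invoke Theorem \ref{T: properties hyper} to conclude that $\Lambda_e^{\cE G}$ is a tree that embeds in $X$. \qed
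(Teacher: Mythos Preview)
Your proposal is correct and matches the paper's own argument essentially verbatim: the paper states just before the lemma that the result follows from Proposition \ref{P: extended wall gallery} and Theorem \ref{T: properties hyper}, and marks the lemma with a \qed. Your two-sentence version is exactly what is intended.
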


\begin{definition}[wall of $\cE G$ associated with an edge of $\cE G_\mathrm{\textit{bal}}$]
Let $e$ be an edge of $\cE G_\mathrm{\textit{bal}}$. We define the \textit{wall associated with} $e$ as a tree of spaces over the hypergraph $\Lambda_e^{\cE G}$ as follows.
Let $\widetilde{R}$ be a polygon of $\cE G$ and let $e_1$ and $e_2$ be  opposite edges of a polygon $\widetilde{R}$ of $\cE G_\mathrm{\textit{bal}}$, which are in the equivalence class of $e$, see Definition \ref{equivalencerelationedgebal}. Note that the polygon $p(\widetilde{R})$  of $X_\mathrm{\textit{bal}}$, together with the doors $p(e_1)$ and $p(e_2)$, defines a polygon of $\cC_e^{\cE G}$. We define  
$$ W_{e}^{\cE G} := \underset{\mathrm{in~the~equivalence~class~of~} e}{\bigcup_{e_1 \sim_1 e_2 }} \big( H_{e_1} \cup  \Lambda_{e_1}^{\widetilde{R}} \cup H_{e_2} \big),$$
where $\Lambda_{e_1}^{\widetilde{R}}$ is the hypergraph of the polygon $\widetilde{R}$ defined in Remark \ref{R: walls polygon}. $W_{e}^{\cE G}$ is called the \textit{wall of $\cE G$ associated with $e$}.
\label{D: walls 2}
\end{definition}

We readily observe that the above defined wall $W^{\cE G}_e$ is a combination of hyperplanes of the various CAT(0) cube complexes of $\cE G$ and hypergraphs  of the various polygons of $\cE G$.

Note that the projection of the wall $W_e^{\cE G}$ under $p:\cE G_{bal}\to X_{bal}$ is the hypergraph $\Lambda_e^{\cE G}$ associated with the gallery $C_e^{\cE G}$. 
Let us distinguish two types of walls $W_e^{\cE G}$ associated with an edge of $\cE G_\mathrm{\textit{bal}}$ according to their projections $\Lambda_e^{\cE G}$ in $X$. 
 \begin{itemize}
  \item The wall $W_e^{\cE G}$ and its associated hypergraph $\Lambda_e^{\cE G}$ are said to be \emph{of  first type} if $\Lambda_e^{\cE G}$ consists of a single vertex.
  \item Otherwise, $W_e^{\cE G}$ and  $\Lambda_e^{\cE G}$ are said to be \emph{of second type}. 
 \end{itemize}
 
 \begin{figure}[H]
\begin{center}
\scalebox{0.4}{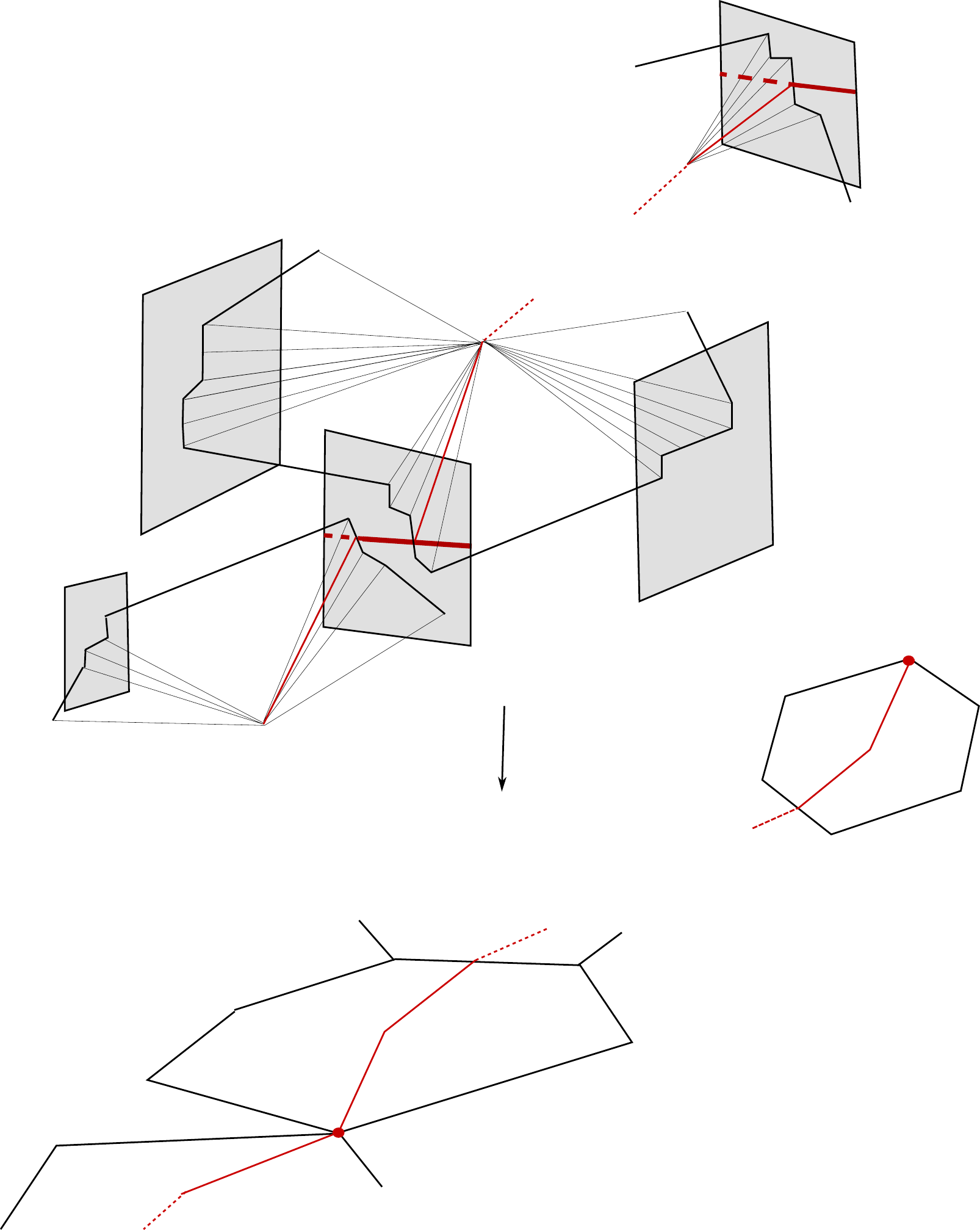}
\caption{A portion of a wall associated with an edge of $\cE G_\mathrm{\textit{bal}}$ of second type, together with its hypergraph. To avoid drawing too many edges, we assume here that $\cE G$ is balanced, that is, $\cE G= \cE G_{\mathrm{\textit{bal}}}$.}
\label{FigureWall}
\end{center}
\end{figure}
 
Note that a wall $W_e^{\cE G}$ associated with a vertical edge $e$ of $\cE G_{\mathrm{\textit{bal}}}$ is of first type if and only if $e$ is contained in a fibre CAT(0) cube complex and the associated hyperplane crosses none of the attaching paths defined in Definition \ref{attachingpath}.
An example where all occurring types of hypergraphs $\Lambda_e^{\cE G}$and  $\widetilde{\Lambda_e^{\cE G}}$ are displayed is shown in Figure \ref{F: 3 types of walls}.
 
 We now show that the walls of $\cE G$ associated with edges of $\cE G_{\mathrm{\textit{bal}}}$ are walls in the sense of Definition \ref{wallspace}, that is, they separate $\cE G$ into exactly two connected components. As noted in the introduction, the results and methods of \cite[Section~5]{wise_structure_2011} cannot be applied to conclude in our situation. Instead, we use, as already mentioned, the   properties of hypercarriers in the $C'(1/6)$--polygonal complex $X$ and the properties of the fibre CAT(0) cube complexes. Hence, we give a more direct approach to the cubulation problem.
 
  \begin{lem}\label{WallEGseparates1}
A wall associated with an edge of $\cE G_\mathrm{\textit{bal}}$ of first type is contractible and separates $\cE G$ into two connected components.
\end{lem}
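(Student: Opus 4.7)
My plan is to first identify $W_e^{\cE G}$ explicitly in the first-type case, then deduce contractibility for free, and finally establish separation by a Mayer--Vietoris computation. By the observation following Definition \ref{D: walls 2}, the edge $e$ must be vertical, lying in some fibre $EG_v$, with its associated hyperplane $H_e \subset EG_v$ disjoint from every attaching path. In particular, every edge of $EG_v$ dual to $H_e$ lies in the interior of $EG_v$ and is not contained in the boundary of any polygon of $\cE G$. Consequently, no polygon-based elementary equivalence can appear within the $\sim_1$-class of $e$, so the class reduces to the set of vertical edges of $EG_v$ dual to $H_e$, and the wall itself reduces to the single hyperplane $W_e^{\cE G} = H_e$. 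Contractibility is then immediate from Proposition \ref{propertieshyperplane}.

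For separation, I plan to apply Mayer--Vietoris to a suitable open cover of $\cE G$. The crucial geometric input is that $H_e$ admits a tubular neighborhood in $\cE G$ lying entirely inside $EG_v$: since the edges of $EG_v$ dual to $H_e$ are not shared with any polygon of $\cE G$, no polygon is glued to $EG_v$ in a small neighborhood of $H_e$. Fixing $\epsilon > 0$ small enough, the open neighborhood $N := N_\epsilon(H_e) \subset \cE G$ coincides with the usual $\epsilon$-tubular neighborhood of $H_e$ inside the cube complex $EG_v$; in particular $N$ is connected and deformation retracts onto $H_e$, while $N \setminus H_e$ has exactly two connected components corresponding to the two sides of $H_e$ in $EG_v$.

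Applying the reduced Mayer--Vietoris sequence to the open cover $\cE G = (\cE G \setminus H_e) \cup N$ with intersection $N \setminus H_e$, and using that $\cE G$ is simply connected (so $\tilde{H}_1(\cE G) = 0$), together with $\tilde{H}_0(N) = \tilde{H}_0(\cE G) = 0$ and $\tilde{H}_0(N \setminus H_e) \cong \bbZ$, the sequence reduces to
\[
0 \to \bbZ \to \tilde{H}_0(\cE G \setminus H_e) \to 0,
\]
giving $\tilde{H}_0(\cE G \setminus H_e) \cong \bbZ$. Hence $\cE G \setminus H_e$ has exactly two connected components, as required.

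The step requiring most care is verifying that the tubular neighborhood of $H_e$ really does lie inside $EG_v$, which is precisely what the first-type hypothesis ensures. This is also what makes the first-type case strictly simpler than the second-type case treated in the next lemma, where walls propagate through polygons into neighbouring fibres and a more intricate combinatorial argument is needed to control the separation.
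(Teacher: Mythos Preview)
Your proof is correct and follows essentially the same route as the paper's: both identify the first-type wall with a single hyperplane $H_e$ in a fibre $EG_v$, obtain contractibility from Proposition \ref{propertieshyperplane}, and then pass from local to global separation using simple connectedness of $\cE G$. The paper invokes this last step as a general principle (``locally separates into two components and $\cE G$ is simply connected, hence globally separates into two components''), whereas you carry out the underlying Mayer--Vietoris computation explicitly; these are the same argument at different levels of detail. One small point: your justification that the $\epsilon$-neighbourhood of $H_e$ in $\cE G$ coincides with its $\epsilon$-neighbourhood in $EG_v$ would be cleaner if you noted that attaching paths not crossed by $H_e$ (and horizontal-edge attachment vertices) are at distance at least $1/2$ from $H_e$ in the cubical metric, so taking $\epsilon < 1/2$ suffices.
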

 \begin{proof}
 Let $e$ be an edge of $\cE G_{bal}$ whose associated hypergraph is of first type. The hypergraph is then completely contained in a CAT(0) cube complex of the form $E G_v$, that is, it coincides with one of the hyperplanes of $E G_v$, and such a hyperplane does not cross any attaching path. Thus, the wall is contractible and separates $\cE G$ locally into two connected components by Proposition \ref{propertieshyperplane}. Since $\cE G$ is simply connected, the wall separates $\cE G$ globally into two connected components. 
 \end{proof}
 
\begin{lem}\label{WallEGseparates2}
A wall associated with an edge of $\cE G_\mathrm{\textit{bal}}$ of second type is contractible and separates $\cE G$ into two connected components. 
\end{lem}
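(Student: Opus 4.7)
The plan splits naturally into contractibility and separation.

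For contractibility, I would exhibit $W_e^{\cE G}$ as a tree of contractible spaces over the hypergraph tree $\Lambda_e^{\cE G}$ (which is a tree by Lemma \ref{L: hypergraph tree}). The restriction of $p:\cE G_{\mathrm{\textit{bal}}}\to X_{\mathrm{\textit{bal}}}$ to $W_e^{\cE G}$ has fibres that are either single points (above open radii in polygons of $\cC_e^{\cE G}$, above apices of such polygons, and above midpoints of horizontal-edge doors) or hyperplanes of fibre CAT(0) cube complexes (above vertex-doors of $\cC_e^{\cE G}$, where the hypergraph passes through a vertex of $X$). The latter are contractible by Proposition \ref{propertieshyperplane}, so the total space is a tree of contractible spaces, and hence is contractible.

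For separation, I would first establish local separation at every point $x\in W_e^{\cE G}$, by checking four cases. If $x$ is interior to a hyperplane of a fibre, local separation is immediate from Proposition \ref{propertieshyperplane}. If $x$ is interior to a polygon-diameter, the diameter separates its polygon. If $x$ is the midpoint of a horizontal edge shared by two polygons of $\cC_e^{\cE G}$, the two meeting diameters fit together consistently at the shared midpoint thanks to the coherence condition of the gallery, which was the main content of Proposition \ref{P: extended wall gallery}. Finally, if $x$ is the midpoint of a vertical edge where a polygon-diameter meets a hyperplane of a fibre, Lemma \ref{L: cube complexes} does the work: Fibre separation places the two endpoints of the vertical edge on opposite sides of the hyperplane, and No turns guarantees the hyperplane crosses the attaching path only at $x$, so the two local sides of the diameter match the two local sides of the hyperplane on the nose.

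With local separation in hand, I would promote it to a global separation of $\cE G$ into exactly two connected components by invoking the simple connectedness of $\cE G$ together with the contractibility of $W_e^{\cE G}$ proved above. Concretely, one defines a locally constant $\{+,-\}$-labelling on $\cE G\setminus W_e^{\cE G}$ from the local sides, transports it along paths, and checks that homotopic paths give the same transport because every loop in $\cE G$ is null-homotopic and the wall is contractible, so the signed intersection count vanishes on any loop. Non-emptiness of both sides is immediate since each polygon of $\cC_e^{\cE G}$ contributes vertices to each half of its diameter.

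The main obstacle is the last case of the local analysis: at a vertex $v\in X$ carrying a hyperplane of the wall, several polygons of $\cC_e^{\cE G}$ may have $v$ as a door, and all the corresponding diameters in $\cE G$ must be pasted onto the \emph{same} hyperplane of $EG_v$ in a globally consistent way across the fibre. The coherence condition of $\cC_e^{\cE G}$ together with Fibre separation and No turns of Lemma \ref{L: cube complexes} is designed precisely to ensure this compatibility, so once these are invoked correctly the case goes through and the rest of the argument is comparatively routine.
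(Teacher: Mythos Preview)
Your proposal is correct and follows the same overall strategy as the paper: contractibility via a tree-of-contractible-spaces structure over $\Lambda_e^{\cE G}$, and separation by establishing local two-sidedness and then invoking simple connectedness of $\cE G$.

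The one place your execution diverges is the vertex-door case. The paper does not do a pointwise analysis there; instead it builds an explicit retraction $q_v: S(v) \to EG_v$ of the whole preimage of the star of $v$ onto the fibre (first a radial retraction in each polygon, then a retraction of the boundary paths onto $EG_v$), observes that $q_v$ sends $S(v)\setminus W_e^{\cE G}$ onto $EG_v\setminus H$, and concludes from Proposition~\ref{P: walls cupe complexes} that $S(v)\setminus W_e^{\cE G}$ has exactly two components. This handles all polygons with door $v$ at once and makes the ``exactly two'' count immediate, without any monodromy bookkeeping. Your approach replaces this with a pointwise check (your cases 1 and 4) together with a signed-intersection/monodromy argument to globalise; this also works, and the ingredients you cite (coherence from Proposition~\ref{P: extended wall gallery}, Fibre separation and No turns from Lemma~\ref{L: cube complexes}) are the right ones, but the paper's retraction is a slightly more economical way to organise the same content.
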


\begin{proof} 
Let $e$ be an edge of $\cE G_\mathrm{\textit{bal}}$ whose associated hypergraph is of second type. We first use   properties of hypergraphs in $X$. 
Using  Lemma \ref{L: hypergraph tree}, we observe that the wall associated with $e$ has a structure of tree of spaces over $\Lambda_e^{\cE G}$ with fibres being (contractible) hyperplanes. The contractibility of such a wall thus  follows.

Since $\cE G_\mathrm{\textit{bal}}$ is simply connected, it is enough to prove that the associated hypergraph separates locally $\cE G_\mathrm{\textit{bal}}$ into two connected components. Therefore, we now use geometric properties of $X$ to reduce the problem to the hyperplanes in the CAT(0) cube complexes.

The only non-trivial case to consider is the preimage of a neighbourhood of a vertex of $X_\mathrm{\textit{bal}}$ contained in $\Lambda_e^{\cE G}$, that is, a point of $\Lambda_e^{\cE G}$ whose preimage in $\cE G_\mathrm{\textit{bal}}$ is a hyperplane in the associate fibre. Let $v$ be such a vertex of $X_\mathrm{\textit{bal}}$ and $H$ the hyperplane associated with an edge $e$ on the attaching path $p_{v,R}$ in $E G_v$ corresponding to a polygon of $X$.

\begin{figure}[H]
\begin{center}
\scalebox{0.6}{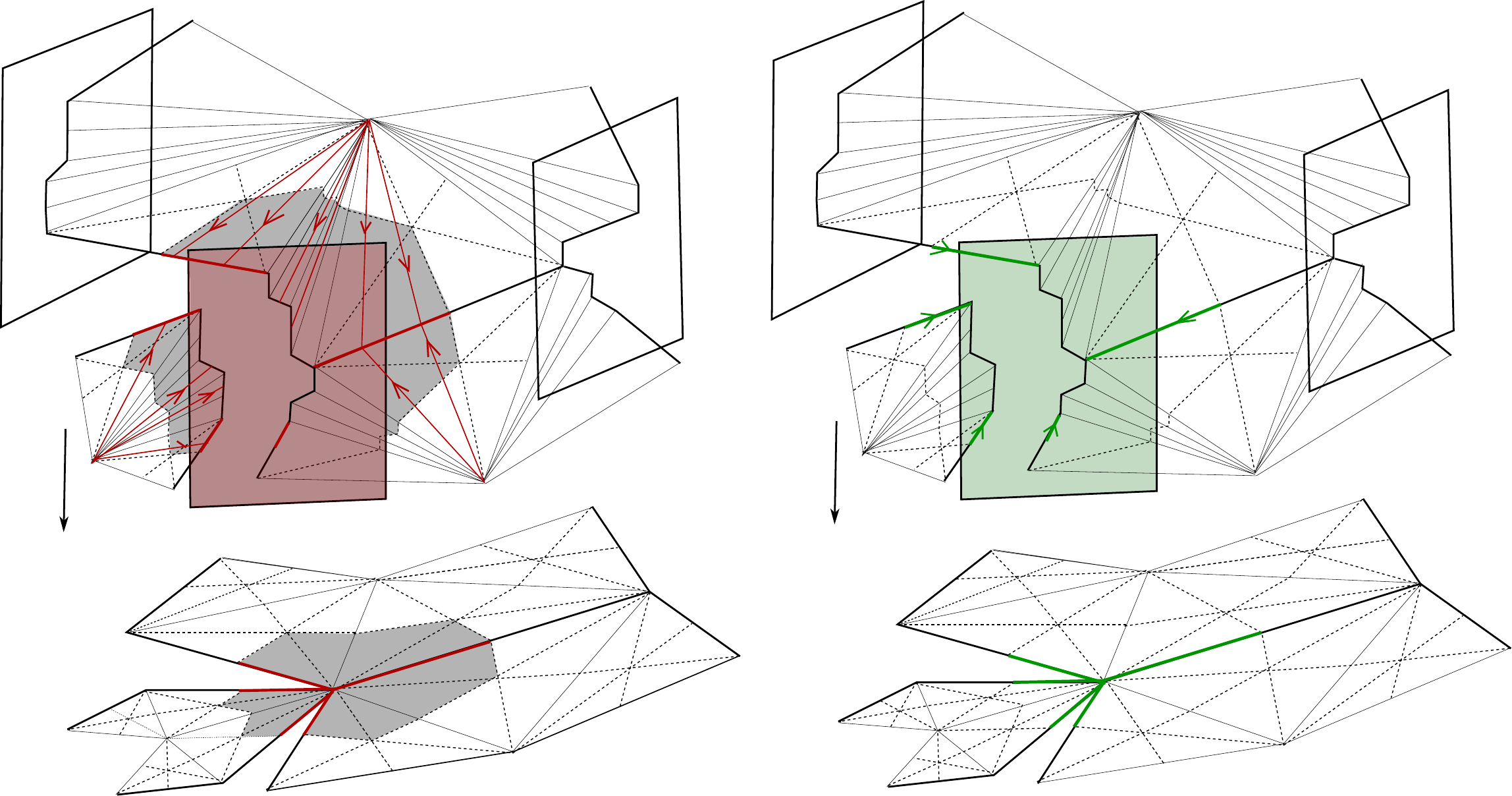}
\caption{The construction of the projection map $q_v$. The star of $v$ and its preimage in $\cE G_\mathrm{\textit{bal}}$ are represented in shaded. On the left, the various radial projections $\widetilde{R} \cap S(v) \ra \partial \widetilde{R} \cap S(v)$. On the right, the various projections $\partial \widetilde{R} \cap S(v) \ra  EG_v$. }
\label{FigureProjection}
\end{center}
\end{figure}

We now work with a finer polyhedral structure on $\cE G_\mathrm{\textit{bal}}$ obtained as follows. First consider the simplicial polygon associated with each polygon of $\cE G_\mathrm{\textit{bal}}$ (as explained in Section \ref{complexofgroupsfreeproduct}), then take its first barycentric subdivision. For this new polyhedral structure, consider the star $\mbox{st}(v)$ of $v$, that is, the union of all the simplices containing $v$. Denote by $S(v)$ the preimage of $\mbox{st}(v)$ under the projection map $p:\cE G_\mathrm{\textit{bal}} \ra X_\mathrm{\textit{bal}}$. We define a projection map $q_v: S(v) \ra EG_v$ in two steps. Let $R$ be a polygon of $X_\mathrm{\textit{bal}}$ containing $v$ and $\widetilde{R}$ its lift to $\cE G_\mathrm{\textit{bal}}$. First retract radially $\widetilde{R} \cap S(v)$ onto $\partial \widetilde{R} \cap S(v)$, then retract  $\partial \widetilde{R} \cap S(v)$ onto $\partial \widetilde{R} \cap EG_v$ (see Figure \ref{FigureProjection}).  
It is straightforward to check that these projections are compatible and define a map from $S(v)$ to $EG_v$. Furthermore, by definition of $W_e^{\cE G}$, $q_v$ restricts to a surjective map from $S(v) \setminus W_e^{\cE G}$ onto $EG_v \setminus H$. 

Finally, we use the properties of CAT(0) cube complexes to conclude. As $EG_v$ is a CAT(0) cube complex, by Proposition \ref{P: extended wall gallery}(1) the latter space is disconnected into exactly two components, so, using Proposition \ref{P: extended wall gallery}(2), is $S(v) \setminus W_e^{\cE G}$. As the preimage under $q_v$ of a path of $EG_v$ is a connected subset of $S(v)$ and $EG_v \setminus H$ has exactly two connected components, $S(v) \setminus W_e^{\cE G}$ has at most two connected components, hence it has exactly  two connected components. 
\end{proof}

We now have defined many walls on $\cE G$: lifts of walls of $X$, and extension of hyperplanes of the fibre CAT(0) cube complexes to the whole space $\cE G$. In the next section we use all these walls to define a wallspace structure on $\cE G$ that makes $\cE G$ a wallspace. We then associate a CAT(0) cube complex to such a structure.

\subsection{The wallspace and its associated CAT(0) cube complex}

In this section we combine the walls associated with edges of $X_{bal}$, Definition \ref{D: walls 1}, and the walls associated with edges of $\cE G_{bal}$, Definition \ref{D: walls 2}. This yields a wallspace structure on $\cE G_{bal}$. Figure \ref{F: 3 types of walls} shows an example of $\cE G$ with all three types of walls, together with their corresponding hypergraphs.

\begin{figure}[H]
\begin{center}
\scalebox{0.69}{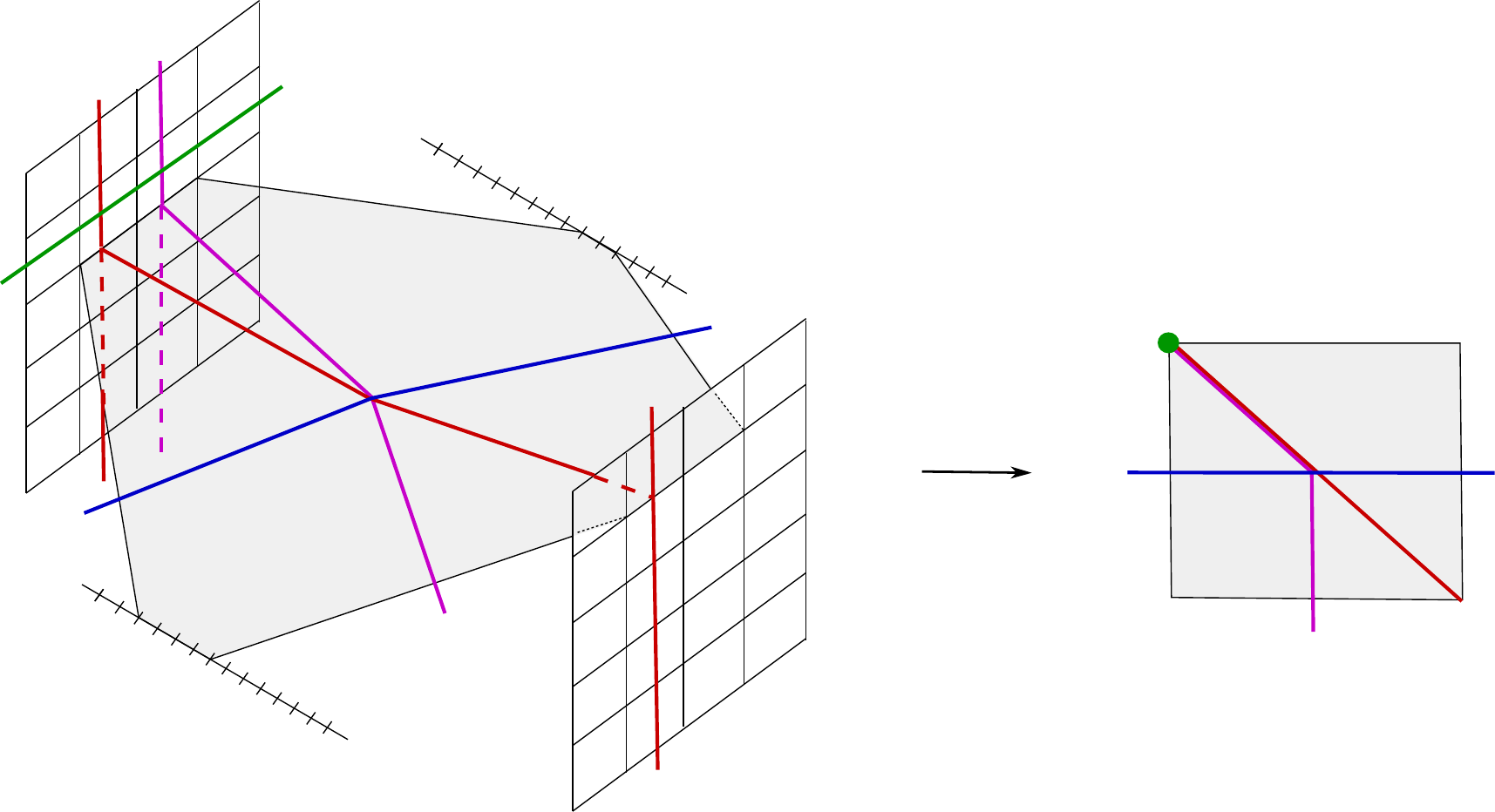}
\caption{Examples of the three types of walls of $\cE G$ (left) and their associated hypergraphs in $X$ (right), in the case $A=\mathbb{Z}^2$, $B=\mathbb{Z}$. To avoid a busy picture, we only represent the case of a polygon of $X$ with $4$ sides. Blue: Walls/Hypergraphs associated with edges of $X$. Green: Walls/Hypergraphs associated with edges of $\cE G_\mathrm{bal}$  of first type. Red and pink: Walls/Hypergraphs associated with edges of $\cE G_\mathrm{bal}$  of second type.}
 \label{F: 3 types of walls}
\end{center}
\end{figure}

\begin{definition}\label{DefinitionWalls}
We denote by $\mathcal{W}$ the family of walls of $\cE G_\mathrm{\textit{bal}}$ consisting of:
\begin{itemize}
\item the walls associated with an edge of $X_\mathrm{\textit{bal}}$,
\item the walls of first type associated with an edge of $\cE G_\mathrm{\textit{bal}}$,
\item the walls of second type associated with an edge of $\cE G_\mathrm{\textit{bal}}$.
\end{itemize}
We call an element of $\mathcal{W}$ a \textit{wall} of $\cE G_\mathrm{\textit{bal}}$. 
\end{definition}

The next result follows from combining the properties of the three types of walls that we have discussed above. 
\begin{prop}\label{EGwallstructure}
The complex $\cE G$  with the previous family of walls $\cW$ is a wallspace. \qed
\end{prop}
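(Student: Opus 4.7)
The plan is to verify the two axioms in Definition \ref{wallspace}. The first axiom—that each element of $\cW$ gives a partition of $\cE G_{bal}$ into two non-empty half-spaces—has already been established: the lifted walls $W_e^X$ are treated in Lemma \ref{WallXseparates}, while the walls $W_e^{\cE G}$ of first and second type are treated in Lemmas \ref{WallEGseparates1} and \ref{WallEGseparates2} respectively. What remains is the finiteness axiom: for any two vertices $x, y$ of $\cE G_{bal}$, only finitely many walls of $\cW$ separate them.

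To establish finiteness, I would fix a combinatorial edge-path $\gamma$ from $x$ to $y$ of length $L$. Any wall $W \in \cW$ that separates $x$ from $y$ must meet $\gamma$, because $\gamma$ connects two points lying in distinct connected components of $\cE G_{bal} \setminus W$. It therefore suffices to prove a uniform bound on the number of walls of $\cW$ that meet a single edge $e$ of $\cE G_{bal}$. I claim this number is at most two, from which at most $2L$ walls separate $x$ from $y$.

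I would distinguish two cases according to whether $e$ is horizontal or vertical in the sense of Definition \ref{vertical and horizontal}. If $e$ is horizontal, then $p(e)$ is an edge of $X_{bal}$. Any lifted wall $W_{e'}^X$ meeting $e$ must satisfy that the hypergraph $\Lambda_{e'}^X$ meets $p(e)$; since each hypergraph embeds in $X_{bal}$ by Theorem \ref{T: properties hyper} and the opposite-equivalence class containing $p(e)$ is unique, this forces $W_{e'}^X = W_{p(e)}^X$, and this single lifted wall meets $e$ only at its midpoint. A wall $W_{e'}^{\cE G}$ of second type meets $e$ only if $e$ itself lies in the $\sim_1$-equivalence class underlying $W_{e'}^{\cE G}$ from Definition \ref{D: extended walls}, which forces $W_{e'}^{\cE G} = W_e^{\cE G}$; by Proposition \ref{P: extended wall gallery} and Theorem \ref{T: properties hyper}, the generalized hypergraph $\Lambda_e^{\cE G}$ is embedded in $X$, so it meets $p(e)$ at most once. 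No wall of first type meets $e$, since those are confined to fibres while $e$ is horizontal. If instead $e$ is vertical, then $p(e)$ is a vertex of $X_{bal}$; since every hypergraph in the opposite-edge gallery $\cC_{e'}^X$ avoids vertices of $X_{bal}$, no lifted wall meets $e$. Within the fibre CAT(0) cube complex containing $e$, there is a unique hyperplane $H_e$ crossing $e$ by Proposition \ref{propertieshyperplane}, and by construction $H_e$ lies in exactly one wall of $\cW$, which is either of first or second type depending on whether $H_e$ meets an attaching path.

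The main obstacle is organizing the horizontal case transparently: one must simultaneously invoke uniqueness of the opposite-equivalence class of a given edge in $X_{bal}$, embeddedness of the relevant generalized hypergraph (Theorem \ref{T: properties hyper}), and the fact that the $\sim_1$-equivalence class of an edge of $\cE G_{bal}$ produces a single wall. All these ingredients have been carefully engineered in Sections 2.1 and 3, so the proof reduces to assembling prior results rather than performing new calculations.
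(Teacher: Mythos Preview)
Your proof is correct and follows essentially the same approach as the paper: both invoke Lemmas \ref{WallXseparates}, \ref{WallEGseparates1}, \ref{WallEGseparates2} for the partition axiom, and both deduce finiteness from the fact that a separating wall must cross any edge-path joining the two vertices, together with the observation that each edge of $\cE G_{\mathrm{\textit{bal}}}$ lies in at most one equivalence class of each kind (hence determines at most one lifted wall and at most one $\cE G$-wall). Your horizontal/vertical case analysis and the explicit bound of two walls per edge make the argument more explicit than the paper's two-line version, but the underlying idea is identical.
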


The wallspace $(\cE G,\cW)$ comes with an action of $G$, by setting $g\cdot W^X_e:=W^X_{g\cdot e}$ and $g\cdot W_e^{\cE G} :=W_{g\cdot e}^{\cE G}$ respectively. 

\begin{proof}[Proof of Proposition \ref{EGwallstructure}] By definition and Lemma \ref{WallXseparates} every edge in  $X_{bal}$ defines a unique wall of $\cE G$. By definition and  Lemmas \ref{WallEGseparates1} and Lemmas \ref{WallEGseparates2} an edge of $\cE G_\mathrm{\textit{bal}}$ defines a unique wall of $\cE G$.  Two vertices of $\cE G_\mathrm{\textit{bal}}$ can be joined by a finite path in the $1$-skeleton of $\cE G_\mathrm{\textit{bal}}$. As a wall separating two vertices must cross every path connecting them, the result follows. 
\end{proof}

The proof immediately implies the following useful remark.

 \begin{cor}\label{finitenumberhypercarriers2cell}
There is an upper bound on the number of walls of $\cE G$, the hypercarriers of which  contain a given polygon of $X$.\qed
\end{cor}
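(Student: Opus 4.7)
The plan is to split the walls (Definition \ref{DefinitionWalls}) into the three families from the excerpt---walls $W_e^X$ associated with edges of $X_\mathrm{\textit{bal}}$, walls of first type, and walls of second type---and to bound each family's contribution separately. First-type walls drop out immediately: by definition their hypergraph is a single vertex of $X$, so their hypercarrier contains no polygon of $X$ at all, contributing nothing to our count.

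For the walls $W_e^X$, I will argue that if the hypercarrier contains $R$, then the gallery $\cC_e^X$ must contain a polygon-with-doors $R_{\{e_1,e_2\}}$ where $\{e_1,e_2\}$ is a pair of diametrically opposed edges of $R$ viewed as a polygon of $X_\mathrm{\textit{bal}}$. Since the diametrically-opposed-edge equivalence relation partitions the set of such pairs into classes, and each class is the gallery of exactly one wall, distinct walls in this family give distinct pairs, yielding at most $d\cdot N\cdot k$ such walls (half the perimeter of $R$ in $X_\mathrm{\textit{bal}}$). The same strategy should work for walls $W_e^{\cE G}$ of second type: such a wall's gallery $\cC_e^{\cE G}$ must contain a polygon-with-doors of underlying cell $R$, arising from a pair of opposite edges of the \emph{unique} lift $\widetilde R$ of $R$ in $\cE G_\mathrm{\textit{bal}}$; the elementary equivalence relation of Definition \ref{equivalencerelationedgebal} partitions these pairs into classes, each determining a unique wall, giving at most $\tfrac{1}{2}|\partial\widetilde R|$ walls, which is a constant depending only on the model polygon, on the balancing index $k$, and on the chosen geodesics $\gamma_{A,i},\gamma_{B,i}$.

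The step I expect to be most delicate is the injectivity claim for the second-type family: one must check that two distinct pairs of opposite edges of $\widetilde R$ always index distinct walls, even when their projections to $R$ coincide. This degeneracy can occur precisely when a door is a vertex of $R$, since the entire attaching path above that vertex contributes many vertical edges to $\partial\widetilde R$, and several such vertical edges could project to a single vertex door. My plan is to bypass this issue by counting opposite-edge pairs in $\widetilde R$ \emph{directly}, rather than counting pairs of doors in $R$: the elementary equivalence relation is defined on edges of $\cE G_\mathrm{\textit{bal}}$ before projecting to $X$, so its equivalence classes are genuinely indexed by opposite-edge pairs of $\widetilde R$ and not by their images in $R$. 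Summing the bounds from the $W_e^X$ family and from the second-type $W_e^{\cE G}$ family will then yield the claimed uniform upper bound.
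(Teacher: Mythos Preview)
Your approach is correct and is essentially a detailed spelling-out of what the paper leaves implicit: the paper gives no separate proof, merely noting that the corollary follows from the proof of Proposition~\ref{EGwallstructure}, where it is observed that every wall is determined by an edge of $X_\mathrm{\textit{bal}}$ or of $\cE G_\mathrm{\textit{bal}}$. Your case split into the three wall families and the edge-counting argument for each is exactly the intended mechanism.

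One remark: the ``delicate'' injectivity concern you raise is in fact a non-issue, and your resolution is more work than needed. You only need the map from \emph{walls whose hypercarrier contains $R$} to \emph{opposite-edge pairs of $\widetilde R$} (or of $R$ in $X_\mathrm{\textit{bal}}$, for the $W_e^X$ family) to be well-defined---that is, each such wall must cross at least one edge of $\widetilde R$, which is immediate from the definition of the gallery $\cC_e^{\cE G}$. Distinct walls are distinct equivalence classes, hence give distinct pairs, so the number of such walls is bounded by the number of edges of $\widetilde R$. Whether the reverse map (pairs $\to$ walls) is injective is irrelevant for an upper bound: if several pairs of opposite edges of $\widetilde R$ happen to lie in the same equivalence class, that only decreases the wall count. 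So you can drop the discussion of vertex-doors and degenerate projections entirely.
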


\begin{rmk} We note that the wall-pseudometric on $\cE G$ is a metric. Indeed, every pair of vertices of $\cE G$ is separated by a wall. To see this first consider two  vertices in the same fibre. By assumption the fibre is a CAT(0) cube complex. Then, the proof of Lemma \ref{WallEGseparates2} implies in particular that two such points are separated by at least one wall associated with a vertical edge. For vertices in two different fibres, as $X$ is a $C'(1/6)$--complex it follows from  the fact that the family of hypergraphs $\cW^X$ separates any two vertices of $X$; this last statement follows directly from  \cite[Lemma 4.3]{WiseSmallCancellation}. 
\end{rmk}

Let us now associate a CAT(0) cube complex to the wallspace $(\cE G, \cW)$, and to the wallspace $X$. A vertex of this complex is a map  $\sigma:\cW \to \cH$ sending each wall to one of the two half-spaces it defines, with some additional conditions, see \cite{ChatterjiNibloWallSpaces}. Two vertices $\sigma_1$ and $\sigma_2$ are connected by an edge if $\sigma_1$ and $\sigma_2$ differ on exactly one~wall.

\begin{definition}
 Let $C_\cW$ denote the CAT(0) cube complex associated with $(\cE G,\cW)$. 
\end{definition}

\begin{rmk}
 The action of $G$ on $\cW$ induces an action of $G$ on $ C_\cW$.
\end{rmk}

\begin{rmk}
Using the definition of the CAT(0) cube complexes associated with a wallspace \cite{ChatterjiNibloWallSpaces}, one can show that the embedding of wallspaces associated with the embedding $EG_v \hra \cE G$ yields an embedding $EG_v \hra C_\cW$ of CAT(0) cube complexes which is equivariant with respect to the map $G_v \hra G$.
\end{rmk}

Note however that there is a priori no link between the CAT(0) cube complex associated with the wallspace $(X, \cW^X)$ and $C_\cW$. 
Therefore, the results of Wise \cite{WiseSmallCancellation} that are valid for $C_X$ cannot directly be used to conclude anything about $C_\cW$.  It would technically be possible to reason solely with walls associated with the edges of $\cE G_\mathrm{\textit{bal}}$ and the associated cube complex. However, adding walls associated with edges of $X_\mathrm{\textit{bal}}$ only increases the dimension of the cube complex acted upon. We have decided to follow this approach as it seemed to us more natural from the viewpoint of the combination argument.

In the next section, we will combine results of Wise on the geometric positions of  walls of $\cW^X$ \cite[Lemma 6.4, Theorem 6.9, Theorem 11.1]{WiseSmallCancellation} with new results on the combination of such walls with the walls associated with edges of $\cE G_\mathrm{\textit{bal}}$. This will be used to prove that the wallspace structure $\cW$ on $\cE G$ is such that the induced action on the associated CAT(0) cube complex $C_\cW$ is  proper and cocompact.

\section{Cubulation theorem}

The aim of this section is to prove our main result. 

\begin{thm}\label{T: MAIN}
The action of $G$ on the CAT(0) cube complex $C_\cW$ is proper and cocompact.\label{geometricaction}
\end{thm}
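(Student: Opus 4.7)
The plan is to verify the criteria of Chatterji--Niblo \cite{ChatterjiNibloWallSpaces} for the induced action on the dual cube complex to be proper and cocompact. Since the $G$-action on the wallspace $\cE G_\mathrm{\textit{bal}}$ is already proper and cocompact (by the proposition following Definition \ref{EZcomplexofspaces}), it suffices to establish two quantitative properties of the wall collection $\cW$: a \emph{linear separation} property (the wall pseudometric $d_\cW$ is bi-Lipschitz comparable to the edge metric on $\cE G_\mathrm{\textit{bal}}$), which yields properness; and a \emph{uniform bound on the number of pairwise-crossing walls} (finite dimension), which, together with the geometric action on the wallspace, yields cocompactness.

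For linear separation, I would take two vertices $x,y$ of $\cE G_\mathrm{\textit{bal}}$ and a combinatorial geodesic $\gamma$ between them, decomposed into maximal horizontal and vertical subarcs (Definition \ref{vertical and horizontal}). Each horizontal edge $e$ of $\gamma$ lies on a unique lifted $X$-hypergraph $\widetilde{\Lambda^X_e}$; since $\gamma$ projects to a geodesic in the $C'(1/6)$-complex $X_\mathrm{\textit{bal}}$, distinct horizontal edges produce distinct walls from the family $\cW^X$ by the linear separation already known for $C'(1/6)$-complexes (\cite{WiseSmallCancellation}). For each vertical subarc lying in a fibre $EG_v$, each edge corresponds to a unique hyperplane of $EG_v$ by Proposition \ref{P: walls cupe complexes}, and this hyperplane extends to a wall of first or second type. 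The fact that these extended walls remain pairwise distinct is ensured by the tree property of $\Lambda^{\cE G}_e$ (Lemma \ref{L: hypergraph tree}), the no-turns property of Lemma \ref{L: cube complexes}, and the coherence condition satisfied by the gallery $\cC^{\cE G}_e$ (Proposition \ref{P: extended wall gallery}). Summing the contributions over all subarcs of $\gamma$, and accounting for the fixed-index subdivision from $\cE G$ to $\cE G_\mathrm{\textit{bal}}$, yields an inequality $d_\cW(x,y) \geq c\, d_{\cE G}(x,y) - C$ for uniform constants $c,C>0$; combined with the cocompact proper action of $G$ on $\cE G_\mathrm{\textit{bal}}$, this gives properness on $C_\cW$.

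For cocompactness, I would first bound the dimension of $C_\cW$ uniformly. A collection of pairwise-crossing walls in $\cW$ either gives rise to hypergraphs in $X_\mathrm{\textit{bal}}$ that pairwise cross inside a common polygon, or meets a common vertex of $X_\mathrm{\textit{bal}}$ where their traces are pairwise-crossing hyperplanes in the same fibre. The first situation is controlled by Corollary \ref{finitenumberhypercarriers2cell}, together with the $C'(1/6)$ bound on pieces per polygon; the second is controlled by the local finiteness and cocompactness of $EA$ and $EB$, which bound the dimension of each fibre. With the dimension bound in hand, cocompactness on $C_\cW$ follows from the Chatterji--Niblo criterion once the properness already established above is combined with the cocompactness of the $G$-action on the wallspace $\cE G_\mathrm{\textit{bal}}$.

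The main obstacle will be the linear-separation count at the transitions between horizontal and vertical subarcs of $\gamma$. Precisely because second-type walls extend as trees of hyperplanes across multiple polygons, one must check that a hyperplane contributing to a vertical subarc is not accidentally recounted as part of a wall coming from a neighbouring fibre, and that no horizontal edge is crossed by a wall of second type already attributed to an adjacent vertical piece. The embedding and convexity statements of Theorem \ref{T: properties hyper}, combined with the coherence and far-apart conditions defining a gallery (Definition \ref{D: gallery}), are the essential inputs guaranteeing that the three families of walls in Definition \ref{DefinitionWalls} interact cleanly enough to produce the required linear lower bound and the uniform dimension bound.
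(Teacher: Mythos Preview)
Your cocompactness argument contains a genuine gap. A uniform bound on the size of pairwise-crossing families (i.e.\ a dimension bound on $C_\cW$) is \emph{not} what Proposition~\ref{criterioncocompact} asks for, and it does not by itself imply cocompactness, even combined with a geometric action on the underlying wallspace. What is needed is that there are only finitely many $G$-orbits of \emph{configurations} of pairwise-crossing walls. For this one must show that any such family can be \emph{localised}: the paper proves (Proposition~\ref{intersectionhypergraphs}) that the hypercarriers in $X_\mathrm{\textit{bal}}$ of a pairwise-crossing family share a common vertex~$v$, and then, inside the fibre $EG_v$, uses Helly's theorem for convex subcomplexes of a CAT(0) cube complex to pin down a vertex in a uniformly bounded ball meeting all the relevant attaching data. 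Your proposed dichotomy (``either the hypergraphs cross in a common polygon, or their traces are pairwise-crossing hyperplanes in a common fibre'') is too coarse: a wall whose hypercarrier passes through $v$ need not meet $EG_v$ in a hyperplane at all (the vertex $v$ can lie on an exterior arc rather than at a door), so the case analysis in the paper's Theorem~\ref{configurations} is genuinely more delicate than what you sketch.

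Your properness argument is different from the paper's and more ambitious than necessary. The paper does not establish linear separation; it shows directly that balls for the wall pseudometric are finite, via an inductive exhaustion argument combining the locally finite CAT(0) structure of the fibres with linear separation in the $C'(1/6)$ complex $X_\mathrm{\textit{bal}}$. Your linear-separation route is plausible in spirit, but the transition problem you flag at the end is real: a wall of second type can meet several fibres along a geodesic $\gamma$, and the claim that $p(\gamma)$ is geodesic in $X_\mathrm{\textit{bal}}$ is unjustified (there is no reason a combinatorial geodesic of $\cE G_\mathrm{\textit{bal}}$ projects to one of $X_\mathrm{\textit{bal}}$). The paper's weaker compactness argument sidesteps both issues.
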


The following two criteria provide information about the group action on a cube complex from  the properties of the action on the  wallspace used to define this cube complex. 

\begin{prop}[Theorem 3 in \cite{ChatterjiNibloWallSpaces}] \label{criterionproper}
Let $H$ be a group acting by isometries on a space with walls $(Y, \cW(Y))$, where $Y$ is a metric space. The $H$-action on the associated CAT(0) cube complex is proper if  for some $y \in Y$, we have $ d_{\mathcal{W}(Y)}(y,h\cdot y) \ra \infty \mbox{ when } h \ra \infty.$\qed
\end{prop}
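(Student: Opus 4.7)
The plan is to apply the Sageev--Chatterji--Niblo construction of the CAT(0) cube complex $C_{\cW}$ associated with a wallspace, working directly with the canonical principal-orientation map. Recall that a vertex of $C_{\cW}$ is a consistent choice $\sigma:\cW(Y)\to\cH$ selecting one halfspace per wall and satisfying: (i) the \emph{coherence} condition that if halfspaces $H\subseteq H'$ are such that $\sigma$ selects $H$ on its wall, then $\sigma$ selects $H'$ on its wall; and (ii) a \emph{descending chain} (finiteness) condition ensuring that $\sigma$ lies at finite combinatorial distance from a fixed basepoint. Two vertices are joined by an edge precisely when they differ on a single wall, and the resulting graph metric extends to the full cube complex metric.

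The first step is to define the \emph{principal orientation map} $\iota:Y\to C_{\cW}^{(0)}$ sending $y$ to the orientation $\sigma_y$ that assigns, to every wall $W\in\cW(Y)$, the unique halfspace containing $y$. Coherence is immediate, and the finiteness axiom of a wallspace guarantees (ii), so $\sigma_y$ is indeed a vertex of $C_{\cW}$. Since the $H$-action on orientations is induced by the action on halfspaces via $(h\cdot\sigma)(W):=h\cdot\sigma(h^{-1}W)$, a direct check yields the equivariance
\[
h\cdot\sigma_y \;=\; \sigma_{h\cdot y}
\qquad \text{for all } h\in H,\ y\in Y.
\]

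The main technical step is the distance identity
\[
d_{C_{\cW}}(\sigma_y,\sigma_{y'}) \;=\; d_{\cW(Y)}(y,y').
\]
The lower bound is immediate: every edge of $C_{\cW}$ crosses exactly one wall, and two principal orientations $\sigma_y,\sigma_{y'}$ differ on precisely the walls separating $y$ from $y'$, whence any edge-path from $\sigma_y$ to $\sigma_{y'}$ has length at least $d_{\cW(Y)}(y,y')$. The upper bound requires exhibiting an explicit edge-path of this length by flipping the separating walls one at a time in an order compatible with halfspace inclusion (flipping the ``innermost'' halfspaces first), and verifying that each intermediate orientation remains coherent. This is the main obstacle and the heart of Chatterji--Niblo's argument: choosing the flipping order carefully is exactly what ensures the sequence stays inside $C_{\cW}$.

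Granted the distance formula and equivariance, properness follows quickly. For any $R>0$,
\[
\{\,h\in H : d_{C_{\cW}}(\sigma_y,\,h\cdot\sigma_y)\le R\,\}
\;=\;\{\,h\in H : d_{\cW(Y)}(y,\,h\cdot y)\le R\,\},
\]
which is finite by the hypothesis $d_{\cW(Y)}(y,h\cdot y)\to\infty$ as $h\to\infty$ in $H$. Since $H$ acts by cube complex automorphisms, hence by isometries of the combinatorial metric, this metric properness at the single vertex $\sigma_y$ propagates to all of $C_{\cW}$ and yields that $\{h\in H:hK\cap K\neq\varnothing\}$ is finite for every compact $K\subset C_{\cW}$, i.e. the action on $C_{\cW}$ is proper.
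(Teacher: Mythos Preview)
The paper does not prove this proposition at all: the statement carries a \qed\ and is simply cited as Theorem~3 of Chatterji--Niblo, so there is no ``paper's own proof'' to compare against. Your proposal is a correct outline of the argument from that reference: the principal-orientation map $y\mapsto\sigma_y$ is $H$-equivariant, the combinatorial distance $d_{C_{\cW}}(\sigma_y,\sigma_{y'})$ equals the number of separating walls, and the hypothesis then gives metric properness at the orbit of $\sigma_y$, which suffices since $H$ acts by isometries. The only place to be slightly more careful is the final ``propagation'' step: you should note explicitly that any compact $K\subset C_{\cW}$ has bounded diameter, so $hK\cap K\neq\varnothing$ forces $d_{C_{\cW}}(\sigma_y,h\cdot\sigma_y)$ to be bounded; this is routine but worth stating rather than asserting.
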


\begin{prop}\label{criterioncocompact}
Let $H$ be a group acting on a space with walls $(Y, \cW(Y))$. The $H$-action on the associated cube complex is cocompact if and only if there exist only finitely many configurations of pairwise crossing walls of $Y$, up to the action of $H$. \qed
\end{prop}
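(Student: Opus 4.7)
The plan is to use the $H$-equivariant correspondence between cubes of $C_\cW$ and configurations of pairwise crossing walls of $(Y,\cW(Y))$ inherent in the Sageev--Chatterji--Niblo construction \cite{ChatterjiNibloWallSpaces}. In that construction, a $k$-cube of $C_\cW$ arises from a set of $k$ pairwise crossing walls of $Y$ (its dual hyperplane set): its $2^k$ vertices are the consistent orientations of $\cW(Y)$ obtained by flipping any subset of these $k$ walls in a fixed base orientation of the remaining walls. Conversely, every $k$-tuple of pairwise crossing walls occurs as the dual hyperplane set of some $k$-cube. Since $H$ acts on $\cW(Y)$ and on orientations compatibly, the assignment (cube $\mapsto$ dual wall configuration) is $H$-equivariant.

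For the direction ``cocompact $\Rightarrow$ finitely many orbits of configurations'', I would argue that cocompactness of $H \curvearrowright C_\cW$ means that the quotient $C_\cW/H$ is a compact cube complex and hence contains only finitely many cubes, giving finitely many $H$-orbits of cubes in $C_\cW$. Pushing these orbits forward under the above $H$-equivariant assignment yields finitely many $H$-orbits of pairwise crossing configurations in $Y$.

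For the direction ``finitely many orbits of configurations $\Rightarrow$ cocompact'', I would first note that the finiteness of orbits of pairwise crossing configurations bounds their cardinality (since configurations of different sizes belong to different orbits), bounding the dimension $D$ of $C_\cW$. For each $k \leq D$, I would then argue that the $H$-orbits of $k$-cubes are in finite-to-one correspondence with the $H$-orbits of pairwise crossing $k$-configurations: fixing a representative $\{W_1,\ldots,W_k\}$, any $k$-cube dual to it is determined by the common orientation of the remaining walls at its vertices, and the consistency conditions built into the construction of $C_\cW$ force only finitely many such orientations modulo $\mathrm{Stab}_H(\{W_1,\ldots,W_k\})$. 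Summing the contributions over the finitely many orbits of configurations in each dimension $\leq D$ gives finitely many $H$-orbits of cubes in total, hence $C_\cW/H$ is a finite cube complex and therefore compact.

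The main obstacle, which is the heart of the backward direction, is to make rigorous the finite-to-one claim: that, modulo the stabiliser of a pairwise crossing configuration, only finitely many $k$-cubes of $C_\cW$ are dual to it. This requires carefully exploiting the consistency and local finiteness constraints satisfied by admissible orientations in the Sageev--Chatterji--Niblo construction to ensure that the orientation of non-configuration walls contributes only finitely many possibilities per configuration-orbit.
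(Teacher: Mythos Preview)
The paper does not actually prove this proposition: it is stated as a background fact from the Chatterji--Niblo/Sageev theory and marked with \qed. So there is no ``paper's own proof'' to compare against; I comment only on the soundness of your argument.

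Your forward direction is fine: the map from cubes to their dual sets of hyperplanes is $H$-equivariant and surjective (the latter by the Helly property for carriers), so finitely many orbits of cubes forces finitely many orbits of crossing configurations.

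The backward direction, however, has a genuine gap --- and you correctly flag it as ``the main obstacle''. The finite-to-one claim you need, namely that $\mathrm{Stab}_H(\{W_1,\dots,W_k\})$ acts with finitely many orbits on the $k$-cubes dual to $\{W_1,\dots,W_k\}$, is essentially equivalent to the cocompactness you are trying to prove. Invoking ``consistency and local finiteness constraints'' does not settle it: for a non-maximal configuration there can be infinitely many cubes dual to it (e.g.\ in $\bbR \times [0,1]^2$ the two ``long'' hyperplanes are dual to infinitely many squares), and what saves you is precisely the action of the stabiliser --- but controlling that action is the whole point.

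The standard and much cleaner route is to pass through \emph{maximal} cubes. If $\{W_1,\dots,W_k\}$ is a \emph{maximal} family of pairwise crossing walls, then the intersection of the corresponding hyperplanes in $C_\cW$ has no hyperplanes of its own (any such would extend the family), hence is a single point, the centre of a \emph{unique} $k$-cube. Thus maximal cubes of $C_\cW$ are in $H$-equivariant bijection with maximal crossing configurations. Finitely many orbits of configurations gives finitely many orbits of maximal configurations (and bounds their size, hence $\dim C_\cW$), hence finitely many orbits of maximal cubes; since every cube is a face of some maximal cube and each maximal cube has finitely many faces, cocompactness follows.
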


 Therefore, we continue to study the   combination of the various type of walls underlying $C_\cW$.

\subsection{Properness}

\begin{thm}\label{properaction}
The action of $G$ on $C_\mathcal{W}$ is proper.
\end{thm}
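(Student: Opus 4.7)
The plan is to invoke the Chatterji--Niblo criterion (Proposition~\ref{criterionproper}), so it suffices to choose a base vertex $y_0 \in \cE G$ and show that $d_\cW(y_0, h \cdot y_0) \to \infty$ as $h \to \infty$ in $G$. I fix $y_0 = x_A$ in a distinguished fibre $EG_{v_A}$, where $v_A$ is a lift to $X$ of the vertex $u_A$, and argue by contradiction: suppose a sequence $(h_n)$ in $G$ satisfies $|h_n|_G \to \infty$ while $d_\cW(y_0, h_n y_0) \leq N$ for some fixed $N$.

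The argument is organised by the horizontal displacement in $X$. First, a wall $W_e^X$ of type I separates $y_0$ from $h_n y_0$ precisely when its underlying hypergraph $\Lambda_e^X$ separates $v_A$ from $h_n v_A$ in $X$, because $p : \cE G \to X$ sends the two components of $\cE G \setminus \widetilde{\Lambda_e^X}$ onto the two components of $X \setminus \Lambda_e^X$. Combined with Wise's linear lower bound on the wall pseudometric of a $C'(1/6)$ polygonal complex (\cite[Theorem~11.1]{WiseSmallCancellation}), this yields $d_\cW(y_0, h_n y_0) \geq c\,d_X(v_A, h_n v_A) - C_1$, so we may assume that $d_X(v_A, h_n v_A) \leq R$ for some constant $R$. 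Since the $G$-action on $\cE G$ is proper and cocompact and $|h_n|_G \to \infty$, we have $d_{\cE G}(y_0, h_n y_0) \to \infty$; with the horizontal part bounded, the position of $h_n y_0$ in its fibre $EG_{h_n v_A}$ must drift arbitrarily far in the CAT(0) cube metric from the uniformly bounded family of attaching paths at $h_n v_A$.

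The next step is to turn this vertical drift into many walls of $\cW$ separating $y_0$ from $h_n y_0$. For each hyperplane $H$ of $EG_{h_n v_A}$ that separates $h_n y_0$ from all attaching paths at $h_n v_A$, the associated wall $W_e^{\cE G} \in \cW$ (of first or second type depending on whether $H$ crosses an attaching path) separates $y_0$ from $h_n y_0$: in the first-type case, Lemma~\ref{WallEGseparates1} and the fact that $y_0$ can be reached from $EG_{h_n v_A}$ only along an attaching path place $y_0$ on the attaching-paths side of the wall and $h_n y_0$ on the other; in the second-type case, the wall is a tree of hyperplanes over a hypergraph of $X$ which is an embedded tree (Theorem~\ref{T: properties hyper}, Proposition~\ref{P: extended wall gallery}), and one argues analogously by following the gallery from the fibre of $h_n y_0$ towards $v_A$, using convexity of hypercarriers (Theorem~\ref{T: properties hyper}) to preclude a path in $\cE G$ from crossing the wall twice. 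As $EG_{h_n v_A}$ is a CAT(0) cube complex with a proper cocompact action by its stabiliser, the number of such fibre hyperplanes grows linearly with the fibre distance, forcing $d_\cW(y_0, h_n y_0) \to \infty$, a contradiction. The main obstacle is this final geometric step: reliably identifying which side of each wall (especially second-type ones) contains $y_0$ and which contains $h_n y_0$, which requires carefully combining the convexity of hypercarriers in $X$ with the convexity of hyperplane neighbourhoods in the fibre CAT(0) cube complexes (Proposition~\ref{propertieshyperplane}).
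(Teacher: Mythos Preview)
Your overall plan—invoke Proposition~\ref{criterionproper}, first bound the horizontal displacement in $X$ via the lifted walls $W_e^X$, then extract many separating walls from the vertical drift inside a fibre—is the right shape and parallels the paper's argument. The horizontal step is fine. The gap is in the vertical step.

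You propose to count hyperplanes $H$ of $EG_{h_n v_A}$ that ``separate $h_n y_0$ from all attaching paths at $h_n v_A$''. This condition is ill-posed: for a vertex $v$ with infinite stabiliser $G_v$ there are infinitely many polygons of $X$ through $v$, hence infinitely many attaching paths $p_{v,R}$, and since $G_v$ permutes them while acting cocompactly on $EG_v$, their union is an \emph{unbounded} subset of $EG_v$. A hyperplane separating $h_n y_0$ from \emph{every} attaching path need not exist, and the number of such hyperplanes certainly does not grow with any reasonable notion of fibre distance. (There is also an internal inconsistency: a hyperplane separating $h_n y_0$ from every attaching path cannot cross any of them, so the ``second type'' case you discuss cannot arise from that condition.)

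The repair is simpler than the obstacle you flag. The key fact—implicit in the proof of Lemma~\ref{WallEGseparates2} via the retraction $q_v$—is that for any two vertices $z,z'$ of a single fibre $EG_v$, every hyperplane of $EG_v$ separating them extends to a wall of $\cE G$ that still separates them; hence $d_\cW(z,z') \geq d_{EG_v}(z,z')$, the latter being the cube (hyperplane) metric on $EG_v$. Once the horizontal displacement is bounded, pass to a subsequence with $h_n v_A = v$ fixed, pick any fixed vertex $z_0 \in EG_v$, and apply the triangle inequality:
\[
d_\cW(y_0, h_n y_0) \;\geq\; d_\cW(z_0, h_n y_0) - d_\cW(y_0, z_0) \;\geq\; d_{EG_v}(z_0, h_n y_0) - C.
\]
The right-hand side tends to infinity because $d_{\cE G}(y_0, h_n y_0) \to \infty$ and $d_{EG_v}(z_0, h_n y_0) \geq d_{\cE G}(z_0, h_n y_0)$. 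No side-identification for second-type walls relative to $y_0$ is needed. The paper packages the same comparison $d_\cW|_{EG_v} \geq d_{EG_v}$ into an inductive construction showing directly that every $d_\cW$-ball of radius $M$ is contained in a finite set built fibre-by-fibre, using local finiteness of the fibres and of $\cE G$ at each stage.
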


Let us mention, once again, that we do not follow a more general approach of Wise \cite[Th. 5.50]{wise_structure_2011}. This has an advantage of a more elementary proof. Again, we combine in an appropriate way properties of the fibre  CAT(0) cube complexes and properties of the $C'(1/6)$--polygonal complex $X$. 

\begin{proof}
We first prove that the wall distance $d_{\cW}$ is proper, that is, for every vertex $x$ of $\cE G_\mathrm{\textit{bal}}$ and every integer $M \geq 0$, the set of vertices separated by at most $M$ walls from $x$ is compact. 
We give an inductive procedure to describe the set of vertices separated of $x$ by at most $M$ walls.  

Let $v$ be a vertex of $X$, $x_v$ be a vertex of $EG_v$ and $M\geq 0$ an  integer.  Let 
\[
 K_0:=\{x \in \cE G \mid x \mbox{ is a vertex of } EG_v \mbox{ and } d_{\cW(EG_v)}(x_v,x) \leq M\},                                                                                                                                                                                                                                                                                                               \]
be the   ball in $EG_v$ of radius $M$ around $x_v$. As $EG_v$ is a locally finite CAT(0) cube complex we see that $K_0$ is finite. 

We now \emph{orient}  edges $e$ of $X$ by setting one vertex of $e$ the initial and the other vertex the  terminal vertex, denoted by $i(e)$ and $t(e)$ respectively. Given an oriented edge $e$ we denote by $x_{i(e)}$ and $x_{t(e)}$ the respective attaching points of the lift $\widetilde{e}$ in $\cE G$. Let us orient each edge $e$ of $X$ at $v$ such that $i(e)=v$. Let $E_0$ be the set of those such edges with $x_{i(e)}\in K_0$.
 
Suppose we have inductively defined sets $K_0\subseteq \ldots \subseteq  K_k$ of vertices of $\cE G$ and finite sets $E_0\subseteq \ldots \subseteq E_k$ of oriented edges of $X$ such that for every such edge $e\in E_i$ we have that  $x_{i(e)}\in K_i, 0\leq i \leq k$ and $x_{t(e)}\in K_{i-1} , 0< i \leq k$. For every edge  ${e} \in E_k-E_{k-1}$  denote by $K_e$ the   ball in $EG_{t(e)}$ of radius $M + d_\cW(x_v,x_{t(e)})$ around $x_v$. Denote by $E_e$ the set of edges $e'$ of $X$ at $t(e)$ such that $i(e')\in K_e$. Set 
  \[
   K_{k+1}:=K_k\cup \bigcup_{e\in E_k} K_e,
  \]
and let 
\[
 E_{k+1}:= E_k\cup \bigcup_{e\in E_k} E_e.
\]

Again, as the various spaces $EG_v$ are CAT(0) cube complexes and $\cE G$ is locally finite, the sets $E_k$ and $K_{k}$ are finite. 

 Since $X$ is a $C'(1/6)$ polygonal complex, there exists a constant $k_M$ such that a vertex of $X$ at distance at least $k_M$ from $v$ is separated from $v$ by at least $M$ walls of $X$. Therefore and by construction, the set of vertices of $\cE G$ which are separated from $x_v$ by at most $M$ walls of $\cE G$ is contained in the set $K_{k_M}$. This set was shown to be finite, hence the claim.

Finally, let $(g_n)$ be an injective sequence of elements of $G$. Since $G$ acts properly discontinuously on $\cE G$, there are for any integer $m \geq 0$ only finitely many $n\geq 0$ such that $g_n x_v \in K_m$. Thus, $d_\cW(x_v, g_n x_v) \ra \infty$, and the result now follows from Proposition \ref{criterionproper}.
\end{proof}

Note that the proof of Theorem \ref{properaction} uses only the fact that the various fibres are locally finite CAT(0) cube complexes, and that $\cE G$ is a locally finite polyhedral complex, which follows from the fact that the fibres are locally finite and that $G$ is obtained by considering only \textit{finitely} many relators in $A*B$. In particular, redoing the whole construction in this more general framework, we obtain a proof of Theorem \ref{T: mainHaagerup}.

\begin{cor}
If $A$ and $B$ are only assumed to act \textit{properly} on locally finite CAT(0) cube complexes $EA$ and $EB$ respectively, then $G$ acts properly on $C_\cW$. \qed
\end{cor}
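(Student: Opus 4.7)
The plan is to observe that the proof of Theorem~\ref{properaction} never invoked cocompactness of the $A$-action on $EA$ or of the $B$-action on $EB$; it used only the following three ingredients: each fibre $EG_v$ is a locally finite CAT(0) cube complex, the total space $\cE G_{\mathrm{bal}}$ is locally finite, and the $G$-action on $\cE G$ itself is properly discontinuous. Hence to establish the corollary one rebuilds the whole machinery of Sections~2--4 verbatim in the weaker setting and then reruns the proof of Theorem~\ref{properaction} without modification.

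Concretely, first I would repeat the construction of $\cE G$ in Section~2.3 starting from CAT(0) cube complexes $EA$ and $EB$ on which $A$ and $B$ now only act properly. Choose base vertices $x_A\in EA$ and $x_B\in EB$ and geodesics $\gamma_{A,i}$ from $x_A$ to $a_i\cdot x_A$ and $\gamma_{B,i}$ from $x_B$ to $b_i\cdot x_B$. Since there are only \emph{finitely many} $\gamma_{A,i}$ and $\gamma_{B,i}$ (because $G$ is defined by a single relator, or more generally finitely many), the maximum of their edge lengths is finite. This is exactly the finiteness one needs in order to choose $k$ so that $(\cE G)_k$ becomes balanced. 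The construction of $\cE G$ as a complex of spaces yields a simply connected polyhedral complex with a $G$-action that is proper (Theorem~2.4 of \cite{MartinBoundaries} only needs properness of the local actions on the fibres). Cocompactness of the $G$-action on $\cE G$ is lost, but the space is still \emph{locally finite}: the fibres are locally finite by assumption, and the number of polygons of $\cE G$ meeting any given vertex remains uniformly bounded because it is still controlled by the combinatorics of $X$ (which is unchanged) and the finite set of attaching paths.

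Second, I would redo the wallspace construction of Section~3 exactly as before, noting that the definitions of hypergraphs, balancing, and walls of either type never appeal to cocompactness but only to the CAT(0) cube complex structure of the fibres and to the $C'(1/6)$ combinatorics of $X$. In particular, Propositions~\ref{WallXseparates}, \ref{WallEGseparates1}, and \ref{WallEGseparates2} go through verbatim, so $(\cE G,\cW)$ is a wallspace carrying an isometric $G$-action. Corollary~\ref{finitenumberhypercarriers2cell} is also unchanged since it depends only on $X$.

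Finally, I would rerun the inductive argument in the proof of Theorem~\ref{properaction} to show that the wall pseudometric $d_{\cW}$ is proper on $\cE G_{\mathrm{bal}}$. The inductive sets $K_0\subseteq K_1\subseteq\cdots$ are still finite at each stage: $K_0$ is a ball in the locally finite CAT(0) cube complex $EG_v$, each $K_e$ is again a ball in some locally finite fibre, and the set $E_k$ of edges of $X$ reached at stage $k$ is finite by local finiteness of $\cE G$ together with the $C'(1/6)$ control on $X$. Thus for any $M\geq 0$ only finitely many vertices of $\cE G$ lie within wall-distance $M$ of $x_v$. Combining this with the properness of the $G$-action on $\cE G$ (to ensure that $g_n\cdot x_v$ eventually escapes every finite set) and applying Proposition~\ref{criterionproper} gives properness of the $G$-action on $C_\cW$. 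The only substantive point to check carefully, and therefore the main (mild) obstacle, is the persistence of local finiteness of $\cE G$ once cocompactness is dropped from the fibre actions; but this is automatic from the assumed local finiteness of $EA$ and $EB$ together with the finiteness of the relator set.
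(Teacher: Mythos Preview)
Your proposal is correct and follows essentially the same approach as the paper: the paper's own justification (the paragraph immediately preceding the corollary) simply observes that the proof of Theorem~\ref{properaction} uses only that the fibres are locally finite CAT(0) cube complexes and that $\cE G$ is locally finite, the latter following from local finiteness of $EA$, $EB$ together with the finiteness of the relator set, and then notes that the whole construction can be redone in this weaker setting. Your write-up is considerably more detailed than the paper's one-paragraph remark, but the substance is identical.
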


\subsection{Cocompactness}

Here we prove the cocompactness of the action on $C_\cW$. 

\begin{thm}\label{cocompactaction}
The action of $G$ on $C_\mathcal{W}$ is cocompact.
\end{thm}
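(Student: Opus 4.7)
The plan is to invoke the criterion of Proposition \ref{criterioncocompact}, which reduces the problem to showing that there are only finitely many $G$-orbits of finite collections of pairwise crossing walls in $\cE G_{\mathrm{\textit{bal}}}$. Since $G$ acts cocompactly on $X$, both the set of $G$-orbits of polygons of $X$ and the set of $G$-orbits of vertices of $X$ are finite, which will be exploited to reduce counting to a local problem.

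The central step is a Helly-type property for pairwise crossing walls. First I would set up the following dictionary between crossings in $\cE G$ and intersections of hypergraphs in $X$: each wall $W \in \cW$ has an associated hypergraph $\Lambda_W$ in $X$, which is $\Lambda_e^X$ for walls lifted from $X$, a single vertex of $X$ for walls of first type, and a tree passing through several cells of $X$ for walls of second type. Using Lemma \ref{L: hypergraph tree} and the explicit description of walls as trees of spaces over their hypergraphs (Definition \ref{D: walls 2}), two walls of $\cE G$ can cross only in one of two ways: either at a common polygon $R$ (the two wall-diameters cross inside $\widetilde R$) or at a common vertex $v$ of $X$ (the restrictions to $EG_v$ are transversely crossing hyperplanes of the CAT(0) cube complex $EG_v$). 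I would then extend Wise's Helly property for hypergraphs in $C'(1/6)$--small cancellation complexes \cite[Lemma 6.4, Theorem 6.9]{WiseSmallCancellation} to our generalised hypergraphs, via Theorem \ref{T: properties hyper} and the techniques developed in the Appendix, to conclude that any finite family of pairwise crossing walls has all its hypergraphs sharing a common cell (polygon or vertex) of $X$.

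Granted this Helly property, a pairwise crossing family splits into two cases. If the family shares a common polygon $R$, then Corollary \ref{finitenumberhypercarriers2cell} bounds the number of walls whose hypercarrier contains $R$, and cocompactness of the $G$-action on polygons of $X$ yields only finitely many $G$-orbits of such subfamilies. If the family shares a common vertex $v$, then each wall restricts in $EG_v$ to a hyperplane, and the resulting hyperplanes pairwise cross; since $G_v$ acts geometrically on the CAT(0) cube complex $EG_v$, there are finitely many $G_v$-orbits of pairwise crossing collections of hyperplanes, and combining with the finiteness of $G$-orbits of vertices of $X$ gives finitely many orbits globally. This exhausts all possibilities and proves cocompactness. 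The main obstacle I expect is the Helly step itself: one must extend Wise's argument to generalised hypergraphs that can pass through vertices (with cut points there) and handle the mixed situation in which some pairs of walls in the collection cross inside polygons while others cross inside vertex fibres, verifying that in every configuration the joint support collapses to a single cell of $X$.
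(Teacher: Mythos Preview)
Your overall strategy via Proposition \ref{criterioncocompact} and a Helly-type step is the same as the paper's, but your dichotomy in the final paragraph contains a genuine gap. The Helly property that actually holds (and that the paper proves as Proposition \ref{intersectionhypergraphs}) is that the \emph{hypercarriers} $Y_i$ of a pairwise crossing family share a common vertex $v$ of $X$; it is \emph{not} true in general that the \emph{hypergraphs} $\Lambda_i$ all pass through a common cell. Concretely, a wall $W_i$ whose hypercarrier contains $v$ can have $v$ lying on an exterior arc of one of its polygons (or in a door-tree), in which case $W_i \cap EG_v = \varnothing$: the wall leaves no hyperplane trace in $EG_v$ at all. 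Your assertion that in the ``common vertex'' case ``each wall restricts in $EG_v$ to a hyperplane, and the resulting hyperplanes pairwise cross'' therefore fails, and with it the reduction to cocompactness of $G_v$ on $EG_v$.

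The paper's proof (Theorem \ref{configurations}) handles precisely this difficulty. Having found a common vertex $v$ of the hypercarriers, it associates to each wall $W_i$ a \emph{convex} subcomplex $C_i$ of the CAT(0) cube complex $EG_v$: if $W_i \cap EG_v$ is a hyperplane, $C_i$ is its $2l$-neighbourhood; if $W_i \cap EG_v = \varnothing$, then $C_i$ is a $2l$-ball around an endpoint of an attaching path $p_{v,R}$ for some polygon $R$ of $Y_i$ through $v$ (here $l$ bounds the lengths of attaching paths). One then checks $C_i \cap C_j \neq \varnothing$ pairwise, using convexity of hypercarriers (Theorem \ref{T: properties hyper}) when $W_i, W_j$ do not already cross inside $EG_v$, and applies Helly's theorem in $EG_v$ (Theorem \ref{backgroundcubecomplexes}) to obtain a single vertex $w$ whose $4l$-ball in $EG_v$ controls the entire configuration. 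Cocompactness of $G_v$ on $EG_v$ then finishes the argument. The missing ingredient in your sketch is exactly this replacement of ``trace hyperplanes'' by convex proxies for walls that do not meet $EG_v$.
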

This follows once we have shown that $\cW$ satisfies the assumptions of Proposition \ref{criterioncocompact}. 
In order to do that,  we combine, again, the properties of the fibre CAT(0) cube complexes $E_A$ and $E_B$ with the   properties of hypergraphs in the $C'(1/6)$--small cancellation polygonal complex $X$. 

In particular, we use the following properties of CAT(0) cube complexes, cf. \cite{SageevCubeComplex,GerasimovSemiSplittings}.

\begin{thm}\label{backgroundcubecomplexes}
Let $Y$ be a CAT(0) cube complex. 
\begin{itemize}
\item Given a  convex subcomplex of $Y$, its   neighbourhood, that is, the union of all the cubes meeting it, is again  convex.
\item   neighbourhoods of hyperplanes of $Y$ are  convex.
\item (Helly's theorem) Let $(Y_i)$ be a family of pairwise  convex subcomplexes of $Y$ such that any two such subcomplexes have a non-empty intersection. Then $\cap_i Y_i$ is non-empty. \qed
\end{itemize}
\end{thm}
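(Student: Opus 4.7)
The plan is to deduce all three statements from the fundamental characterization of convex subcomplexes of a CAT(0) cube complex $Y$: a subcomplex $Y'\subseteq Y$ is convex if and only if its vertex set is closed under the median operation on $Y$, equivalently, if and only if $Y'$ is an intersection of combinatorial half-spaces bounded by hyperplanes of $Y$. Both characterizations are classical (Sageev, Chepoi, Roller, Nica). Throughout, I use the fact that combinatorial geodesics in the $1$-skeleton are exactly the paths crossing each hyperplane at most once, so $d(u,v)$ equals the number of hyperplanes separating $u$ from $v$.

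For (1), I would show that $N(Y')$ is median-closed. Given vertices $u,v,w\in N(Y')$, pick $Y'$-neighbours $u',v',w'$ (at distance $\leq 1$ from $u,v,w$ respectively, possibly equal). By convexity of $Y'$, the median $m':=m(u',v',w')$ lies in $Y'$. I then claim that the median $m:=m(u,v,w)$ satisfies $d(m,m')\leq 1$, so $m\in N(Y')$. The argument is a hyperplane bookkeeping: a hyperplane separates $m$ from $m'$ only if it separates at least two of $\{u,v,w\}$ from the corresponding pair in $\{u',v',w'\}$, but each pair $(x,x')$ is separated by at most one hyperplane. A case analysis shows at most one such hyperplane qualifies, giving $d(m,m')\leq 1$.

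For (2), I would use the canonical product decomposition $N(h)\cong h\times[0,1]$ of the carrier of a hyperplane $h$. The hyperplanes of $Y$ crossing $N(h)$ are exactly $h$ together with the hyperplanes crossing $h$. If $v\notin N(h)$, then any combinatorial geodesic from $v$ to its closest vertex in $N(h)$ begins with an edge dual to some hyperplane $h'$; by minimality $h'\neq h$, and a short argument shows $h'$ does not cross $h$. Hence $h'$ separates $v$ from all of $N(h)$. This exhibits $N(h)$ as the intersection of all half-spaces $k$ whose bounding hyperplane does not cross $h$ and with $N(h)\subseteq k$, so $N(h)$ is convex by the half-space characterization.

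For (3), Helly's theorem, I would proceed by induction on the number of subcomplexes, reducing to the pocset Helly property for half-spaces: writing $Y_i=\bigcap_{h\in\mathcal{H}_i} h$, the intersection $\bigcap_i Y_i$ is itself an intersection of half-spaces, so it suffices to show that a pairwise intersecting finite family of half-spaces of $Y$ has non-empty total intersection. For half-spaces, pairwise intersection means no two are disjoint, which in a pocset forces every two bounding hyperplanes to either cross or have the half-spaces face in compatible directions. Given vertices $v_{ij}\in Y_i\cap Y_j$, iterated medians $m(v_{ij},v_{ik},v_{jk})$ lie in $Y_i\cap Y_j\cap Y_k$ by convexity, and induction produces a common vertex. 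The main delicate point is the pocset Helly step for half-spaces; local finiteness of $Y$ (and finiteness of the family, as used in the cocompactness argument of Section 4) makes this standard.
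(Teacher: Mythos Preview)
The paper does not give its own proof of this theorem: it is stated as background, ends with \qed, and is attributed to Sageev and Gerasimov. So there is no ``paper's proof'' to compare against; you are supplying an argument where the authors simply cite the literature.

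That said, your argument for (1) has a genuine gap. You assume that every vertex $u\in N(Y')$ admits a vertex $u'\in Y'$ with $d(u,u')\le 1$. This is false: the neighbourhood $N(Y')$ is the union of all \emph{cubes} meeting $Y'$, not the $1$-ball around $Y'$. For instance, if $Y$ is a single square and $Y'$ is one of its vertices, then $N(Y')$ is the whole square and the opposite corner is at distance $2$ from $Y'$. More generally, a vertex of a cube $C$ meeting $Y'$ in a low-dimensional face can be at distance $\dim C$ from $Y'$. Even granting the distance-$1$ claim, your hyperplane bookkeeping for $d(m,m')\le 1$ is incorrect: take a hyperplane $h$ with $u,v$ on one side, $w$ on the other, and $u'$ on the $w$-side while $v',w'$ stay put. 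Then $h$ separates $m$ from $m'$ but separates only the single pair $(u,u')$. The correct statement is that the median is $1$-Lipschitz in each coordinate, giving only $d(m,m')\le 3$, which is not enough. A working route to (1) is to characterise $N(Y')$ directly as an intersection of half-spaces (those half-spaces containing $Y'$ whose bounding hyperplane is disjoint from $Y'$ can be dropped exactly when they osculate $Y'$), or equivalently to show that $v\in N(Y')$ iff the hyperplanes separating $v$ from its gate in $Y'$ pairwise cross, and then check this condition is preserved along geodesics.

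Your sketch for (2) is fine. For (3), the median argument you mention at the end (take $m(v_{12},v_{13},v_{23})\in Y_1\cap Y_2\cap Y_3$ and induct on the size of a finite family) is the standard and correct proof; the half-space detour you start with is unnecessary and does not obviously reduce to a \emph{finite} family of half-spaces even when the index set is finite.
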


We use the following result on the hypercarriers in $X$ of pairwise crossing walls of $\cE G_\mathrm{\textit{bal}}$. 
 
\begin{prop}\label{intersectionhypergraphs}
 Let $W_1,W_2,\ldots $ be a set of pairwise crossing walls of $\cE G$ , and let $Y_1, \ldots, Y_k $, $k \geq 3$, be the set of corresponding hypercarriers of $X$. Then the intersection $\bigcap Y_i$ is non-trivial.
\end{prop}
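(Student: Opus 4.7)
The plan is to prove the proposition in two main stages: first a pairwise intersection statement, and then a Helly-type combination using the tree structure of hypergraphs together with convexity of hypercarriers.

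First I would unify the three types of walls by pushing everything down to $X$ via the projection $p\colon \cE G \to X$. Each wall $W_i$ has an associated hypergraph $\Lambda_i \subset X$, and its hypercarrier $Y_i$ is convex in $X$ by Theorem~\ref{T: properties hyper}. For a wall of type $1$ or a second-type wall, $\Lambda_i$ is a non-trivial tree and $Y_i$ is a non-trivial union of polygons along it. For a first-type wall localised in a fibre $EG_v$, the hypergraph collapses to $\{v\}$ and I would take $Y_i$ to be the star of $v$ in $X$.

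Next I would verify the pairwise intersection statement: if $W_i$ and $W_j$ cross in $\cE G$, then $Y_i \cap Y_j$ contains a polygon or vertex of $X$. The analysis splits into cases according to the types of $W_i,W_j$. When both are of type $1$ or second type, a crossing point lies in some polygon $\widetilde{R}$ of $\cE G$, and $p(\widetilde{R})$ is then a common polygon of $Y_i\cap Y_j$. When $W_i$ is a first-type wall in the fibre $EG_v$, the key observation is that $W_i$ crosses no attaching path, so the entire space $\cE G \setminus EG_v$ lies in one half-space of $W_i$; hence $W_j$ can cross $W_i$ only if $W_j$ genuinely enters $EG_v$, which forces the hypergraph $\Lambda_j$ to pass through $v$, so $v\in Y_j\cap Y_i$. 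Two first-type walls in different fibres cannot cross by the same obstruction, so the remaining case (both first-type in the same fibre) is trivial.

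The core difficulty is the Helly-type statement for $k\ge 3$. Here I would adapt Wise's triple-crossing argument \cite[Theorem 6.9]{WiseSmallCancellation} to our generalised galleries. The strategy is: assume for contradiction that three pairwise crossing hypercarriers $Y_1,Y_2,Y_3$ do not share a common polygon. Using convexity of $Y_i$ in $X$ (Theorem~\ref{T: properties hyper}) together with the tree structure of each $\Lambda_i$, one assembles a configuration of polygons whose total boundary is a disk diagram whose combinatorial structure violates the $C'(1/6)$-condition on $X$, via a Greendlinger-type argument. The doors of our galleries are allowed to be vertices (in contrast with the original Wise setup), so the combinatorics must be re-examined carefully — this is precisely the setting for which the Appendix extends Wise's theory. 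Once the case $k=3$ is in place, the case $k\ge 4$ follows by induction: any $k$ pairwise-intersecting convex subcomplexes with pairwise common polygons reduce by the $k=3$ statement to fewer subcomplexes sharing a polygon, and convexity plus the tree Helly property of the hypergraphs allows this to be iterated.

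The main obstacle I expect is the triple-crossing step: extending Wise's combinatorial argument to the generalised hypergraphs that may pass through vertices of $X$ and to the mixed-type crossings involving first-type walls (where the hypercarrier is degenerate and the interaction with the other walls takes place inside a single fibre CAT(0) cube complex). Handling this will require combining Theorem~\ref{T: properties hyper} with Helly for hyperplanes in the fibres (Theorem~\ref{backgroundcubecomplexes}) to treat the fibre-local part and the $X$-global part simultaneously.
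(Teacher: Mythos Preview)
Your broad strategy---reduce to hypercarriers in $X$, establish the triple-crossing case by extending Wise's argument, and then induct---matches the paper's approach. The paper likewise defers the hard work to an appendix lemma that proves the $k=3$ case via a Gauss--Bonnet angle assignment (rather than a Greendlinger-type disk diagram argument as you sketch), and then inducts on $k$.

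There is, however, a genuine gap in your treatment of first-type walls. Your key claim, that ``the entire space $\cE G \setminus EG_v$ lies in one half-space of $W_i$'', does not follow from the bare fact that $W_i$ crosses no attaching path. That inference would require all attachment points of horizontal edges in $EG_v$ to lie on the same side of the hyperplane $H$, which in turn requires the link of $v$ in $X$ (equivalently, the graph formed by attaching paths and their endpoints in $EG_v$) to be connected. When $v$ is a cut-vertex of $X$, distinct components of $X\setminus\{v\}$ can attach on opposite sides of $H$, and your claim fails. The paper avoids this by arguing the contrapositive directly: if a first-type wall $W_i$ is present among pairwise crossing walls, then \emph{every} $W_j$ crossing it must itself be associated with a vertical edge in the same fibre $EG_v$, so all hypercarriers contain $v$ trivially. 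This is how first-type walls are disposed of---not via a half-space containment argument, and not by redefining the hypercarrier as the star of $v$ (which is neither the paper's convention nor obviously convex).

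A smaller point: your inductive step for $k\ge 4$ (``convexity plus the tree Helly property'') is too vague. The paper's induction does not use a generic Helly principle; it exploits the specific decomposition of $\partial Y_k$ into two subtrees $\partial_+Y_k$ and $\partial_-Y_k$ and argues by cases on which of these contain the vertices $v_1,v_2,v_3$ furnished by the inductive hypothesis. Your sketch does not indicate how to carry out this reduction.
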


This result extends the following result of Wise.
\begin{lem}[cf. Theorem 6.9 of \cite{WiseSmallCancellation}]\label{L: Wise2}
 Let $\{\Lambda_1,\Lambda_2,\Lambda_3,\ldots\}$ be a set of pairwise crossing hypercarriers of $X$ defined by equivalence of diametrically opposed edges, see Section \ref{ExampleEdge}. If $\Lambda_1,  \Lambda_2, \Lambda_3, \ldots$ pairwise cross, then their common intersection contains a vertex.
\end{lem}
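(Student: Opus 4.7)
Since the statement is attributed to Wise's Theorem~6.9 of \cite{WiseSmallCancellation}, my plan is to reproduce Wise's small cancellation disc diagram argument in this setting, relying on the structural properties of the classical hypercarriers already recorded in Theorem~\ref{T: properties hyper}: each hypercarrier $\Lambda_i$ is a simply connected convex subcomplex of $X$ that embeds in $X$, and its hypergraph is a tree. The proof goes by induction on $k$, with the geometric heart of the argument at $k=3$.

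The base case $k=2$ is essentially tautological: by the definition in Section~\ref{ExampleEdge}, two Wise hypergraphs $\Lambda_1$, $\Lambda_2$ cross if and only if some polygon $R$ of $X$ carries a diametrically opposed pair of edges in the equivalence class of each, and such an $R$ lies in both hypercarriers. For the key case $k=3$, pick for each pair a polygon $R_{ij} \in \Lambda_i \cap \Lambda_j$ and choose the triple $(R_{12},R_{13},R_{23})$ so as to minimise the sum of the tree-distances, measured inside each hypergraph tree, between the two apices of the polygons that hypergraph contains. Suppose for contradiction that no polygon lies in $\Lambda_1\cap\Lambda_2\cap\Lambda_3$, so in particular the three $R_{ij}$ are pairwise distinct. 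Inside each hypergraph $\Lambda_i$ (a tree embedded in $X$ by Theorem~\ref{T: properties hyper}), take the unique geodesic arc connecting the two apices associated with that $i$; concatenated with short paths inside the $R_{ij}$ joining the relevant door midpoints, these three arcs form a closed edge-path in $X$. Since $X$ is simply connected, this loop bounds a reduced disc diagram $D$, and the minimality of the chosen triple rules out back-tracking and ensures each corner cell $R_{ij}$ appears only once on $\partial D$.

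Now apply Greendlinger's lemma for $C'(1/6)$ disc diagrams: any non-degenerate reduced diagram contains a boundary $2$-cell meeting $\partial D$ in more than half of its perimeter. One then analyses the boundary of $D$, which decomposes into hypergraph arcs (whose overlap with each polygon they traverse is controlled by the far-apart property) together with the three corner polygons $R_{ij}$; the $C'(1/6)$ condition provides a strict upper bound on how much of any boundary $2$-cell's perimeter can lie on $\partial D$, contradicting Greendlinger. Hence $D$ must degenerate, forcing two of the $R_{ij}$'s to coincide and contradicting minimality. The inductive step $k-1 \Rightarrow k$ follows by applying the $k=3$ conclusion to triples of polygons obtained from the inductive hypothesis for $(k-1)$-subfamilies, combined with convexity of the hypercarriers (Theorem~\ref{T: properties hyper}) to propagate a common polygon through the intersection. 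The main obstacle will be the piece-bookkeeping along $\partial D$ in the $k=3$ step, i.e.\ turning the \emph{far apart} doors of each $R_{ij}$ into a genuine violation of $C'(1/6)$; this is exactly the combinatorial content of Wise's original argument which I would reproduce in detail.
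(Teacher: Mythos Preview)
Your overall architecture (disc diagram at $k=3$, then induction) matches the paper's appendix proof of the generalisation (Lemmas~\ref{intersection3hypergraphs} and~\ref{A: intersectionhypergraphs}), but both steps differ in the actual mechanism, and your inductive step has a real gap.

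For $k=3$, you propose Greendlinger's lemma (find a shell with outer path more than half the perimeter, then bound that outer path by pieces). The paper instead assigns explicit angles to the disc diagram bounded by the three hypergraph arcs and runs the combinatorial Gau\ss--Bonnet Theorem~\ref{GaussBonnet}: interior corners get $2\pi/3$, boundary corners get $\pi/2$, and the three ``corner'' vertices coming from the $\sigma_{i,j}$ get $(n_{i,j}-3)\pi/3$. This forces every cell and vertex to have non-positive curvature except possibly the three corner vertices, each contributing at most $2\pi/3$; equality then rules out any interior polygon and collapses $D$ into $L_1\cup L_2\cup L_3$. Your Greendlinger route can be made to work in the classical opposite-edge case, but the angle argument is cleaner because it handles all boundary cells uniformly rather than isolating a single shell and doing piece-bookkeeping against three different hypergraph arcs and three corner cells simultaneously.

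Your induction step, however, is not yet a proof. Saying ``apply the $k=3$ conclusion to triples obtained from the $(k-1)$-subfamilies, combined with convexity'' does not produce a vertex in $\bigcap_i Y_i$: convexity of the $Y_i$ in $X$ does not by itself give a Helly property, since $X$ is not a tree and convex subcomplexes of a $C'(1/6)$ complex need not satisfy Helly. The paper's induction uses a genuinely additional structural fact: the boundary $\partial Y_k$ of a hypercarrier decomposes as a union of two \emph{trees} $\partial_+Y_k$ and $\partial_-Y_k$ onto each of which $Y_k$ deformation retracts. One takes vertices $v_1,v_2,v_3$ in the three $(k{-}1)$-fold intersections, observes that geodesics between them lie in $Y_k$ and in the relevant $(k{-}2)$-fold intersections by convexity, and then does a case split on which boundary tree of $Y_k$ each $v_i$ lies in. If all three lie in $\partial_+Y_k$, the median of a tripod in a tree gives the common vertex; otherwise one analyses the minimal ladders crossing $Y_k$ and uses Lemma~\ref{auplusdeuxpieces} to force a polygon into $Y_{I\setminus\{3\}}$. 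This tree-boundary decomposition is the missing idea in your sketch; without it the induction does not close.
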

Let us emphasise once again, that Lemma \ref{L: Wise2} cannot directly be applied because our hypercarriers  have cutpoints,  and our far apart condition  allows  hypercarriers that differ significantly from those defined by equivalence classes of opposite edges.

\begin{proof}[Proof of Proposition \ref{intersectionhypergraphs}] We consider three cases.    If all walls $W_1,W_2,\ldots$ are associated with vertical edges in $\cE G_v$, then  $v$ is contained in the intersection of their hypercarriers.  This is  the only configuration where a wall of first type can occur. 
If all walls $W_1,W_2,\ldots$ are walls coming from $X$, associated with edges of $X_{bal}$, then Wise's Lemma \ref{L: Wise2} immediately implies the claim. 
All other configurations contain no wall of first type, and at least one wall of second type.  In this case, the proof of Wise's Lemma \ref{L: Wise2} can be extended in a straightforward way,   using our generalised notions of hypergraphs and hypercarriers. Our far apart condition is again essential. We give a full account of the arguments in Appendix \ref{Appendix hypercarriers},  see  Lemma \ref{A: intersectionhypergraphs}
\end{proof}

\begin{thm}\label{configurations} 
There is only finitely many configurations of pairwise crossing walls of $\cE G$, up to the action of $G$.
\end{thm}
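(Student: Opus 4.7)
The plan is to verify the sufficient condition of Proposition \ref{criterioncocompact}, i.e.\ that up to $G$ the family of configurations of pairwise crossing walls of $\cE G$ is finite. So let $\{W_1,\ldots,W_n\}$ be an arbitrary family of pairwise crossing walls, with hypercarriers $Y_1,\ldots,Y_n \subset X$. By Proposition \ref{intersectionhypergraphs} we have $\bigcap_i Y_i \neq \emptyset$, and as the proof of that proposition (extending Wise's Lemma \ref{L: Wise2}) actually yields a common vertex, I fix $v_0 \in \bigcap_i Y_i$. Since $G$ acts cocompactly on $X$, up to $G$-translation $v_0$ lies in a fixed finite set of orbit representatives; it thus suffices to bound, for each such $v_0$, the number of pairwise crossing configurations whose hypercarriers share $v_0$, modulo the stabiliser $G_{v_0}$.

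A preliminary observation organises the case analysis: a wall of $X$ can never cross a wall of first type. Indeed, a wall of $X$ equals the $p$-preimage of a hypergraph $\Lambda_e^X$, hence does not enter any fibre $EG_v$, while one half-space of a wall of first type in $EG_{v_0}$ lies entirely inside $EG_{v_0}$; the four intersections required by the crossing condition cannot all be non-empty. Consequently the family is of one of two mutually exclusive types: Case 1, no wall of first type occurs, or Case 2, no wall of $X$ occurs.

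In Case 1, every $W_i$ has in its gallery a polygon of $X$ containing $v_0$ as a vertex. Using the tree-like structure of hypergraphs (Theorem \ref{T: properties hyper}, Lemma \ref{L: hypergraph tree}) and the far-apart condition, an iteration of the small-cancellation argument of the Appendix shows that a family of pairwise crossing hypercarriers all passing through $v_0$ forces a common polygon of $X$ to lie in the intersection, reducing the count to a single polygon $R$ incident to $v_0$. Corollary \ref{finitenumberhypercarriers2cell} then bounds, uniformly, the number of walls whose hypercarrier contains $R$, and since there are only finitely many such $R$ up to $G_{v_0}$ incident to $v_0$ one obtains finitely many configurations. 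In Case 2, each $W_i$ is either a wall of first type in $EG_{v_0}$, or a wall of second type whose hypercarrier contains $v_0$; in both situations $W_i \cap EG_{v_0}$ contains at least one hyperplane of the CAT(0) cube complex $EG_{v_0}$. Applying the radial projection $q_{v_0}:S(v_0)\to EG_{v_0}$ constructed in the proof of Lemma \ref{WallEGseparates2}, the pairwise crossing property of the $W_i$ in $\cE G$ translates into pairwise crossing of these hyperplane traces in $EG_{v_0}$. Since $G_{v_0}$ acts geometrically on $EG_{v_0}$, applying Proposition \ref{criterioncocompact} to $EG_{v_0}$ itself yields finitely many $G_{v_0}$-orbits of such hyperplane configurations; it remains to show that the map $W_i \mapsto W_i \cap EG_{v_0}$ is finite-to-one up to $G_{v_0}$.

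The main obstacle is precisely this last finite-to-one step in Case 2: one must rule out infinitely many distinct walls of second type producing the same trace of hyperplanes in $EG_{v_0}$. I plan to do this by noting that any such wall $W_i$ has a hypergraph $\Lambda^{\cE G}_{e}$ which is a tree embedded in the $C'(1/6)$-complex $X$ (Lemma \ref{L: hypergraph tree}) passing through $v_0$ via a specific branch in one of the polygons of $X$ incident to $v_0$; Corollary \ref{finitenumberhypercarriers2cell} applied to each such polygon then bounds the number of walls producing the same first branch at $v_0$, and this branch is determined up to finite ambiguity by the hyperplane trace in $EG_{v_0}$ by the convexity properties of hyperplanes (Theorem \ref{backgroundcubecomplexes}) together with the Helly-type property stated there. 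Combining the two cases, the total number of configurations up to $G$ is finite, which by Proposition \ref{criterioncocompact} proves the cocompactness of the $G$-action on $C_\cW$.
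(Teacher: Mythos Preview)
Your proposal has several genuine gaps.

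\textbf{The case division rests on a false claim.} You assert that a wall of $X$ can never cross a wall of first type because ``one half-space of a wall of first type in $EG_{v_0}$ lies entirely inside $EG_{v_0}$.'' This is not true: a wall of first type is a hyperplane $H\subset EG_{v_0}$ not meeting any attaching path, but attaching paths of polygons typically exist on \emph{both} sides of $H$, so both half-spaces of $H$ extend outside $EG_{v_0}$. There is no reason a lifted hypergraph $\widetilde{\Lambda_e^X}$ cannot then cross such a wall. Your dichotomy Case~1/Case~2 therefore does not exhaust all configurations.

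\textbf{Case 1 is not argued.} The assertion that pairwise crossing hypercarriers through $v_0$ must share a common \emph{polygon} of $X$ is neither proved nor true in general; the Appendix arguments yield a common vertex, not a common polygon. Two hypercarriers can meet only at $v_0$ while the corresponding walls still cross in $\cE G$ (for instance via crossing hyperplanes in $EG_{v_0}$).

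\textbf{Case 2 is based on a false premise.} You claim that for a wall $W_i$ of second type with $v_0\in Y_i$ one always has $W_i\cap EG_{v_0}$ a hyperplane. This fails precisely when $v_0$ lies on an exterior arc of a polygon of $Y_i$, or in a door-tree without $v_0$ being a door: then the hypergraph $\Lambda_e^{\cE G}$ does not pass through $v_0$ and $W_i\cap EG_{v_0}=\varnothing$. This is exactly the left configuration in Figure~\ref{F: cubul}, which the paper's proof treats explicitly. Your projection argument via $q_{v_0}$ and the finite-to-one reduction then have nothing to project.

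The paper avoids all of this by a uniform construction: to each $W_i$ it associates a \emph{convex} subset $C_i\subset EG_{v_0}$ --- either the $2l$-neighbourhood of the hyperplane $W_i\cap EG_{v_0}$ when this is non-empty, or a $2l$-ball around an endpoint of the relevant attaching path when it is empty. Pairwise non-empty intersection of the $C_i$ is obtained from convexity of hypercarriers (Theorem~\ref{T: properties hyper}), Helly's theorem (Theorem~\ref{backgroundcubecomplexes}) then gives a common vertex $w$, and the whole configuration is recovered from the finite $4l$-ball around $w$ together with Corollary~\ref{finitenumberhypercarriers2cell}. Your argument would need exactly this mechanism to handle the cases where $W_i\cap EG_{v_0}=\varnothing$.
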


\begin{proof} Let  $l$ be  the maximal length of an attaching path in the fibres of $\cE G$. 
Let $(W_i)$ be a system of pairwise crossing walls of $\cE G$ and denote by $(Y_i)$ the associated system of hypercarriers in $X$. By Lemma \ref{intersectionhypergraphs}, let $v$ be a vertex in the intersection of these hypercarriers. For each $i$, let $K_i$ be the union of all the attaching paths $p_{v,R} \subset EG_v$, where $R$ is a polygon of $Y_i$ containing $v$. We now describe the sets $K_i$, depending on the relative position of the hypergraph and the vertex $v$, as illustrated in Figure \ref{F: cubul}. 

\begin{figure}[H]
\begin{center}
\scalebox{0.8}{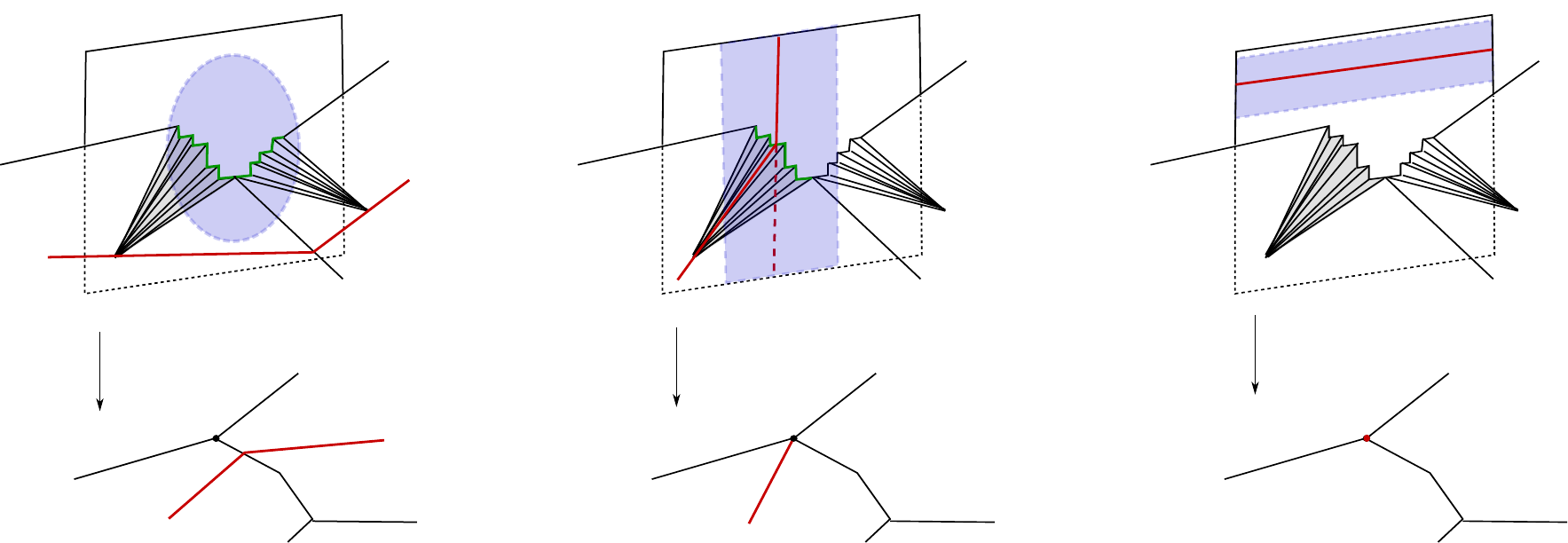}
\caption{The three possible configurations, depending on the relative position of the hypergraph associated with $W_i$ and the vertex $v$. In red: walls and hypergraphs. In green: $K_i$. In blue: $C_i$.}
 \label{F: cubul}
\end{center}
\end{figure}

If $W_i \cap EG_v$ is empty, then the vertex $v$ either belongs to an exterior arc of a polygon of $Y_i$, or $v$ belongs to a door-tree of $Y_i$. In the former case, $K_i$ consists of the single attaching path $p_{v,R}$. We then denote by $u$ the starting vertex of $p_{v,R}$ in $EG_v$. In the latter case, all the polygons $R_j$ of $Y_i$ containing $v$ share a common edge containing $v$. Then  $K_i$ consists of the union of all the attaching path $p_{v,R_j}$. These paths intersect in one vertex in $ EG_v$, that we denote by $u$. In both cases, let $C_i$ be the   $2l$-ball around $u$.  It follows that $K_i$ is contained in $C_i$. 

If  $W_i \cap EG_v$ is nonempty, then $W_i$ is a wall associated with a vertical edge of $\cE G_\mathrm{bal}$  of first or of second type. If $W_i$ is a wall of first type associated with an edge of $\cE G_v$, then $W_i\cap EG_v=W_i$, and $K_i$ is empty. Then let $C_i$ be the   $2l$-neighbourhood of the hyperplane corresponding to $W_i$.  

If $W_i$ is a wall of second type associated with an edge of of $E G_v$, then let $C_i$ be the $2l$-neighbour\-hood of the hyperplane $W_i\cap EG_v$. By definition the attaching path of any polygon of $Y_i$ containing $v$ must cross the hyperplane $W_i \cap EG_v$. Thus the subset $K_i$ is contained in $C_i$.

For two given indices $i$ and $j$, we have that $C_ i\cap C_j \neq \varnothing$.
 Indeed, if $W_i$ and $W_j$ cross in $EG_v$ this is immediate. If $W_i$ and $W_j$ do not cross in $EG_v$, then choose a cell $R$ of $X_\mathrm{\textit{bal}}$ whose preimage in $\cE G_\mathrm{\textit{bal}}$ contains a point of $W_i \cap W_j$. Choose a vertex $w$ of $R$ other than $v$, and consider a   geodesic between $v$ and $w$. By Proposition \ref{T: properties hyper}, such a geodesic is contained in $Y_i \cap Y_j$. In particular, the unique edge of that geodesic containing $v$ is in $Y_i \cap Y_j$, which implies that 
 $C_i \cap C_j \neq \varnothing$. The various subcomplexes $C_i$ are  convex by Theorem \ref{backgroundcubecomplexes}. Thus, Helly's theorem implies that the intersection $\cap_i C_i$ is non-empty. Let $w$ be a vertex in this intersection, and let $C$ be the   $4l$-ball around $w$. Note that, as $EG_v$ is a locally 
finite  CAT(0) cube complex, the set $C$ is finite.

 Let us now consider two cases. First suppose all hypergraphs $W_i$ intersect in $EG_v$. Then for each hypergraph $W_i$ there is an edge $e_i$ contained in $C$ such that the hyperplane associated with $e_i$ equals $\widetilde{\Lambda_i}\cap EG_v$. Therefore the information that is necessary to reconstruct such a situation is contained in the finite subset $C$ of $EG_v$. 
 
 Now suppose that at least one hypergraph $W_j$ does not contain $v$, that is $W_j\cap EG_v=\varnothing$. First note that there is no wall of first type in this situation. Then observe that $C$ contains $K_j$. Hence $C$ contains the attaching paths $p_{v,R}$ contained in $K_i\cap K_j$ for all $i$. 
 Thus, $C$ contains an attaching path associated with $W_i$ for all $i$.  Again, as $C$ is finite, there are only finitely many attaching paths contained in $C$, and therefore the information that is necessary to reconstruct such a situation is contained in the finite subset $C$ of $EG_v$. 
 
Since the action of $A$ on $EA$ (resp. of $B$ on $EB$) is cocompact, choose a compact subcomplex $K_{A}$ (resp. $K_B$) of $EG_{v_A}$ (resp. $EG_{v_B}$) which contains an $A$-translate (resp. a $B$-translate) of every   $4l$-ball of $EG_{v_A}$ (resp. $EG_{v_B}$).  

Let $g$ be an element of $G$ which sends $C$ to a subcomplex of $K_A \cup K_B$. 
In the first case above, as the fibres are locally finite there are only finitely many possibilities for the walls $(gWi)$. 
In the second above case, let $\mathcal{P}$ be the set of polygons of $\cE G$ such that one of their attaching paths meet $K_A$ or $K_B$. This set is finite since the action of $A$ on $EA$ (resp. of $B$ on $EB$) is properly discontinuous. As $\cP$ is finite, and  by Corollary \ref{finitenumberhypercarriers2cell}, there are only finitely many possibilities for the walls $(gW_i)$. Hence, in total there are only finitely many possibilities for the walls $(gW_i)$.
\end{proof}

Theorem \ref{cocompactaction} now follows from Proposition \ref{criterioncocompact} and Theorem \ref{configurations}.\qed\medskip

\appendix

\section{Appendix: Small cancellation polygonal complexes}\label{Appendix A}

Let us denote by  $X$  a $C'(1/6)$ polygonal complex. 
Here, we study the   geometry of $X$. The results can then be applied to the $C'(1/6)$ polygonal complex defined in Section \ref{complexofgroupsfreeproduct}.

\subsection{Classification of disc diagrams}
\begin{definition}[disc diagram over $X$, reduced disc diagrams, arcs] A disc diagram $D$ \textit{over the $C'(1/6)$ polygonal complex $X$} is a contractible planar polygonal complex endowed with a   map  $D \ra X$ which is an embedding on each polygon. A disc diagram $D$ over $X$ is called \textit{reduced} if no two distinct polygons of $D$ that share an edge are sent to the same polygon of $X$.

For a disc diagram $D$, we denote by $\partial D$ its boundary and $\mathring{D}$ its interior. The \textit{area} of a diagram $D$, denoted Area$(D)$, is the number of polygons of $D$. 
For a polygon $R$ of $D$, the intersection $\partial R \cap \partial D$ is called the \textit{outer component} of $R$ (and the \textit{outer path} if such an intersection is connected), the closure of $\partial R \cap \mathring{D}$ is called the \textit{inner component} of $R$ (and the \textit{inner path} if such an intersection is connected).

A diagram is called \textit{non-degenerate} if its boundary is homeomorphic to a circle, \textit{degenerate} otherwise. An \textit{arc} of $D$ is a path of $D$ whose interior vertices have valence $2$ and whose boundary vertices have valence at least $3$. Such an arc is called \textit{internal} if its interior is contained in $\mathring{D}$, \textit{external} if  the arc is fully contained in $\partial D$.
\end{definition}

We have the following fundamental result:

\begin{thm}[Lyndon--van Kampen]
Every loop of $X$ is the boundary of a reduced disc diagram. \qed
\end{thm}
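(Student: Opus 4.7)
The plan is to split the proof into two independent steps: first, construct \emph{some} disc diagram over $X$ bounding the given loop; second, upgrade it to a reduced one by a minimal-area argument.

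\textbf{Step 1 (existence of a disc diagram).} Let $\gamma$ be a combinatorial loop in the $1$-skeleton of $X$ that is null-homotopic (the case relevant for the applications in Section \ref{SectionSmallCancellation}, where $X$ is simply connected). By simple connectedness $\gamma$ extends to a continuous map from the closed disc $\mathbb{D}^2$ to $X$. Perturbing this map into general position with respect to the cell structure of $X$, and then pulling back the cell structure and collapsing trees in the $1$-dimensional complement, one obtains a contractible planar polygonal complex $D$ and a map $D\to X$ whose restriction to each polygon is an embedding and whose boundary is $\gamma$. This is the standard Lyndon--van Kampen construction; the verification that each polygon maps by an embedding follows from the $C'(1/6)$ hypothesis together with the fact that polygons of $X$ are themselves embedded (Corollary \ref{C: polygonsareconvex}, which applies here since we are in the appendix treating a general $C'(1/6)$ polygonal complex). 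Alternatively, one can induct on the combinatorial length of $\gamma$ by recognizing a boundary word of some polygon of $X$ as a subword of $\gamma$ and attaching the corresponding polygon along that subword.

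\textbf{Step 2 (reduction via minimal area).} Among all disc diagrams $D\to X$ bounding $\gamma$, choose one of minimal area in the sense of the definition. Suppose, toward a contradiction, that $D$ is not reduced: there are two distinct polygons $R_1,R_2$ of $D$ sharing an edge $e$ and mapping to a common polygon $R$ of $X$. Because each $R_i\to R$ is an embedding and the two maps agree on $e$, after at most one reflection the two boundary words of $R_1$ and $R_2$ coincide as paths in $R$. One then performs a \emph{fold}: identify the corresponding boundary arcs of $R_1$ and $R_2$ starting from $e$ in both directions, and delete the interior of the resulting cell. The result $D'$ is a planar, contractible polygonal complex (possibly degenerate in the sense of the definition, with arcs or pinch points, which is still allowed), equipped with an induced map $D'\to X$ still restricting to an embedding on each polygon, and with boundary still equal to $\gamma$. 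Since $\mathrm{Area}(D')<\mathrm{Area}(D)$, this contradicts the minimality of $D$, so $D$ must have been reduced.

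\textbf{Main obstacle.} The delicate point is the fold in Step~2. When $R_1$ and $R_2$ share more than the edge $e$ in $D$, the identification can create pinch points or cut $D$ into pieces joined along lower-dimensional subsets; one must check that what remains is still a disc diagram in the sense of the definition (a contractible planar polygonal complex, possibly degenerate) and that the outer boundary path is preserved unchanged. This is done by a local analysis around the shared boundary of $R_1$ and $R_2$ and is the content of the classical treatment in \cite[Ch.~V]{LyndonSchupp}; since our definition of a $C'(1/6)$ polygonal complex differs from the classical group-theoretic setup only in allowing the $2$-cells to have arbitrary (not necessarily free) attaching maps, and since we only use that each polygon of $X$ is embedded, that argument carries over without change.
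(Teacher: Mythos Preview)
The paper gives no proof of this statement: it is marked with \qed\ immediately after the theorem heading and is treated as the classical Lyndon--van Kampen lemma, to be invoked as a black box throughout the Appendix. Your two-step outline (existence via a null-homotopy, then reduction by choosing minimal area and folding a cancelling pair) is exactly the standard argument, so in spirit you are aligned with what the paper intends.

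One genuine circularity should be removed from your Step~1. You justify ``each polygon of $D$ maps by an embedding'' by appealing to Corollary~\ref{C: polygonsareconvex}. That corollary is about convexity, not embedding, and more importantly it is derived from Theorem~\ref{T: properties hyper}, whose proof in the Appendix (Propositions~\ref{simplyconnectedembedded} and~\ref{A: lyconvex}) explicitly \emph{uses} the Lyndon--van Kampen theorem you are trying to prove. If you want to invoke that polygons of $X$ are embedded, cite it as a direct consequence of the classification theorem for disc diagrams (Theorem~\ref{classification}, i.e.\ Greendlinger's lemma via \cite{McCammondWiseFansLadders}), which is logically prior. Better still, drop the appeal entirely: the condition ``embedding on each polygon'' in the definition of disc diagram refers to the map $D\to X$ restricted to a single $2$-cell of $D$, and in the standard construction each $2$-cell of $D$ is by design a copy of a $2$-cell of $X$ glued by the characteristic map, so this is automatic and needs no small-cancellation input.
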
 

All  disc diagrams considered in this Appendix will be reduced without further notice. We now present a classification theorem for reduced disc diagrams.

\begin{definition}[ladder] A reduced disc diagram $D$  of $X$ is a \textit{ladder} if it can be written as a union $D = c_1 \cup \ldots \cup c_n$, where the $c_i$ are edges or polygons of $X$ and such that:
\begin{itemize}
\item $D \setminus c_1$ and $D \setminus c_n$ are connected,
\item $D\setminus c_i$ has exactly two connected components for $1 < i < n$.
\end{itemize}
\end{definition}

\begin{definition}[shell, spur]
Let $D$ be a reduced disc diagram of $X$. A \textit{shell} of $D$ is a polygon of $D$ such that $\partial R \cap \partial D$ is connected and whose inner path is the concatenation of at most $3$ internal arcs of $D$. 
A \textit{spur} of $D$ is an edge of $D$ with a vertex of valence $1$.
\end{definition}

\begin{rmk}
Note that the internal arcs involved in the previous definition are automatically sent to  pieces of $X$ by the properties of a reduced disc diagram.
\end{rmk}

The following is the fundamental result of small cancellation theory (a version of the well-known Greendlinger 
Lemma, see Theorem 4.5 in \cite[Chapter V.4]{LyndonSchupp}). This version follows directly from Theorem 9.4 of \cite{McCammondWiseFansLadders}.

\begin{thm}[Classification Theorem for disc diagrams]\label{classification}
Let $D$ be a reduced disc diagram of $X$. Then either:
\begin{itemize}
\item $D$ consists of a single vertex, edge or polygon,
\item $D$ is a ladder,
\item $D$ contains at least three shells or spurs.\qed
\end{itemize}
\end{thm}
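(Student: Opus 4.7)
The plan is to prove the statement by combinatorial curvature analysis, following the spirit of the Greendlinger lemma. Since the first case (single vertex, edge, or polygon) is a base case that can be handled trivially, the real content is to show that a reduced disc diagram $D$ of area at least $2$ which is not a ladder contains at least three shells or spurs. The first preparatory step is to dispose of degenerate configurations: a spur already counts as one of the features we are trying to produce, so if $D$ has at least three spurs we are done, and otherwise we may iteratively remove spurs and cut vertices and focus on non-degenerate sub-diagrams with a simple closed boundary (carefully tracking how shells/spurs in the sub-diagrams produce shells/spurs in $D$).

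So the central task is to handle a non-degenerate reduced disc diagram $D$ without spurs. For each polygon $R$ of $D$ and each interior arc $\alpha \subset \partial R$, the fact that $D$ is reduced implies that $\alpha$ maps to a piece of $X$ in the sense of Definition \ref{smallcancellationcomplex}; hence by the $C'(1/6)$ hypothesis, $|\alpha| < |\partial R|/6$. It follows that if $R$ is an \emph{interior} polygon of $D$ (meaning $\partial R \cap \partial D = \varnothing$), then the boundary of $R$ must be a concatenation of at least $7$ maximal internal arcs, and similarly any boundary polygon whose outer component is a single path and whose inner component is a concatenation of at least $4$ internal arcs has at least $7$ maximal boundary arcs in $D$ in total. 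This is the combinatorial content that $C'(1/6)$ contributes.

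The main step is then a combinatorial Gauss--Bonnet / angle-defect argument. I would assign to each corner of $D$ (a pair of a polygon and a vertex of its boundary) an angle so that (a) angles around an interior vertex of $D$ sum to $2\pi$, (b) angles around a boundary vertex sum to $\pi$ plus a boundary-curvature term, and (c) the sum of angles in a polygon $R$ equals $(k-2)\pi$ where $k$ is the number of maximal arcs in $\partial R$ within $D$. With a standard choice one verifies that the combinatorial curvature $\kappa(R)$ is non-positive whenever $R$ is interior or has at least $4$ internal arcs (by the $\geq 7$ arcs bound above), while $\kappa(R) > 0$ only for shells. Summing curvatures over all $2$-cells, vertices, and edges yields, by Euler's formula $\chi(D)=1$, a total curvature of $2\pi$. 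Since interior and non-shell boundary polygons contribute $\le 0$, the positive curvature must be carried by shells (and by spurs, once they are folded into the bookkeeping).

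The final step is to argue that if $D$ contains fewer than three shells or spurs, then the accumulated positive curvature forces $D$ to have a very rigid linear structure, i.e.\ to be a ladder. The hard part, and where I expect the main technical obstacle, is making this last dichotomy precise: one has to show that any disc diagram in which the positive curvature is concentrated at only one or two shells must have every non-extremal polygon meeting exactly two internal arcs and exactly two other polygons, so the adjacency graph of polygons is a path — this is precisely the ladder condition. The delicate book-keeping around vertices of $D$ lying in $\partial D$, around polygons whose outer component is disconnected (which typically creates additional shells), and around the interaction between spurs and cut vertices is where care is needed; this is the heart of the argument in \cite{McCammondWiseFansLadders}, from which the statement is quoted.
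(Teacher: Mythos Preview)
The paper does not actually prove this statement: it is marked with \qed and attributed directly to Theorem~9.4 of \cite{McCammondWiseFansLadders}, with only the remark that ``the proof of this theorem is based on a negative curvature phenomenon described via a version of Gau\ss--Bonnet's Theorem.'' Your sketch is precisely an outline of that Gau\ss--Bonnet argument, so in spirit you are reproducing what the paper is invoking rather than diverging from it.

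One genuine slip in your outline: condition~(c), that the angles in a polygon $R$ sum to $(k-2)\pi$ with $k$ the number of maximal arcs, would force $\kappa(R)=0$ for every polygon and contradict your next sentence. In the actual argument one does not engineer the angle sums this way; one fixes a uniform angle assignment (for instance angles depending only on whether the corner lies on $\partial D$ or in the interior, as the paper itself does later in the proof of Lemma~\ref{intersection3hypergraphs}), and then the $C'(1/6)$ bound on the number of arcs is what makes $\kappa(R)\le 0$ for interior polygons and for boundary polygons with $\ge 4$ inner arcs. With that correction, the skeleton you give --- reduce to non-degenerate sub-diagrams while tracking spurs, use $C'(1/6)$ to force at least seven arcs on non-shell polygons, apply Gau\ss--Bonnet to see the total curvature $2\pi$ must be supplied by shells and spurs, and finally show that at most two positively curved cells forces the ladder structure --- is the correct one, and your identification of the last step (the rigidity argument producing a ladder) as the place where the real work lies is accurate.
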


The proof of this theorem is based on a negative curvature phenomenon  described via  a   version of Gau\ss-Bonnet's Theorem. We now explain this theorem as it   is used later. 

\begin{definition}[corner, disc diagram with angles]
A \textit{corner} of a (reduced) disc diagram $D$ of $X$ is a pair $(v,R)$ where $v$ is a vertex of $D$ and $R$ a polygon containing it. We denote by Corner$(v)$ (resp. Corner$(R)$) the set of corners of the form $(v,R')$ (resp. $(v', R)$). 

We say that $D$ is a disc diagram \textit{with angles} if each corner $c$ is assigned an \textit{angle} $\angle(c) \geq 0$.

For a vertex $v$ of $D$, we define its \textit{curvature}:
$$\kappa(v) = 2\pi - \pi \cdot \chi(\mbox{link}(v)) - \sum_{c \in \mathrm{Corner}(v)} \angle(c).$$

For a polygon $R$ of $X$, we define its \textit{curvature}:
$$\kappa(R) =  \sum_{c \in \mathrm{Corner}(R)} \angle(c) - \pi \cdot |\partial R| + 2\pi.$$
\end{definition} 

\begin{thm}[  Gau\ss-Bonnet Theorem] For a (reduced) disc diagram of $X$ with angles, we have:
$$\underset{v \mathrm{ ~vertex ~of~ }D}{\sum} \kappa(v) + \underset{R \mathrm{~polygon~of~} D}{\sum} \kappa(R) = 2\pi.$$\qed
\label{GaussBonnet}
\end{thm}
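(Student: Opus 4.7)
My plan is to prove this as a purely combinatorial identity, since once the angle contributions cancel, the theorem reduces to the Euler characteristic formula $\chi(D)=V-E+F=1$ for the contractible complex $D$. The strategy is to expand both sums, cancel the angle terms by swapping the order of summation, and then identify the remaining topological terms.

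First I would expand the two sums according to the definitions:
\begin{align*}
\sum_{v} \kappa(v) &= 2\pi V - \pi \sum_{v} \chi(\mathrm{link}(v)) - \sum_{v}\sum_{c \in \mathrm{Corner}(v)} \angle(c),\\
\sum_{R} \kappa(R) &= \sum_{R}\sum_{c \in \mathrm{Corner}(R)} \angle(c) - \pi \sum_{R} |\partial R| + 2\pi F,
\end{align*}
where $V$ is the number of vertices and $F$ the number of polygons of $D$. The key observation is that every corner $c=(v,R)$ of $D$ appears exactly once in $\sum_{v}\sum_{c \in \mathrm{Corner}(v)}\angle(c)$ and exactly once in $\sum_{R}\sum_{c \in \mathrm{Corner}(R)}\angle(c)$, so these angle contributions cancel when we add the two sums.

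The next step is the crux of the argument: proving the double-counting identity
\[ \sum_{v} \chi(\mathrm{link}(v)) + \sum_{R} |\partial R| = 2E,\]
where $E$ is the number of edges of $D$. For this I would count vertices and edges of $\mathrm{link}(v)$ separately: each edge $e$ of $D$ contributes exactly $2$ vertices to $\coprod_v \mathrm{link}(v)$ (one at each endpoint of $e$), giving $\sum_v |V(\mathrm{link}(v))|=2E$; and each corner $(v,R)$ contributes exactly one edge to $\mathrm{link}(v)$ (namely the arc joining the two edges of $R$ incident to $v$), so $\sum_v |E(\mathrm{link}(v))| = \sum_R |\partial R|$. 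Subtracting gives the desired identity.

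Combining these observations, the sum simplifies to
\[ \sum_v \kappa(v) + \sum_R \kappa(R) = 2\pi V + 2\pi F - \pi\bigl(\sum_v \chi(\mathrm{link}(v)) + \sum_R |\partial R|\bigr) = 2\pi(V-E+F),\]
which equals $2\pi\chi(D)=2\pi$ since $D$ is contractible. The main obstacle is really just the link identity above; everything else is definition-chasing. One small subtlety I would address is that for degenerate diagrams (e.g.\ with pinch points or spurs) the link of a vertex may have Euler characteristic larger than $1$, but the double-counting argument above is insensitive to this, since it only uses the fact that the link is a graph whose vertex and edge counts can be read off combinatorially from the incidences in $D$.
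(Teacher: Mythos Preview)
Your proof is correct. Note, however, that the paper does not actually supply its own proof of this statement: the theorem is stated with a \qed\ immediately after it, signalling that the authors treat the combinatorial Gau\ss--Bonnet formula as a standard background result. Your expansion-and-cancellation argument, reducing the identity to $V-E+F=1$ via the double-counting identity $\sum_v \chi(\mathrm{link}(v)) + \sum_R |\partial R| = 2E$, is precisely the standard proof and is carried out correctly. The one place where a reader might pause is the implicit claim $|\mathrm{Corner}(R)| = |\partial R|$, which amounts to saying the boundary of each polygon of $D$ is an embedded cycle; since $D$ is by definition a planar polygonal complex this is indeed the case, so no issue arises.
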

\subsection{Hypercarriers embed}

Galleries were defined in Definition \ref{D: gallery}. 
We prove the following result, which generalises a result of Wise \cite[Lemma 3.11]{WiseSmallCancellation}:

\begin{prop}\label{simplyconnectedembedded}
Let $\cC$ be a gallery. Then its hypercarrier $Y_\cC$ is connected and simply connected and the map $i_\cC:Y(\cC) \ra X$ is en embedding. 
\end{prop}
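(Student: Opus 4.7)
Connectedness of $Y_\cC$ is immediate from the connectedness condition in the definition of a gallery: any two doors, and hence the polygons containing them, are linked by a sequence of polygons sharing common doors. For the embedding of $i_\cC$ and the simple connectivity of $Y_\cC$, I would proceed by a minimal disc diagram argument, adapting the proof of Wise's \cite[Lemma 3.11]{WiseSmallCancellation}, with the far apart condition of Definition \ref{DefinitionFarApart} playing the role of the diametrically opposed edges condition used in Wise's original argument.

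Assume toward a contradiction that $Y_\cC$ is not simply connected, or that $i_\cC$ is not an embedding. In either case one can produce a non-trivial reduced disc diagram $D \to X$ of positive area witnessing this failure, together with an induced door-structure on the polygons of $D$ inherited from the gallery $\cC$. Choose such a $D$ of minimal area and apply the Classification Theorem for disc diagrams (Theorem \ref{classification}).

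The cases of a single vertex, edge, or polygon are immediate, since a single polygon of the gallery embeds into $X$ by the definition of a polygonal complex. In the ladder case, walk through the ladder polygon by polygon and use the coherence condition to assemble the gallery structure consistently, contradicting the minimality of $D$. Otherwise $D$ contains at least three shells or spurs; spurs can be removed while preserving the obstruction, so we may assume that $D$ contains at least three shells. Let $R$ be such a shell, with doors $\tau_1, \tau_2$ inherited from $\cC$. The inner path of $R$ is a concatenation of at most three pieces of $X$. Since $\tau_1$ and $\tau_2$ are far apart, any path in $\partial R$ containing both doors is a concatenation of at least four pieces. Hence the inner path of $R$ cannot contain both doors, and at least one door lies on the outer path of $R$. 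The coherence condition then allows us to remove $R$ from $D$ and to obtain a strictly smaller reduced disc diagram that still witnesses the original failure, contradicting the minimality of $D$.

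The main obstacle will be making the shell analysis precise: defining the door-structure on polygons of $D$ from the gallery in a consistent manner (this is where the coherence condition is essential), and verifying that the removal of a shell $R$ produces a reduced diagram which still witnesses the original failure of embedding or of simple connectivity. Once this is set up, the argument follows the template of Wise's proof, with the far apart condition supplying exactly the combinatorial leverage needed to handle our more general hypergraphs, including those whose hypercarriers possess cutpoints at vertices of $X$ and which do not arise in Wise's setting.
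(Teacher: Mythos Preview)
Your proposal has a genuine gap at its very foundation: the claim that the polygons of the disc diagram $D$ carry ``an induced door-structure \ldots\ inherited from the gallery $\cC$'' is unjustified. The diagram $D$ is obtained by filling the loop $i_\cC(P)$ \emph{in $X$}, using the simple connectivity of $X$; its $2$-cells are arbitrary polygons of $X$ and have, a priori, nothing to do with $\cC$. In particular a shell $R$ of $D$ need not be (the image of) a polygon of the gallery and need not contain any door at all. Your shell analysis, which relies on $R$ having doors $\tau_1,\tau_2$ that are far apart, therefore does not get off the ground. The same problem invalidates your treatment of the single-polygon case: the single polygon of $D$ is a polygon of $X$, not necessarily of $\cC$.

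The paper's argument proceeds differently. It only knows that the \emph{boundary} $\partial D = i_\cC(P)$ lies in $i_\cC(Y_\cC)$ (away from a possible singular vertex). The key step is Lemma~\ref{auplusdeuxpieces}: if a polygon $R$ of $X$ does not contain a door of $\cC$, then any path in $\partial R \cap i_\cC(Y_\cC)$ lifting to $Y_\cC$ is covered by at most two pieces. Applied to a shell $R$ whose outer path avoids the singular vertex, this forces $R \subset i_\cC(Y_\cC)$: otherwise the outer path (which lifts to $Y_\cC$) would be covered by two pieces, and together with the at most three inner pieces this would cover $\partial R$ by five pieces, contradicting $C'(1/6)$. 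Only \emph{then}, once $R$ is known to lie in $i_\cC(Y_\cC)$, can one push $P$ across $R$ and reduce the area. Your argument tries to use the far apart condition directly on $R$, but the far apart condition is a hypothesis on polygons \emph{of the gallery}, and you have not established that $R$ is one.
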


The proof of this proposition is using all three properties of a gallery, in particular the far apart condition. Extending the arguments of \cite{WiseSmallCancellation} in a straight-forward way, we give the detailed proof below.

\begin{lem}\label{arcexterieur}
Let $\cC$ be a gallery and let $R_{\{\tau_1, \tau_2\}} $ be a polygon of $\cC$. Let $P_1,P_2 \subset \partial R_{\{\tau_1, \tau_2\}}$  be distinct paths such that the concatenations $\tau_1  P_1$ and $ \tau_2  P_2$ are pieces of $X$. Then \emph{no} connected component of $\partial R \setminus (\tau_1P_1 \cup \tau_2 P_2)$  is  covered by a single piece, and  $P_1$ and $P_2$ are disjoint.
\end{lem}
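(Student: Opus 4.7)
The plan is to argue both assertions by contradiction, in each case producing a path in $\partial R$ that contains both $\tau_1$ and $\tau_2$ but is covered by fewer than four pieces, contradicting the far-apart condition of Definition~\ref{DefinitionFarApart} that is built into the gallery axioms.

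For the disjointness of $P_1$ and $P_2$, I would assume $P_1\cap P_2\neq\emptyset$. Since $P_1$ and $P_2$ are sub-arcs of the topological circle $\partial R$ sharing at least one point, their union is again a sub-arc, and hence $\tau_1 P_1\cup \tau_2 P_2$ is a single connected sub-arc of $\partial R$ containing both $\tau_1$ and $\tau_2$. But this sub-arc is covered by the two pieces $\tau_1P_1$ and $\tau_2P_2$, hence decomposes as an end-to-end concatenation of at most two pieces (one reads it as $\tau_1P_1$ followed by the residual sub-path of $\tau_2P_2$ lying outside $\tau_1P_1$; the latter is again a piece, using the convention in Definition~\ref{smallcancellationcomplex} that edges, and hence sub-paths decomposed into edges, count as pieces). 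Since $2<4$, this contradicts the far-apart condition, forcing $P_1\cap P_2=\emptyset$.

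For the statement on connected components, I use the disjointness just established: $\tau_1P_1$ and $\tau_2P_2$ are then disjoint sub-arcs of the circle $\partial R$, so their complement $\partial R\setminus (\tau_1P_1\cup\tau_2P_2)$ has exactly two connected components, each an open sub-arc whose closure joins an endpoint of $\tau_1P_1$ to an endpoint of $\tau_2P_2$. If some such component $C$ were contained in a single piece $Q$, then $\tau_1P_1\cup\overline{C}\cup\tau_2P_2$ would be a connected sub-arc of $\partial R$ containing both $\tau_1$ and $\tau_2$, written as the end-to-end concatenation of the three pieces $\tau_1P_1$, $Q$ and $\tau_2P_2$. Since $3<4$, this again contradicts the far-apart condition, so no component of the complement can be covered by a single piece.

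The only subtle point is the bookkeeping of piece-counts in the overlapping case of the first step, which relies on the standard small-cancellation convention that any sub-path of a piece can itself be written as a concatenation of pieces (in the worst case, a concatenation of individual edges). Apart from this, both parts are immediate applications of the far-apart condition, which is indeed tailored to rule out precisely the short-piece configurations along $\partial R$ between the two doors that the lemma forbids.
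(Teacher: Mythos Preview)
Your proof is correct and follows essentially the same approach as the paper's: both assertions are derived by exhibiting a path in $\partial R$ through both doors that decomposes into fewer than four pieces, contradicting the far-apart condition. The only cosmetic difference is that the paper handles the complement claim first and the disjointness second, whereas you reverse the order and use disjointness to pin down the structure of the complement; your ordering is arguably cleaner, and your explicit remark that a sub-path of a piece is again a piece makes the two-piece count in the overlapping case precise where the paper leaves it implicit.
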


\begin{proof}
If a connected component $C$ of $\partial R \setminus (\tau_1P_1 \cup \tau_2 P_2)$  is  covered by a single piece, the path from $\tau_1$ to $\tau_2$ covering $C$   consists of  at most three pieces. This contradicts the far apart condition.  
If $P_1$ and $P_2$ intersect, the path covering $\tau_1P_1$ and $\tau_2P_2$ consists of at most two pieces, again contradicting the far apart condition.
\end{proof}

\begin{definition}[canonical decomposition of a $2$-cell, exterior arcs, door-trees]\label{canonicaldecomposition}
Let $\cC$ be a gallery with hypercarrier $Y_\cC$ and let $R_{\{\tau_1, \tau_2\}} $ be a polygon of $\cC$. Let $P_1, P_1', P_2, P_2' \subset \partial R_{\{\tau_1,\tau_2\}}$ be maximal paths such that the concatenations $\tau_1P_1, \tau_1P_1', \tau_2P_2, \tau_2,P_2'$ are pieces.

Let $A, A' \subset \partial R_{\{\tau_1,\tau_2\}}$ be the paths joining the extremities of $P_1, P_2$ and $P_1',P_2'$, called the \textit{exterior arcs} of $R_{\{\tau_1, \tau_2\}} $.

The union of all the paths of the form $\tau_1P_1$ and $\tau_1P_1'$, where $R$ runs over the polygons of $\cC$ containing $\tau_1$ as door, is a tree, called the \textit{door-tree} associated with the door $\tau_1$.
\end{definition}

By definition of $Y_\cC$, no edge of $A$ or $A'$ is identified to the edge of a distinct polygon of  $Y_\cC$ which is glued to $R_{\{\tau_1,\tau_2\}}$ along either $\tau_1$ or $\tau_2$. This implies in particular that two distinct polygons of $Y_\cC$ sharing a door of $\cC$ are sent to different polygons of $X$. As the map $i_\cC: Y_\cC \ra X$ is already an immersion at the level of the $1$-skeleton, the following follows: 

\begin{cor}
Let $\cC$ be a gallery of $X$. Then the map $i_\cC: Y_\cC \ra X$ is an immersion. \qed
\end{cor}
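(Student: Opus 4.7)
The plan is to upgrade the already established immersion on the 1-skeleton to a full immersion by controlling what happens across shared edges of polygons of $Y_\cC$. Concretely, I would show that if two distinct polygons of $Y_\cC$ share an edge, their underlying polygons in $X$ are distinct; once this is combined with the 1-skeleton immersion and the fact that each polygon of $Y_\cC$ is sent homeomorphically onto its underlying polygon in $X$, the local injectivity of $i_\cC$ follows everywhere, which is what is needed to conclude that $i_\cC$ is an immersion.

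First, I would analyze when two distinct polygons $R_1, R_2$ of $Y_\cC$ can share an edge $e$. By the construction of $Y_\cC$, the only identifications performed between boundaries of polygons of $\cC$ take place along paths lying at common doors; hence such an edge $e$ must lie on the door-tree of a door $\tau$ that belongs to both $R_1$ and $R_2$. Here Lemma \ref{arcexterieur} is essential: its far apart conclusion rules out identifications of edges on an exterior arc with edges of polygons attached at either door of the same underlying cell, so identifications can only occur through door-trees.

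Second, I would apply the coherence condition from Definition \ref{D: gallery}. Writing $R_1 = R_{\{\tau, \tau_2\}}$ and $R_2 = R'_{\{\tau, \tau_2'\}}$, suppose for contradiction that the underlying cells $R$ and $R'$ coincide in $X$. The coherence condition then forces $\tau_2 = \tau_2'$, making $R_1$ and $R_2$ the same polygon with doors and contradicting their distinctness in $\cC$. Thus any two distinct polygons of $Y_\cC$ sharing a door are sent to distinct polygons of $X$.

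Third, I would conclude by combining the previous step with the 1-skeleton immersion. A small neighbourhood of a point in the interior of an edge $e$ of $Y_\cC$ is covered by the (at most two) polygons of $Y_\cC$ incident to $e$; by Step 2 these polygons are sent to distinct polygons of $X$ meeting along $i_\cC(e)$, each of which is embedded in $X$ by definition, so the map is locally injective near the interior of $e$. Local injectivity on the interior of polygons is immediate, and on the 1-skeleton it is the hypothesis. The main obstacle is exactly the one addressed in the paragraph preceding the corollary, namely excluding spurious identifications between exterior arcs and edges of neighbouring polygons via Lemma \ref{arcexterieur}; once this is secured, the coherence condition closes the argument cleanly.
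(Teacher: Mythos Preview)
Your proof is correct and follows essentially the same route as the paper. The paper's argument is the short paragraph immediately preceding the corollary: it uses Lemma~\ref{arcexterieur} and the canonical decomposition (Definition~\ref{canonicaldecomposition}) to observe that no exterior-arc edge of a polygon of $Y_\cC$ is identified with an edge of a neighbouring polygon glued along one of its doors, deduces from this that two distinct polygons of $Y_\cC$ sharing a door are sent to different polygons of $X$, and then combines this with the already-established immersion on the $1$-skeleton---exactly your Steps~1 and~3.

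The only substantive difference is your Step~2: you invoke the \emph{coherence} condition to force distinct underlying cells, whereas the paper obtains the same conclusion from the exterior-arc observation alone and explicitly records, right after Lemma~\ref{auplusdeuxpieces}, that neither connectedness nor coherence has been used up to that point. Your appeal to coherence is clean and correct; it is simply less economical in the hypotheses consumed than the paper's argument, which extracts the conclusion from the far-apart condition via Lemma~\ref{arcexterieur}.
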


\begin{lem}\label{auplusdeuxpieces}
Let $\cC$ be a gallery of $X$. Let $R$ be a polygon of $X$ meeting $i_\cC(Y_\cC)$ which does not contain a door of $\cC$. Let $P$ be a path of $\partial R \cap i_\cC(Y_\cC)$ which admits a lift to $Y_\cC$ under $i_\cC$. Then $P$ is covered by the concatenation of at most two pieces.
\end{lem}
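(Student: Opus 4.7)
The plan is to take the given lift $\widetilde{P} \subset Y_\cC$ of $P$, decompose it into maximal subpaths each contained in the boundary of a single polygon of $\cC$, and argue that this decomposition has at most two terms by applying the far apart condition inside the ``middle'' polygon.

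First I would write $\widetilde{P} = \widetilde{P}_1 \cdot \widetilde{P}_2 \cdots \widetilde{P}_k$, where each $\widetilde{P}_j$ is a maximal subpath sitting in the boundary $\partial R'_j$ of a single polygon $R'_j$ of $\cC$, and consecutive polygons $R'_j, R'_{j+1}$ are identified in $Y_\cC$ along a door-tree centered at a shared door $\tau_j$ (this decomposition is well defined up to the usual overlap on door-trees, which are shared by several polygons in $Y_\cC$). Setting $P_j := i_\cC(\widetilde{P}_j) \subset \partial R \cap \partial R'_j$, I observe that $R$ is not a polygon of $\cC$ (since by hypothesis it contains no door), so $R \neq R'_j$ for all $j$. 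As the image of a connected path under a continuous map, each $P_j$ is a connected subpath of $\partial R$ lying in two distinct polygons, hence a single piece of $X$.

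Then I would prove $k \leq 2$ by contradiction. Assuming $k \geq 3$, focus on the middle subpath $\widetilde{P}_2 \subset \partial R'_2$, whose polygon has doors $\tau_1$ (shared with $R'_1$) and $\tau_2$ (shared with $R'_3$). The key geometric observation is that the door-tree at $\tau_1$, as a tree in $Y_\cC$, has $\tau_1$ as its unique ``hub'' joining branches living in $R'_1$ to branches living in $R'_2$; consequently the lift $\widetilde{P}$ must cross $\tau_1$ when transitioning from $R'_1$ to $R'_2$, so $i_\cC(\widetilde{P})$ contains $\tau_1$. The same reasoning at the other end shows that $P_2$ also contains $\tau_2$. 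On the one hand $P_2$ is a single piece, while on the other hand Definition \ref{DefinitionFarApart} applied to the gallery $\cC$ forces any path in $\partial R'_2$ containing both $\tau_1$ and $\tau_2$ to be a concatenation of at least four pieces, a contradiction. Hence $k \leq 2$, and either $P = P_1$ or $P = P_1 \cdot P_2$ is covered by at most two pieces.

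The main obstacle will be to justify carefully the ``hub'' statement, that is, that crossing the door-tree in $Y_\cC$ really forces the image in $X$ to contain the central door. Since door-trees may share more than one simplex with several polygons and since the map $i_\cC$ is only known to be an immersion at this stage, one has to argue using the tree structure built into Definition \ref{canonicaldecomposition} together with Lemma \ref{arcexterieur} (which guarantees that maximal piece extensions of a door inside a given polygon are disjoint and thus emanate from $\tau$ as well-separated branches). Once this is in place, the contradiction with the far apart condition is immediate.
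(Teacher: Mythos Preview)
Your decomposition and the reduction to ``$k\leq 2$'' are the right shape, but the ``hub'' claim is false as stated, and this is a genuine gap rather than a detail to be filled in.

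Concretely, two polygons $R'_1,R'_2$ of $\cC$ sharing a door $\tau_1$ are glued in $Y_\cC$ along the full piece $\partial R'_1\cap\partial R'_2$, which is a path strictly containing $\tau_1$ in general. Thus the branches of the door-tree coming from $R'_1$ and from $R'_2$ overlap along this whole piece, and $\tau_1$ is \emph{not} a cut-point separating them. A lift $\widetilde P$ can therefore pass from $\partial R'_1$ to $\partial R'_2$ at any vertex of that shared piece without ever meeting $\tau_1$. Lemma~\ref{arcexterieur} does not help here: it only says that the pieces around the two \emph{different} doors $\tau_1,\tau_2$ of a single polygon are disjoint; it says nothing about how the $\tau_1$-branches of two neighbouring polygons sit inside the door-tree. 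A good sanity check that something is off: if your hub claim were true, then already $k\geq 2$ would force $P\subset\partial R$ to contain a door of $\cC$, contradicting the hypothesis on $R$ and yielding $k=1$, i.e.\ one piece, which is strictly stronger than what the lemma asserts.

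The repair is small and brings you exactly to the paper's argument. From your decomposition you do get that $\widetilde P_2$ starts at a vertex lying in the door-tree region of $\partial R'_2$ at $\tau_1$ and ends in the door-tree region at $\tau_2$; hence $\widetilde P_2$ must contain a full exterior arc of $R'_2$ in the sense of Definition~\ref{canonicaldecomposition}. Its image is then a subpath of the single piece $P_2\subset\partial R\cap\partial R'_2$, so the exterior arc is covered by one piece, contradicting Lemma~\ref{arcexterieur}. Equivalently, you can run the far-apart contradiction after extending $\widetilde P_2$ inside the two door-tree pieces to reach $\tau_1$ and $\tau_2$: this gives a path in $\partial R'_2$ containing both doors that is covered by at most three pieces, again impossible. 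Either way, the key input is the exterior-arc statement of Lemma~\ref{arcexterieur}, not a cut-point property of the door-tree.
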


\begin{proof}
 Lemma \ref{arcexterieur} implies that $P$  cannot cover a complete exterior arc $A$. Thus, either $P$ is a proper subpath of $A$, or $P$ intersects exactly two polygons of $\cC$. In the former case, $P$ is covered by one piece, in the latter case $P$ is covered by two pieces.
\end{proof}

Note that we have, so far, not used the connectedness nor the coherence condition in the definition of a gallery, see Definition \ref{D: gallery}.

\begin{proof}[Proof of Proposition \ref{simplyconnectedembedded}]
The fact that $Y_\cC$ is connected is a direct consequence of the connectedness condition.

We say that a path $P$ of $Y_\cC$ is \textit{essential} if it is a loop representing a non-trivial element of the fundamental group of $Y_\cC$, or if it is a path with distinct extremities which are sent to the same vertex of $X$. In the latter case, we call such a vertex of $X$ the \textit{unique singular vertex of $i_\cC(P)$}. The proposition amounts to proving that there exists no essential path in  $Y_\cC$.

We reason by contradiction. Let $P$ be such an essential path of $Y_\cC$. Since $X$ is simply-connected, the loop $i_\cC(P)$ is the boundary of a disc diagram $D$. Notice first that $D$ cannot be a single vertex or edge. Without loss of generality, we can assume that the number of polygons of $D$ is minimal among such diagrams. In particular, $D$ is non-degenerate and each path of its boundary $i_\cC(P)$ that does not contain the singular vertex of $i_\cC(P)$ lifts to a path of $P \subset Y_\cC$. 

First suppose that $D$ is a single polygon. By hypothesis on $P$, $D$ cannot be contained in $i_\cC(Y_\cC)$. Let us decompose the boundary of $D$ as the union of two paths $P_1$ and $P_2$ neither of which contains the singular vertex of $i_\cC(Y_\cC)$ in their interior. Both paths $P_1$ and $P_2$ thus lift to paths of $Y_\cC$. By Lemma \ref{arcexterieur}, this implies that $P_1$ and $P_2$ can be covered by the concatenation of two pieces, and so the boundary of $D$ is covered by fours pieces, contradicting the condition $C'(1/6)$.

By the classification theorem \ref{classification}, this implies that  the disc diagram $D$ contains at least two shells, and we can choose one of these shells, say $R$, so that its outer path does not contain the singular vertex of $i_\cC(Y_\cC)$ in its interior. Such a shell must be contained in $i_\cC(Y_\cC)$,  for otherwise Lemma \ref{auplusdeuxpieces} would imply that  $R \cap \partial D$ is covered by two pieces, making the boundary of $R$ covered by five pieces, a contradiction with condition $C'(1/6)$. Thus $R \subset i_\cC(Y_\cC)$ and we can push the path $P$ through the lift of $R$ in $Y_\cC$ to obtain a new essential path, the image of which in $X$ is the image in $X$ of the boundary of the disc diagram $D \setminus R$. As such a diagram contains strictly fewer polygons than $D$, we get a contradiction. 
\end{proof}

\begin{cor}\label{A:hypergraphtree}
For every gallery $\cC$, the associated hypergraph $\Lambda_\cC$ is a tree which embeds in $X$.
\end{cor}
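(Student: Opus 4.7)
The embedding assertion is immediate: since $\Lambda_\cC \subset Y_\cC$ and $i_\cC : Y_\cC \to X$ is an embedding by Proposition \ref{simplyconnectedembedded}, so is its restriction to $\Lambda_\cC$.

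For the tree property, I would view $\Lambda_\cC$ as a combinatorial graph whose vertex set consists of the apices of the polygons of $\cC$ together with the barycentres of distinct doors of $\cC$, and in which each polygon $R_{\{\tau_1,\tau_2\}}$ contributes exactly two edges (the two radii from its apex to the barycentres of $\tau_1$ and $\tau_2$). Equivalently, $\Lambda_\cC$ is the barycentric subdivision of the nerve graph $N(\cC)$ whose vertices are the distinct doors of $\cC$ and whose edges are the polygons of $\cC$ (each polygon being an edge between its two doors). Hence $\Lambda_\cC$ is a tree if and only if $N(\cC)$ is one. Connectedness of $N(\cC)$ is immediate from the connectedness condition of Definition \ref{D: gallery}.

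For acyclicity I would argue by contradiction: suppose $N(\cC)$ contains a cycle, and choose a minimal one, yielding a cyclic sequence $R_1, \ldots, R_n$ of distinct polygons of $\cC$ such that $R_i$ and $R_{i+1}$ share a door $\tau_i$ (indices mod $n$). The plan is to mimic the disc-diagram strategy of the proof of Proposition \ref{simplyconnectedembedded}: replace the cycle by a loop $\gamma$ in the $1$-skeleton of $X$ that follows, for each $R_i$, one of the two arcs of $\partial R_i$ joining $\tau_{i-1}$ and $\tau_i$ (the choice being dictated by the side on which the cycle encloses a region); fill $\gamma$ by a reduced disc diagram $D$ via Lyndon--van Kampen; attach the $R_i$ along $\gamma$ to form an enlarged reduced disc diagram $D'$; and apply Theorem \ref{classification} to $D'$. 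In the three possible outcomes (a single cell, a ladder, or a diagram with at least three shells/spurs) I would combine Lemma \ref{auplusdeuxpieces} with the far apart condition on each $R_i$: any outer path containing both doors $\tau_{i-1}$ and $\tau_i$ must consume at least four pieces, whereas the classification outcomes together with the $C'(1/6)$ condition force it to be coverable by strictly fewer pieces, a contradiction (with the $C'(1/6)$ condition in the shell case, and with the minimality of the cycle in the ladder case).

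The main obstacle is that a loop in $\Lambda_\cC$ does not live in the $1$-skeleton of $X$, since it passes through apices lying in the interior of polygons; the $1$-skeletal replacement $\gamma$ and the resulting enlarged diagram $D'$ must therefore be set up so that the combinatorics still match the far apart condition on each $R_i$. The short case $n = 2$, where two polygons of $\cC$ share both doors $\tau_1, \tau_2$ but have distinct underlying cells in $X$, should be handled separately: any path in $\partial R_1$ containing both $\tau_1$ and $\tau_2$ is then coverable by a controlled number of pieces coming from $\partial R_1 \cap \partial R_2$ and neighbouring polygons, which conflicts directly with the requirement of at least four pieces imposed by the far apart condition.
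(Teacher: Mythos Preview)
Your embedding argument is correct and matches the paper. For the tree property, however, you are working far too hard. The paper's proof is a one-line observation: the hypercarrier $Y_\cC$ deformation retracts onto $\Lambda_\cC$ (using the canonical decomposition of each polygon from Definition~\ref{canonicaldecomposition}, one radially retracts each polygon onto the path $L_{\{\tau_1,\tau_2\}}$ joining the barycentres of its doors through the apex). Since Proposition~\ref{simplyconnectedembedded} already establishes that $Y_\cC$ is simply connected, so is $\Lambda_\cC$; a simply connected graph is a tree.

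Your proposed route, by contrast, replaces a putative cycle in $\Lambda_\cC$ by a $1$-skeletal loop $\gamma$, fills it by a reduced disc diagram, and runs a shell/ladder analysis. This is essentially a second proof of the simple connectedness of $Y_\cC$ restricted to the sub-situation you need, so it duplicates the work of Proposition~\ref{simplyconnectedembedded} rather than using it. Moreover, as you yourself note, the passage from the hypergraph cycle (which goes through apices in the interiors of polygons) to a $1$-skeletal loop, the construction of the enlarged diagram $D'$, and the $n=2$ case are all left as sketches; making them precise would cost more than the entire paper proof. The deformation-retract argument bypasses all of this.
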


\begin{proof}
It is enough by Proposition \ref{simplyconnectedembedded} to see that the associated hypercarrier $Y_\cC$ retracts by deformation onto $\Lambda_\cC$. Such a deformation is easily defined using the canonical decomposition of a polygon of $\cC$ introduced in Definition \ref{canonicaldecomposition}.
\end{proof}

\begin{rmk}[minimal ladder between two  simplices of a hypercarrier]
Let $\cC$ be a gallery and $\tau$ and $\tau'$ be two simplices of $Y_\cC$ that are not contained in the same door-tree of $Y_\cC$. There exists a unique non-degenerate ladder of minimal area containing $\tau$ and $\tau'$, which we call the \textit{(minimal) ladder of $Y_\cC$ between $\tau$ and $\tau'$}.
\end{rmk}

\subsection{Convexity of hypercarriers}

Here we prove the following: 

\begin{prop}\label{A: lyconvex}
Let $\cC$ be a gallery. Then the subcomplex $Y_\cC$ of $X$ is  convex, that is, a   geodesic between two vertices of $Y_\cC$ is contained in $Y_\cC$.
\end{prop}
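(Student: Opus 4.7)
My plan is to argue by contradiction via a minimal disc diagram of Lyndon--van Kampen type, following the standard small cancellation template. Suppose $Y_\cC$ is not convex and choose vertices $u,v \in Y_\cC$ together with a geodesic $\gamma \subset X$ from $u$ to $v$ that leaves $Y_\cC$, with $|\gamma|$ minimal among such counter-examples. Since $Y_\cC$ is connected by Proposition \ref{simplyconnectedembedded}, I can pick a combinatorial path $\gamma' \subset Y_\cC$ from $u$ to $v$; by Lyndon--van Kampen the loop $\gamma \cdot (\gamma')^{-1}$ bounds a reduced disc diagram $D \to X$, and I would then choose the pair $(\gamma',D)$ so that $\mathrm{Area}(D)$ is minimal. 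The goal is to apply the classification theorem (Theorem \ref{classification}) to rule out every possible shape of $D$, thereby forcing $\gamma = \gamma' \subset Y_\cC$.

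The degenerate shapes can be handled directly. If $D$ is a single vertex or edge then $\gamma = \gamma' \subset Y_\cC$, a contradiction. If $D$ is a single polygon $R$, then $R \not\subset Y_\cC$ (otherwise $\gamma \subset R \subset Y_\cC$), so $\gamma' \subset \partial R \cap i_\cC(Y_\cC)$ lifts to $Y_\cC$ and Lemma \ref{auplusdeuxpieces} gives $|\gamma'| < |\partial R|/3$; combined with $|\gamma| \leqslant |\partial R|/2$ coming from geodesity, this contradicts $|\gamma| + |\gamma'| = |\partial R|$.

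The main case is when $D$ has at least three shells or spurs. Since $\gamma$ is geodesic it has no spurs, and I may choose $\gamma'$ with none either, so $D$ has at least three shells. At most two shells can have $u$ or $v$ in the interior of their outer path, so some shell $R$ has its entire outer path contained in $\gamma$ or entirely in $\gamma'$. In the first subcase the $C'(1/6)$ condition yields an inner path of length strictly less than $|\partial R|/2$ and an outer path of length strictly more than $|\partial R|/2$, so rerouting $\gamma$ through the inner path of $R$ produces a strictly shorter path from $u$ to $v$, contradicting geodesity of $\gamma$. In the second subcase I would first show $R \subset Y_\cC$: if not, the outer path of $R$ lies in $\gamma' \subset Y_\cC$ and lifts to $Y_\cC$, so Lemma \ref{auplusdeuxpieces} bounds it by at most two pieces, which combined with the three-piece bound on the inner path contradicts $C'(1/6)$. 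Once $R \subset Y_\cC$, rerouting $\gamma'$ through the interior of $R$ yields a new path in $Y_\cC$ bounding, together with $\gamma$, a reduced disc diagram of strictly smaller area, contradicting minimality. The ladder case reduces to the very same analyses applied to the two extremal polygons of the ladder, which play the role of shells on each side of $\partial D$.

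The main obstacle I anticipate is the subcase where a shell $R$ whose outer path lies in $\gamma'$ happens to contain a door of $\cC$, since Lemma \ref{auplusdeuxpieces} explicitly excludes that situation. I would resolve it by using the coherence condition from Definition \ref{D: gallery} together with the far-apart condition: once one door of $R$ is shared with a polygon of $\cC$, the coherence condition forces the other door, so $R$ is itself a polygon of $\cC$ and therefore lies in $Y_\cC$. With this door-bookkeeping caveat dispatched, the standard $C'(1/6)$ template closes the proof.
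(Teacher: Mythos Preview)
Your approach mirrors the paper's: minimise $(|\gamma|,\mathrm{Area}(D))$ lexicographically and eliminate each shape from Theorem~\ref{classification}. Two points need correction.

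The ladder case does not reduce to the shell analysis. The extremal cells $R_1,R_n$ of a ladder contain the endpoints $u,v$, so their outer paths meet both $\gamma$ and $\gamma'$; they are not shells whose outer path lies entirely on one side, and your two subcases do not apply. The paper (Lemma~\ref{convex3}) argues differently: write $\partial R_1$ as the union of $\gamma\cap\partial R_1$, $\gamma'\cap\partial R_1$, and $R_1\cap R_2$; once one knows $R_1\not\subset Y_\cC$, the last two together are at most three pieces, hence $|\gamma\cap\partial R_1|>\tfrac12|\partial R_1|$, contradicting geodesity of $\gamma$.

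Your door resolution is wrong. The coherence condition of Definition~\ref{D: gallery} compares two polygons \emph{of $\cC$} with the same underlying cell; it says nothing about an arbitrary polygon $R$ of $X$ whose boundary merely contains a door simplex. Such an $R$ need not be a polygon of $\cC$ at all, so coherence cannot force it into $Y_\cC$. The correct fix is simpler: inspect the proof of Lemma~\ref{auplusdeuxpieces}. The hypothesis is only used to guarantee $R\neq R'$ for every polygon $R'$ of $\cC$, so that $\partial R\cap\partial R'$ is a genuine piece and hence cannot cover a full exterior arc of $R'$. Thus the lemma already holds under the weaker assumption that $R$ is not the underlying cell of any polygon of $\cC$, which is exactly what $R\not\subset Y_\cC$ provides. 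With that observation your single-polygon, shell, and (corrected) ladder cases go through without further bookkeeping; the paper applies the lemma under this weaker hypothesis throughout Lemmas~\ref{convex1}--\ref{convex3} without comment.
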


We will prove that proposition by contradiction. Let us assume that there exists a   geodesic $P$ between two vertices of $Y_\cC$ and such that $P$ is not contained in $Y_\cC$. Let  $Q$ be a path of $Y_\cC$ joining the two extremities of $P$. The union of $P$ and $Q$ yields a loop of $X$, and thus there exists a disc diagram with such a loop as boundary. We choose $P, Q$ and $D$ in such a way that $(|P|, \mbox{Area}(D) )$ is minimal for the lexicographic order. In particular, $P$ does not cross the hypergraph $\Lambda_\cC$. We now study separately three cases. 

\begin{lem}
The diagram $D$ cannot consist of a single polygon.
\label{convex1}
\end{lem}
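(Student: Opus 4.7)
The plan is to argue by contradiction: assume $D$ consists of a single polygon $R$ and derive a contradiction by forcing $|Q|$ to be too small. Write $\partial R=P\cup Q$, so that $|P|+|Q|=|\partial R|$; since $P$ is a geodesic between the endpoints of $Q$, one has $|P|\leq|Q|$, and therefore $|Q|\geq|\partial R|/2$.

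First I would handle the case where $R$ is itself a polygon $R_{\{\tau_1,\tau_2\}}$ of the gallery $\cC$: in that case the entire boundary $\partial R$ lies in $i_\cC(Y_\cC)$, so $P\subset Y_\cC$, contradicting the choice of $P$ as a geodesic \emph{not} contained in $Y_\cC$. Otherwise $R$ is not a polygon of $\cC$, and $Q$ lies in $\partial R\cap i_\cC(Y_\cC)$; each overlap $R\cap R'$ with $R'\in\cC$ and $R'\neq R$ is contained in a piece of $X$ of length strictly less than $|\partial R|/6$ by the $C'(1/6)$ condition. When $R$ contains no door of $\cC$ on its boundary, Lemma~\ref{auplusdeuxpieces} applies directly and yields that $Q$ is covered by at most two pieces, so $|Q|<2\cdot|\partial R|/6=|\partial R|/3<|\partial R|/2$, contradicting $|Q|\geq|\partial R|/2$.

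The main obstacle is the remaining subcase, in which $R\notin\cC$ but $R$ does carry at least one door of $\cC$ on its boundary, so Lemma~\ref{auplusdeuxpieces} is not directly available. I would extend its piece-counting proof by combining the coherence condition of Definition~\ref{D: gallery}, which prevents any single polygon of $\cC$ from contributing two separate overlaps to $Q$, with the far-apart condition of Definition~\ref{DefinitionFarApart}, which forces consecutive overlaps of $Q$ with distinct polygons of $\cC$ to be separated along $\partial R$ by non-trivial exterior arcs. A direct case analysis, in the spirit of Lemmas~\ref{arcexterieur}--\ref{auplusdeuxpieces}, then shows that $Q$ is covered by the concatenation of only a small number of pieces (meeting at door vertices of $\cC$), giving $|Q|<|\partial R|/2$ and hence again a contradiction. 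This closes the argument.
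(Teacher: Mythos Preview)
Your overall strategy coincides with the paper's: assume $D$ is a single polygon $R$, note $R\notin Y_\cC$ (else $P\subset\partial R\subset Y_\cC$), bound $|Q|$ by a short concatenation of pieces, and contradict $|P|\leq|Q|$. The paper's proof is exactly this, but without your three-way case split: once $R\notin Y_\cC$ it simply invokes Lemma~\ref{auplusdeuxpieces} to get that $Q$ is covered by at most two pieces, hence $|Q|<\tfrac{1}{2}|\partial R|<|P|$.

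The gap in your proposal is your third case. You correctly observe that Lemma~\ref{auplusdeuxpieces} is stated under the hypothesis that $R$ contains no door of $\cC$, and you promise a ``direct case analysis in the spirit of Lemmas~\ref{arcexterieur}--\ref{auplusdeuxpieces}'' when that hypothesis fails; but you do not perform it, and you do not even commit to a specific bound on the number of pieces. As written this is an assertion, not an argument. If you want to keep this case, you must actually carry out the count: show that a path in $\partial R\cap i_\cC(Y_\cC)$ admitting a lift to $Y_\cC$ meets only a bounded number of polygons of $\cC$ even when it passes through door-trees, and that the resulting piece count still gives $|Q|<\tfrac{1}{2}|\partial R|$. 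Alternatively, follow the paper and simply apply Lemma~\ref{auplusdeuxpieces} as stated --- the paper treats ``$R$ not contained in $Y_\cC$'' as sufficient to invoke it and does not isolate your door subcase at all.
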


\begin{proof}By contradiction, suppose that $D$ consists of a single polygon $R$ of $X$. Since $R$ is not contained in $Y_\cC$ by assumption, the path  $Q= R \cap Y_\cC$ is covered by at most two pieces by Lemma \ref{auplusdeuxpieces}. Thus, condition $C'(1/6)$ implies that $|Q| < \frac{1}{2} |\partial D|$, hence $|P| > \frac{1}{2} |\partial D| > |Q|$, contradicting the fact that $P$ is a   geodesic. 
\end{proof}

\begin{lem} The diagram $D$ cannot contain three shells.
\label{convex2}
\end{lem}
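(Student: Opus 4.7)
The plan is to derive a contradiction from the minimality of the pair $(|P|, \mathrm{Area}(D))$. By that minimality together with the fact that $P$ is a geodesic (and in particular has no backtracking), we may assume $P\cap Q$ consists only of the two common endpoints of $P$ and $Q$, so $\partial D$ is a simple closed curve and $D$ has no spurs. Hence the three shells-or-spurs supplied by Theorem~\ref{classification} must all be shells. Since there are only two corner vertices of $D$ (the endpoints of $P$), at least one shell $R$ has outer path contained in the interior of $P$ or in the interior of $Q$.

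Suppose first that the outer path of $R$ lies in the interior of $P$. Its inner path is a concatenation of at most three internal arcs of $D$, each mapping to a piece of $X$. By $C'(1/6)$ the inner path has length strictly less than $\tfrac{1}{2}|\partial R|$, so the outer path has length strictly greater than $\tfrac{1}{2}|\partial R|$. Replacing the outer path of $R$ in $P$ by the inner path of $R$ therefore produces a path $P'$ in $X$ with the same endpoints as $P$ and $|P'|<|P|$, contradicting the fact that $P$ is a geodesic.

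Suppose instead that the outer path of $R$ lies in the interior of $Q\subseteq Y_\cC$. If $R$ is itself a polygon of $Y_\cC$, then the inner path of $R$ is also contained in $Y_\cC$, so replacing the outer path of $R$ in $Q$ by the inner path of $R$ produces a new path $Q'$ joining the endpoints of $P$ inside $Y_\cC$, bounding together with $P$ a disc diagram $D'=D\setminus R$ of strictly smaller area and the same perimeter path $P$. This contradicts the minimality of $\mathrm{Area}(D)$. If instead $R$ is not a polygon of $Y_\cC$ and does not contain a door of $\cC$, Lemma~\ref{auplusdeuxpieces} shows that the portion of $\partial R$ lying in $Y_\cC$, and in particular the outer path, is covered by the concatenation of at most two pieces; together with the at most three pieces covering the inner path, $\partial R$ is then covered by five pieces, contradicting $C'(1/6)$.

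The remaining technical subtlety, and the step I expect to be the main obstacle, is the case where $R$ is a shell whose outer path lies in $Q$, with $R\notin Y_\cC$ but $R$ containing a door of $\cC$ in its boundary. Here Lemma~\ref{auplusdeuxpieces} does not apply as stated, since its hypothesis explicitly excludes polygons containing a door. One needs an extension of that lemma to analyse $\partial R\cap Y_\cC$ in the presence of door-trees: using the far apart condition built into the definition of a gallery, together with the local structure of the canonical decomposition of polygons of $\cC$ at their doors, one again bounds the outer path of $R$ by the concatenation of a few pieces, recovering the same $C'(1/6)$ contradiction. All remaining configurations reduce either to the geodesic-shortening move of Case~A or to the area-reducing move of Case~B together with standard small cancellation estimates.
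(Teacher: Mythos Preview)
Your argument follows the paper's proof almost verbatim: pick a shell $R$ whose outer path avoids the two corner vertices, then either shorten $P$ (if the outer path lies in $P$) or reduce area by pushing $Q$ through $R$ (if the outer path lies in $Q$ and $R$ lies in $Y_\cC$), and rule out $R\notin Y_\cC$ via Lemma~\ref{auplusdeuxpieces} together with the three-piece bound on the inner path.

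Where you diverge from the paper is in your final paragraph. The paper does \emph{not} single out the case where $R\notin Y_\cC$ yet $R$ contains a door of $\cC$; it simply writes ``$R$ has to be contained in $Y_\cC$, for otherwise such an arc would be covered by two pieces by Lemma~\ref{auplusdeuxpieces}'' and moves on. In other words, the paper applies Lemma~\ref{auplusdeuxpieces} without checking its door hypothesis. You have been more careful than the source in flagging this. Your sketched resolution (use the far apart condition and the local structure of door-trees to recover a short-piece bound on the outer path) is the right instinct, and is essentially what one needs to make the paper's invocation of Lemma~\ref{auplusdeuxpieces} honest; but as written, neither you nor the paper actually carries it out. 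So this is not a defect of your proposal relative to the paper---it is a genuine wrinkle that the paper elides and that you have correctly isolated.
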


\begin{proof} By contradiction, suppose that $D$ contains three shells. We can thus choose one of them, say $R$, whose outer boundary is contained either in $P$ or in $Q$.

First assume that such an outer path is contained in $P$. We can thus push $P$ through $R$  to get a new path $P'$ such that the union $P' \cup Q$ is the boundary of the disc diagram $D \setminus R$. Let $L$ be the concatenation of the inner arcs of $R$. Since $R$ is a shell, the $C'(1/6)$--condition implies $|L| < \frac{1}{2} |\partial R |$, hence $|P'| < |P|$, a contradiction. 

Assume now that this outer path of $R$ is contained in $Q$. First notice that $R$ has to be contained in $Y_\cC$, for otherwise such an arc would be covered by two pieces by Lemma \ref{auplusdeuxpieces} and since $R$ is a shell the whole of $\partial R$ would be covered by five pieces, contradicting the $C'(1/6)$--condition. Thus $R \subset Y_\cC$ and we can push $Q$ through $R$ to obtain a new path $Q'$ of $Y_\cC$ such that $P \cup Q'$ is the boundary of the disc diagram $D \setminus R$, contradicting the minimality of $D$.
\end{proof}

\begin{lem} The disc diagram $D$ cannot be a ladder. 
\label{convex3}
\end{lem}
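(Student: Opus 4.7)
The plan is to analyse the extremal cells of the ladder $D = c_1 \cup \ldots \cup c_n$. Since $P$ is a geodesic it admits no spur, and any spur of $D$ must therefore lie on $Q$; such spurs can be removed without affecting $|P|$ or increasing $\mathrm{Area}(D)$, so we may assume that $c_1$ and $c_n$ are polygons, which I denote $R_1$ and $R_n$. Their inner arcs $I_1$ and $I_n$ (shared with the adjacent cells) are single pieces of $X$, so by the $C'(1/6)$--condition the outer arcs $\alpha_i := \partial R_i \cap \partial D$ satisfy $|\alpha_i| > \frac{5}{6}|\partial R_i|$. For each end polygon $R$ with outer arc $\alpha$ and inner arc $I$, I will distinguish three cases based on how $\alpha$ meets $P$ and $Q$; the first two reduce immediately to the arguments already used in Lemmas~\ref{convex1} and~\ref{convex2}.

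First, if $\alpha \subset P$, then replacing $\alpha$ by $I$ in $P$ produces a strictly shorter path between the endpoints of $P$, contradicting the geodesy of $P$. Second, if $\alpha \subset Q$, then Lemma~\ref{auplusdeuxpieces} applied when $R \not\subset Y_\cC$ yields $|\alpha| < \frac{1}{3}|\partial R|$, contradicting $|\alpha| > \frac{5}{6}|\partial R|$; so $R \subset Y_\cC$, hence $I \subset Y_\cC$, and replacing $\alpha$ by $I$ in $Q$ gives a path $Q' \subset Y_\cC$ bounding together with $P$ the disc diagram $D \setminus R$ of strictly smaller area, contradicting the area-minimality of $D$. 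If one of these two situations holds for either $R_1$ or $R_n$ we are done, so the remaining case is that both endpoints $x, y$ of $P$ lie in the interiors of $\alpha_1, \alpha_n$ respectively, each splitting as $\alpha_i = \alpha_i^P \cup \alpha_i^Q$ with $\alpha_i^P \subset P$ and $\alpha_i^Q \subset Q$.

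To handle this last case, I focus on $R = R_1$, writing $\alpha = \alpha^P \cup \alpha^Q$ with splitting point $x$. I first claim $R \subset Y_\cC$: otherwise Lemma~\ref{auplusdeuxpieces} applied to $\alpha^Q \subset \partial R \cap Y_\cC$ gives $|\alpha^Q| < \frac{1}{3}|\partial R|$, hence $|\alpha^P| > \frac{5}{6}|\partial R| - \frac{1}{3}|\partial R| = \frac{1}{2}|\partial R|$, while the alternative path from $x$ to the common endpoint $a$ of $I$ and $\alpha^P$, given by $\alpha^Q \cup I$, has length $|\alpha^Q| + |I| < \frac{1}{3}|\partial R| + \frac{1}{6}|\partial R| = \frac{1}{2}|\partial R| < |\alpha^P|$; replacing $\alpha^P$ in $P$ by $\alpha^Q \cup I$ then yields a walk from $x$ to $y$ in $X$ of length strictly less than $|P|$, contradicting the fact that $P$ is a geodesic. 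Thus $R \subset Y_\cC$, and in particular the vertex $a$ lies in $Y_\cC$. Now $P \setminus \alpha^P$ is a geodesic from $a$ to $y$, both in $Y_\cC$, of length strictly smaller than $|P|$; the minimality of $|P|$ among all geodesic counterexamples to the convexity of $Y_\cC$ now forces $P \setminus \alpha^P \subset Y_\cC$, and together with $\alpha^P \subset \partial R \subset Y_\cC$ this yields $P \subset Y_\cC$, contradicting the assumption.

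The main obstacle is this last case: neither an immediate shortening of $P$ nor a reduction of $\mathrm{Area}(D)$ is available, since the endpoints of $P$ lie strictly inside the outer arcs of the end polygons and one cannot push $P$ through a complete shell. The resolution exploits the stronger minimality hypothesis on $|P|$ itself (among all geodesic counterexamples to convexity), a feature of the $(|P|, \mathrm{Area}(D))$-minimisation that was not needed in Lemmas~\ref{convex1} and~\ref{convex2}.
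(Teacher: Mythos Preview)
Your proof is correct and carries out the case analysis more explicitly than the paper does. The main divergence is in how you handle the possibility that the end polygon $R_1$ lies in $Y_\cC$. The paper rules this out upfront: from the lexicographic minimality of $(|P|,\mathrm{Area}(D))$ it first records that $P$ cannot cross the hypergraph $\Lambda_\cC$, and then asserts that $R_1\subset Y_\cC$ would force such a crossing; with $R_1\not\subset Y_\cC$ secured, a single application of Lemma~\ref{auplusdeuxpieces} and the piece count $|P_2|<\tfrac12|\partial R_1|<|P_1|$ finishes. You instead confront $R_1\subset Y_\cC$ directly: the endpoint $a$ of the inner arc is then a vertex of $Y_\cC$, so the strictly shorter sub-geodesic $P\setminus\alpha^P$ between $a$ and $y$ must lie in $Y_\cC$ by the minimality of $|P|$, forcing $P\subset Y_\cC$. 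Both arguments ultimately draw on the same minimality hypothesis, but yours invokes it through a shorter sub-geodesic rather than through the hypergraph; this is a touch more elementary and avoids having to justify the implication ``$R_1\subset Y_\cC \Rightarrow P$ crosses $\Lambda_\cC$'', which the paper states without elaboration.
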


\begin{proof}
By contradiction, suppose that $D$ is a (non-trivial) ladder. The minimality assumption implies that $D$ is non-degenerate. Let us write  $D=R_1 \cup R_2 \cup \ldots $ and let $P_1$ be the portion of $P$ contained in $R_1$, and $P_2$ its complement in $R_1$. 

We can push $P$ through $R_1$ to obtain a new path $P_1'$. Since $P$ does not cross $\Lambda_\cC$, $R_1$ is not contained in $Y_\cC$ and thus $R_1 \cap Y_\cC$ is covered by two pieces by Lemma \ref{auplusdeuxpieces}. As $R_1 \cap R_2$ is also a piece, it follows that $P_2$ is covered by three pieces, and condition $C'(1/6)$ now implies $|P_2| < \frac{1}{2} |\partial R_1 | < |P_1|$, a contradiction. 
\end{proof}

\begin{proof}[Proof of Proposition \ref{A: lyconvex}]
This follows from Lemmas \ref{convex1}, \ref{convex2}, \ref{convex3}, together with the classification theorem for disc diagrams \ref{classification}.
\end{proof}

\begin{cor}\label{polygonsareconvex} Polygons of $X$ are  convex. \qed
\end{cor}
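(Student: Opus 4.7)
The plan is to realise each polygon $R$ of $X$ as the hypercarrier of a suitable trivial gallery, and then invoke Proposition \ref{A: lyconvex} directly; the corollary is essentially a specialisation of that proposition to the simplest possible gallery.

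First I would choose a pair of simplices $\tau_1, \tau_2 \subset \partial R$ that are far apart in the sense of Definition \ref{DefinitionFarApart}. Such a pair is easy to produce in a $C'(1/6)$ polygonal complex: take two diametrically opposite vertices of $\partial R$ (or opposite edges, if $|\partial R|$ is even). Any path in $\partial R$ containing both $\tau_1$ and $\tau_2$ must traverse at least half of $\partial R$. Since a piece has length strictly less than $\tfrac{1}{6}|\partial R|$, any concatenation of at most three pieces has total length strictly less than $\tfrac{1}{2}|\partial R|$, and therefore cannot connect $\tau_1$ to $\tau_2$. Hence $\tau_1$ and $\tau_2$ are far apart in $R$.

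Next I would set $\cC := \{R_{\{\tau_1,\tau_2\}}\}$, the system of doors consisting of the single polygon $R$ with the chosen doors. This is a gallery: the coherence condition is vacuous (only one polygon with doors in $\cC$), the connectedness condition is trivial (there is only one pair of doors to relate), and the far apart condition holds by construction. The hypercarrier $Y_\cC$ is then obtained from the disjoint union $R$ by performing no identifications, so $Y_\cC = R$ as a polygonal complex, and the map $i_\cC \colon Y_\cC \to X$ is the inclusion of $R$ into $X$ (its injectivity is in any case guaranteed by Proposition \ref{simplyconnectedembedded}).

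Finally, applying Proposition \ref{A: lyconvex} to this gallery yields that $Y_\cC = R$ is convex in $X$, which is exactly the statement of the corollary. There is no substantive obstacle in this argument: the only mildly delicate point is the production of far-apart simplices on $\partial R$, which follows at once from the $C'(1/6)$ hypothesis.
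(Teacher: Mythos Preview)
Your argument is correct and is exactly the intended one: the paper places the corollary immediately after Proposition \ref{A: lyconvex} with a bare \qed, precisely because a single polygon $R$ with any pair of far-apart doors is itself a (one-cell) gallery whose hypercarrier is $R$. The only minor remark is that your parenthetical about parity is slightly garbled (opposite \emph{edges} exist exactly when $|\partial R|$ is even, and for odd $|\partial R|$ one can take an edge and its opposite vertex), but in the paper's setting all polygons have an even number of sides, and the paper already records (in the Example following Definition \ref{DefinitionFarApart}) that opposite edges are far apart, so no extra work is needed.
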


\subsection{Intersections of hypercarriers} \label{Appendix hypercarriers}
 
In this section, we extend the following results of Wise.

 \begin{lem}[cf. Lemma 6.4 of \cite{WiseSmallCancellation}]\label{L: Wise1}
Let $Y_1$, $Y_2$ and $Y_3$ be hypercarriers of $X$ defined by equivalence of diametrically opposed edges, see Section \ref{ExampleEdge}. If $Y_1$, $Y_2$ and $Y_3$ pairwise cross, then their common intersection is non-trivial.
\end{lem}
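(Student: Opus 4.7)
The plan is to argue by contradiction: suppose $Y_1 \cap Y_2 \cap Y_3 = \varnothing$. Since the hypercarriers pairwise cross, for each pair $\{i,j\} \subset \{1,2,3\}$ one can choose a vertex $v_{ij} \in Y_i \cap Y_j$. By the convexity of hypercarriers (Proposition \ref{A: lyconvex}), for each $k \in \{1,2,3\}$ there exists a geodesic $\gamma_k$ of $X$ from $v_{ij}$ to $v_{ik}$ (where $\{i,j,k\}=\{1,2,3\}$) entirely contained in $Y_k$. The concatenation $\gamma_1 \cup \gamma_2 \cup \gamma_3$ is a geodesic triangle in $X$, and by Lyndon--van Kampen it bounds a reduced disc diagram $D$. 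I would choose the data $(v_{ij}, \gamma_k, D)$ to minimise the pair $\bigl(|\gamma_1|+|\gamma_2|+|\gamma_3|,\,\mathrm{Area}(D)\bigr)$ in the lexicographic order.

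The argument then proceeds by applying the Classification Theorem \ref{classification} to $D$. If $D$ is a single vertex or edge, then two of the corners $v_{ij}$ coincide, producing a vertex of $Y_1 \cap Y_2 \cap Y_3$, a contradiction. If $D$ is a single polygon $R$, then by Lemma \ref{auplusdeuxpieces}, for each $i$ either $R \subset Y_i$ or $\gamma_i \cap \partial R \subset R \cap Y_i$ is covered by at most two pieces. If $R$ lies in none of the $Y_i$, then $\partial R$ is covered by six pieces, contradicting $C'(1/6)$; if $R$ lies in all three $Y_i$, any vertex of $R$ is in the triple intersection, again a contradiction; and in the intermediate cases one pushes the $\gamma_k$'s for which $R \not\subset Y_k$ through $\partial R$ (using convexity of the polygon, Corollary \ref{polygonsareconvex}) to strictly shorten the perimeter. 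If $D$ is a non-trivial ladder, geodesicity of $\partial D$ excludes spurs, so the two extremal cells of the ladder each carry a corner $v_{ij}$; the third corner then sits on an intermediate cell, and a piece-count analogous to the single-polygon case, combined with pushing the relevant $\gamma_k$ through that intermediate cell, again contradicts the lexicographic minimality.

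The remaining and essential case is that $D$ contains at least three shells (spurs being excluded by geodesicity). For a shell $R$, the inner path is a concatenation of at most three pieces and hence has length $< |\partial R|/2$ by $C'(1/6)$; in particular it is strictly shorter than the outer path. If the outer path of $R$ lies in the interior of a single side $\gamma_k$, Lemma \ref{auplusdeuxpieces} combined with $C'(1/6)$ forces $R \subset Y_k$ (otherwise the outer path would be covered by two pieces and $\partial R$ by at most five pieces, a contradiction); pushing $\gamma_k$ across $R$ then yields a strictly shorter geodesic still contained in $Y_k$, contradicting minimality. If instead the outer path of $R$ straddles a corner $v_{ij}$, an analogous piece-count forces $R \subset Y_i \cap Y_j$; pushing the triangle through $R$ then moves $v_{ij}$ to a new vertex of $Y_i \cap Y_j$ lying on the inner path of $R$ and shortens the perimeter.

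The main obstacle is precisely the bookkeeping in this shell analysis: in every configuration of three shells distributed among the three sides and the three corners of the triangle, one must carefully verify, using Lemma \ref{auplusdeuxpieces} together with the $C'(1/6)$ bound $< |\partial R|/6$ on piece length, that at least one shell admits the required inclusion in a single or double hypercarrier, so that the associated pushing operation produces the desired strict shortening and thereby the final contradiction.
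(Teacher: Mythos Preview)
Your strategy---choosing vertices $v_{ij}\in Y_i\cap Y_j$, forming a geodesic triangle with sides in the hypercarriers, and running the Classification Theorem on a minimal disc diagram---is natural, but it is \emph{not} the route the paper takes, and the point you flag as ``bookkeeping'' is in fact a real gap.

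The problematic configuration is the one where $D$ has exactly three shells, one at each corner $v_{ij}$, and the corner shell $R$ lies in \emph{neither} $Y_i$ nor $Y_j$. Your piece count then yields at most $2+2$ pieces on the outer path (Lemma~\ref{auplusdeuxpieces} applied to each side) plus at most $3$ on the inner path, i.e.\ at most $7$ pieces on $\partial R$; under $C'(1/6)$ this is not a contradiction. Nor can you push: since $R\not\subset Y_i$ and $R\not\subset Y_j$, the inner path need not lie in either hypercarrier, so replacing the outer path by the inner path destroys the containment $\gamma_i\subset Y_i$, $\gamma_j\subset Y_j$. When the shell lies in at least one of the two adjacent hypercarriers one can indeed relocate $v_{ij}$ and strictly shorten the perimeter, but nothing in your outline rules out all three corner shells being of the bad type simultaneously. (There is also a minor indexing slip: the side joining $v_{ij}$ to $v_{ik}$ lies in $Y_i$, not in $Y_k$.)

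The paper (cf.\ the proof of Lemma~\ref{intersection3hypergraphs}, which specialises to the present statement) sidesteps this difficulty by building the triangle from \emph{hypergraph segments} $\lambda_i\subset\Lambda_i$ rather than from $1$--skeleton geodesics. The corners are then apices of polygons $R_{i,j}\subset Y_i\cap Y_j$, so each corner cell is automatically contained in the two relevant hypercarriers---precisely the condition your argument cannot guarantee. The paper then assigns angles ($\pi/2$ on generic boundary corners, $2\pi/3$ on interior corners, and a tailored angle at the three apices) and applies the combinatorial Gau\ss--Bonnet Theorem~\ref{GaussBonnet} directly: every vertex and polygon has non-positive curvature except possibly the three apices, each contributing at most $2\pi/3$, which forces equality everywhere, hence no internal polygon and $D\subset L_1\cup L_2\cup L_3$. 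This angle argument is what replaces your unfinished shell analysis.
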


\begin{lem}[cf. Theorem 6.9 of \cite{WiseSmallCancellation}]\label{AL: Wise2}
 Let $\{Y_1,Y_2,Y_3,\ldots\}$ be a set of pairwise crossing hypercarriers of $X$ defined by equivalence of diametrically opposed edges, see Section \ref{ExampleEdge}. If $Y_1$, $Y_2$ and $Y_3$ pairwise cross, then their common intersection contains a vertex.
\end{lem}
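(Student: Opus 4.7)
The plan is to argue by contradiction, closely following Wise's original approach for opposite-edge hypercarriers \cite[Sec.~6]{WiseSmallCancellation} but substituting in the embedding and convexity results for our generalised hypercarriers (Propositions \ref{simplyconnectedembedded} and \ref{A: lyconvex}) and relying on the far apart condition of Definition \ref{D: gallery}. Since $Y_1, Y_2, Y_3$ pairwise cross, pick vertices $p_{12}\in Y_1\cap Y_2$, $p_{13}\in Y_1\cap Y_3$, $p_{23}\in Y_2\cap Y_3$; these are honest vertices of $X$ by Proposition \ref{simplyconnectedembedded}. By convexity of hypercarriers (Proposition \ref{A: lyconvex}), choose geodesics $\alpha\subset Y_1$ from $p_{12}$ to $p_{13}$, $\beta\subset Y_2$ from $p_{12}$ to $p_{23}$ and $\gamma\subset Y_3$ from $p_{13}$ to $p_{23}$. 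If two of these geodesics share a vertex other than their common endpoint, or if some $p_{ij}$ lies on the opposite geodesic, a short case analysis immediately locates a vertex in $Y_1\cap Y_2\cap Y_3$ and we are done.

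Otherwise the concatenation $\alpha\cdot\gamma\cdot\bar{\beta}$ is the boundary of a reduced disc diagram $D$ of $X$; choose $(\alpha,\beta,\gamma,D)$ so that $(\mathrm{Area}(D),\,|\alpha|+|\beta|+|\gamma|)$ is lexicographically minimal. I will then go through the three cases of the Classification Theorem \ref{classification}, mirroring the structure of Lemmas \ref{convex1}, \ref{convex2} and \ref{convex3}. If $D$ is a single polygon $R$ contained in some $Y_i$, say $Y_1$, then all vertices of $\partial R$ lie in $Y_1$; in particular $p_{23}\in\partial R\subset Y_1$, so $p_{23}\in Y_1\cap Y_2\cap Y_3$ as desired. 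If $R$ lies in none of the $Y_i$, then Lemma \ref{auplusdeuxpieces} applied to each of $\alpha,\beta,\gamma$ shows that $\partial R$ is covered by at most six pieces of $X$, contradicting $C'(1/6)$. The ladder case is handled analogously to Lemma \ref{convex3}, with the pigeonhole distribution of the three corners $p_{ij}$ across the two ends of the ladder forcing an end polygon whose outer arc sits in a single geodesic; piece-counting then contradicts the minimality of $|\alpha|+|\beta|+|\gamma|$.

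Finally, if $D$ contains at least three shells or spurs, then since the triangle has only three corners, some shell $R$ has its outer path contained entirely in one side, say in $\alpha\subset Y_1$. If $R\not\subset Y_1$, Lemma \ref{auplusdeuxpieces} bounds $R\cap Y_1$ by two pieces, so $\partial R$ is covered by at most five pieces (two for the outer arc, three for the inner arcs of the shell), violating $C'(1/6)$. Hence $R\subset Y_1$; but then pushing $\alpha$ through $R$ yields a new geodesic in $Y_1$ from $p_{12}$ to $p_{13}$ bounding together with $\beta,\gamma$ a reduced disc diagram of strictly smaller area, contradicting minimality. The spur subcase is immediate, since a spur on $\alpha$ would mean $\alpha$ backtracks, contradicting its geodesic nature.

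The main obstacle will be making the shell argument watertight: in Wise's original proof everything is phrased in terms of opposite-edge hypercarriers, whereas ours can include door-trees and cutpoints at vertices of $X$. The \emph{far apart} condition of Definition \ref{D: gallery} is precisely what rescues the piece-counting step, as it ensures that the conclusion of Lemma \ref{auplusdeuxpieces} applies uniformly to the generalised hypercarrier of each gallery. This parallels the role of the far apart condition in Proposition \ref{A: lyconvex} and Proposition \ref{P: extended wall gallery}, where it is used to prevent pathological overlaps of polygons with $Y_i$ that would otherwise obstruct the small cancellation arguments.
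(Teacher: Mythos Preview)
Your outline follows a different route from the paper's and contains a genuine gap in the shell case. The paper does not argue via the trichotomy of the Classification Theorem for the triple-intersection step; instead (see Lemma~\ref{intersection3hypergraphs}) it builds the \emph{hypergraph} triangle $\lambda_1\cup\lambda_2\cup\lambda_3$ inside the minimal ladders $L_i$, fills it by a reduced disc diagram, and assigns a specific angle structure so that the combinatorial Gau\ss--Bonnet Theorem~\ref{GaussBonnet} forces every vertex and polygon to have non-positive curvature except possibly the three corner vertices, each contributing at most $\tfrac{2\pi}{3}$. Equality everywhere then rules out internal polygons and forces the image of $D$ into $L_1\cup L_2\cup L_3$, yielding a common vertex.

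The gap in your argument is the pigeonhole claim ``since the triangle has only three corners, some shell $R$ has its outer path contained entirely in one side''. With exactly three shells and three corners it is entirely possible that each shell's outer arc straddles one corner $p_{ij}$. In that configuration, if the straddling shell $R$ lies in neither $Y_i$ nor $Y_j$, Lemma~\ref{auplusdeuxpieces} only bounds each of the two portions of the outer arc by two pieces, so $\partial R$ is covered by at most $4+3=7$ pieces, which is \emph{not} a $C'(1/6)$ contradiction. You can salvage the subcase $R\subset Y_i$ or $R\subset Y_j$ by relocating the corner and pushing through $R$ (the area drops), but you have no leverage when $R$ lies in neither. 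This is precisely the obstruction the paper's angle assignment is designed to bypass; your ladder sketch happens to survive because the single inner arc keeps the piece count at five, but the shell case does not.

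Finally, despite its awkward phrasing, the statement concerns an arbitrary family $\{Y_1,Y_2,\ldots\}$ of pairwise crossing hypercarriers and asserts a common vertex for \emph{all} of them, in line with Wise's Theorem~6.9 and the paper's own Lemma~\ref{A: intersectionhypergraphs}. Your proposal only treats three hypercarriers. The paper handles the general case by induction on $k$: given vertices $v_1,v_2,v_3$ in the three $(k{-}1)$-fold intersections, it uses the convexity of hypercarriers (Proposition~\ref{A: lyconvex}) together with a decomposition $\partial Y_k=\partial_+Y_k\cup\partial_-Y_k$ into two trees, and a case analysis on which boundary tree contains each $v_i$, to produce a vertex in $\bigcap_i Y_i$. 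That inductive step is a separate argument your proposal does not address.
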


Again, the proofs are extensions of Wise's original proofs, the small difference being related to cut-points in hypercarriers. The generalised hypercarriers coming from the far apart condition play no particular role here, as we treat them with  the results of the  previous sections. However,  the corresponding results of \cite{WiseSmallCancellation} are not sufficient.

\begin{lem}\label{intersection3hypergraphs}
Let $Y_1, Y_2, Y_3$ be three pairwise crossing hypercarriers of $X_\mathrm{\textit{bal}}$. Then the intersection $Y_1 \cap Y_2 \cap Y_3$ contains a vertex. 
\end{lem}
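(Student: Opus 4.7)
I would argue by contradiction, closely following Wise's strategy for Lemma 6.4 of \cite{WiseSmallCancellation}, but using the convexity result for \emph{generalized} hypercarriers (Proposition \ref{A: lyconvex}) in place of Wise's convexity argument.

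Suppose for contradiction that $Y_1 \cap Y_2 \cap Y_3$ contains no vertex. Since the $Y_i$ pairwise cross, each pairwise intersection $Y_i \cap Y_j$ is non-empty, so pick a vertex $v_{ij} \in Y_i \cap Y_j$ for each pair. By the convexity of each hypercarrier (Proposition \ref{A: lyconvex}), I can take a geodesic $P_k$ of $X$ between $v_{ij}$ and $v_{ik}$ that lies entirely in $Y_k$ (for $\{i,j,k\}=\{1,2,3\}$). Concatenating $P_1, P_2, P_3$ gives a closed loop in $X$ which, by the Lyndon--van Kampen theorem, is the boundary of a reduced disc diagram $D$. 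I would then choose the vertices $v_{ij}$, the geodesics $P_k$, and $D$ to minimize $(|P_1|+|P_2|+|P_3|,\,\mathrm{Area}(D))$ in the lexicographic order.

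Next I would apply the Classification Theorem \ref{classification} to $D$. If $D$ degenerates to a single vertex or edge, then the three geodesics share a vertex, which lies in $Y_1 \cap Y_2 \cap Y_3$, contradicting the standing assumption. If $D$ is a single polygon $R$, its boundary decomposes as $R \cap P_1 \cup R \cap P_2 \cup R \cap P_3$; for each $i$ either $R\subset Y_i$ (in which case pushing $P_i$ through $R$ strictly shortens it by $C'(1/6)$, contradicting that $P_i$ is a geodesic in $Y_i$) or $R \cap Y_i$ is covered by at most two pieces (Lemma \ref{auplusdeuxpieces}), so if no $R\subset Y_i$ then $\partial R$ is covered by at most six pieces, contradicting $C'(1/6)$. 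The ladder case is handled analogously by applying the same dichotomy to each end polygon of the ladder.

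Finally, in the three-shell case, a pigeonhole argument between the three shells and the three ``corners'' $v_{12},v_{13},v_{23}$ of $\partial D$ lets me select a shell $R$ whose outer path lies entirely in a single side $P_i$ (containing no corner in its interior). Then by the same dichotomy: if $R \subset Y_i$, pushing $P_i$ through $R$ shortens it (as the inner path of a shell is at most three pieces, hence length $<\tfrac12|\partial R|$ by $C'(1/6)$), contradicting the minimality of $|P_i|$; otherwise $R \cap Y_i \subset \partial R \cap P_i$ is covered by at most two pieces by Lemma \ref{auplusdeuxpieces}, making $\partial R$ covered by at most $2+3=5$ pieces, again contradicting $C'(1/6)$.

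\emph{Main obstacle.} The delicate step is the pigeonhole in the shell case: one must confirm that, among at least three shells of $D$ and three corner vertices of the triangle, there is always a shell whose outer path avoids all corners. Shells whose outer paths straddle a corner require careful treatment, where one exploits that the corner vertex $v_{ij}$ itself lies in $Y_i \cap Y_j$ and uses the canonical decomposition of a polygon of a gallery (Definition \ref{canonicaldecomposition}) together with the embedding of the hypercarrier (Proposition \ref{simplyconnectedembedded}) to still realize a ``push-through'' move that shortens one of the $P_k$. This is also the place where the cutpoints permitted by our far apart condition (absent in \cite{WiseSmallCancellation}) must be accommodated, but it causes no real trouble since the relevant structural properties---embedding, simple-connectedness, and convexity---are already established in Appendix \ref{Appendix A}.
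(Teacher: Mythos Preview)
Your approach differs genuinely from the paper's. The paper does \emph{not} use geodesic sides and the Classification Theorem; instead it takes the sides of the triangle to be segments of the \emph{hypergraphs} $\lambda_i \subset \Lambda_i$ (after subdividing so these become paths in the $1$-skeleton), chooses the $\sigma_{i,j}$ and the minimal ladders $L_i$ to minimise the total number of polygons, and then runs a Gauss--Bonnet argument with a specific angle assignment. The angle choice forces every vertex and polygon to have non-positive curvature except possibly the three corners, each contributing at most $2\pi/3$; Gauss--Bonnet then pins each corner to exactly $2\pi/3$ and forbids any interior polygon, so $D \subset L_1 \cup L_2 \cup L_3$ and the triple intersection is non-empty. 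This sidesteps entirely the shell-versus-corner pigeonhole you flag as the main obstacle.

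Your route is viable in spirit but has more loose ends than you acknowledge. First, there is an index slip: the geodesic between $v_{ij}$ and $v_{ik}$ lies in $Y_i$, not $Y_k$. Second, in the single-polygon case your ``push $P_i$ through $R$'' move does not obviously shorten $P_i$, since you have no a priori lower bound on $|P_i \cap \partial R|$; the clean argument here is rather that if $R \subset Y_i$ then the opposite corner $v_{jk} \in R \subset Y_i$ already lies in $Y_1 \cap Y_2 \cap Y_3$. Third, Lemma~\ref{auplusdeuxpieces} has the hypothesis that $R$ contain \emph{no door} of the gallery, so ``$R \not\subset Y_i \Rightarrow R \cap Y_i$ is two pieces'' is not immediate and needs a short extra argument. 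Finally, the pigeonhole you identify is genuine: with exactly three shells it can happen that each outer path contains one corner, and your sketch of how to ``push through'' in that situation is not yet an argument. None of these is fatal, but the Gauss--Bonnet proof in the paper avoids all of them at once.
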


\begin{proof}
 We can restrict to the case where $Y_1 \cap Y_2 \cap Y_3$ does not contain a polygon. First choose cells $\sigma_{1,2} \subset Y_1 \cap Y_2$, $\sigma_{2,3} \subset Y_2 \cap Y_3$ and $\sigma_{3,1} \subset Y_3 \cap Y_1$ of maximal dimension  such that the preimage of $\sigma_{i,j}$ in $\cE G_\mathrm{\textit{bal}}$ contains a point of $W_i \cap W_j$. The cell $\sigma_{i,j}$ is either a polygon $R_{i,j}$ or a vertex $v_{i,j}$. In the former case, the hypergraphs of $Y_i$ and $Y_j$ intersect in the apex of $R_{i,j}$, in the latter, the fibre over $v_{i,j}$ contains both, a hyperplane of $Y_i$, and a hyperplane of $Y_j$. 

If two of these cells $\sigma_{i,j}$ coincide, then it defines a cell in $Y_1 \cap Y_2 \cap Y_3$. Suppose this is not the case.  For pairwise distinct $i,j,k \in \{1,2,3\}$, consider the minimal ladder $L_i$ in $Y_i$ between $\sigma_{i,j}$ and $\sigma_{i,k}$.  We choose such a configuration in such a way that the number of polygons in $L_1 \cup L_2 \cup L_3$ is minimal. Denote by $\lambda_1\subset L_1$ the portion of the \emph{hypergraph $\Lambda_1$ associated with $Y_1$} which is the geodesic of $\Lambda_1$ 
joining the barycentres of $\sigma_{1,2}$ and $\sigma_{3,1}$, and define similarly $\lambda_2\subset L_2$ and $\lambda_3\subset L_3$. Subdivide the polygons of $L_1 \cup L_2 \cup L_2$  in a minimal way such that $\lambda_1 \cup \lambda_2 \cup \lambda_3$ defines a triangle of the $1$-skeleton of $X$. Denote by $v_{i,j}$ the vertex associated with the cell $\sigma_{i,j}$. Consider now a reduced disc diagram whose boundary path is $\lambda_1 \cup \lambda_2 \cup \lambda_3$. We now endow $D$ with a structure of disc diagram 
with 
angles:
\begin{itemize}
\item If $\sigma_{i,j}$ is a polygon $R_{i,j}$, the corner at the vertex corresponding to $R_{i,j}$ is given the angle $\frac{(n_{i,j}-3)\pi}{3}$, where $n_{i,j}$ is the number of sides of the polygon of $D$ containing that vertex. Note that by minimality of the number of polygons in $L_1 \cup L_2 \cup L_3$, we necessarily have $n_{i,j} \geq 4$. If $\sigma_{i,j}$ is a vertex $v_{i,j}$, then by minimality of the number of polygons of $L_1 \cup L_2 \cup L_3$, there are at least two distinct polygons of $D$ containing $v_{i,j}$.
\item Each other corner of $D$ relying on an edge of $\partial D$ is given an angle $\frac{\pi}{2}$. 
\item All remaining corners are given an angle $\frac{2\pi}{3}$.
\end{itemize}
It is straightforward to check that with such a choice of angles, every polygon and every vertex of $D$ has non-positive curvature  by the $C'(1/6)$--condition, apart maybe from the  the vertices corresponding to the various $R_{i,j}$. The curvature at each such vertex being at most $\frac{2\pi}{3}$, it must be exactly $\frac{2\pi}{3}$ by the   Gauss Bonnet Theorem \ref{GaussBonnet} (in particular, each $\sigma_{i,j}$ is a polygon $R_{i,j}$). Thus, there is no vertex or polygon with negative curvature. In particular, since an internal polygon of $D$ would have at least $7$ sides by the $C'(1/6)$--condition, and since such a cell would have negative curvature, $D$ contains no internal polygon. Thus the image of $D$ is contained in $L_1 \cup L_2 \cup L_3$ and $L_1 \cap L_2 \cap L_3$, hence $Y_1 \cap Y_2 \cap Y_3$, must be non-empty. 
\end{proof}

\begin{lem}\label{A: intersectionhypergraphs}
Let $Y_1, \ldots, Y_k $, $k \geq 3$, be a set of pairwise crossing hypercarriers of $X_\mathrm{\textit{bal}}$. Then the intersection $\bigcap Y_i$  contains a vertex.
\end{lem}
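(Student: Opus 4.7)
The plan is to proceed by induction on $k$, with base case $k=3$ given directly by Lemma \ref{intersection3hypergraphs}. For the inductive step with $k \geq 4$, I would apply the induction hypothesis to the two $(k-1)$-subsets $\{Y_1, \ldots, Y_{k-1}\}$ and $\{Y_2, \ldots, Y_k\}$ to obtain vertices $u \in \bigcap_{i=1}^{k-1} Y_i$ and $v \in \bigcap_{i=2}^{k} Y_i$. If $u \in Y_k$ or $v \in Y_1$ we are immediately done, so assume neither holds.

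Next I would analyse a $1$-skeleton geodesic $P$ from $u$ to $v$ in $X$. Since $u$ and $v$ both lie in the subcomplex $\bigcap_{i=2}^{k-1} Y_i$, which is convex as an intersection of convex subcomplexes by Proposition \ref{A: lyconvex}, convexity gives $P \subseteq \bigcap_{i=2}^{k-1} Y_i$. The convexity of $Y_1$ (resp.\ $Y_k$), together with the geodesicity of $P$, forces the vertices of $P$ lying in $Y_1$ (resp.\ $Y_k$) to form a single subpath of $P$: an initial subpath $[u,p_m]$ (resp.\ a terminal subpath $[p_n,v]$). If these subpaths overlap, i.e.\ $m \geq n$, then the vertex $p_n$ lies in $Y_1 \cap Y_k \cap \bigcap_{i=2}^{k-1} Y_i = \bigcap_{i=1}^{k} Y_i$, yielding the desired common vertex.

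The main obstacle is to rule out the remaining case $m < n$, in which the subpath $R = [p_m,p_n]$ of $P$ avoids $Y_1 \cup Y_k$ in its interior. I plan to derive a contradiction here by a disc-diagram argument modelled on the proof of Lemma \ref{intersection3hypergraphs}. Pick $z \in Y_1 \cap Y_k$ (non-empty by pairwise crossing) and form, using convexity, geodesics $Q_1 \subseteq Y_1$ from $p_m$ to $z$ and $Q_k \subseteq Y_k$ from $z$ to $p_n$. The loop $R \cdot Q_k^{-1} \cdot Q_1^{-1}$ bounds a reduced disc diagram $D$ over $X$, and I would adapt the angle assignment of Lemma \ref{intersection3hypergraphs} --- angle $\tfrac{2\pi}{3}$ at interior corners, suitable corner contributions at each of the three designated vertices $p_m,p_n,z$, and $\tfrac{\pi}{2}$ (or the appropriate value forced by the configuration) at other boundary corners --- so that the $C'(1/6)$-condition makes every non-corner curvature contribution non-positive. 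The Gauss--Bonnet Theorem \ref{GaussBonnet}, combined with the classification Theorem \ref{classification}, then forces $D$ to be degenerate and ultimately forces two of the three corners $p_m, p_n, z$ to coincide --- which contradicts $p_m \in Y_1 \setminus Y_k$, $p_n \in Y_k \setminus Y_1$, and $z \in Y_1 \cap Y_k$. Hence the case $m < n$ cannot occur, and the overlap case yields the required vertex.

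The hard part is precisely this last curvature computation: unlike in Lemma \ref{intersection3hypergraphs}, the three sides $R, Q_1, Q_k$ are generic $1$-skeleton geodesics rather than pieces of hypergraph trees, so the intermediate boundary vertices of $\partial D$ need a careful angle treatment to guarantee non-positive curvature away from the three distinguished corners. Once this adaptation is carried out, the argument parallels the proof of the triple case and completes the induction.
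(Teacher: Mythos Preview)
Your induction setup differs from the paper's: the paper applies the induction hypothesis to \emph{three} subsets $I\setminus\{1\}$, $I\setminus\{2\}$, $I\setminus\{3\}$ to obtain three vertices $v_1,v_2,v_3$, all lying in $Y_k$ (since $k\geq 4$), and then exploits the fact that the boundary of the hypercarrier $Y_k$ decomposes as a union of two trees $\partial_+Y_k\cup\partial_-Y_k$ onto each of which $Y_k$ deformation retracts. A case analysis on which tree contains which $v_i$, together with the median property of trees and the ladder structure of $Y_k$, produces the common vertex. No Gauss--Bonnet argument is used in the inductive step.

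Your proposed alternative has a genuine gap at exactly the point you flag as ``the hard part''. The angle assignment in Lemma~\ref{intersection3hypergraphs} works because the sides $\lambda_i$ are \emph{hypergraph segments}: after the subdivision they pass through apices and midpoints of doors, and at every non-corner boundary vertex there are exactly two incident corners of $D$, so the $\tfrac{\pi}{2}$ assignment gives zero curvature there. For a $1$-skeleton geodesic side this fails outright: a geodesic can run along the outer path of a single polygon of $D$, producing boundary vertices with only \emph{one} incident corner and hence curvature $\pi-\tfrac{\pi}{2}=\tfrac{\pi}{2}>0$. There is no angle assignment that simultaneously keeps boundary curvature non-positive along geodesic sides and keeps interior polygons non-positively curved under $C'(1/6)$; indeed geodesic triangles in $C'(1/6)$ complexes routinely bound reduced disc diagrams of positive area, so the hoped-for conclusion ``$D$ is degenerate'' is simply false in general. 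And even granting degeneracy, a tree-shaped diagram does not force two of the three marked corners to coincide --- it only produces a median vertex lying on all three sides, which would land you back at needing to know that this median is in $\bigcap Y_i$. The paper avoids all of this by never forming a geodesic triangle in the inductive step and instead using the boundary-tree structure of a single hypercarrier.
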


\begin{proof} We again use the methods we have developed in Section \ref{S: extending the hyperplanes of fibres} and this Appendix to extend the original arguments of Wise's proof of Lemma \ref{AL: Wise2}. 
We prove the result by induction on $k \geq 3$, the case $k=3$ being Lemma \ref{intersection3hypergraphs}. For a subset $S$ of $I:=\{1, \ldots, k\}$, we denote by $Y_S$ the intersection of the hypergraphs $Y_i$ for $i \in S$. 

By the induction hypothesis, the intersections $Y_{I - \{1\}} $, $Y_{I - \{2\}} $ and $Y_{I - \{3\}} $ contain a vertex, denoted respectively $v_1, v_2$ and $v_3$. Choose a   geodesic between $v_i$ and $v_j$ for $1 \leq i\neq j \leq 3$, which we denote $P_{i,j}$. By Proposition \ref{A: lyconvex}, we have that $P_{i,j} \subset Y_{I - \{i,j \}} \subset Y_k$. 

 If $Y_i$ is a hypercarrier defined by equivalence of diametrically opposed edges, see Section \ref{ExampleEdge}, its boundary $\partial Y_i$ is the disjoint union of two trees, $\partial_+Y_i$ and $\partial_-Y_i$ and $Y_i$ retracts by deformation on each of these trees. The situation is slightly different  here since vertices can be local cut-points of $Y_i$. However, by reasoning separately on the closure of each component of $Y_i$ with its cut-points removed, we can write $\partial Y_i$ as the union of two trees $\partial_+ Y_i$ and $\partial_- Y_i$ whose intersection is contained in the set of cut-points of $Y_i$ and such that $Y_i$ retracts by deformation on each of these two trees.

We now consider two cases, depending on the relative position of $v_1$, $v_2$ and $v_3$ inside $Y_k$. First assume that $v_1$, $v_2$ and $v_3$ are contained in the same boundary component of $Y_k$, say $\partial_+ Y_k$. We can thus replace the paths $P_{i,j}$ by immersed paths $P_{i,j}'$ between $v_i$ and $v_j$, and which is contained in the tree $\partial_+ Y_k$. In particular, the intersection $P_{1,2}' \cap P_{2,3}' \cap P_{3,1}'$ contains a vertex, which is thus contained in $Y_{I - \{1,2 \}} \cap Y_{I - \{2,3 \}} \cap Y_{I - \{3,1 \}} = Y_I$.

Let us now assume that $v_1$ and $v_2$ are contained in the same component $\partial_+ Y_k$ and $v_3$ is contained in $\partial_- Y_k$. For $i=1,2$, consider the minimal ladder $L_{i,3} \subset Y_k$ between $v_i$ and $v_3$ and define the path $P_{i,3}' := L_{i,3} \cap \partial_+ Y_k$. Consider the sequence of doors between $v_3$ and $v_1$, and between $v_3$ and $v_2$. If these sequences do not share the same initial door, then $v_3$ belongs to one of the exterior arcs of some polygon $R$ of $Y_k$. Since both doors of $R$ also belong to $Y_{I - \{3\}}$, this subcomplex contains a subpath of $\partial R$ of length $\frac{|\partial R|}{2}$ by Corollary \ref{polygonsareconvex}. This implies that $R \subset Y_{I - \{3\}}$ by Lemma \ref{auplusdeuxpieces}, and thus the other exterior arc of $R$ is contained $P_{1,2}' \cap Y_{I - \{3\}} \subset Y_I$. Otherwise consider the last door in this initial common subsequence. Then one of the vertices of this door is contained in $P_{1,2}' \cap Y_{I - \{3\}} \subset Y_I$.
\end{proof}

\bibliographystyle{hep}
\bibliography{CubulationFreeProduct}

\def\cprime{$'$} \def\cprime{$'$}
\begin{thebibliography}{Mar14b}

\bibitem[Ago13]{AgolVirtualHaken}
I.~Agol, \textsl{ The virtual {H}aken conjecture},
\newblock Doc. Math. \textbf{ 18}, 1045--1087 (2013),
\newblock With an appendix by I. Agol, D. Groves, and J. Manning.

\bibitem[AS14]{ArzhantsevaSteenbockRipsConstruction}
G.~Arzhantseva and M.~Steenbock, \textsl{ Rips construction without unique
  product},
\newblock (2014),
  \href{http://xxx.lanl.gov/abs/arXiv:1407.2441}{arXiv:1407.2441}.

\bibitem[BH99]{BridsonHaefliger}
M.~R. Bridson and A.~Haefliger,
\newblock \textsl{ Metric spaces of non-positive curvature},
\newblock Springer-Verlag, Berlin, 1999.

\bibitem[CN05]{ChatterjiNibloWallSpaces}
I.~Chatterji and G.~Niblo, \textsl{ From wall spaces to {$\rm CAT(0)$} cube
  complexes},
\newblock Internat. J. Algebra Comput. \textbf{ 15}(5-6), 875--885 (2005).

\bibitem[Cor92]{CorsonComplexesofGroups}
J.~M. Corson, \textsl{ Complexes of groups},
\newblock Proc. London Math. Soc. (3) \textbf{ 65}(1), 199--224 (1992).

\bibitem[EJ10]{edjvet_nonsingular_2010}
M.~Edjvet and A.~Juh{\'a}sz, \textsl{ Nonsingular equations over groups {II}},
\newblock Comm. Algebra \textbf{ 38}(5), 1640--1657 (2010).

\bibitem[EJ11]{edjvet_nonsingular_2011}
M.~Edjvet and A.~Juh{\'a}sz, \textsl{ Non-singular equations over groups {I}},
\newblock Algebra Colloq. \textbf{ 18}(2), 221--240 (2011).

\bibitem[Ger97]{GerasimovSemiSplittings}
V.~N. Gerasimov,
\newblock Semi-splittings of groups and actions on cubings,
\newblock in \textsl{ Algebra, geometry, analysis and mathematical physics},
  pages 91--109, 190, Izdat. Ross. Akad. Nauk Sib. Otd. Inst. Mat.,
  Novosibirsk, 1997.

\bibitem[GMS15]{GruberMartinSteenbockFiniteIndex}
D.~Gruber, A.~Martin and M.~Steenbock, \textsl{ Finite index subgroups without
  unique product in graphical small cancellation groups},
\newblock Bull. London Math. Soc. \textbf{ 47}(4), 631--638 (2015).

\bibitem[GS14]{dominik_infintely_2014}
D.~Gruber and A.~Sisto, \textsl{ Infinitely presented graphical small
  cancellation groups are acylindrically hyperbolic},
\newblock (2014),
  \href{http://xxx.lanl.gov/abs/arXiv:1408.4488}{arXiv:1408.4488}.

\bibitem[Hae91]{HaefligerOrbihedra}
A.~Haefliger,
\newblock Complexes of groups and orbihedra,
\newblock in \textsl{ Group theory from a geometrical viewpoint ({T}rieste,
  1990)}, pages 504--540, World Sci. Publ., River Edge, NJ, 1991.

\bibitem[HK01]{higson_etheory_2001}
N.~Higson and G.~Kasparov, \textsl{ {$E$}-theory and {$KK$}-theory for groups
  which act properly and isometrically on {H}ilbert space},
\newblock Invent. Math. \textbf{ 144}(1), 23--74 (2001).

\bibitem[HP98]{HaglundPaulinWallSpaces}
F.~Haglund and F.~Paulin,
\newblock Simplicit\'e de groupes d'automorphismes d'espaces \`a courbure
  n\'egative,
\newblock in \textsl{ The {E}pstein birthday schrift}, volume~1 of \textsl{
  Geom. Topol. Monogr.}, pages 181--248 (electronic), Geom. Topol. Publ.,
  Coventry, 1998.

\bibitem[HW08]{HaglundWiseSpecial}
F.~Haglund and D.~T. Wise, \textsl{ Special cube complexes},
\newblock Geom. Funct. Anal. \textbf{ 17}(5), 1551--1620 (2008).

\bibitem[HW12a]{HaglundWiseCombination}
F.~Haglund and D.~T. Wise, \textsl{ A combination theorem for special cube
  complexes. {I}},
\newblock Ann. of Math. \textbf{ 176}(3), 1427--1482 (2012).

\bibitem[HW12b]{HsuWiseAmalgams}
T.~Hsu and D.~T. Wise, \textsl{ Cubulating malnormal amalgams},
\newblock (2012), \href{http://xxx.lanl.gov/abs/preprint}{preprint}.

\bibitem[HW14]{hruska_finiteness_2014}
G.~C. Hruska and D.~T. Wise, \textsl{ Finiteness properties of cubulated
  groups},
\newblock Compos. Math. \textbf{ 150}(3), 453--506 (2014).

\bibitem[LOS12]{LinnellOkunSchickStrongAtiyah}
P.~Linnell, B.~Okun and T.~Schick, \textsl{ The strong {A}tiyah conjecture for
  right-angled Artin and Coxeter groups},
\newblock Geom. Dedicata \textbf{ 158}, 261--266 (2012).

\bibitem[LS77]{LyndonSchupp}
R.~C. Lyndon and P.~E. Schupp,
\newblock \textsl{ Combinatorial group theory},
\newblock Springer-Verlag, Berlin, 1977.

\bibitem[Mar14a]{MartinCombinationEG}
A.~Martin, \textsl{ Combination of universal spaces for proper actions},
\newblock J. Homotopy Relat. Struct., in press  (2014).

\bibitem[Mar14b]{MartinBoundaries}
A.~Martin, \textsl{ Non-positively curved complexes of groups and boundaries},
\newblock Geom. Topol. \textbf{ 18}(1), 31--102 (2014).

\bibitem[MK14]{KourovkaProblems2014}
V.~Mazurov and E.~Khukro, editors,
\newblock \textsl{ The Kourovka notebook}, volume~18,
\newblock 2014.

\bibitem[MS71]{miller_embeddings_1971}
C.~F. Miller, III and P.~E. Schupp, \textsl{ Embeddings into {H}opfian groups},
\newblock J. Algebra \textbf{ 17}, 171--176 (1971).

\bibitem[MW02]{McCammondWiseFansLadders}
J.~P. McCammond and D.~T. Wise, \textsl{ Fans and ladders in small cancellation
  theory},
\newblock Proc. London Math. Soc. (3) \textbf{ 84}(3), 599--644 (2002).

\bibitem[Nic04]{nica_cubulating_2004}
B.~Nica, \textsl{ Cubulating spaces with walls},
\newblock Algebr. Geom. Topol. \textbf{ 4}, 297--309 (electronic) (2004).

\bibitem[Ol{\cprime}91]{olshanskii_geometry_1991}
A.~Y. Ol{\cprime}shanski{\u\i},
\newblock \textsl{ Geometry of defining relations in groups}, volume~70 of
  \textsl{ Mathematics and its {A}pplications ({S}oviet {S}eries)},
\newblock Kluwer {A}cademic {P}ublishers {G}roup, {D}ordrecht, 1991,
\newblock Translated from the 1989 {R}ussian original by {Y}u. {A}.
  {B}akhturin.

\bibitem[Pan99]{pankratev_hyperbolic_1999}
A.~E. Pankrat{\cprime}ev, \textsl{ Hyperbolic products of groups},
\newblock Vestnik Moskov. Univ. Ser. I Mat. Mekh. (2), 9--13, 72 (1999).

\bibitem[RS87]{RipsSegevUniqueProduct}
E.~Rips and Y.~Segev, \textsl{ Torsion-free group without unique product
  property},
\newblock J. Algebra \textbf{ 108}(1), 116--126 (1987).

\bibitem[Sag95]{SageevCubeComplex}
M.~Sageev, \textsl{ Ends of group pairs and non-positively curved cube
  complexes},
\newblock Proc. London Math. Soc. (3) \textbf{ 71}(3), 585--617 (1995).

\bibitem[Sch76]{schupp_embeddings_1976}
P.~E. Schupp, \textsl{ Embeddings into simple groups},
\newblock J. London Math. Soc. (2) \textbf{ 13}(1), 90--94 (1976).

\bibitem[Sch14]{SchreveStrongAtiyah}
K.~Schreve, \textsl{ The strong {A}tiyah conjecture for virtually cocompact
  special groups},
\newblock Math. Ann. \textbf{ 359}(3-4), 629--636 (2014).

\bibitem[Sta91]{GerstenStallings}
J.~R. Stallings,
\newblock Non-positively curved triangles of groups,
\newblock in \textsl{ Group theory from a geometrical viewpoint ({T}rieste,
  1990)}, pages 491--503, World Sci. Publ., River Edge, NJ, 1991.

\bibitem[Ste15]{SteenbockRipsSegev}
M.~Steenbock, \textsl{ {Rips--Segev torsion-free groups without the unique
  product property}},
\newblock J. Algebra \textbf{ 438}, 337--378 (2015),
\newblock http://www.sciencedirect.com/science/article/pii/S0021869315002343.

\bibitem[Wis04]{WiseSmallCancellation}
D.~T. Wise, \textsl{ Cubulating small cancellation groups},
\newblock Geom. Funct. Anal. \textbf{ 14}(1), 150--214 (2004).

\bibitem[Wis11]{wise_structure_2011}
D.~T. Wise, \textsl{ The Structure of Groups with a Quasiconvex Hierarchy},
\newblock (2011),
\newblock available online (accessed 08/18/2014),
\newblock
  \url{https://docs.google.com/file/d/0B45cNx80t5-2T0twUDFxVXRnQnc/edit?pli=1}.

\end{thebibliography}

\noindent Alexandre Martin, Fakult\"at f\"ur Mathematik, Oskar-Morgenstern-Platz 1, 1180 Wien, Austria.

\noindent E-mail: alexandre.martin@univie.ac.at\\

\noindent Markus Steenbock, Fakult\"at f\"ur Mathematik, Oskar-Morgenstern-Platz 1, 1180 Wien, Austria.

\noindent E-mail: markus.steenbock@univie.ac.at\\

\end{document}